\documentclass{article}
\usepackage{subfiles} % ファイル分割

\usepackage[T1]{fontenc} % アクセント文字対応・PDF検索/コピペ品質向上
\usepackage{lmodern} % T1フォントのビットマップ化防止（Latin Modernベクトルフォント）

\usepackage[top=30truemm,bottom=30truemm,left=25truemm,right=25truemm]{geometry} %余白の調整

\usepackage{mathtools}
\mathtoolsset{showonlyrefs = true} % 参照した数式のみに番号付与
\usepackage{amssymb} % 記号を増やす。とりあえず読み込む。
\usepackage{physics2} % 記号の簡潔なコマンド。括弧やノルムなど。
\usephysicsmodule{ab,ab.legacy}
\usepackage{bbm} % 白抜き1
\usepackage{xcolor} % 色
\usepackage{comment} % コメントアウト機能

\usepackage{amsthm} % 定理環境
\usepackage{thmtools} % これがないとcrefで正しく引用されない（親ラベルの名前で引用される）
\usepackage[bookmarkstype=toc, colorlinks=true, pdfborder={0 0 0}, bookmarks=false, bookmarksnumbered=true, linkcolor=blue, citecolor=red,  anchorcolor=blue]{hyperref} % 引用でリンクを張る
\usepackage[capitalize,nameinlink,noabbrev,nosort]{cleveref} %引用時に自動で「定理」などを付けてくれる
\theoremstyle{plain}
\newtheorem{theorem}{Theorem}[section]
\newtheorem{lemma}[theorem]{Lemma}
\newtheorem{proposition}[theorem]{Proposition}
\newtheorem{corollary}[theorem]{Corollary}
\theoremstyle{definition} % remarkはDefinitionスタイルで
\newtheorem{remark}[theorem]{Remark}
\newtheorem{example}[theorem]{Example}

\numberwithin{equation}{section} % 式番号を「(3.5)」のように表示

\newcommand{\reals}{\mathbb{R}} % 実数
\newcommand{\naturals}{\mathbb{N}} % 自然数
\newcommand{\integers}{\mathbb{Z}} % 整数
\newcommand{\expectsymbol}{\mathbb{E}} % 期待値のE
\newcommand{\probabsymbol}{\mathbb{P}} % 確率のP
\newcommand{\indicatorsymbol}{\mathbbm{1}} % 指示関数の1
\NewDocumentCommand\expect{o m}{% % 期待値（カッコ自動調整・手動調整可能）
  {%
    \expectsymbol%
    \IfValueTF{#1}{\bab[#1]{#2}}{\bab{#2}}%
  }%
}
\ExplSyntaxOn % noneかどうかの判定のために実行
\NewDocumentCommand\condexpect{o m m}{% % 条件付き期待値（カッコ自動調整・手動調整可能）
  \expectsymbol%
  \IfValueTF{#1}{%
    \str_if_eq:nnTF{#1}{none}{%
      \bab[none]{#2 \mathrel{\vert} #3}%
    }{%
      \bab[#1]{#2 \mathrel{\csname #1\endcsname\vert} #3}%
    }%
  }{%
    \bab{#2 \mathrel{}\middle|\mathrel{} #3}%
  }%
}
\ExplSyntaxOff
\NewDocumentCommand\probab{o m}{% % 確率（カッコ自動調整・手動調整可能）
  {%
    \probabsymbol%
    \IfValueTF{#1}{\pab[#1]{#2}}{\pab{#2}}%
  }%
}
\ExplSyntaxOn % noneかどうかの判定のために実行
\NewDocumentCommand\condprobab{o m m}{% % 条件付き確率（カッコ自動調整・手動調整可能）
  \probabsymbol%
  \IfValueTF{#1}{%
    \str_if_eq:nnTF{#1}{none}{%
      \pab[none]{#2 \mathrel{\vert} #3}%
    }{%
      \pab[#1]{#2 \mathrel{\csname #1\endcsname\vert} #3}%
    }%
  }{%
    \pab{#2 \mathrel{}\middle|\mathrel{} #3}%
  }%
}
\ExplSyntaxOff
\NewDocumentCommand\indicator{o m}{% % 指示関数（カッコ自動調整・手動調整可能）
  {%
    \indicatorsymbol%
    \IfValueTF{#1}{\Bab[#1]{#2}}{\Bab{#2}}%
  }%
}
\NewDocumentCommand\pairing{o m m}{% % ペアリング（カッコ自動調整・手動調整可能）
  {%
    \IfValueTF{#1}{\aab[#1]{#2, #3}}{\aab{#2, #3}}%
  }%
}
\NewDocumentCommand\iprod{o m m}{% % 内積（inner product）（カッコ自動調整・手動調整可能）
  {%
    \IfValueTF{#1}{\aab[#1]{#2, #3}}{\aab{#2, #3}}%
  }%
}

\newcommand{\cechcpx}[2]{\mathcal{C}(#1, #2)} % \v{C}ech複体
\newcommand{\rgg}[2]{G(#1, #2)} % 幾何グラフ

\newcommand{\map}[3]{#1 \colon #2 \to #3} % 写像
\newcommand{\signedmeas}{\mathcal{M}} % 符号付き測度の空間
\newcommand{\lebmeas}{\mathsf{Leb}} % ルベーグ測度
\newcommand{\diam}{\mathsf{diam}} % 直径
\newcommand{\lorder}{O} % ランダウ記号（大きいO）
\newcommand{\centered}[1]{\overline{#1}} % 中心化（X - E[X]）
\newcommand{\card}[1]{\abs{#1}} % 濃度（要素数）
\newcommand{\supnorm}[1]{\norm{#1}_{\infty}} % supノルム
\newcommand{\suprdnorm}[1]{\norm{#1}_{\reals^m, \infty}} % R^mノルムのsupノルム

\newcommand{\tvnorm}[1]{\norm{#1}_{\mathsf{TV}}} % 全変動ノルム
\newcommand{\sigmatau}{\mathcal{G}_{\tau}} % 有界可測関数とのペアリングを可測にする最小のσ集合族
\newcommand{\sigmaweak}{\mathcal{G}_{\mathsf{weak}}} % 有界可測関数とのペアリングを可測にするσ集合族
\DeclareMathOperator{\interior}{int} % 内部
\DeclareMathOperator{\closure}{cl} % 閉包
\newcommand{\interiortau}[1]{\interior_{\tau}(#1)} % τ-topoogyでの内部
\newcommand{\closuretau}[1]{\closure_{\tau}(#1)} % τ-topoogyでの閉包
\newcommand{\interiorweak}[1]{\interior_{\mathsf{weak}}(#1)} % weak-topoogyでの内部
\newcommand{\closureweak}[1]{\closure_{\mathsf{weak}}(#1)} % weak-topoogyでの閉包
\newcommand{\interiorrd}[1]{{#1}^{\circ}} % R^dでの内部
\newcommand{\closurerd}[1]{\overline{#1}} % R^dでの閉包
\newcommand{\ball}[2]{B(#1, #2)} % 球
\newcommand{\lex}{l} % 辞書式順序
\newcommand{\lexineq}{\preceq} % 辞書式順序
\newcommand{\roundup}[1]{\lceil #1 \rceil} % 切り上げ
\newcommand{\Roundup}[1]{\left\lceil #1 \right\rceil} % 切り上げ（括弧自動調整）
\newcommand{\rounddown}[1]{\lfloor #1 \rfloor} % 切り捨て
\newcommand{\fqexp}{\gamma} % テスト関数列{f_q}_{q >= 1}の勾配増大度の指数

\newcommand{\bfa}{\mathbf{a}}
\newcommand{\bfe}{\mathbf{e}}
\newcommand{\bfx}{\mathbf{x}}
\newcommand{\bfy}{\mathbf{y}}

\newcommand{\bfu}{\mathbf{u}}
\newcommand{\bfv}{\mathbf{v}}
\newcommand{\bft}{\mathbf{t}}
\newcommand{\bfzero}{\mathbf{0}}
\newcommand{\calB}{\mathcal{B}}
\newcommand{\calI}{\mathcal{I}}
\newcommand{\calF}{\mathcal{F}}
\newcommand{\calE}{\mathcal{E}}
\newcommand{\calG}{\mathcal{G}}

\newcommand{\calX}{\mathcal{X}}
\newcommand{\calY}{\mathcal{Y}}
\newcommand{\calZ}{\mathcal{Z}}
\newcommand{\calT}{\mathcal{T}}
\newcommand{\calR}{\mathcal{R}}
\newcommand{\calU}{\mathcal{U}}
\newcommand{\calK}{\mathcal{K}}

\newcommand{\convint}[1]{\mathsf{conv}^\circ(#1)} % 凸包の内部

\newcommand{\ppp}{\mathcal{P}} % Poisson点過程
\newcommand{\bpp}{\mathcal{B}} % 二項点過程
\newcommand{\pppentire}{\Pi} % Poisson点過程（Rd全体）
\newcommand{\nrnd}{n r_n^d}
\newcommand{\nkrndkmo}[1][k]{n^{#1} r_n^{d({#1}-1)}}
\newcommand{\ldpspeed}[1][n]{\rho_{#1}}
\newcommand{\mdpfactor}[1][n]{b_{#1}}
\newcommand{\lilsubseq}[2]{n(#1, #2)} % LILで使う部分列（n_{s, l} = \rho_n = s^lとなるn）
\newcommand{\cube}[2]{[#1, #2]^d} % 直方体
\newcommand{\unitcube}{\cube{0}{1}} % 単位立方体
\newcommand{\abscont}{\ll} % 絶対連続
\newcommand{\diracdelta}[1]{\delta_{#1}} % δ測度
\NewDocumentCommand\isolated{o m m m}{% % 他の点と繋がっていない条件
  {%
    \IfValueTF{#1}{c_{#1}}{c} (#2, #3; #4)
  }%
}
\newcommand{\bddcont}{C_{\mathsf{b}}} % 有界連続関数の空間
\newcommand{\bddmble}{M_{\mathsf{b}}} % 有界可測関数の空間
\newcommand{\distancemeas}{d_{\signedmeas(E)}} % 符号付き測度の空間に入れる距離
\newcommand{\distancevec}{d_{\reals^m}} % R^mに入れる（集合との）距離
\newcommand{\accptmeas}{\calK} % LILで使う集積点全体（符号付き測度）
\newcommand{\accptvec}{K} % LILで使う集積点全体（R^m）
\newcommand{\tailfunc}{\psi} % tailに出てくる関数 1 + x - √(1+2x)
\newcommand{\zeroed}[1]{\widetilde{#1}} % f(0) = 0とする操作

\newcommand{\orlicznorm}[1]{\norm{#1}_{\psi_1}} % Orliczノルム

\let\emptyset\varnothing % 空集合
\let\epsilon\varepsilon % ε
\let\liminf\varliminf % liminf
\let\limsup\varlimsup % limsup
\title{Moderate deviations and laws of the iterated logarithm for geometric functionals in the sparse regime}
\author{
  Yudai Sakihara\thanks{Joint Graduate School of Mathematics for Innovation, Kyushu University, 744, Motooka, Nishi-ku, Fukuoka, 819-0395, Japan.
   \textit{E-mail address}: \texttt{sakihara.yudai.145@s.kyushu-u.ac.jp}}
   \and
  Kenkichi Tsunoda\thanks{Faculty of Mathematics, Kyushu University, 744, Motooka, Nishi-ku, Fukuoka, 819-0395, Japan.\\
   \textit{E-mail address}: \texttt{tsunoda@math.kyushu-u.ac.jp}}
}

\date{\empty}

\begin{document}

\maketitle

\begin{abstract}
We prove moderate deviation principles and laws of the iterated logarithm in the sparse regime
for geometric and topological functionals associated with $k$-point connected components.
These limit theorems are established at both the measure-valued and vector-valued levels, for Poisson and binomial point processes.
As applications, we derive corresponding results for component counts in random geometric graphs and \v{C}ech complexes and for counts of Morse critical points.
\end{abstract}

% Keywords command
\providecommand{\keywords}[1]
{
  \small	
  \textbf{Keywords and Phrases:} #1
}
%MSC Classification command
\providecommand{\subjclass}[1]
{
  \small	
  \textbf{Mathematics Subject Classification 2020:} #1
}

\keywords{Moderate deviation principle, Law of the iterated logarithm, Stochastic geometry, Random geometric graph, \v{C}ech complex, Morse critical point.}

\subjclass{Primary: 60F10; Secondary: 60D05, 60G55, 55N31.}

% メモ（確認用、後で削除）：
% \begin{itemize}
%   \item 60F10: Large deviations
%   \item 60D05: Geometric probability and stochastic geometry
%   \item 60G55: Point processes (e.g., Poisson, Cox, Hawkes processes)
%   \item 55N31: Persistent homology and applications, topological data analysis
% \end{itemize}

\section{Introduction}

In recent years, topological data analysis has attracted widespread attention as a method for extracting topological and geometric features underlying complex data structures such as point clouds.
In particular, persistent homology tracks the birth and death of homological features across a filtration.
Since the pioneering works that established its computational foundations \cite{elz2002tda,zg2005computingph}, this tool has led to a wide range of applications.
Alongside these developments, the field of random topology has grown rapidly, providing probabilistic models and tools for the study of random topological structures.

Random geometric complexes and their generalizations form an important class of objects in random topology.
Historically, random geometric graphs, where edges are constructed based on the distances between points of a spatial point process,
have been widely studied in the context of stochastic geometry \cite{penrose2003rgg}.
From this perspective, random geometric complexes are higher-dimensional generalizations of these random geometric graphs,
and simultaneously provide probabilistic models for complexes constructed from point data in topological data analysis.
As seen in \cite{ya2015rgcstationary,ysa2017rgcthermo},
there is a substantial body of literature on laws of large numbers (LLNs) and
central limit theorems (CLTs) for topological and geometric functionals of random geometric complexes, including Betti numbers.
As further developments, the LLNs for Betti numbers have been extended to manifolds \cite{gtt2019sllnmanifold},
a CLT for persistent Betti numbers has been established \cite{kp2025cltbetti},
and functional limit theorems have been derived for stochastic processes of subgraph counts, Betti numbers, and the Euler characteristic,
indexed by the radius parameter \cite{to2020plimbetti,to2021flimeuler,owada2017fcltsubgraph}.
LLNs and other convergence results for persistence diagrams have also been established \cite{hst2018pd,owada2022pd}.

While the LLN and CLT describe typical behaviors that occur with high probability,
the large deviation principle (LDP) describes the probabilities of rare deviations from the average behavior.
Here we review the definition of LDP \cite{dz2009ldp}.
A sequence of probability measures $\{\mu_n\}_{n \geq 1}$ on a topological space $\calX$ equipped with a $\sigma$-field $\calB$ is said to satisfy the LDP with speed $a_n\to\infty$ and a lower semicontinuous function $\map{I}{\calX}{[0, \infty]}$ (called rate function)
if, for any $A \in \calB$,
\begin{align}
  - \inf_{x \in \interior (A)} I(x)
  \leq \liminf_{n \to \infty} \frac{1}{a_n} \log \mu_n(A)
  \leq \limsup_{n \to \infty} \frac{1}{a_n} \log \mu_n(A)
  \leq - \inf_{x \in \closure (A)} I(x)
\end{align}
holds, where $\interior (A)$ and $\closure (A)$ denote the interior and the closure of $A$, respectively.
Furthermore, the moderate deviation principle (MDP) interpolates between the central-limit and large-deviation scales.
More precisely, an MDP is an LDP for suitably centered and rescaled random objects, with the fluctuation scale lying between these scales.
For the precise statement of the MDP, see \cref{sec:result.mdp}.

In contrast to the extensive literature on LLNs and CLTs, relatively few results are available for LDPs in random topology.
For example, in the dense regime, even for a basic functional such as the number of edges in a random geometric graph,
determining the speed and rate function of the large deviations is a highly nontrivial problem \cite{cm2020denseedgeldp}.
In the critical regime, LDPs have been studied for functionals satisfying near additivity \cite{st2001ldp},
as well as for functionals satisfying weak dependence conditions characterized by stabilization radii \cite{sy2005ldp}.
The LDP for persistence diagrams has also been investigated in random cubical complexes, which are lattice-based analogs of random geometric complexes \cite{hiraoka2024large}.
Against this background, Hirsch and Owada \cite{ho2023ldp} recently focused on the sparse regime,
and established LDPs for geometric functionals such as connected component counts.
To briefly recall their setting, let $\ppp_n$ be a Poisson point process of intensity $n$ on $[0, 1]^d$,
let $r_n$ be a sequence of connectivity radii, and for a function $\map{H}{(\reals^d)^k}{[0, \infty)}$, define
\begin{align}
  G_n(\calY, \ppp_n; t) \coloneqq H(r_n^{-1} \calY) \indicator[none]{\norm[none]{y - y'}_{\reals^d} \geq r_n t, \, \text{ for all $y \in \calY$ and $y' \in \ppp_n \setminus \calY$}}.
\end{align}
They proved the LDP for real-valued statistics of the form
\begin{align}
  \sum_{\calY \subset \ppp_n, \card{\calY}=k} G_n(\calY, \ppp_n; t). \label{eq:intro.statistics.scalar}
\end{align}
Furthermore, their results include the corresponding Sanov-type (measure-valued) LDP,
that is, the LDP for the empirical measure
\begin{align}
  \sum_{\calY \subset \ppp_n, \card{\calY}=k} \diracdelta{G_n(\calY, \ppp_n; t)}, \label{eq:intro.statistics.measure}
\end{align}
where $\diracdelta{x}$ is the Dirac delta measure at $x$,
as well as analogous results for binomial point processes.
Various functionals in stochastic geometry can be treated as such statistics \cite{byy2019fastdecay,bsy2022poissonpalm,otto2025poissonapprox,lr2016poissonustat}.
As applications, they also obtained LDPs for persistent Betti numbers and for counts of Morse critical points, the latter being closely related to homological changes in random geometric complexes
\cite{ba2014distfunc,bm2015topolpgymanifold}, in specific dimensions.
More recently, Hirsch and Willhalm \cite{hw2024lowerldp}
investigated lower large deviations for a broader class of geometric functionals,
 extending the fixed-size component statistics considered in \cite{ho2023ldp}
 to sums over connected components of size at least $k$.

Motivated by the result of Hirsch and Owada \cite{ho2023ldp},
we study MDPs and LILs for the same class of geometric statistics.
As our main results, we establish the MDPs and LILs corresponding to the LDPs of Hirsch and Owada \cite{ho2023ldp},
both at the real-valued level \eqref{eq:intro.statistics.scalar} and at the measure-valued level \eqref{eq:intro.statistics.measure},
and for both Poisson and binomial point processes.
The resulting MDPs have explicit quadratic rate functions.
The measure-level LIL is stated as a Strassen-type LIL (e.g., \cite{kuelbs1974strassenlil}),
where the set of all accumulation points of the suitably scaled and centered empirical measure \eqref{eq:intro.statistics.measure} coincides with a deterministic compact set determined by the rate function (\cref{thm:lil.measure,cor:lil.scalar}).
As applications, we derive corresponding MDPs and LILs for component counts in random geometric graphs and complexes and for counts of Morse critical points.

A main difficulty is that the isolation condition depends on points outside the selected $k$-tuple, so that the statistics are not standard Poisson $U$-statistics.
For the proof of the MDP, we use the Dawson--G\"{a}rtner theorem (the projective-limit method) and the G\"{a}rtner--Ellis theorem.
Furthermore, to estimate the tail probabilities of the centered differences between point process integrals,
which are necessary for establishing exponential equivalence (\cref{prop:exp.eq.eta.xi})
and the cutoff approximations in the contraction principle (\cref{lem:contraction_cutoff}),
we decompose these differences in a way that preserves centering and
represent them as spatial martingale difference sequences as used in \cite{py2001clt,krebs2021lilsip}.
In the proof of the LIL, a major challenge lies in controlling the errors between subsequences (e.g., \cref{lemma:lil.upper.z1})
when showing that the accumulation points are contained in the level sets of the rate function (\eqref{eq:lil.upper.meas} and \eqref{eq:lil.upper.vec}).
To bound the tail probabilities of the suprema of the differences between point process integrals that arise in this context,
we combine Bousquet's concentration inequality with chaining arguments.
In particular, controlling the errors arising from the variation in $n r_n^d$ (\cref{lemma:lil.upper.z4,lemma:lil.upper.z4.ei}) is a non-trivial technical step
that is not required for the critical regime.

We mention several previous studies related to MDPs and LILs.
In the critical regime, Baryshnikov et al.\ \cite{besy2008mdp} proved a measure-level MDP and LIL for geometric functionals satisfying exponential stabilization,
such as random sequential packing, spatial birth-growth models, and $k$-nearest neighbor random graphs.
Eichelsbacher et al.\ \cite{ers2015mdp} relaxed the moment conditions on the functionals and the tail conditions on the stabilization radius compared to Baryshnikov et al.\ \cite{besy2008mdp},
and proved a measure-level MDP.
Their study imposes restrictions on the scale of the MDP.
Krebs \cite{krebs2021lilsip} proved LILs for geometric and topological functionals satisfying exponential stabilization and polynomially bounded moment conditions in the critical regime.
Krebs establishes real-valued LILs
and imposes a local growth condition on the functionals (see Condition (1.3) in their paper),
whereas the functionals \eqref{eq:intro.statistics.scalar} treated in our work do not necessarily satisfy such a condition.
Furthermore, in the context of Poisson $U$-statistics, Schulte and Th\"{a}le \cite{st2024mdp} proved MDPs for Poisson $U$-statistics,
which include subgraph counts (not restricted to connected components) as a special case.
Although the results of this study cover the sparse, critical, and dense regimes,
their framework deals with functionals (Poisson $U$-statistics) determined solely by combinations of $k$ points,
unlike the functional \eqref{eq:intro.statistics.scalar}, which imposes the condition of being isolated from other points,
and their results are formulated at the real-valued level.
Adamczak and Kutek \cite{ak2026explil} proved exponential tail probability inequalities and LILs for Poisson $U$-statistics.
Their LILs deal with a setting 
where the connectivity radius $r_n$ is fixed and only the intensity $n$ of the Poisson point process increases.
Moreover, their LILs are formulated at the real-valued level.
To the best of our knowledge, these are the first measure-valued and vector-valued MDPs and LILs for sparse isolated-configuration statistics of this type.

The rest of this paper is structured as follows.
In \cref{sec:model.results}, we rigorously formulate the main theorems on the MDPs and LILs for the point processes and functionals treated in this study,
and present specific applications, such as component counts and the number of Morse critical points.
In \cref{sec:pf.mdp}, we prove the MDPs, while the LILs are proved in \cref{sec:lil.upper}.
The technical lemmas, concentration inequalities, and other tools used in the proof of the LILs in \cref{sec:lil.upper} are collected in \cref{sec:technical.tools}.
Finally, the proofs of the MDPs and LILs for Morse critical points are provided in \cref{sec:morse.proof}.
% the Appendix.

\section{Model and main results}
\label{sec:model.results}

\subsection{Model}
\label{sec:model}
Let $\ppp_n $ be a Poisson point process on $\unitcube$ with intensity $n$ ($d \geq 2$),
and $\bpp_n $ a binomial point process consisting of $n$ independent and identically distributed uniform random vectors in $\unitcube$.
Let $\{r_n\}_{n \geq 1} $ be a sequence of positive numbers decaying to $0$ as $n \to \infty$,
and let $k \geq 2 $ be an integer, which will be fixed throughout the rest of this paper.
We investigate the \textit{sparse regime},
which is characterized by the following conditions:
\begin{align}
  \ldpspeed \coloneqq \nkrndkmo \to \infty, \quad \nrnd \to 0 \quad \text{as } n\to \infty. \label{eq:rn.sparse}
\end{align}

We consider a nonnegative measurable function $\map{H \coloneqq (h^{(1)}, \dots, h^{(m)})}{(\reals^d)^{k}}{[0, \infty)^m} $
satisfying the following conditions:
\begin{description}
  \item[{\rm (H1)}] $H$ is symmetric with respect to permutations of variables in $\reals^{d}$.
  \item[{\rm (H2)}] $H$ is translation invariant:
    \begin{align}
      H(x_1, \dots, x_k)=H(x_1 + y, \dots, x_k + y), \quad x_{i}, y \in \reals^d .
    \end{align}
  \item[{\rm (H3)}] $H$ is locally determined:
    \begin{align}
      H(x_1, \dots, x_k)=\bfzero_{m} \quad \text { whenever } \diam(x_1, \dots, x_k) \coloneqq \max_{1 \leq i, j \leq k} \norm{x_i - x_j}_{\reals^d} > L,
    \end{align}
    where $L>0$ is a constant, $\bfzero_{m}=(0, \dots, 0) \in \reals^m$,
    and $\norm{\cdot}_{\reals^d} $ denotes the Euclidean norm on $\reals^d$.
  \item[{\rm (H4)}] For every $\bfa= (a_1, \dots, a_m) \in \reals^m$,
    \begin{align}
      \int_{(\reals^{d})^{k-1}} e^{\iprod{\bfa}{H(\bfzero_d, \bfy)}}\indicator[none]{H(\bfzero_d, \bfy) \neq \bfzero_m} \,d \bfy < \infty,
    \end{align}
  where $\bfy = (y_1, \dots,  y_{k-1}) \in (\reals^d )^{k-1}$ and $\iprod{\cdot}{\cdot}$ denotes the Euclidean inner product.
  \item[{\rm (H5)}] For every $\bfa \in \reals^m \setminus \Bab{\bfzero_m}$,
  \begin{align}
    \int_{(\reals^d)^{k-1}} \abs{\iprod{\bfa}{H(\bfzero_d, \bfy)}} \,d\bfy > 0.
  \end{align}
\end{description}
Since condition (H3) remains valid even if $L$ is chosen arbitrarily large,
without loss of generality,
we may assume that $L$ is larger than the parameters $t, t_1, \dots, t_m > 0$ introduced later.
Throughout this paper, we assume (H1)--(H5).

For a $k$-point subset $\calY \subset \reals^d$, 
a finite subset $\calX \subset \reals^d$ and $t \in (0, \infty)$,
define
\begin{align}
  \isolated{\calY}{\calX}{t} = \isolated{\calY}{\calX \setminus \calY}{t}
  \coloneqq \indicator[none]{\norm[none]{y - y'}_{\reals^d} \geq t, \, \text{for all } y \in \calY \text{ and } y' \in \calX \setminus \calY}.
\end{align}
Moreover, for $\bft = (t_1, \dots, t_m) \in (0, \infty)^m$, we define
\begin{align}
  \isolated{\calY}{\calX}{\bft} \coloneqq (\isolated{\calY}{\calX}{t_i})_{i=1, \dots, m}. \label{eq:isolated.def}
\end{align}
We also define
\begin{align}
  G(\calY, \calX; \bft) \coloneqq  \isolated{\calY}{\calX}{\bft} \odot H(\calY) = (\isolated{\calY}{\calX}{t_i} h^{(i)}(\calY))_{i=1, \dots, m},
\end{align}
where $\odot$ denotes the Hadamard product.
We define a scaled version of $H$ by $H_n(\calX) \coloneqq H(r_n^{-1}\calX) $,
as well as
\begin{align}
  \isolated[n]{\calY}{\calX}{\bft} \coloneqq \isolated{r_n^{-1}\calY}{r_n^{-1}\calX}{\bft}
  = \isolated{\calY}{\calX}{r_n \bft},
  \quad
  G_n(\calY, \calX; \bft) \coloneqq \isolated[n]{\calY}{\calX}{\bft} \odot H_n(\calY).
\end{align}
For a finite $\calX \subset \reals^d $, we define
\begin{align}
  \kappa_n(\calX) \coloneqq \sum_{\calY \subset \calX, \card{\calY}=k} \indicator[none]{G_n(\calY,\calX;\bft) \neq \bfzero_{m}} \diracdelta{G_n(\calY,\calX;\bft)} \label{eq:def.kappa}
\end{align}
and 
\begin{align}
  T_n(\calX) \coloneqq \sum_{\calY \subset \calX, \card{\calY}=k} G_n(\calY,\calX;\bft). \label{eq:def.t}
\end{align}

Let $\signedmeas(E)$ be the space of finite signed measures on $E \coloneqq [0, \infty)^m$.
For a random variable $X$ (either $\reals^m$- or $\signedmeas(E)$-valued), we denote its centered version by $\centered{X} \coloneqq X - \expect[none]{X}$. 
Here, for an $\signedmeas(E)$-valued random variable $\nu$, its expectation $\expect[none]{\nu}$ is the measure defined by $\expect[none]{\nu}(A) \coloneqq \expect[none]{\nu(A)}$ for every Borel set $A \subset E$.
Let $\bddmble(E)$ be the set of all bounded measurable functions on $E$.
We equip $\signedmeas(E)$ with the $\tau$-topology, which is the weakest topology making the map $\nu \mapsto \pairing{f}{\nu} \coloneqq \int_{E} f d\nu$ continuous for every $f \in \bddmble(E)$.
Let $\sigmatau$ be the $\sigma$-field on $\signedmeas(E)$ generated by the maps $\nu \mapsto \pairing{f}{\nu}$ for all $f \in \bddmble(E)$.
Note that $\sigmatau$ does not coincide with the Borel $\sigma$-field induced by the $\tau$-topology.

Fix an arbitrary positive sequence $\Bab{\mdpfactor}_{n \geq 1}$ satisfying
\begin{align}
  \lim_{n \to \infty} \frac{\ldpspeed}{\mdpfactor} = \infty
  \quad \text{and} \quad \lim_{n \to \infty} \frac{\ldpspeed}{\mdpfactor^2} = 0. \label{eq:mdpfactor.speed}
\end{align}
Our aim is to establish the moderate deviation principle (MDP) and the law of the iterated logarithm (LIL) for the $\signedmeas(E)$-valued sequences $\{ \mdpfactor^{-1}\centered{\kappa_n(\ppp_n)} \}_{n \geq 1} $ and $\{\mdpfactor^{-1}\centered{\kappa_n(\bpp_n)} \}_{n \geq 1} $,
as well as the $\reals^m$-valued sequences $\{\mdpfactor^{-1}\centered{T_n(\ppp_n)} \}_{n \geq 1} $ and $\{\mdpfactor^{-1}\centered{T_n(\bpp_n)} \}_{n \geq 1} $.
We remark that, in contrast to \cite{ho2023ldp}, we do not remove $\bfzero_m$ from the domain $E$ of these measures. Instead, we explicitly exclude $\diracdelta{\bfzero_m}$ from the definition of these measures in \eqref{eq:def.kappa}.

Let $\tau$ be the measure on $E$ defined by
\begin{align}
  \tau(A) \coloneqq \frac{1}{k!} \lebmeas \ab\big(\{\bfy \in (\reals^{d})^{k-1} : H(\bfzero_d, \bfy) \in A \setminus \{\bfzero_m\}\}), \quad A \subset E,
\end{align}
where $A$ is a Borel set and $\lebmeas$ denotes the Lebesgue measure.
We define the rate function $\map{\Lambda}{\signedmeas(E)}{[0,\infty]}$ by
\begin{align}
  \Lambda(\nu) \coloneqq
  \begin{cases}
    \displaystyle
    \frac{1}{2} \int_{E} \ab\Big(\frac{d \nu}{d \tau})^2 \,d \tau & \text{if } \nu \abscont \tau, \\
    \infty & \text{otherwise},
  \end{cases}
\end{align}
where $\nu \in \signedmeas(E)$ and $\nu \abscont \tau$ indicates that $\nu$ is absolutely continuous with respect to $\tau$.

\subsection{Main result: moderate deviation principles}
\label{sec:result.mdp}
First, under the definitions and notation introduced above, we state our main result regarding the measure-level MDP.

\begin{theorem}[Measure-level MDP]
\label{thm:mdp.measure}
Let $\calX_n$ be either the Poisson point process $\ppp_n$ or the binomial point process $\bpp_n$.
Assume that $H$ satisfies \textup{(H1)--(H5)}.
Then, the sequence $\{\mdpfactor^{-1} \centered{\kappa_n(\calX_n)} \}_{n \geq 1}$ satisfies an MDP in $\signedmeas(E)$ endowed with the $\tau$-topology and the $\sigma$-field $\sigmatau$, with speed $\mdpfactor^2 / \ldpspeed$ and the good rate function $\Lambda$.
That is, for every $A \in \sigmatau$,
\begin{align}
  - \inf \Bab{\Lambda(\nu) : \nu \in \interiortau{A}}
  &\leq \liminf_{n \to \infty} \frac{\ldpspeed}{\mdpfactor^2} \log \probab[Big]{\frac{1}{\mdpfactor} \centered{\kappa_n(\calX_n)} \in A} \\
  &\leq \limsup_{n \to \infty} \frac{\ldpspeed}{\mdpfactor^2} \log \probab[Big]{\frac{1}{\mdpfactor} \centered{\kappa_n(\calX_n)} \in A}
  \leq - \inf \Bab{\Lambda(\nu) : \nu \in \closuretau{A}},
\end{align}
where $\interiortau{A}$ and $\closuretau{A}$ are the interior and the closure of $A$ in the $\tau$-topology, respectively.
\end{theorem}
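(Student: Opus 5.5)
We prove the measure-level MDP by the projective-limit method: finite-dimensional MDPs via G\"{a}rtner--Ellis, lifted by Dawson--G\"{a}rtner, with the binomial case reduced to the Poisson one. For a finite measurable partition $\pi = \{A_1, \dots, A_p\}$ of $E$, consider the $\reals^p$-valued vectors $Z_n^\pi \coloneqq \mdpfactor^{-1}(\centered{\kappa_n(\calX_n)}(A_1), \dots, \centered{\kappa_n(\calX_n)}(A_p))$. The projective limit of $\reals^p$ over partitions identifies with the algebraic dual of simple functions, and the $\tau$-topology on $\signedmeas(E)$ is induced from it. It therefore suffices to prove an MDP for each $Z_n^\pi$ with speed $\mdpfactor^2/\ldpspeed$ and rate function
\begin{align}
  \Lambda_\pi(\bfx) = \sup_{\bfa \in \reals^p}\Bab{\iprod{\bfa}{\bfx} - \tfrac12 \textstyle\sum_{j} a_j^2 \tau(A_j)}.
\end{align}
The supremum over $\pi$ of $\Lambda_\pi$ evaluated on the projections equals $\Lambda$; this is the standard identification of the Fenchel dual of $\frac12\int f^2\,d\tau$ on $\bddmble(E)$. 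We then check that $\Lambda$ is good in the $\tau$-topology, since its level sets are bounded in $L^2(\tau)$ densities and hence $\tau$-compact. Dawson--G\"{a}rtner then gives the MDP for $\mdpfactor^{-1}\centered{\kappa_n}$, with the measurability issues handled through $\sigmatau$ as in the Sanov-type setting of \cite{ho2023ldp}.

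In the Poisson case, the finite-dimensional MDP should follow from G\"{a}rtner--Ellis. For a simple $f = \sum_j a_j \indicator{A_j}$ with $f(\bfzero_m)$ irrelevant, we need
\begin{align}
  \lim_{n\to\infty} \frac{\ldpspeed}{\mdpfactor^2}\log \expect[big]{\exp\ab(\frac{\mdpfactor}{\ldpspeed}\pairing{f}{\centered{\kappa_n(\ppp_n)}})} = \frac12 \int f^2\, d\tau.
\end{align}
The isolation indicator makes $\pairing{f}{\kappa_n}$ not a $U$-statistic, so we avoid computing this cumulant directly. Instead we introduce the auxiliary statistic $\xi_n$ obtained by dropping the isolation constraint, restricting to $k$-tuples whose rescaled diameter is at most $L$. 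This is a Poisson $U$-statistic whose cumulants are explicitly computable. Its expectation is $\sim \ldpspeed \int f\,d\tau$ by (H2)--(H3) and a change of variables, and its variance is $\sim \ldpspeed \int f^2 d\tau$; overlapping tuples contribute a factor $\nrnd \to 0$. Its higher cumulants are $O(\ldpspeed)$ times powers of $\nrnd$, and (H4) controls the exponential moments since $f$ is bounded. Multiplying by $\mdpfactor/\ldpspeed$, the rescaled cumulant-generating function of $\xi_n$ is then $\frac12\int f^2 d\tau + o(1)$, using $\mdpfactor/\ldpspeed \to 0$. Condition (H5) guarantees nondegeneracy, so that $\Lambda_\pi$ is finite exactly on the right subspace.

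The main obstacle is showing that $\eta_n \coloneqq \pairing{f}{\kappa_n(\ppp_n)}$ and $\xi_n$ are exponentially equivalent at this scale, i.e.\ that
\begin{align}
  \limsup_{n} \frac{\ldpspeed}{\mdpfactor^2}\log\probab{\mdpfactor^{-1}\abs{\centered{\eta_n}-\centered{\xi_n}} > \delta} = -\infty.
\end{align}
The difference counts tuples that are non-isolated, together with the discrepancy $\indicator[none]{G_n \ne \bfzero}$ versus $H_n\ne \bfzero$. Its mean is of order $\ldpspeed \nrnd$, but after centering we need exponential tail bounds at deviation level $\delta\mdpfactor$. We plan to partition $\unitcube$ into cubes of side of order $r_n$ and order them lexicographically. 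The centered difference is then written as a sum of martingale differences $\condexpect{D}{\calF_i}-\condexpect{D}{\calF_{i-1}}$, in the spirit of \cite{py2001clt}. Each increment is bounded by a local count of Poisson points in neighbouring cubes, which has Poisson-type exponential moments. A Freedman/Bernstein-type martingale inequality with conditional variance of order $\ldpspeed\nrnd$ then yields the superexponential decay, because $\nrnd\to0$ while $\mdpfactor^2/\ldpspeed\to\infty$; a truncation on the local point counts handles the unbounded increments.

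For the binomial process, we couple $\bpp_n$ with $\ppp_{n'}$ for $n'$ near $n$ (de-Poissonization). We show that adding or removing $O(\sqrt{n\,\mdpfactor^2/\ldpspeed})$ points changes $\kappa_n$ by $o(\mdpfactor)$ with superexponentially small failure probability, again through the same local martingale estimates, which gives exponential equivalence with the Poisson case.
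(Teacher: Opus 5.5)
\paragraph{Verdict.} The outer frame of your proposal is sound and matches the paper: finite-dimensional G\"{a}rtner--Ellis, the Dawson--G\"{a}rtner lift to the $\tau$-topology, and identifying $\Lambda$ as the Fenchel dual of $\frac12\int f^2\,d\tau$ (using partitions instead of finite families of bounded measurable functions changes nothing). The proof breaks at the auxiliary object, however, and that step cannot be repaired.

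\paragraph{Your $\xi_n$ has infinite exponential moments.} Because you \emph{drop} the isolation constraint, $\xi_n$ has infinite exponential moments for every $n$ once $k\ge 2$. A cluster of $N$ points contributes up to $\binom{N}{k}$ summands, while such a cluster costs only a factor of order $(c\,nr_n^d)^N/N!$. Concretely, take $k=2$, $m=1$, $H(x_1,x_2)=\mathbbm{1}\{\|x_1-x_2\|\le 1\}$ (which satisfies (H1)--(H5)) and $f\equiv 1$. On the event that $\mathcal{P}_n$ has exactly $N$ points, all in a fixed ball $B$ of radius $r_n/2$, we get $\langle f,\xi_n\rangle=\binom N2$. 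Hence $\mathbb{E}[e^{\lambda\langle f,\xi_n\rangle}]\ge e^{-n}\sum_N (n\lambda_d(B))^N e^{\lambda N(N-1)/2}/N!=\infty$ for every $\lambda>0$, where $\lambda_d$ is Lebesgue measure. Boundedness of $f$ and (H4) do not help, because the problem is the \emph{number} of summands, not their size. Correspondingly, each cumulant is $O(\rho_n)$ only with constants growing super-exponentially in its order. The cumulant series therefore diverges, and the G\"{a}rtner--Ellis limit is $+\infty$ rather than $\frac12\int f^2\,d\tau$.

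\paragraph{The exponential equivalence is false in part of the range.} The exponential equivalence you flag as the main obstacle is not merely hard; it is false for part of the admissible range of $b_n$, and so is the MDP for your $\xi_n$ itself. Keep the same example with $r_n=n^{-\alpha/d}$ and $b_n=\rho_n/\log\rho_n$.
\begin{itemize}
\item Put $N\asymp b_n^{1/2}$ points in one ball of radius $c\,r_n$ with $c<\min(1,t)/2$. This gives $\binom N2\gtrsim b_n$ non-isolated pairs with $H_n\neq 0$. All of them are counted by your $\xi_n$ and none by $\kappa_n$.
\item The mean of the difference is $O(\rho_n\,nr_n^d)=o(b_n)$ here, so this event pushes the centered difference above $\delta b_n$.
\item The event has probability at least $\exp(-Cb_n^{1/2}\log n)$, which is much larger than $\exp(-Mb_n^2/\rho_n)$ for every $M$.
\end{itemize}
No truncation of local point counts can rescue this.

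\paragraph{The missing idea.} Keep isolation in every auxiliary object. Only boundedly many isolated $k$-subsets with $H\ne\mathbf{0}_m$ pass through a given point, so isolated counts are linear in local point numbers and have uniformly bounded exponential moments. The paper uses this in three steps.
\begin{itemize}
\item It works with the block process $\eta_n$, where isolation is checked inside i.i.d.\ cubes of side $\rho_n^{-1/d}$. It gets the G\"{a}rtner--Ellis limit from a second-order expansion of the block Laplace transform (\cref{prop:exp.moment.finite,prop:cov,prop:ge.eta}, with exponential moments from (5.16) in \cite{ho2023ldp}).
\item It proves exponential equivalence between $\eta_n$ and $\xi_n$, where the paper's $\xi_n$ still has isolation at radius $t_1$ on the whole cube; the difference consists of boundary-crossing tuples (\cref{prop:exp.eq.eta.xi}).
\item It proves exponential equivalence between $\xi_n$ and $\kappa_n$; the difference comes from the different radii $t_i$, and is handled by the spatial martingale-difference bound of \cref{prop:exp.moment.common} (\cref{prop:exp.eq.xi.kappa}).
\end{itemize}
Your martingale/Freedman idea belongs in these last two steps, applied to isolation-preserving differences.
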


\begin{remark}
Since the $\tau$-topology is finer than the standard weak topology on $\signedmeas(E)$,
and the $\sigma$-field $\sigmaweak$ generated by the maps $\{\nu \mapsto \pairing{f}{\nu} : f \in \bddcont(E) \} $ coincides with the Borel $\sigma$-field of the weak topology,
\cref{thm:mdp.measure} implies that the LDP also holds with respect to the weak topology and its Borel $\sigma$-field. 
That is, for every Borel set $A \subset \signedmeas(E)$ in the weak topology, we have
\begin{align}
  -\inf \Bab{\Lambda(\nu) : \nu \in \interiorweak{A}}
  &\leq \liminf_{n \to \infty} \frac{\ldpspeed}{\mdpfactor^2} \log \probab[Big]{\frac{1}{\mdpfactor} \centered{\kappa_n(\calX_n)} \in A} \\
  &\leq \limsup_{n \to \infty} \frac{\ldpspeed}{\mdpfactor^2} \log \probab[Big]{\frac{1}{\mdpfactor}\centered{\kappa_n(\calX_n)} \in A}
  \leq - \inf \Bab{\Lambda(\nu) : \nu \in \closureweak{A}},
\end{align}
where $\interiorweak{A}$ and $\closureweak{A}$ are the interior and the closure of $A$ in the weak topology, respectively.
\end{remark}

\begin{remark}
While the LDPs established in \cite{ho2023ldp} are formulated with respect to the weak topology,
they can be extended to the $\tau$-topology and its associated $\sigma$-field $\sigmatau$ by employing our method in a manner similar to \cite{acosta1994psldp}.
\end{remark}

As an immediate consequence of our measure-level result (\cref{thm:mdp.measure}), we can derive the vector-level MDP for $T_n$. 
To formulate this result, let $\Gamma$ be the $m \times m$ matrix defined by
\begin{align}
  \Gamma \coloneqq \ab\bigg( \frac{1}{k!} \int_{(\reals^d)^{k-1}} h^{(i)}(\bfzero_d, \bfy) h^{(j)}(\bfzero_d, \bfy) \,d\bfy)_{1 \leq i,j \leq m}.
\end{align}
We define the rate function $\map{I}{\reals^m}{[0,\infty]}$ by
\begin{align}
  I(\bfu) \coloneqq \frac{1}{2} \pairing{\bfu}{\Gamma^{-1} \bfu}, \quad \bfu = (u_1, \dots, u_m) \in \reals^m. \label{eq:ratefunc.vector}
\end{align}
Note that $\Gamma$ is invertible under condition (H5).

As a corollary of \cref{thm:mdp.measure}, we obtain the following vector-level MDP.

\begin{corollary}[Vector-level MDP]
\label{cor:mdp.vector}
Let $\calX_n$ be either the Poisson point process $\ppp_n$ or the binomial point process $\bpp_n$.
Assume that $H$ satisfies \textup{(H1)--(H5)}.
Then, the sequence $\{\mdpfactor^{-1} \centered{T_n(\calX_n)} \}_{n \geq 1} $ satisfies an MDP in $\reals^m$ with speed $\mdpfactor^2 / \ldpspeed $ and the good rate function $I$.
That is, for every Borel set $A \subset \reals^m$,
\begin{align}
  - \inf_{\bfu \in \interiorrd{A}} I(\bfu)
  \leq \liminf_{n \to \infty} \frac{\ldpspeed}{\mdpfactor^2} \log \probab[Big]{\frac{1}{\mdpfactor} \centered{T_n(\calX_n)} \in A} 
  \leq \limsup_{n \to \infty} \frac{\ldpspeed}{\mdpfactor^2} \log \probab[Big]{\frac{1}{\mdpfactor} \centered{T_n(\calX_n)} \in A}
  \leq - \inf_{\bfu \in \closurerd{A}} I(\bfu),
\end{align}
where $\interiorrd{A}$ and $\closurerd{A}$ are the interior and the closure of $A$ in $\reals^m$, respectively.
\end{corollary}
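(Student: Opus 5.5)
The plan is to deduce the statement from \cref{thm:mdp.measure} by the contraction principle. The obstacle is that the map
\begin{align}
  \nu \mapsto \Big(\int_E x_i \,\nu(dx)\Big)_{i=1,\dots,m}
\end{align}
is not continuous in the $\tau$-topology, because the coordinate functions $x_i$ are unbounded. Note that $T_n(\calX_n) = \int_E x\, \kappa_n(\calX_n)(dx)$ coordinatewise, since the excluded atom at $\bfzero_m$ contributes nothing.

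The first step is truncation. For $M>0$ let $f_{i,M}(x) \coloneqq \min(x_i, M) \in \bddmble(E)$. The map
\begin{align}
  \Phi_M(\nu) \coloneqq \big(\pairing{f_{1,M}}{\nu},\dots,\pairing{f_{m,M}}{\nu}\big)
\end{align}
is then $\tau$-continuous, and it is measurable from $\sigmatau$ to the Borel sets. By the contraction principle (valid with the $\sigma$-field $\sigmatau$, because preimages of Borel sets are in $\sigmatau$), $\Phi_M(\mdpfactor^{-1}\centered{\kappa_n(\calX_n)})$ satisfies an MDP with speed $\mdpfactor^2/\ldpspeed$ and rate function
\begin{align}
  I_M(\bfu)=\inf\{\Lambda(\nu):\Phi_M(\nu)=\bfu\}.
\end{align}
This infimum is a quadratic minimisation over $g = d\nu/d\tau \in L^2(\tau)$. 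By Cauchy--Schwarz / Lagrange multipliers it equals $\frac12\pairing{\bfu}{\Gamma_M^{-1}\bfu}$, where $(\Gamma_M)_{ij}=\int f_{i,M}f_{j,M}\,d\tau$. One checks $\int x_ix_j\,d\tau = \Gamma_{ij}$ by the definition of $\tau$, and $\Gamma_M\to\Gamma$ by monotone convergence, using (H4) for integrability. Invertibility of $\Gamma_M$ for large $M$ follows from (H5) and continuity.

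The second step, the main obstacle, is exponential approximation. I must show, for every $\delta>0$,
\begin{align}
  \lim_{M\to\infty}\limsup_{n\to\infty}\frac{\ldpspeed}{\mdpfactor^2}\log\probab[Big]{\frac1{\mdpfactor}\abs{\centered{T_n(\calX_n)}_i-\pairing{f_{i,M}}{\centered{\kappa_n(\calX_n)}}}>\delta}=-\infty .
\end{align}
The difference is the centered statistic built from the functional $(h^{(i)}-M)_+$ in place of $h^{(i)}$. I would bound its exponential moment at parameter $\lambda\mdpfactor/\ldpspeed$ using the same cumulant estimates that give the G\"artner--Ellis limit in the proof of \cref{thm:mdp.measure}, or equivalently the cutoff lemma \cref{lem:contraction_cutoff}. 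The limiting log-Laplace functional is
\begin{align}
  \frac{\lambda^2}{2}\cdot\frac1{k!}\int \big((h^{(i)}-M)_+\big)^2(\bfzero_d,\bfy)\,d\bfy ,
\end{align}
which tends to $0$ as $M\to\infty$ by (H4). Chebyshev's exponential inequality, optimised over $\lambda$, then gives the super-exponential bound. This is the delicate point: the isolation indicator makes the summands dependent, so the uniformity in $M$ of the cumulant bound must be checked. I would rely on the martingale-difference representation of the centered differences already used for \cref{prop:exp.eq.eta.xi}.

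Finally, I would apply the approximate contraction principle (Dembo--Zeitouni, Theorem 4.2.23). This requires $I_M\to I$ uniformly on compacts in the sense $\sup_{\bfu\in \mathrm{closed\ set}}$ limits, which holds since $\Gamma_M^{-1}\to\Gamma^{-1}$. The conclusion is the MDP with the good rate function $I$, whose level sets are compact ellipsoids because $\Gamma$ is positive definite.
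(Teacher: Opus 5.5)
Your Poisson argument is sound, but it handles the discontinuous map $\Phi$ differently from the paper. The paper invokes Theorem 4.2.23 of Dembo--Zeitouni with the continuous cutoff $\Phi_K$. Besides the exponential approximation in \cref{lem:contraction_cutoff}(i), that theorem needs the uniform closeness $\sup_{\Lambda(\nu)\le a}\norm{\Phi(\nu)-\Phi_K(\nu)}_{\reals^m}\to 0$ and the identification $I(\bfu)=\inf\{\Lambda(\nu):\Phi(\nu)=\bfu\}$, which are parts (ii) and (iii). You instead contract through the $\tau$-continuous $\Phi_M$, compute $I_M(\bfu)=\frac12\pairing{\bfu}{\Gamma_M^{-1}\bfu}$ by a Gram-matrix minimisation in $L^2(\tau)$, and let $\Gamma_M\to\Gamma$ do the rest.

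That works, but the result you are then using is Theorem 4.2.16 (exponentially good approximations whose approximants satisfy LDPs), not 4.2.23. The hypothesis of 4.2.23 is convergence of the maps on level sets of $\Lambda$, not convergence of rate functions. For 4.2.16 you need two things:
the weak-LDP identification, which follows from local uniform convergence $I_M\to I$;
and $\inf_F I\le\limsup_M\inf_F I_M$ for every closed $F$, which follows from $\Gamma_M^{-1}\ge(1-\epsilon_M)\Gamma^{-1}$ in the positive semidefinite order with $\epsilon_M\to0$.
Your route gets the rate function from finite-dimensional linear algebra; the paper's route never has to compute a truncated rate.

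In the approximation step, the exact Gärtner--Ellis limit $\frac{\lambda^2}{2\,k!}\int((h^{(i)}-M)_+)^2\,d\bfy$ you quote is not established here. The paper's computation covers only the blocked $\eta_n$ with bounded $f$. It is also not needed. \cref{prop:exp.moment.common} with $C_p=p!\,C(\nrnd)^{k}\int e^{h^{(i)}(\bfzero_d,\bfy)}\indicator[none]{h^{(i)}(\bfzero_d,\bfy)>M}\,d\bfy$ gives a bound $\exp(C C_{i,M}\ldpspeed C^2\lambda^2/(1-C\lambda))$. Here $C$ is independent of $M$ and $C_{i,M}\to0$ by (H4), which is exactly the uniformity you flag.

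The genuine gap is the binomial case. Your second step is stated for $\calX_n=\bpp_n$ too, but the martingale-difference representation relies on resampling an independent Poisson process on one cube and on the spatial independence of Poisson counts. $\bpp_n$ has a fixed number of points and no such structure, so nothing in your sketch controls $\centered{\sum_{\calY}\isolated[n]{\calY}{\bpp_n}{t_i}(h_n^{(i)}(\calY)-M)_+}$. The measure-level MDP for $\bpp_n$ cannot substitute, because the unbounded tail is exactly what bounded test functions do not see.

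The paper proves the vector MDP for $\ppp_n$ first. It then shows that $\mdpfactor^{-1}\centered{T_n(\ppp_n)}$ and $\mdpfactor^{-1}\centered{T_n(\bpp_n)}$ are exponentially equivalent at speed $\mdpfactor^2/\ldpspeed$ (\cref{prop:exp.eq.ppp.bpp}). This uses the mean comparison \cref{prop:ppp.bpp.expect.vanish} and the coupling bound \cref{prop:tail.bppl1.bppl2} for the unbounded test function $\iprod{\bfa}{\cdot}$, whose constant $\int e^{g_f}\,d\tau$ is finite by (H4). You need to add this step, or an equivalent coupling argument for the truncated-tail statistic, to cover $\bpp_n$.
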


%%%%%%%%%%%%%%%%%%%%% LIL
\subsection{Main result: laws of the iterated logarithm}
Next, we introduce the LIL.
For the LIL, we additionally assume the following condition on $H$:
\begin{description}
  \item[{\rm (H6)}] There exists a constant $C > 0$ such that
  for all $a < 1$ sufficiently close to $1$ and every $i=1, \dots, m$,
  \begin{align}
  \begin{split}
    &\lebmeas \ab\bigg(\Bab[bigg]{\bfy \in (\reals^{d})^{k-1} : \begin{aligned}
    &(h^{(i)}(\bfzero_d, \bfy) = 0 \text{ and } h^{(i)}(\bfzero_d, a\bfy) \neq 0) \\
    &\text{or } (h^{(i)}(\bfzero_d, \bfy) \neq 0 \text{ and } h^{(i)}(\bfzero_d, a\bfy) = 0)
    \end{aligned}} )
    \leq C (1 - a),\label{eq:h.condition.h6.vanisleb}
  \end{split}
  \end{align}
  and for all $p \geq 2$,
  \begin{align}
    \int_{(\reals^d)^{k-1}} \abs[Big]{h^{(i)}(\bfzero_d, \bfy) - h^{(i)}(\bfzero_d, a \bfy)}^p \,d\bfy
    \leq C^p (1 - a). \label{eq:h.condition.h6}
  \end{align}
\end{description}
Unlike (H1)--(H5), condition (H6) will be assumed only when needed.

In addition, we consider the coupling between $\ppp_n$ and $\bpp_n$ introduced in \cite{krebs2021lilsip}.
Let $\pppentire$ be a homogeneous Poisson point process on $\reals^d$ with unit intensity, let $W_n \coloneqq [0, n^{1/d}]^d$,
and let $X_{n, 1}', X_{n, 2}', \dots$ be independent and identically distributed random vectors uniformly distributed on $\unitcube$.
We define the Poisson point process on $\unitcube$ by
\begin{align}
  \ppp_n \coloneqq n^{-1/d} (\pppentire \cap W_n) = \Bab{X_1, \dots, X_{N_n}}, \label{eq:lil.ppp.coupling}
\end{align}
where $N_n \coloneqq \ppp_n(\unitcube) = \pppentire(W_n)$,
and we define the binomial point process on $\unitcube$ by
\begin{align}
  \bpp_n \coloneqq \Bab{X_i : i \in \Bab{1,\dots,n \land N_n}} \cup \Bab{X_{n, i}' : i \in \Bab{1, \dots, (n - N_n) \lor 0}}. \label{eq:lil.bpp.coupling}
\end{align}
Furthermore, for simplicity, we assume that $r_n$ decays as
\begin{align}
  r_n = n^{-\alpha/d}, \quad 1 < \alpha < \frac{k}{k-1}, \label{eq:rn.decay.speed}
\end{align}
where the condition $1 < \alpha < k/(k-1)$ is imposed to ensure that \eqref{eq:rn.sparse} holds.

Following \cite{besy2008mdp}, we introduce a metric $\distancemeas$ on $\signedmeas(E)$.
Let $\{f_q\}_{q \geq 1}$ be a sequence of bounded continuous functions on $E$.
We assume that $\{f_q\}_{q \geq 1}$ induces the metric $\distancemeas$ on $\signedmeas(E)$ defined below;
for example, one may take $\{f_q\}_{q \geq 1}$ to be a dense subset of the space of all continuous functions with compact support on $E$.
Furthermore, we assume that each $f_q$ is continuously differentiable,
and that there exist constants $C > 0$ and $\fqexp \geq 0$ such that for all $q \geq 1$,
\begin{align}
  \suprdnorm{\nabla f_q} \coloneqq \supnorm{ \norm{\nabla f_q}_{\reals^m}} \leq C \supnorm{f_q} q^{\fqexp}, \label{eq:test.func.grad.bound.cond}
\end{align}
where $\nabla f_q \coloneqq (\partial f_q / \partial x_i)_{1\leq i \leq m}$ is the gradient of $f_q$,
and $\supnorm{\cdot}$ denotes the supremum norm on $E$.
With these preparations, we define the metric $\distancemeas$ by
\begin{align*}
  \distancemeas(\mu, \nu) \coloneqq \sum_{q=1}^{\infty} \frac{1}{2^q \supnorm{f_q}} \abs{\pairing{f_q}{\mu} - \pairing{f_q}{\nu}}, \quad \mu, \nu \in \signedmeas(E).
\end{align*}
For the results on the LILs, we equip $\signedmeas(E)$ with the topology induced by the metric $d_{\signedmeas(E)}$.

For the realizations constructed above,
we shall prove the following measure-level LIL.

\begin{theorem}[Measure-level LIL]
\label{thm:lil.measure}
Let $\calX_n$ be either the Poisson point process $\ppp_n$ or the binomial point process $\bpp_n$.
Assume that $H$ satisfies \textup{(H1)--(H6)} and $r_n$ satisfies \eqref{eq:rn.decay.speed}.
Then, the set of all accumulation points of the sequence $\{(\ldpspeed \log \log \ldpspeed)^{-1/2} \centered{\kappa_n(\calX_n)} \}_{n \geq 1}$ with respect to the metric $\distancemeas$ almost surely coincides with $\Lambda^{-1}([0, 1]) = \{\nu \in \signedmeas(E): \Lambda(\nu) \leq 1 \}$.
\end{theorem}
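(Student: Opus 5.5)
We follow the classical Strassen-type scheme as in \cite{besy2008mdp}, using the MDP of \cref{thm:mdp.measure} as the main input. Write $Z_n \coloneqq (\ldpspeed \log\log\ldpspeed)^{-1/2}\centered{\kappa_n(\calX_n)}$ and $K \coloneqq \Lambda^{-1}([0,1])$. Since $\Lambda$ is a good rate function, $K$ is compact in the $\tau$-topology and hence also for $\distancemeas$, because $\distancemeas$ is generated by bounded continuous test functions. The argument has two halves:
\begin{enumerate}
\item[(U)] almost surely, $\distancemeas(Z_n, K) \to 0$;
\item[(L)] almost surely, every $\nu \in K$ is an accumulation point of $\{Z_n\}$.
\end{enumerate}
By density and the closedness of the set of accumulation points, it suffices in (L) to treat $\nu$ with $\Lambda(\nu)<1$.

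For both halves we use a geometric subsequence. Under \eqref{eq:rn.decay.speed}, $\ldpspeed = n^{k-\alpha(k-1)}$ is a power of $n$, so we may choose $n_l = n(s,l)$ with $\rho_{n_l}\approx s^l$ for $s>1$. We apply the MDP with $\mdpfactor = (\ldpspeed \log\log \ldpspeed)^{1/2}$; this is admissible by \eqref{eq:mdpfactor.speed}, and the speed is $\log\log \ldpspeed \approx \log l$. The upper bound of the MDP applied to $\{\mu: \distancemeas(\mu,K_{\delta'})\ge\delta\}$ then gives probabilities of order $l^{-(1+\epsilon)}$. Here the closure in the $\tau$-topology is harmless, since $\distancemeas$-closed sets are $\tau$-closed. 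Borel--Cantelli yields $\distancemeas(Z_{n_l}, (1+\epsilon)K)\to 0$ along the subsequence.

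For (L) we need asymptotic independence along a sparser subsequence $s = s_l$ growing, say $n_l$ with $\rho_{n_l} = l^{l}$. We decompose $\centered{\kappa_{n_l}}$ into a part that is independent of the configuration generating $\kappa_{n_{l-1}}$ plus a negligible remainder. To do this we use the coupling \eqref{eq:lil.ppp.coupling} with $\pppentire$ on nested windows: the points of $\pppentire$ in $W_{n_l}\setminus W_{n_{l-1}}$, after enlarging by a boundary layer of width $O(r_n)$ in rescaled units, are independent of the earlier ones. The contribution of $W_{n_{l-1}}$ rescaled at level $n_l$ has variance $O(\rho_{n_{l-1}}\cdot\text{poly})$, which is $o(\ldpspeed[n_l]\log\log)$. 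The lower MDP bound on a small $\distancemeas$-ball around $\nu$ gives probability $\gtrsim l^{-(1-\epsilon)}$, and the second Borel--Cantelli lemma finishes (L). The binomial case follows by showing that $\kappa_n(\bpp_n)-\kappa_n(\ppp_n)$, which involves $|n-N_n| = O(\sqrt{n\log\log n})$ points, contributes $O(\sqrt{n\log\log n}\,\nrnd^{k-1}\cdot\text{poly})$. This is $o((\ldpspeed\log\log\ldpspeed)^{1/2})$ since $\nrnd\to0$; we verify it with a Bernstein-type tail bound and Borel--Cantelli.

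The main obstacle is filling the gaps in (U), that is, controlling $\max_{n_l\le n<n_{l+1}}\distancemeas(Z_n,Z_{n_l})$ for $s$ close to $1$. Here both the window $W_n$ and the scaling $r_n$ vary with $n$, so $\kappa_n$ and $\kappa_{n_l}$ differ through three effects:
\begin{enumerate}
\item[(i)] new points in $W_n\setminus W_{n_l}$;
\item[(ii)] rescaling of positions by $(n_l/n)^{1/d}$;
\item[(iii)] the change of $r_n$, which means replacing $H(\bfzero_d,\bfy)$ by $H(\bfzero_d,a\bfy)$ with $1-a = O(s-1)$.
\end{enumerate}
Condition (H6) is designed exactly for (iii): it bounds the $L^p$ size of these increments by $C^p(1-a)$. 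Testing against each $f_q$ and using the gradient bound \eqref{eq:test.func.grad.bound.cond}, each pairing difference becomes a centered functional whose variance is $O((s-1)\ldpspeed q^{2\fqexp})$. To bound the supremum over $n$ in a block, we first use a chaining argument over a dyadic grid of $n$. At each grid point we apply Bousquet's concentration inequality, or, for the centered isolated-configuration sums, the martingale-difference representation over spatial cells used for \cref{prop:exp.eq.eta.xi}. This should give, for any $\epsilon$, tail probability at most $l^{-2}$ that the block oscillation exceeds $\epsilon(\ldpspeed\log\log\ldpspeed)^{1/2}$, once $s$ is close enough to $1$. The summation over $q$ is handled by the weights $2^{-q}$, which absorb the factor $q^{\fqexp}$. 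Letting $\epsilon\to0$ and $s\downarrow1$ completes (U).
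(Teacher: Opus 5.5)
You reproduce the paper's architecture: an MDP plus Borel--Cantelli along $\rho_{n_l}\approx s^l$, block oscillations controlled with (H6), spatial martingale differences, Bousquet and chaining, independent blocks with the second Borel--Cantelli lemma, and a Poisson--binomial coupling.

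\textbf{The gap: passing from the $\tau$-topology MDP to $\distancemeas$-balls.} This step, as you wrote it, fails. Take a basic $\tau$-neighbourhood $\{\mu:|\langle g_j,\mu-\mu_0\rangle|<\epsilon,\ j\le J\}$ with $g_j\in\bddmble(E)$. It contains $\mu_0+c\sigma$ for every $c\in\reals$ and every $\sigma$ with $\langle g_1,\sigma\rangle=\dots=\langle g_J,\sigma\rangle=0$. Because $\distancemeas$ is a metric, the $f_q$ span an infinite-dimensional space, so they cannot all lie in $\mathrm{span}\{g_1,\dots,g_J\}$. Hence some such $\sigma$ has $\langle f_q,\sigma\rangle\neq0$, and then $\distancemeas(\mu_0+c\sigma,\mu_0)\to\infty$. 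So every nonempty $\tau$-open set is $\distancemeas$-unbounded, with two consequences.
\begin{itemize}
\item Every $\distancemeas$-ball has empty $\tau$-interior. Your ``lower MDP bound on a small $\distancemeas$-ball'' therefore gives $-\infty$, and (L) fails as written.
\item The set $\{\mu:\distancemeas(\mu,K_{\delta'})\ge\delta\}$ has $\distancemeas$-bounded complement, so it is $\tau$-dense. Your claim that $\distancemeas$-closed sets are $\tau$-closed is false, and the MDP upper bound applied to this set gives only the trivial exponent $0$.
\end{itemize}
Compactness of $K$ for $\distancemeas$ is fine, but only because $K$ is bounded in total variation, which no $\tau$-neighbourhood is.

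\textbf{How the paper handles it, and the repair.} For the lower bound, Step~1 of the proof of \eqref{eq:lil.lower.meas} reduces to finitely many $g_1,\dots,g_M\in\bddmble(E)$, whose cylinder sets are $\tau$-open. It then controls the tail $2^{-(M-1)}\distancemeas(\cdot,\cdot;\{f_{q+M}\}_{q\ge1})$ with the subsequence upper bound. That upper bound needs the same treatment:
\begin{itemize}
\item use the MDP only for $(\langle f_q,\cdot\rangle)_{q\le M}$;
\item make $\sum_{q>M}2^{-q}|\langle f_q,Z_{n_l}\rangle|$ a.s.\ small via a Bernstein bound for $\langle f,\overline{\xi_n}\rangle$, uniform in $\|f\|_\infty\le1$ (from \cref{prop:exp.moment.common}, as in \cref{lemma:lil.upper.z1}), with thresholds growing in $q$.
\end{itemize}
Note that \cref{lem:lil.subseqence.cptness} applies \cref{thm:mdp.measure} to the same $\tau$-dense set, so this repair is needed there too.

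\textbf{Route differences and points left vague.}
\begin{itemize}
\item You work with $\kappa_n$ throughout. The paper proves the LIL for the single-radius $\xi_n$ and transfers it via \eqref{eq:xi.kappa.diff.tail}, which is summable over all $n$. This keeps the isolation indicators out of the argument of $f$ in the (H6)/gradient estimates.
\item Your sparse subsequence $\rho_{n_l}=l^l$ for (L) is a legitimate alternative to the paper's fixed large $s$ with rescaled kernel $H'$ and $\Lambda^{(H')}=s^{\alpha'}\Lambda$. However, the old-window remainder must be a.s.\ negligible. Chebyshev with a variance of order $\rho_{n_{l-1}}$ gives only $\approx1/(l\log l)$, which is not summable, so you need the exponential bounds there.
\item In (U), your ``chaining over a grid plus Bousquet at each grid point'' should take the form the paper uses for $Z^{(4)}$ in \cref{lemma:lil.upper.z4}. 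The $n$-dependent index set $I_n$ is frozen by the submartingale $V_j$ and Doob's inequality. Bousquet then concentrates the supremum over $n$ of sums over a fixed index set around its mean. Chaining in $d_l(n_1,n_2)=(1-v_{n_1\vee n_2}/v_{n_1\wedge n_2})^{1/2}$ serves only to show that this mean is $O(\rho_{n_l}^{1/2})$.
\item The paper's boundary and isolation-radius terms $Z^{(1)},Z^{(3)}$ need only a union bound over $n$, and $Z^{(2)}$ is a martingale in $n$.
\item $\kappa_n(\bpp_n)-\kappa_n(\ppp_n)$ is not a sum of independent terms. A tail bound summable over all $n$ is what the domination by auxiliary Poisson processes in \cref{prop:tail.bppl1.bppl2} provides; a generic ``Bernstein-type bound'' does not apply directly.
\end{itemize}
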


As a natural consequence of \cref{thm:lil.measure}, we obtain the following vector-level LIL.

\begin{corollary}[Vector-level LIL]
\label{cor:lil.scalar}
Let $\calX_n$ be either the Poisson point process $\ppp_n$ or the binomial point process $\bpp_n$.
Assume that $H$ satisfies \textup{(H1)--(H6)} and $r_n$ satisfies \eqref{eq:rn.decay.speed}.
Then, the set of all accumulation points of the sequence $\{(\ldpspeed \log \log \ldpspeed)^{-1/2} \centered{T_n(\calX_n)} \}_{n \geq 1}$ almost surely coincides with $I^{-1}([0, 1]) = \{\bfu \in \reals^m: I(\bfu) \leq 1\}$.
In particular, when $m = 1$, almost surely,
\begin{align}
  \limsup_{n \to \infty} \frac{1}{\sqrt{2 \ldpspeed \log \log \ldpspeed}}\centered{T_n(\calX_n)} = \sqrt{\frac{1}{k!} \int_{(\reals^d)^{k-1}} H(\bfzero_d, \bfy)^2 \,d\bfy}
\end{align}
and
\begin{align}
  \liminf_{n \to \infty} \frac{1}{\sqrt{2 \ldpspeed \log \log \ldpspeed}}\centered{T_n(\calX_n)} = -\sqrt{\frac{1}{k!} \int_{(\reals^d)^{k-1}} H(\bfzero_d, \bfy)^2 \,d\bfy}.
\end{align}
\end{corollary}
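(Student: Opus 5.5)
We derive the corollary from \cref{thm:lil.measure} by a continuous-mapping argument. The statistic $T_n$ is obtained from $\kappa_n$ by integrating the coordinate functions: $T_n(\calX_n) = \pairing{\pi}{\kappa_n(\calX_n)}$, where $\pi(\bfx) = \bfx$ is the identity on $E$ (the excluded atom at $\bfzero_m$ contributes nothing). Since $\pi$ is unbounded and not one of the test functions $f_q$, the map $\nu \mapsto \pairing{\pi}{\nu}$ is not continuous for $\distancemeas$. We therefore work on the event where the scaled measures stay in a region where this map is controlled, and we use a truncation.

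\emph{Step 1 (truncation).} For $M>0$, write $\pi = \pi_M + \pi^M$, where $\pi_M(\bfx) = \bfx\,\phi(\norm{\bfx}/M)$ and $\phi$ is a smooth cutoff equal to $1$ on $[0,1]$ and $0$ on $[2,\infty)$. Then $\pi_M$ is bounded and continuous with bounded support.

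\emph{Step 2 (truncated part).} Each coordinate of $\pi_M$ lies in $C_c(E)$, so it is a uniform limit of the $f_q$'s. Pairing with $C_c$ functions is $\distancemeas$-continuous on sets of measures with uniformly bounded total variation. Hence $\nu\mapsto\pairing{\pi_M}{\nu}$ is continuous on such sets. The normalized sequence $\nu_n := (\ldpspeed\log\log\ldpspeed)^{-1/2}\centered{\kappa_n(\calX_n)}$ has almost surely bounded total variation along the relevant subsequences. This follows by applying the LIL upper-bound machinery to the indicator $\indicator{\cdot\ne\bfzero_m}$, or more directly from the fact that each $\Lambda$-level set has uniformly bounded variation through $\abs{\nu}(E)\le \sqrt{2\Lambda(\nu)\tau(E)}$. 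The accumulation points of $\pairing{\pi_M}{\nu_n}$ are therefore exactly $\{\pairing{\pi_M}{\nu}:\Lambda(\nu)\le1\}$.

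\emph{Step 3 (tail part).} Apply the real-valued LIL upper bound, i.e.\ the upper-half argument behind \cref{thm:lil.measure} (concentration plus subsequence interpolation), to the functional with $H$ replaced by $\pi^M\circ H$. This shows that almost surely
\begin{align}
  \limsup_n\abs{\pairing{\pi^M}{\nu_n}}\le C\Big(\int \norm{\pi^M(H(\bfzero_d,\bfy))}^2d\bfy\Big)^{1/2}.
\end{align}
By (H4) the right-hand side tends to $0$ as $M\to\infty$. The truncated $H$ still satisfies (H6) with the same constant, because $\pi^M$ is Lipschitz and vanishes near $0$. Verifying this is the main obstacle. If the measure-level proof does not directly provide a tail bound of this kind, I would instead invoke the moderate deviation estimate from \cref{thm:mdp.measure} along geometric subsequences $\ldpspeed\approx s^l$, combined with Borel--Cantelli and the interpolation lemmas of \cref{sec:lil.upper}.

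\emph{Step 4 (identification of the limit set).} Letting $M\to\infty$, the accumulation set of $\pairing{\pi}{\nu_n}$ is the closure of $\{\pairing{\pi}{\nu}:\Lambda(\nu)\le1\}$, where the pairing is well defined by Cauchy--Schwarz since $\int\norm{\bfx}^2d\tau<\infty$. By contraction, with $g=d\nu/d\tau$, we have $\pairing{\pi}{\nu}=\int \bfx\, g\,d\tau$. Minimizing $\frac12\int g^2d\tau$ subject to this constraint gives $g(\bfx)=\iprod{\Gamma^{-1}\bfu}{\bfx}$ and minimum value $I(\bfu)$, since $\int \bfx\bfx^{\top}d\tau=\Gamma$. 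Thus the accumulation set is exactly $I^{-1}([0,1])$, which is compact and hence closed.

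For $m=1$, this set is the interval $[-\sqrt{2\Gamma},\sqrt{2\Gamma}]$. Dividing by $\sqrt 2$ gives the stated $\limsup$ and $\liminf$.
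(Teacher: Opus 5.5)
Your route differs from the paper's: you truncate the identity, get the bounded part from \cref{thm:lil.measure} by continuity, and kill the tail separately. However, Step~2 rests on a false claim.

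The measures $\nu_n=(\ldpspeed\log\log\ldpspeed)^{-1/2}\centered{\kappa_n(\calX_n)}$ do \emph{not} have bounded total variation in general. Take $\tau$ non-atomic, e.g.\ $m=1$ and $H(\bfx)=\diam(\bfx)\indicator{\diam(\bfx)\le L}$, which satisfies (H1)--(H6). Then $\expect{\kappa_n}\le\ldpspeed\tau$ is diffuse while $\kappa_n$ is purely atomic. Hence $\tvnorm{\nu_n}=(\kappa_n(E)+\expect{\kappa_n}(E))/\mdpfactor\approx 2\tau(E)\sqrt{\ldpspeed/\log\log\ldpspeed}\to\infty$.

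Neither of your justifications helps. The LIL for $\indicator{\cdot\neq\bfzero_m}$ controls only the signed mass $\nu_n(E\setminus\{\bfzero_m\})$. The bound $\abs{\nu}(E)\le\sqrt{2\Lambda(\nu)\tau(E)}$ concerns the limit set, and $\distancemeas$-closeness to that set says nothing about $\tvnorm{\nu_n}$. So continuity of $\nu\mapsto\pairing{\pi_M}{\nu}$ along the sequence is unjustified, and neither inclusion in ``the accumulation points of $\pairing{\pi_M}{\nu_n}$ are exactly $\{\pairing{\pi_M}{\nu}:\Lambda(\nu)\le 1\}$'' follows as argued. Also, uniform approximation of $C_c(E)$ by the $f_q$ is only the paper's example, not an assumption.

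Step~2 can be repaired without any variation bound. The coordinates $\pi_M^{(i)}$ are $C^1$ with bounded gradient and $\supnorm{\pi_M^{(i)}}\asymp M$, so you may insert them into $\{f_q\}$ without violating \eqref{eq:test.func.grad.bound.cond}. Then $\distancemeas(\nu_n,\accptmeas)\to 0$ and $\liminf_n\distancemeas(\nu_n,\theta)=0$ control $\pairing{\pi_M}{\nu_n}$ directly.

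The real work is in Step~3, which you do not supply. The upper-half argument behind \cref{thm:lil.measure} treats only bounded test functions. In \cref{lemma:lil.upper.z4}, $\max_n Z^{(4)}_{n,f}$ is controlled through Bousquet's inequality (\cref{prop:bousquet.ineq}), which needs bounded summands, and $\pi^M$ is unbounded. For unbounded $f$ you need:
\begin{itemize}
\item the Adamczak-type maximal inequality (\cref{prop:adamczak.ineq}) with $\psi_1$-control of $\max_{z,n}\abs{Y_{n,z}}$, i.e.\ the analogue of \cref{lemma:lil.upper.z4.ei};
\item the $\xi_n$--$\kappa_n$ and Poisson--binomial comparisons for such $f$.
\end{itemize}

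That is exactly the content of the paper's proof, which does not pass through \cref{thm:lil.measure} at all. It proves \eqref{eq:lil.upper.vec} from \cref{cor:mdp.vector} along $\lilsubseq{s}{l}$ together with \cref{lemma:lil.upper.z1,lemma:lil.upper.z2,lemma:lil.upper.z3,lemma:lil.upper.z4.ei} for $f_i=\iprod{\bfe_i}{\cdot}$. It proves \eqref{eq:lil.lower.vec} by repeating the second Borel--Cantelli argument of the measure case. It then transfers to $T_n$ via \eqref{eq:lil.equivalent.phixi.t} and \eqref{eq:lil.equivalent.tppp.tbpp}. Once this machinery exists it applies to $\iprod{\bfe_i}{\cdot}$ directly, so your truncation only spares you the lower bound, which the paper gets by essentially repeating the measure-level argument.

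Two smaller corrections to Step~3. First, $\pi^M\circ H$ need not satisfy (H5) or the first half of (H6), since it vanishes on all of $\{\norm{H}\le M\}$, and the constant is not preserved. Only the $L^p$-increment half of (H6) is actually needed for Lipschitz test functions vanishing at $\bfzero_m$, and that half is inherited. Second, with distinct $t_i$ one has $\pi^M(\isolated{\calY}{\calX}{\bft}\odot H(\calY))\neq\isolated{\calY}{\calX}{\bft}\odot\pi^M(H(\calY))$. So $\pairing{\pi^M}{\kappa_n}$ is not the $T_n$-functional built from $\pi^M\circ H$; treat $\pi^M$ as a test function on $\xi_n$ instead.

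Step~4 is correct and matches \cref{lem:contraction_cutoff}(iii).
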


\begin{remark}
Condition (H6) is needed only to show that all accumulation points of $\{(\ldpspeed \log \log \ldpspeed)^{-1/2} \centered{\kappa_n(\calX_n)} \}_{n \geq 1}$ and $\{(\ldpspeed \log \log \ldpspeed)^{-1/2} \centered{T_n(\calX_n)} \}_{n \geq 1}$ are contained in $\Lambda^{-1}([0, 1])$ and $I^{-1}([0, 1])$ (\eqref{eq:lil.upper.meas} and \eqref{eq:lil.upper.vec}, respectively).
In particular, it is used in \cref{lemma:lil.upper.z4,lemma:lil.upper.z4.ei}.
However, it is not needed to show that all points in $\Lambda^{-1}([0, 1])$ and $I^{-1}([0, 1])$ are accumulation points of these sequences (\eqref{eq:lil.lower.meas} and \eqref{eq:lil.lower.vec}).
\end{remark}

\subsection{Applications}
In this subsection, we present several applications of our main results.

\begin{example}[\v{C}ech complex component counts]
Let $\cechcpx{\calX}{r} $ be the \v{C}ech complex for a point set $\calX = \{x_1, \dots, x_M\} \subset \reals^d $ and a connectivity radius $r > 0$.
That is, we define it as follows:
\begin{align}
  \cechcpx{\calX}{r} \coloneqq \Bab[bigg]{\{x_{i_0}, \dots, x_{i_j}\} : j \geq 0, \, x_{i_0}, \dots, x_{i_j} \in \calX, \, \bigcap_{l=0}^j \ball{x_{i_l}}{r/2} \neq \emptyset}. \label{eq:cech.complex}
\end{align}
We define the function $H = (h^{(1)}, \dots, h^{(m)}) $ by
\begin{align}
  h^{(i)}(x_1, \dots, x_k) \coloneqq \indicator[none]{\cechcpx{\{x_1, \dots, x_k\}}{t_i} \cong \Gamma_i},
  \quad (x_1, \dots, x_{k}) \in (\reals^d)^{k},
\end{align}
where $t_1, \dots, t_m \in (0, \infty) $,
$\Gamma_i $ is a connected \v{C}ech complex with $k$ vertices,
and $\cong $ denotes isomorphism between simplicial complexes.
We also assume that $\Gamma_{i} \not\cong \Gamma_{j} $ for $i \neq j $.
Here, we investigate the MDP and LIL for the following functional:
\begin{align}
  S_{n}(\calX) = (S_{n}^{(i)}(\calX))_{i=1,\dots, m}
  \coloneqq \sum_{\calY \subset \calX, \card{\calY} = k} \isolated[n]{\calY}{\calX}{\bft} \odot H_{n}(\calY),
\end{align}
where $\bft \coloneqq (t_1, \dots, t_m) $.
Namely, $S_{n}^{(i)}(\calX)$ represents the total number of connected components isomorphic to $\Gamma_i $ in $\cechcpx{\calX}{r_n t_i} $.
Since $H$ satisfies conditions (H1)--(H6),
by applying \cref{cor:mdp.vector} and \cref{cor:lil.scalar},
we obtain the MDP and LIL for this functional.

\begin{corollary}[MDP for component counts in random geometric complexes]
Let $\calX_n $ be either $\ppp_n $ or $\bpp_n $.
Assume that $\ldpspeed = \nkrndkmo \to \infty $ and $\nrnd \to 0 $ as $n \to \infty $.
Then, for any Borel set $A \subset \reals^m$,
\begin{align}
  - \inf_{\bfu \in \interiorrd{A}} I(\bfu)
  \leq \liminf_{n \to \infty} \frac{\ldpspeed}{\mdpfactor^2} \log \probab[Big]{\frac{1}{\mdpfactor} \centered{S_n(\calX_n)} \in A}
  \leq \limsup_{n \to \infty} \frac{\ldpspeed}{\mdpfactor^2} \log \probab[Big]{\frac{1}{\mdpfactor} \centered{S_n(\calX_n)} \in A}
  \leq - \inf_{\bfu \in \closurerd{A}} I(\bfu),
\end{align}
where $\map{I}{\reals^m}{[0, \infty]} $ is the rate function given by \eqref{eq:ratefunc.vector}.
\end{corollary}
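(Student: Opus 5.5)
The plan is to obtain the corollary directly from \cref{cor:mdp.vector}. First, I note that $S_n(\calX)$ is exactly $T_n(\calX)$ from \eqref{eq:def.t} for the choice $h^{(i)}(x_1,\dots,x_k)=\indicator[none]{\cechcpx{\{x_1,\dots,x_k\}}{t_i}\cong\Gamma_i}$ and $\bft=(t_1,\dots,t_m)$. Here $G_n(\calY,\calX;\bft)=\isolated[n]{\calY}{\calX}{\bft}\odot H_n(\calY)$, so the isolation indicator is already built into the summand. The standing hypotheses $\ldpspeed\to\infty$, $\nrnd\to0$ and \eqref{eq:mdpfactor.speed} are those of \cref{cor:mdp.vector}. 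So the only task is to check (H1)--(H5) for this $H$, together with measurability. Condition (H6) is not needed for the MDP.

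The interpretation of $S_n^{(i)}$ as a component count is not needed for the inequality itself, but I would record why it holds. Two vertices of $\cechcpx{\calX}{r}$ are adjacent iff their distance is at most $r$. Take $\calY$ with $\cechcpx{\calY}{r_nt_i}\cong\Gamma_i$, which is connected, and with all outside points at distance $\ge r_nt_i$ from $\calY$. Then $\calY$ spans a component of $\cechcpx{\calX}{r_nt_i}$, except on ties at distance exactly $r_nt_i$, which have probability zero for both $\ppp_n$ and $\bpp_n$.

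The routine conditions are as follows. \emph{Measurability:} a simplex $\{x_{i_0},\dots,x_{i_j}\}$ belongs to $\cechcpx{\cdot}{r}$ iff the radius of the minimal enclosing ball of those points is at most $r/2$. That radius is continuous in the points, so the event $\{\cechcpx{\{x_1,\dots,x_k\}}{t_i}\cong\Gamma_i\}$ is a finite Boolean combination of closed sets (a union over vertex bijections), hence Borel. \emph{(H1):} isomorphism type does not depend on the labelling of the points. \emph{(H2):} Čech complexes are invariant under translations. \emph{(H3):} $\Gamma_i$ is connected on $k$ vertices, so its $1$-skeleton is a connected graph whose edges have length $\le t_i$. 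Hence $\diam\le(k-1)t_i$, and I take $L>(k-1)\max_i t_i$ (also larger than all $t_i$). \emph{(H4):} $0\le h^{(i)}\le1$ and $h^{(i)}(\bfzero_d,\cdot)$ is supported in the bounded set $\{\bfy:\diam(\bfzero_d,\bfy)\le L\}$, so the integrand is bounded by $e^{\norm{\bfa}\sqrt m}$ on a set of finite measure.

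The main obstacle is (H5), which is equivalent to the Gram matrix $\Gamma$ being positive definite. Equivalently, the indicators of $A_i=\{\bfy:\cechcpx{\{\bfzero_d,\bfy\}}{t_i}\cong\Gamma_i\}$ must be linearly independent in $L^2$.

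I would first show $\lebmeas(A_i)>0$ for each $i$. Take a realization of $\Gamma_i$ at radius $t_i$ and, by a small perturbation, make it generic: no minimal enclosing radius equals $t_i/2$. This is the step where "connected Čech complex" has to be read as "realizable with strict inequalities". An open neighbourhood of this configuration then lies in $A_i$.

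For linear independence, suppose $\sum_ia_i\indicator[none]{A_i}=0$ a.e. with $\bfa\neq\bfzero_m$. If several indices share the same $t$, the corresponding $A_i$ are pairwise disjoint because the $\Gamma_i$ are non-isomorphic, so the coefficients within each radius level separate. Across different radii, I would pick the index $i_0$ with $a_{i_0}\ne0$ and $t_{i_0}$ extremal, say maximal. I would then look for a positive-measure set of generic configurations in $A_{i_0}$ on which every other $\indicator[none]{A_j}$ with $a_j\neq0$ vanishes, and obtain $a_{i_0}=0$, a contradiction.

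If such a set cannot be found in general, the fallback is to use scaling. We have $A_i=t_iB_{\Gamma_i}$ with $B_\Gamma$ the radius-$1$ set. Integrate the relation against functions of the radial variable $\norm{\bfy}$. This should separate the distinct scales $t_i$ and reduce to the equal-radius case already handled.
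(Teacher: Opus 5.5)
Your reduction is the same as the paper's. The paper obtains this corollary by simply asserting that the \v{C}ech $H$ satisfies (H1)--(H6) and invoking \cref{cor:mdp.vector}; it gives no verification of (H5). Your treatment of measurability, (H1)--(H4), and the equal-radius case of (H5) is fine.

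One small point: no special reading of ``connected \v{C}ech complex'' is needed to get $\lebmeas(A_i)>0$. If a configuration $\mathbf{x}$ realizes $\Gamma_i$ at radius $t_i$, then the shrunken configuration $c\mathbf{x}$, with $c<1$ close to $1$, realizes it with all inequalities strict. Enclosing radii of simplices become $<t_i/2$, and the finitely many non-simplices stay $>t_i/2$. Hence an open set lies in $A_i$.

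\textbf{The gap: (H5) for distinct radii.} You rightly call this the crux, but you do not prove it.

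\emph{Why your two plans do not close it.} Your primary plan needs a non-null subset of $A_{i_0}$ that avoids every $A_j$ with $a_j\neq0$. For $t_j<t_{i_0}$, a configuration in $A_{i_0}$ seen at radius $t_j$ shows a stage of its own filtration no later than $\Gamma_{i_0}$, and that stage may well be $\Gamma_j$. Nothing you say excludes $A_{i_0}\subset\bigcup_{j\neq i_0}A_j$ a.e. The fallback also fails as stated. Testing against functions of $\|\bfy\|$ only yields $\sum_i a_i t_i^{d(k-1)-1}m_i(\rho/t_i)=0$, where $m_i$ is the radial density of $B_{\Gamma_i}$. This discards exactly the angular information needed, and by itself does not force $a_i=0$.

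\emph{The missing idea is to argue ray by ray.}
\begin{enumerate}
\item Since $\indicatorsymbol_{A_i}(\bfy/w)=\indicatorsymbol\{\cechcpx{\{\bfzero_d,\bfy\}}{t_iw}\cong\Gamma_i\}$, Fubini gives the following: for a.e.\ $\bfy$, $\sum_i a_i\indicatorsymbol\{\cechcpx{\{\bfzero_d,\bfy\}}{t_iw}\cong\Gamma_i\}=0$ for a.e.\ $w>0$.
\item The filtration $r\mapsto\cechcpx{\{\bfzero_d,\bfy\}}{r}$ only adds simplices and is right-continuous. It changes only at finitely many critical values $c_1(\bfy)<c_2(\bfy)<\cdots$, which are twice minimal enclosing radii. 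So each isomorphism type occupies at most one interval $[c_a,c_{a+1})$, possibly unbounded. The $i$-th term is therefore the indicator of an interval in $w$ with endpoints in $\{c_a(\bfy)/t_i\}$.
\item For a.e.\ $\bfy$, no ratio $c_a(\bfy)/c_b(\bfy)$ equals one of the finitely many numbers $t_i/t_j\neq1$. Off a null set, each $c_a$ is locally twice the circumradius of a fixed affinely independent subset. Two different such subsets have a non-constant real-analytic ratio (move a point lying in only one of them), whose level sets are null.
\item The sum is a right-continuous step function vanishing a.e., hence everywhere, so all its jumps vanish. Jumps coming from different radius levels never coincide by step 3, so each level's partial sum has no jumps.
\item Each level's partial sum is zero for small $w$: there the complex is discrete, while every $\Gamma_i$ is connected. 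Hence it is identically zero.
\item Within one level the intervals are disjoint, since the $\Gamma_i$ are pairwise non-isomorphic. So $a_i=0$ as soon as $\Gamma_i$ occurs along the ray. This holds for every $\bfy\in A_i$. As $\lebmeas(A_i)>0$, some such $\bfy$ lies in the full-measure set where steps 1--5 apply, and therefore $\bfa=\bfzero_m$.
\end{enumerate}
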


\begin{corollary}[LIL for component counts in random geometric complexes]
Let $\calX_n $ be either $\ppp_n $ or $\bpp_n $.
Assume that $r_n$ satisfies \eqref{eq:rn.decay.speed}.
Then, the set of all accumulation points of $\{(\ldpspeed \log \log \ldpspeed)^{-1/2} \centered{S_n(\calX_n)} \}_{n \geq 1}$ almost surely coincides with $I^{-1}([0, 1])$.
In particular, for $i = 1, \dots, m$, almost surely
\begin{align}
  \limsup_{n \to \infty} \frac{1}{\sqrt{2 \ldpspeed \log \log \ldpspeed}}\centered{S_{n}^{(i)}(\calX_n)} = \sqrt{\frac{1}{k!} \lebmeas\ab\big(\{\bfy \in (\reals^{d})^{k-1} : \cechcpx{\{\bfzero_d, \bfy\}}{t_i} \cong \Gamma_i \})}
\end{align}
and
\begin{align}
  \liminf_{n \to \infty} \frac{1}{\sqrt{2 \ldpspeed \log \log \ldpspeed}}\centered{S_{n}^{(i)}(\calX_n)} = - \sqrt{\frac{1}{k!} \lebmeas\ab\big(\{\bfy \in (\reals^{d})^{k-1} : \cechcpx{\{\bfzero_d, \bfy\}}{t_i} \cong \Gamma_i \})}.
\end{align}
\end{corollary}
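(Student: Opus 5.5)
The plan is to obtain this corollary directly from \cref{cor:lil.scalar}, applied to the specific $H$ of the example. Since $S_n(\calX) = T_n(\calX)$ with $h^{(i)}(x_1,\dots,x_k) = \indicator[none]{\cechcpx{\{x_1,\dots,x_k\}}{t_i} \cong \Gamma_i}$, the work consists in checking (H1)--(H6) for this $H$. After that, the explicit $\limsup$/$\liminf$ formulas follow by projection.

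\textbf{Conditions (H1)--(H5).}
\begin{itemize}
\item (H1) and (H2) hold because the \v{C}ech complex is defined only through the pairwise ball intersections. It is therefore invariant under permutations and translations.
\item (H3) holds with any $L > (k-1)\max_i t_i$. Indeed, a connected complex on $k$ vertices whose edges have length $\le t_i$ has diameter $\le (k-1)t_i$.
\item (H4) holds because $h^{(i)}\in\{0,1\}$. The integrand is then bounded by $e^{\norm{\bfa}_{\reals^m}\sqrt m}$ on a set of $\bfy$ where some $h^{(i)}(\bfzero_d,\bfy)\neq0$. That set is contained in $\{\diam \le L\}$, which has finite Lebesgue measure.
\item (H5) is the key point: $\Gamma$ must be positive definite. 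We take the $\Gamma_i$ pairwise non-isomorphic. For a given $\bfy$, the complex $\cechcpx{\{\bfzero_d,\bfy\}}{t_i}$ is determined, but different $t_i$ yield different complexes. So the events need not be disjoint when the $t_i$ differ, and this requires care. It is cleaner to verify (H5) directly. Take $\bfa\ne0$ and let $i$ be an index with $a_i\neq0$ that is extremal in the following sense: choose $i$ so that on a set of positive measure $h^{(i)}=1$ and $h^{(j)}=0$ for all other $j$ with $a_j\neq0$. I would construct such a set by perturbing a generic configuration realizing $\Gamma_i$ at scale $t_i$. Since $\Gamma_i\not\cong\Gamma_j$, and the realization sets are semialgebraic with nonempty interior, a generic perturbation avoids realizing $\Gamma_j$ at scale $t_j$ simultaneously, at least for an appropriate choice of $i$. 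If this fails, I would fall back on the remark in the paper, which asserts (H5) as given.
\end{itemize}

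\textbf{Condition (H6).} This is the main obstacle. Fix $a<1$ close to $1$. The symmetric difference $\{h^{(i)}(\bfzero_d,\bfy)\neq h^{(i)}(\bfzero_d,a\bfy)\}$ is contained in the set of $\bfy$ for which some \v{C}ech simplex condition changes between scale $1$ and $a$. Each such condition has the form $\rho(\sigma)\le t_i/2$, where $\rho$ is the minimal enclosing ball radius of a subset $\sigma$. Since $\rho$ is $1$-homogeneous, the condition changes only when $\rho(\sigma)\in(t_i/2,\, t_i/(2a)]$. Within the bounded region $\{\diam\le L\}$, the set of $\bfy$ with $\rho$ in a thin shell has measure $O(1-a)$. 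This follows from homogeneity by the coarea/scaling argument: the map $\bfy\mapsto s\bfy$ shows $\lebmeas(\rho\in(c,c/a])=\lebmeas(\rho\le c/a)-\lebmeas(\rho\le c)$ with $\lebmeas(\rho\le c)=c^{d(k-1)}\lebmeas(\rho\le1)$. Summing over the finitely many subsets $\sigma$ gives \eqref{eq:h.condition.h6.vanisleb}. Since $|h^{(i)}(\bfzero_d,\bfy)-h^{(i)}(\bfzero_d,a\bfy)|^p$ is the indicator of the same set, \eqref{eq:h.condition.h6} follows with the same constant.

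\textbf{Conclusion.} With (H1)--(H6) in hand, and with $r_n$ satisfying \eqref{eq:rn.decay.speed}, \cref{cor:lil.scalar} gives that the accumulation set equals $I^{-1}([0,1])$. For the $i$-th coordinate we project: $\sup\{u_i : \tfrac12\pairing{\bfu}{\Gamma^{-1}\bfu}\le1\}=\sqrt{2\Gamma_{ii}}$, with infimum $-\sqrt{2\Gamma_{ii}}$. Dividing by the extra $\sqrt2$ in the normalization gives the stated limits. Here $\Gamma_{ii}=\frac1{k!}\lebmeas(\{\bfy:\cechcpx{\{\bfzero_d,\bfy\}}{t_i}\cong\Gamma_i\})$, since $(h^{(i)})^2=h^{(i)}$.
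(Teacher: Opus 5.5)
Your route is the paper's. There the corollary is obtained by asserting that the \v{C}ech indicators satisfy (H1)--(H6) and invoking \cref{cor:lil.scalar}, and your projection $\sup\{u_i : I(\bfu)\le 1\}=\sqrt{2\Gamma_{ii}}$ is the intended last step. Deducing the $\limsup$ from the accumulation set also needs the normalized sequence to be a.s.\ bounded; the paper's proof gives this via \eqref{eq:lil.upper.vec}, or you can apply the case $m=1$ of \cref{cor:lil.scalar} to $h^{(i)}$ alone.

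Where you go beyond the paper and try to verify (H5), the argument fails as written. A perturbation of a configuration realizing $\Gamma_i$ at scale $t_i$ need not avoid $\Gamma_j$ at scale $t_j$, and for some $i$ the set you want is empty. Take $k=3$, $\Gamma_1$ the path with $t_1=1$, and $\Gamma_2$ the full $2$-simplex with $t_2=2$. If $\{x_0,x_1,x_2\}$ is a path at scale $1$ with middle vertex $x_1$, then all pairwise distances are at most $2$ and $x_1\in\ball{x_0}{1}\cap\ball{x_1}{1}\cap\ball{x_2}{1}$, so the complex at scale $2$ is the full simplex. Thus $h^{(1)}\le h^{(2)}$ everywhere. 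When $a_1a_2\neq0$, no set of positive measure has $h^{(1)}=1$ and $h^{(2)}=0$. So ``an appropriate choice of $i$'' is the entire content of (H5), and falling back on the paper only repeats its unproved assertion.

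The index has to be chosen per configuration. Write $\bfy=r\omega$ with $\omega$ in the unit sphere of $(\reals^d)^{k-1}$; then $h^{(j)}(\bfzero_d,r\omega)=1$ iff $\cechcpx{\{\bfzero_d,\omega\}}{t_j/r}\cong\Gamma_j$. These complexes are nested and decrease in $r$, and the isomorphism type fixes the number of simplices. Hence $J_j(\omega)\coloneqq\{r: h^{(j)}(\bfzero_d,r\omega)=1\}$ is empty or an interval $(\alpha_j,\beta_j]$, with $\beta_j=t_j/(2\rho_{\sigma_j}(\omega))$ and $\sigma_j$ the last simplex added (your $\rho$).

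For a.e.\ $\omega$, the largest $\beta_j$ among nonempty $J_j(\omega)$ with $a_j\neq0$ is attained by a single index $j^*$. Equal $t$'s give distinct constancy intervals of one filtration. For $t_j\neq t_{j'}$, a tie would need $\rho_{\sigma_j}=(t_j/t_{j'})\rho_{\sigma_{j'}}$ on an open set. That is impossible: locally these are circumradii of faces, a ratio $\neq1$ forces the faces to be distinct, and moving a vertex of one face that is not in the other changes only one side.

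Just below $\beta_{j^*}$ one has $\iprod{\bfa}{H(\bfzero_d,r\omega)}=a_{j^*}\neq0$, and integrating over directions gives (H5). This needs each $\Gamma_j$ to be realized on a set of positive measure, which is implicit in the example and needed anyway for $\Gamma_{ii}>0$.

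The same interval structure repairs your (H6) computation. There, $\lebmeas(\rho\le c)=c^{d(k-1)}\lebmeas(\rho\le1)$ reads $\infty=\infty$ whenever $\sigma$ omits one of the $k$ points, and restricting to $\{\diam\le L\}$ is not scale invariant. Instead, on each ray $\{h^{(i)}(\bfzero_d,\bfy)\neq h^{(i)}(\bfzero_d,a\bfy)\}\subset(\alpha_i,\alpha_i/a]\cup(\beta_i,\beta_i/a]$, and $\beta_i\le L\sqrt{k-1}$ by (H3). Integrating $r^{d(k-1)-1}$ over these two intervals and then over directions yields the bound $C(1-a)$.
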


\end{example}

\begin{example}[Component counts in a random geometric graph]
Next, we also investigate the component counts for random geometric graphs.
Let $\rgg{\calX}{r} $ be the geometric graph for a finite point set $\calX = \{x_1, \dots, x_M\} \subset \reals^d $ and a connectivity radius $r > 0$.
That is, $\rgg{\calX}{r} $ is a graph with the vertex set $ \calX$ and the edge set $E(\calX, r) $ given by
\begin{align}
  E(\calX, r) \coloneqq \Bab{\{x_i, x_j\}: 1\leq i < j \leq M, \, \norm{x_i - x_j}_{\reals^d} \leq r}.
\end{align}
We define the function $H = (h^{(1)}, \dots, h^{(m)}) $ by
\begin{align}
  h^{(i)}(x_1, \dots, x_k) \coloneqq \indicator[none]{\rgg{\{x_1, \dots, x_k\}}{t_i} \cong \Gamma_i},
  \quad (x_1, \dots, x_{k}) \in (\reals^d)^{k},
\end{align}
where $t_1, \dots, t_m \in (0, \infty) $,
$\Gamma_i $ is a connected graph with $k$ vertices,
and $\cong $ denotes isomorphism between graphs.
We also assume that $\Gamma_{i} \not\cong \Gamma_{j} $ for $i \neq j $.
Let $S_{n}^{(i)}(\calX)$ be the total number of connected components isomorphic to $\Gamma_i $ in $\rgg{\calX}{r_n t_i} $,
and we set
\begin{align}
  S_{n}(\calX) = (S_{n}^{(i)}(\calX))_{i=1,\dots, m}
  \coloneqq \sum_{\calY \subset \calX, \card{\calY} = k} \isolated[n]{\calY}{\calX}{\bft} \odot H_{n}(\calY),
\end{align}
where $\bft \coloneqq (t_1, \dots, t_m) $.
As in the case of component counts for random geometric complexes,
we obtain the MDP and LIL for this functional.

\begin{corollary}[MDP for component counts in random geometric graphs]
Let $\calX_n $ be either $\ppp_n $ or $\bpp_n $.
Assume that $\ldpspeed = \nkrndkmo \to \infty $ and $\nrnd \to 0 $ as $n \to \infty $.
Then, for any Borel set $A \subset \reals^m$,
\begin{align}
  - \inf_{\bfu \in \interiorrd{A}} I(\bfu)
  \leq \liminf_{n \to \infty} \frac{\ldpspeed}{\mdpfactor^2} \log \probab[Big]{\frac{1}{\mdpfactor} \centered{S_n(\calX_n)} \in A}
  \leq \limsup_{n \to \infty} \frac{\ldpspeed}{\mdpfactor^2} \log \probab[Big]{\frac{1}{\mdpfactor} \centered{S_n(\calX_n)} \in A}
  \leq - \inf_{\bfu \in \closurerd{A}} I(\bfu),
\end{align}
where $\map{I}{\reals^m}{[0, \infty]} $ is the rate function given by \eqref{eq:ratefunc.vector}.
\end{corollary}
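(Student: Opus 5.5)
This corollary follows from \cref{cor:mdp.vector}; the only work is to check that the indicator function $H=(h^{(1)},\dots,h^{(m)})$ with $h^{(i)}(x_1,\dots,x_k)=\indicator[none]{\rgg{\{x_1,\dots,x_k\}}{t_i}\cong\Gamma_i}$ satisfies (H1)--(H5). Once this is done, $S_n(\calX)=T_n(\calX)$ by definition, and the conclusion is exactly \cref{cor:mdp.vector} with the rate function $I$ from \eqref{eq:ratefunc.vector}. Note that (H6) is not needed here, and the sparse-regime assumption $\ldpspeed\to\infty$, $\nrnd\to0$ is exactly \eqref{eq:rn.sparse}.

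\emph{(H1)--(H3).} Isomorphism of the geometric graph does not depend on the labelling of the points and is unchanged by translations, so (H1) and (H2) hold; measurability is clear since graph isomorphism is determined by finitely many pairwise distance comparisons. For (H3), since $\Gamma_i$ is connected on $k$ vertices, $h^{(i)}\neq0$ forces every pair of points to be joined by a path of at most $k-1$ edges of length at most $t_i$. Hence $\diam\le (k-1)\max_i t_i$, and we may take $L>(k-1)\max_i t_i$, which is also larger than all $t_i$.

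\emph{(H4).} Because $H$ takes values in $\{0,1\}^m$, we have $e^{\iprod{\bfa}{H}}\le e^{\sum_i\abs{a_i}}$. Moreover, the set $\{H(\bfzero_d,\bfy)\neq\bfzero_m\}$ is contained in the bounded set $\{\bfy:\norm{y_j}\le L \ \forall j\}$, so the integral is finite.

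\emph{(H5).} This is the step requiring an actual argument. The events $\{\rgg{\{\bfzero_d,\bfy\}}{t_i}\cong\Gamma_i\}$ need not be disjoint for different $i$, because the radii $t_i$ differ. I would argue by linear independence in $L^2$: (H5) is equivalent to the Gram matrix $\Gamma$ being positive definite, i.e.\ the functions $h^{(i)}(\bfzero_d,\cdot)$ being linearly independent in $L^2$. Suppose $\sum_i a_i h^{(i)}=0$ a.e. with some $a_i\neq0$. The plan is to find, for a chosen index $i_0$ with $a_{i_0}\neq0$, a positive-measure set of configurations where $h^{(i_0)}=1$ and $h^{(j)}=0$ for all other $j$ with $a_j\neq0$; this gives $a_{i_0}=0$, a contradiction.

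To construct such a configuration, first realize $\Gamma_{i_0}$ as a geometric graph at radius $t_{i_0}$ with all non-edges well separated and all edge lengths concentrated near a single value. Strictly, I would place the edges of $\Gamma_{i_0}$ at length close to $t_{i_0}$ and the non-edges at lengths far larger than every $t_j$, shrinking edges as necessary. For $j$ with $t_j<t_{i_0}$, edges slightly longer than $t_j$ then disappear at radius $t_j$, so the graph becomes disconnected and $h^{(j)}=0$. For $j$ with $t_j\ge t_{i_0}$, the graph at radius $t_j$ is $\Gamma_{i_0}$ or a supergraph of it; since $\Gamma_j\not\cong\Gamma_{i_0}$ when $t_j=t_{i_0}$, we get $h^{(j)}=0$ in the equal case.

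Realizability of an arbitrary graph as a geometric graph in $\reals^d$ is the main obstacle, since not every graph is a unit-disk graph. To avoid it, I would use a cleaner ordering argument. Choose $i_0$ minimizing $t_{i_0}$ among indices with $a_i\neq0$, and among ties pick $\Gamma_{i_0}$ maximal in edge count. Then take generic configurations where $\rgg{\cdot}{t_{i_0}}\cong\Gamma_{i_0}$; these have positive measure whenever $h^{(i_0)}$ is not a.e.\ zero. If $h^{(i_0)}$ is a.e.\ zero, that index is degenerate; the paper's claim that (H5) holds implicitly presumes each $\Gamma_i$ is realizable with positive measure, which I would state as understood. On such configurations, terms with larger $t_j$ are handled by perturbing edge lengths close to $t_{i_0}$ from below: the open condition allows slightly enlarging distances so that non-edges exceed the larger radii. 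I expect this genericity/perturbation step to be the only delicate point, and I would treat it exactly as in the \v{C}ech example.
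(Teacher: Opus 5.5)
Your route is the same as the paper's. The paper's entire argument is the assertion that this $H$ satisfies (H1)--(H6), followed by an appeal to \cref{cor:mdp.vector} with $S_n=T_n$, and your checks of (H1)--(H4) are fine. Your realizability caveat is correct and actually necessary. For example, $K_{1,6}$ is not a geometric graph in $\reals^2$, so the corresponding $h^{(i)}$ vanishes identically and the matrix $\Gamma$ is singular. The problem is the one place where you go beyond the paper, your proof of (H5), which does not work as written.

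With $i_0$ chosen to minimize $t_{i_0}$, indices with $t_j<t_{i_0}$ do not occur. Indices with $t_j=t_{i_0}$ are harmless because the events are disjoint, so your edge-count tie-break plays no role. But indices with $t_j>t_{i_0}$ are not handled.
\begin{itemize}
\item A perturbation within an open set moves distances only slightly, so it cannot push non-edge lengths near $t_{i_0}$ above a fixed larger radius $t_j$.
\item In general no configuration achieves this at all, because of the triangle inequality.
\end{itemize}
Concretely, let $k=3$, $\Gamma_1=P_3$ with $t_1=1$ and $\Gamma_2=K_3$ with $t_2=2$. Every $\bfy$ with $h^{(1)}(\bfzero_d,\bfy)=1$ has diameter at most $2$, hence $h^{(2)}(\bfzero_d,\bfy)=1$. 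So your witness set $\{h^{(1)}=1,\ h^{(2)}=0\}$ is empty, even though this family is linearly independent. The witness must instead be taken for index $2$, e.g.\ configurations of diameter at most $1$.

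So $i_0$ has to be chosen differently. One observation that does work: suppose $\Gamma_{i_0}$ has the most edges among indices with $a_i\neq 0$. Then every $j\neq i_0$ with $t_j\ge t_{i_0}$ is excluded automatically. Indeed, $G(\cdot,t_j)\supseteq G(\cdot,t_{i_0})$, so $G(\cdot,t_j)\cong\Gamma_j$ would force $G(\cdot,t_j)=G(\cdot,t_{i_0})$ and hence $\Gamma_j\cong\Gamma_{i_0}$. The indices with $t_j<t_{i_0}$ then still need a genuine geometric construction. For instance, you would need a realization of $\Gamma_{i_0}$ at radius $t_{i_0}$ in which the edges of length at most $t'=\max\{t_j<t_{i_0}: a_j\neq 0\}$ do not connect all $k$ points.

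Deferring to ``the same as in the \v{C}ech example'' supplies nothing, since the paper gives no argument there either. As written, (H5) is asserted rather than proved in your proposal.
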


\begin{corollary}[LIL for component counts in random geometric graphs]
Let $\calX_n $ be either $\ppp_n $ or $\bpp_n $.
Assume that $r_n$ satisfies \eqref{eq:rn.decay.speed}.
Then, the set of all accumulation points of $\{(\ldpspeed \log \log \ldpspeed)^{-1/2} \centered{S_n(\calX_n)} \}_{n \geq 1}$ almost surely coincides with $I^{-1}([0, 1])$.
In particular, for $i = 1, \dots, m$, almost surely
\begin{align}
  \limsup_{n \to \infty} \frac{1}{\sqrt{2 \ldpspeed \log \log \ldpspeed}}\centered{S_{n}^{(i)}(\calX_n)} = \sqrt{\frac{1}{k!} \lebmeas\ab\big(\{\bfy \in (\reals^{d})^{k-1} : \rgg{\{\bfzero_d, \bfy\}}{t_i} \cong \Gamma_i \})}
\end{align}
and
\begin{align}
  \liminf_{n \to \infty} \frac{1}{\sqrt{2 \ldpspeed \log \log \ldpspeed}}\centered{S_{n}^{(i)}(\calX_n)} = - \sqrt{\frac{1}{k!} \lebmeas\ab\big(\{\bfy \in (\reals^{d})^{k-1} : \rgg{\{\bfzero_d, \bfy\}}{t_i} \cong \Gamma_i \})}.
\end{align}
\end{corollary}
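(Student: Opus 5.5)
The plan is to realize the statement as a special case of \cref{cor:lil.scalar}. Concretely, we check that the indicator function $H=(h^{(1)},\dots,h^{(m)})$ with $h^{(i)}(x_1,\dots,x_k)=\indicator[none]{\rgg{\{x_1,\dots,x_k\}}{t_i}\cong\Gamma_i}$ satisfies (H1)--(H6). Since $r_n$ satisfies \eqref{eq:rn.decay.speed}, the conclusion about the accumulation set $I^{-1}([0,1])$ then follows directly. For the explicit $\limsup$/$\liminf$, we project onto the $i$-th coordinate: the extreme values of $u_i$ on the ellipsoid $\{\bfu:\frac12\pairing{\bfu}{\Gamma^{-1}\bfu}\le 1\}$ are $\pm\sqrt{2\Gamma_{ii}}$. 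Because $(h^{(i)})^2=h^{(i)}$, we get $\Gamma_{ii}=\frac1{k!}\lebmeas(\{\bfy:\rgg{\{\bfzero_d,\bfy\}}{t_i}\cong\Gamma_i\})$, which gives the displayed constants (equivalently, the $m=1$ case of \cref{cor:lil.scalar} applied to $h^{(i)}$ alone).

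Conditions (H1)--(H4) are routine. For (H1), graph isomorphism type is invariant under relabelling the points. For (H2), the geometric graph depends only on pairwise distances. For (H3), a connected geometric graph on $k$ vertices with radius $t_i$ has diameter at most $(k-1)t_i$, so $L=(k-1)\max_i t_i$ (enlarged if needed) works. For (H4), the integrand is bounded by $e^{\norm{\bfa}_{\reals^m}\sqrt m}$ and is supported on the bounded set $\{\bfy:\diam(\bfzero_d,\bfy)\le L\}$.

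For (H6), fix $a<1$ close to $1$. If $h^{(i)}(\bfzero_d,\bfy)\ne h^{(i)}(\bfzero_d,a\bfy)$, then the edge sets of the graphs on $\{\bfzero_d,\bfy\}$ and $\{\bfzero_d,a\bfy\}$ at radius $t_i$ differ. This forces some pairwise distance $\norm{y_j-y_l}_{\reals^d}$ (with $y_0=\bfzero_d$) to lie in $(t_i,t_i/a]$. Moreover, at least one of the two configurations is connected, so $\bfy$ lies in a fixed bounded set. The Lebesgue measure of $\{\bfy \text{ in a bounded set}: \norm{y_j-y_l}\in(t_i,t_i/a]\}$ is $O(1-a)$, by Fubini and the fact that a spherical shell of width $O(1-a)$ has volume $O(1-a)$. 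Summing over the $\binom{k}{2}$ pairs gives \eqref{eq:h.condition.h6.vanisleb}. Since $\abs{h^{(i)}(\bfzero_d,\bfy)-h^{(i)}(\bfzero_d,a\bfy)}^p$ is the indicator of this same set for all $p\ge2$, \eqref{eq:h.condition.h6} follows with the same constant, taking $C\ge1$.

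The main obstacle is (H5), i.e.\ the linear independence of the functions $\bfy\mapsto h^{(i)}(\bfzero_d,\bfy)$ in $L^1$. Here we implicitly use that each $\Gamma_i$ is realizable as a geometric graph; the realizing set is then open up to boundaries, hence of positive measure. If all $t_i$ coincide, the sets $\{\rgg{\{\bfzero_d,\bfy\}}{t}\cong\Gamma_i\}$ are disjoint, so $\int\abs{\iprod{\bfa}{H}}=\sum_i\abs{a_i}\,\lebmeas(\cdot)>0$. In general, I would first group indices by the value of $t_i$. Take $i_0$ with $a_{i_0}\neq0$ and $t_{i_0}$ maximal, and choose $\Gamma_{i_0}$ with the maximal number of edges among such indices. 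I would then try to exhibit an open set of $\bfy$ on which $h^{(i_0)}=1$ and every other $h^{(j)}$ with $a_j\ne0$ vanishes. The idea is to take a generic realization of $\Gamma_{i_0}$ at radius $t_{i_0}$ and push all edge lengths just below $t_{i_0}$ (above every smaller $t_j$), so that at the smaller radii the graph becomes disconnected. The delicate point is whether such a perturbation always keeps the non-edges longer than $t_{i_0}$; if it does not, I would fall back on a scaling argument. Namely, $h^{(j)}(\bfzero_d,\bfy)=\indicator[none]{\bfy/t_j\in A_j}$, and I would compare the behaviour near the boundary of the support of $\bfy\mapsto\iprod{\bfa}{H(\bfzero_d,\bfy)}$.
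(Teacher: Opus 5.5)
Your route is the paper's own. The paper simply asserts that this $H$ satisfies (H1)--(H6) and applies \cref{cor:lil.scalar}. Your checks of (H1)--(H4) and (H6) are fine, as is the identification $\max\{u_i : I(\bfu)\le 1\}=\sqrt{2\Gamma_{ii}}$ with $\Gamma_{ii}=\frac{1}{k!}\lebmeas(\{\bfy : \rgg{\{\bfzero_d,\bfy\}}{t_i}\cong\Gamma_i\})$.

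What is missing is (H5) when the $t_i$ differ, and your main plan for it cannot always be carried out. A realization of $\Gamma_{i_0}$ with every edge length in $(t_j,t_{i_0}]$ need not exist, because some geometric graphs force an edge to be shorter than $c\,t_{i_0}$ for a fixed $c<1$. For example, for $d=2$, two adjacent hubs $u,v$ with five pairwise non-adjacent common neighbours must lie close together; this follows from a compactness argument on the lens $B(u,t)\cap B(v,t)$. Also, no single index $i_0$ needs to work globally: the relevant index can be chosen direction by direction.

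The missing idea is the scaling (ray) argument you mention only as a fallback. Work on the full-measure, dilation-invariant set of $\bfy$ for which the $N=\binom{k}{2}$ pairwise distances of $\{\bfzero_d,\bfy\}$ are distinct and no ratio of two of them equals some $t_i/t_{i'}$. Order these distances as $\ell_1(\bfy)<\dots<\ell_N(\bfy)$ and put $\ell_{N+1}=\infty$. When $t_i/\lambda\in[\ell_j,\ell_{j+1})$, the graph $\rgg{\{\bfzero_d,\lambda\bfy\}}{t_i}$ consists exactly of the $j$ shortest pairs. Hence
\[
h^{(i)}(\bfzero_d,\lambda\bfy)=\epsilon_i(\bfy)\,\mathbf{1}\bigl\{\lambda\in\bigl(t_i/\ell_{e_i+1}(\bfy),\,t_i/\ell_{e_i}(\bfy)\bigr]\bigr\},
\]
where $e_i$ is the number of edges of $\Gamma_i$ and $\epsilon_i(\bfy)\in\{0,1\}$ does not depend on $\lambda$.

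If $\epsilon_i(\bfy)=\epsilon_{i'}(\bfy)=1$ with $i\neq i'$, then $e_i\neq e_{i'}$, since otherwise $\Gamma_i\cong\Gamma_{i'}$. By genericity, the left endpoints of their two intervals therefore differ.

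Now suppose $\langle\bfa,H(\bfzero_d,\cdot)\rangle=0$ a.e. Polar coordinates give that, for a.e.\ direction $\bfy$, the step function $\lambda\mapsto\sum_i a_i h^{(i)}(\bfzero_d,\lambda\bfy)$ vanishes a.e. But just to the right of the smallest left endpoint among indices with $a_i\epsilon_i(\bfy)\neq0$, this function equals a single nonzero $a_i$. Hence, whenever $a_i\neq 0$, $\epsilon_i=0$ for a.e.\ direction, so $h^{(i)}(\bfzero_d,\cdot)=0$ a.e. This contradicts the positive measure of the realization set of $\Gamma_i$.

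Your caveat that this realizability is needed is correct, and it is implicit in the paper as well. For instance, with $d=2$ and $k=7$, $K_{1,6}$ is not a geometric graph, so $h^{(i)}\equiv 0$, $\Gamma$ is singular, and (H5) fails.
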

\end{example}

\begin{example}[Morse critical points of min-type distance functions]
In this example, we investigate the MDP and LIL for the number of Morse critical points of min-type distance functions.
As in \cite{ho2023ldp}, our discussion in this example is restricted to dimension two ($d=2$), with $k=3$ in the notation of \cref{sec:model}.
This restriction is due to the fact that the arguments for the MDP and LIL require a specific geometric property unique to two dimensions,
namely that the number of vertices in a planar Voronoi tessellation is bounded above by a linear function of the number of points.

For a homogeneous Poisson point process $\ppp_n$ of intensity $n$ on $[0, 1]^2$,
we define the min-type distance function $\map{d_{\ppp_n}}{\reals^2}{[0, \infty)}$ as follows:
\begin{align}
  d_{\ppp_n}(x) \coloneqq \min_{y \in \ppp_n} \norm{x - y}_{\reals^2}, \quad x \in \reals^2. 
\end{align}
A point $c \in \reals^2$ is defined to be a Morse critical point of index 2 for $d_{\ppp_n}$ if it satisfies the following condition:
there exists a three-point subset $\calY \subset \ppp_n$ satisfying (i)--(iii) below.
\begin{description}
  \item[(i)] The three points in $\calY$ are in general position.
  \item[(ii)] It holds that $d_{\ppp_n}(c) = \norm{c - y}_{\reals^2}$ for all $y \in \calY$,
  and furthermore, $d_{\ppp_n}(c) < \min_{z \in \ppp_n \setminus \calY} \norm{c - z}_{\reals^2}$.
  \item[(iii)] It holds that $c \in \convint{\calY}$,
  where $\convint{\calY}$ denotes the interior of the convex hull spanned by the points in $\calY$.
\end{description}

The primary motivation for studying the Morse critical points of min-type distance functions is that they serve as highly tractable alternatives to Betti numbers
for capturing the homological changes of random geometric complexes.
According to the Nerve theorem (e.g., Theorem 10.7 in \cite{bjorner1995topological}),
for $r>0$, the sublevel set $d_{\ppp_n}^{-1}([0, r])$ of the distance function $d_{\ppp_n}$ is homotopy equivalent to the \v{C}ech complex $\cechcpx{\ppp_n}{2r} $ in \eqref{eq:cech.complex}.
Therefore, it is well known that the number of Morse critical points of index 2 behaves similarly to the first Betti number of the corresponding \v{C}ech complex \cite{ba2014distfunc,bm2015topolpgymanifold}.
In general, directly evaluating Betti numbers requires taking into account the global structure of the entire simplicial complex,
which often makes the analysis difficult.
On the other hand, since Morse critical points are determined solely by local geometric conditions involving a few points (such as the interior of the circumcircle being empty),
their probabilistic evaluation is relatively straightforward,
and thus they serve as a practical proxy for investigating complex homological properties.

To formally denote Morse critical points, we introduce the following notation.
For a three-point subset $\calY \subset \ppp_n$ in general position,
let $\gamma(\calY)$ be the center of the unique circumcircle passing through $\calY$,
$\calR(\calY)$ the circumradius of $\calY$,
and $\calU(\calY) \coloneqq \interiorrd{\ball{\gamma(\calY)}{\calR(\calY)}} $ the interior of the circumcircle of $\calY $.
Note that when $\gamma(\calY) \in \convint{\calY} $ and $\calU(\calY) \cap \ppp_n = \emptyset $,
the point $\gamma(\calY) $ is a Morse critical point of index 2
and $\calR(\calY) = d_{\ppp_n}(\gamma(\calY)) $.
For given $t_1, \dots, t_m \in (0, \infty)$ (with $t_i \ne t_j$ for $i \ne j$),
we define the functional $N_n$ as follows:
\begin{align}
  N_n = (N_n^{(i)})_{i=1,\dots,m}
  \coloneqq \ab\bigg( \sum_{\calY \subset \ppp_n, \card{\calY}=3} \indicator[none]{\gamma(\calY) \in \convint{\calY}, \calR(\calY) \le r_n t_i, \calU(\calY) \cap \ppp_n = \emptyset} )_{i=1,\dots,m}.
\end{align}
Here, $N_n^{(i)} $ represents the number of Morse critical points of index 2 whose critical values are at most $r_n t_i $.

We can also establish the MDP and LIL for $N_n$.
Note that, as in \cite{ho2023ldp}, we do not provide the binomial point process version of this result,
as it would require further geometric considerations.

\begin{corollary}[MDP for Morse critical points]
\label{cor:mdp.morse.vector}
Assume that $\ldpspeed = n^{3} r_n^{4} \to \infty $ and $n r_n^2 \to 0 $ as $n \to \infty $.
Then, for any Borel set $A \subset \reals^m$,
\begin{align}
  - \inf_{\bfu \in \interiorrd{A}} I(\bfu)
  \leq \liminf_{n \to \infty} \frac{\ldpspeed}{\mdpfactor^2} \log \probab[Big]{\frac{1}{\mdpfactor} \centered{N_n} \in A}
  \leq \limsup_{n \to \infty} \frac{\ldpspeed}{\mdpfactor^2} \log \probab[Big]{\frac{1}{\mdpfactor} \centered{N_n} \in A}
  \leq - \inf_{\bfu \in \closurerd{A}} I(\bfu),
\end{align}
where $\map{I}{\reals^m}{[0, \infty]} $ is the rate function given by \eqref{eq:ratefunc.vector}.
\end{corollary}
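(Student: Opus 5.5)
The plan is to reduce \cref{cor:mdp.morse.vector} to \cref{cor:mdp.vector} via exponential equivalence, following the strategy of \cite{ho2023ldp} for the LDP of Morse critical points. First, for each $i$ write the event $\{\gamma(\calY)\in\convint{\calY}, \calR(\calY)\le r_n t_i\}$ as $h^{(i)}(r_n^{-1}\calY)$ with
\[
h^{(i)}(y_1,y_2,y_3) \coloneqq \indicator[none]{\gamma(\{y_1,y_2,y_3\}) \in \convint{\{y_1,y_2,y_3\}}, \ \calR(\{y_1,y_2,y_3\}) \le t_i}.
\]
Then $H=(h^{(1)},\dots,h^{(m)})$ satisfies (H1)--(H3) with $L=2\max_i t_i$, and (H4) holds because the support is bounded. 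For (H5), note that for $t_1<\dots<t_m$ the indicators are nested. The sets $\{t_{j-1}<\calR\le t_j\}$ all have positive measure, so $\int|\langle\bfa,H\rangle|>0$ whenever $\bfa\neq\bfzero_m$. Thus $\Gamma$ is invertible.

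Second, I will compare $N_n^{(i)}$ with the isolated-component statistic $T_n^{(i)}(\ppp_n)=\sum_{\calY}\isolated[n]{\calY}{\ppp_n}{L}\,h^{(i)}_n(\calY)$, taking all isolation parameters equal to $L$. Suppose $\calY$ is $r_nL$-isolated with $\calR(\calY)\le r_n t_i$. Then $\calU(\calY)\subset \bigcup_{y\in\calY} B(y, 2 r_n t_i)$ contains no other point, so the term also counts in $N_n^{(i)}$. Hence $D_n^{(i)}\coloneqq N_n^{(i)}-T_n^{(i)}\ge0$, and $D_n^{(i)}$ counts empty-circumcircle triangles of radius $\le r_nt_i$ that have some extra point within distance $r_nL$. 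Such a triangle forces a cluster of at least four points of $\ppp_n$ within a ball of radius $O(r_n)$. The two-dimensional Voronoi/Delaunay fact enters here: the number of Delaunay triangles (empty circumcircles) among the points of a cluster is at most linear in the cluster size. Therefore $D_n^{(i)}\le C\sum_{x\in\ppp_n} M_x\indicator{M_x\ge 4}$, where $M_x$ is the number of points in $B(x,C'r_n)$.

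Third, and this is the main obstacle, I need exponential equivalence: for each $\delta>0$,
\[
\limsup_{n\to\infty}\frac{\ldpspeed}{\mdpfactor^2}\log\probab{\abs{\centered{D_n^{(i)}}}>\delta\mdpfactor}=-\infty .
\]
Since $\expect{D_n^{(i)}}=O(n^4r_n^6)=o(\ldpspeed)$ by sparsity, and $\ldpspeed/\mdpfactor\to\infty$, it only remains to show that $\probab{D_n^{(i)}>\delta\mdpfactor/2}$ is super-exponentially small at speed $\mdpfactor^2/\ldpspeed$. I would first partition $[0,1]^2$ into cubes of side comparable to $r_n$, using a finite number of shifted grids so that every cluster lies in some cube. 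This bounds $D_n$ by a sum of independent variables $Z_Q=C N(Q)^2\indicator{N(Q)\ge4}$ with Poisson counts $N(Q)$ of mean $\asymp nr_n^2\to0$. Then I apply a Chernoff bound with parameter $\theta=\lambda\mdpfactor/\ldpspeed$ for large $\lambda$. Truncating $N(Q)$ at a level $K$ and using Poisson tails $\probab{N(Q)\ge j}\le(nr_n^2)^j/j!$ gives
\[
\log\expect{e^{\theta D_n}}\lesssim r_n^{-2}(nr_n^2)^4 e^{C\theta K^2}+\text{(tail)}.
\]
This is $o(\mdpfactor^2/\ldpspeed)$ because $n^4r_n^6/\ldpspeed = nr_n^2\to0$. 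The delicate part is handling large $N(Q)$, whose contribution is quadratic; I expect to use a bound of the form $\probab{\max_Q N(Q)\ge K}\le r_n^{-2}(nr_n^2)^K$, with $K$ chosen large depending on the ratio of the exponents.

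Finally, $\mdpfactor^{-1}\centered{N_n}$ and $\mdpfactor^{-1}\centered{T_n(\ppp_n)}$ are exponentially equivalent. By \cref{cor:mdp.vector} (Poisson case) and the standard transfer of the LDP under exponential equivalence \cite{dz2009ldp}, $\mdpfactor^{-1}\centered{N_n}$ satisfies the MDP with the same speed and the good rate function $I$.
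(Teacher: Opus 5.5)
Your route differs from the paper's. The paper does not compare $N_n$ with an isolated-component statistic. It reruns \cref{prop:ge.eta}, \cref{prop:mdp.eta} and \cref{prop:exp.eq.eta.xi} directly with the empty-circumcircle indicator, and takes the uniform exponential moments of the boundary blocks from (5.53) of \cite{ho2023ldp}.

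Your first two steps are fine. (H1)--(H5) hold for the nested indicators. An $r_nL$-isolated triple with $\calR\le r_nt_i\le r_nL/2$ has an empty circumdisk, so $D_n^{(i)}\ge 0$, and $T_n$ is covered by \cref{cor:mdp.vector}.

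\textbf{The gap is in the third step, where you discard the centering.} From $\mathbb{E}[D_n^{(i)}]\asymp \rho_n\,nr_n^2=o(\rho_n)$ and $\rho_n/b_n\to\infty$ you cannot conclude $\mathbb{E}[D_n^{(i)}]=o(b_n)$, and that conclusion is false under the hypotheses. Take $r_n=n^{-0.55}$, so $\rho_n=n^{0.8}$ and $nr_n^2=n^{-0.1}$. Then $\mathbb{E}[D_n^{(i)}]\asymp n^{0.7}$, while $b_n=n^{0.5}$ is admissible. In this regime $D_n^{(i)}$ concentrates far above $b_n$, so $\mathbb{P}(D_n^{(i)}>\delta b_n/2)\to 1$ and no super-exponential bound can hold.

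Your Chernoff computation hides this because you compared $n^4r_n^6$ with $\rho_n$ instead of with $b_n^2/\rho_n$. In fact $\log\mathbb{E}e^{\theta D_n^{(i)}}\ge\theta\,\mathbb{E}[D_n^{(i)}]\asymp\lambda b_n nr_n^2$, which here is much larger than $b_n^2/\rho_n$. The lower tail $\overline{D_n^{(i)}}<-\delta b_n$ is also never treated.

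\textbf{What is needed} is a two-sided bound on the centered variable $\overline{D_n^{(i)}}$ with variance proxy of order $\rho_n nr_n^2=o(\rho_n)$, as in \cref{prop:exp.eq.xi.kappa}. That yields $\frac{\rho_n}{b_n^2}\log\mathbb{P}(|\overline{D_n^{(i)}}|>\delta b_n)\le-\lambda\delta+O(\lambda^2 nr_n^2)+o(1)$.
\begin{itemize}
\item You must write $D_n^{(i)}$ exactly as a sum of martingale differences or block terms, for instance attributing each triangle to the cube of its lexicographically smallest vertex. An upper bound $D_n\le\sum_Q Z_Q$ destroys the centering.
\item The configurations in $D_n^{(i)}$ are not isolated, so the uniqueness step of \cref{prop:exp.moment.common} is unavailable. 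Replace it by the planar Delaunay bound (at most twice the number of points in the enlarged block) together with exponential moments of Poisson counts, as in \cref{sec:morse.proof}.
\end{itemize}
This also removes your truncation problem. Your block variable $CN(Q)^2$ has no finite exponential moment under the Poisson law. Truncating at a fixed $K$ does not help, because $\mathbb{P}(\max_Q N(Q)\ge K)$ decays only polynomially in $n$, while $b_n^2/\rho_n$ may grow polynomially. The linear Delaunay bound needs no truncation.

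With these repairs your reduction to \cref{cor:mdp.vector} would be a valid alternative. It confines all Morse-specific geometry to a single negligibility estimate, whereas the paper must redo the covariance and block computations for the new indicator.
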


\begin{corollary}[LIL for Morse critical points]
\label{cor:lil.morse.vector}
Assume that $\ldpspeed = n^{3} r_n^{4} \to \infty $ and $n r_n^2 \to 0 $ as $n \to \infty $.
Assume that $r_n$ satisfies \eqref{eq:rn.decay.speed}.
Then, the set of all accumulation points of $\{(\ldpspeed \log \log \ldpspeed)^{-1/2} \centered{N_n} \}_{n \geq 1}$ almost surely coincides with $I^{-1}([0, 1])$.
In particular, for $i = 1, \dots, m$, almost surely
\begin{align}
  \limsup_{n \to \infty} \frac{1}{\sqrt{2 \ldpspeed \log \log \ldpspeed}}\centered{N_{n}^{(i)}} = \sqrt{\frac{1}{6} \lebmeas\ab\big(\{\bfy \in (\reals^{2})^{2} : \gamma(\bfzero_2, \bfy) \in \convint{\bfzero_2, \bfy}, \calR(\bfzero_2, \bfy) \le t_i \})}
\end{align}
and
\begin{align}
  \liminf_{n \to \infty} \frac{1}{\sqrt{2 \ldpspeed \log \log \ldpspeed}}\centered{N_{n}^{(i)}} = - \sqrt{\frac{1}{6} \lebmeas\ab\big(\{\bfy \in (\reals^{2})^{2} : \gamma(\bfzero_2, \bfy) \in \convint{\bfzero_2, \bfy}, \calR(\bfzero_2, \bfy) \le t_i \})}.
\end{align}
\end{corollary}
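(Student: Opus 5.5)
The plan is to reduce \cref{cor:lil.morse.vector} to \cref{cor:lil.scalar} by comparing $N_n$ with a functional of the form $T_n$, and to show that the discrepancy is negligible on the LIL scale $(\ldpspeed\log\log\ldpspeed)^{1/2}$ almost surely.

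\textbf{Step 1: the comparison functional.} Take $d=2$, $k=3$ and
\begin{align}
h^{(i)}(x_1,x_2,x_3)\coloneqq \indicator[none]{\gamma(x_1,x_2,x_3)\in\convint{x_1,x_2,x_3},\ \calR(x_1,x_2,x_3)\le t_i}.
\end{align}
This $H$ satisfies (H1)--(H5) as in \cite{ho2023ldp}, and (H3) holds with $L\ge 2\max_i t_i$. For (H6), note that $h^{(i)}(\bfzero_2,a\bfy)=h^{(i)}(\bfzero_2,\bfy)$ unless $\calR(\bfzero_2,\bfy)\in(t_i,t_i/a]$, because the convex-hull condition is scale invariant. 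That set has Lebesgue measure $O(1-a)$ by homogeneity of $\lebmeas$ under dilation, which gives both parts of (H6).

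Let $T_n$ be the functional built from this $H$ with isolation radii $\bft'=(2t_1,\dots,2t_m)$. If all other points lie at distance $\ge 2r_nt_i$ from $\calY$, then, since the circumdisk lies within distance $2\calR(\calY)\le 2r_nt_i$ of every point of $\calY$, we get $\calU(\calY)\cap\ppp_n=\emptyset$. Hence $D_n\coloneqq N_n-T_n(\ppp_n)$ is a nonnegative vector. It counts Morse critical points coming from triples $\calY$ that have an empty circumdisk but some fourth point within distance $2r_nt_i$. \cref{cor:lil.scalar} applies to $T_n(\ppp_n)$, so it suffices to prove
\begin{align}
(\ldpspeed\log\log\ldpspeed)^{-1/2}\,\centered{D_n}\to 0 \quad\text{a.s.}
\end{align}

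\textbf{Step 2: bounding $D_n$.} This is where the planar Voronoi fact enters. Each triple counted by $D_n$ gives a Voronoi vertex of $\ppp_n$. All such triples lie in a connected component of $\rgg{\ppp_n}{c r_n}$ with at least $4$ points, for some constant $c$ depending on $\max_i t_i$. The number of Voronoi vertices generated inside such a component is linear in its size. Hence
\begin{align}
\|D_n\|\le C\,M_n,
\end{align}
where $M_n$ is the number of points of $\ppp_n$ lying in components of size $\ge 4$ of $\rgg{\ppp_n}{cr_n}$. Moreover $D_n$ is a local functional of such clusters. By Mecke's formula its mean and variance are $O(n^4r_n^6)=O(\ldpspeed\, n r_n^2)=o(\ldpspeed)$.

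Note that the mean alone need not be $o(\sqrt{\ldpspeed})$ when $\alpha$ is close to $1$, since $n^{4-3\alpha}$ versus $n^{(3-2\alpha)/2}$. So the centering is essential, and we must control the fluctuations of $D_n$, not merely its size.

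\textbf{Step 3: almost-sure negligibility of $\centered{D_n}$.} Fix the geometric subsequence $n_l=\lfloor s^l\rfloor$. Along $n_l$ we apply a concentration bound for $\centered{D_{n_l}}$, obtained from the martingale-difference and Bousquet-type inequalities of \cref{sec:technical.tools}, with variance proxy $o(\ldpspeed)$ and increments bounded via $M_n$. This gives
\begin{align}
\probab{\|\centered{D_{n_l}}\|>\epsilon(\ldpspeed[n_l]\log\log\ldpspeed[n_l])^{1/2}}
\end{align}
summable in $l$, and Borel--Cantelli handles the subsequence.

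Interpolating to all $n\in[n_l,n_{l+1}]$ is the step I expect to be the main obstacle. Under the coupling \eqref{eq:lil.ppp.coupling}, changing $n$ both enlarges the window $W_n$ and shrinks the rescaled radius $n^{1/2}r_n=n^{(1-\alpha)/2}$. My plan is to handle this exactly as in \cref{lemma:lil.upper.z4,lemma:lil.upper.z4.ei}: view $\centered{D_n}$ as a process indexed by $n$ in the block, and control its supremum over the block by chaining on a grid in the radius parameter, using Bousquet's inequality at each level. The increments between neighbouring radii are controlled through the (H6)-type estimate from Step 1, which makes the $L^p$-increments $O(1-a)$ as in the main proof.

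With Step 3 in hand, $(\ldpspeed\log\log\ldpspeed)^{-1/2}\centered{N_n}$ and $(\ldpspeed\log\log\ldpspeed)^{-1/2}\centered{T_n(\ppp_n)}$ have the same accumulation set almost surely. That set is $I^{-1}([0,1])$ by \cref{cor:lil.scalar}. Taking $m=1$ for each coordinate $i$ gives the explicit $\limsup$ and $\liminf$ values, since $(h^{(i)})^2=h^{(i)}$ and $1/k!=1/6$.
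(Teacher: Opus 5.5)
Your reduction is correct, but it takes a different route from the paper's.

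\textbf{The paper's route.} In \cref{sec:morse.proof} the paper does not compare $N_n$ with an isolation functional. It reruns the LIL machinery with the empty-circumdisk indicator \eqref{eq:indicator.morse} in place of the isolation indicator, and writes out only the analogue of \cref{lemma:lil.upper.z4.ei}. There the uniqueness of the isolated $\calZ$ used in \cref{prop:exp.moment.common} is replaced by the planar Voronoi bound $M\le 2\card{\pppentire\cap Q_z^{(3Lv)}}$, the Mecke formula and Poisson exponential moments. This is followed by \cref{prop:orliczc.norm}~(ii) and \cref{prop:adamczak.ineq}.

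\textbf{Your route.} Your sandwich $T_n(\ppp_n)\le N_n$ with isolation radii $2t_i$ is valid: a point of $\calU(\calY)$ lies at distance $<2\calR(\calY)\le 2r_nt_i$ from every point of $\calY$. Together with your check of (H1)--(H6), this lets you use \cref{cor:lil.scalar} as a black box. The MDP, the subsequence compactness, the $Z^{(1)}$--$Z^{(4)}$ bounds and the independence argument for the lower bound never have to be redone for the Morse indicator. The Voronoi fact enters only through the remainder $D_n$. The paper's route avoids an auxiliary functional, but it requires checking that every step of the isolation-based argument survives the change of indicator, and only one of those checks is written out.

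\textbf{Where the work actually lies.} You have misplaced the difficulty in Step 3. The one genuinely new estimate is a single-$n$ exponential moment bound for $\centered{D_n^{(i)}}$.
\begin{itemize}
\item \cref{prop:exp.moment.common} cannot be quoted, because $D_n$ carries no isolation indicator.
\item You must therefore bound the local block moments through the Voronoi count, exactly as in \cref{sec:morse.proof}. A global bound by $M_n$ is not what is needed.
\item With $v=(nr_n^2)^{1/2}$, this gives block $p$-th moments at most $C^p p!\,v^{8}$.
\item Hence $\log\expectsymbol[\exp(\lambda\centered{D_n^{(i)}})]\le C\sigma_n^2\,C^2\lambda^2/(1-C\lambda)$ with $\sigma_n^2\coloneqq nr_n^2\,\ldpspeed$.
\end{itemize}

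\textbf{No interpolation is needed.} Once you have this bound, you need no interpolation over $[\lilsubseq{s}{l},\lilsubseq{s}{l+1}]$, no chaining and no Bousquet inequality. Bousquet would fail here anyway, since the block variables are unbounded. Because $nr_n^2=n^{1-\alpha}$ decays polynomially, the tail bound $2\exp(-C\sigma_n^2\tailfunc(R/(C^2\sigma_n^2)))$ at $R=\epsilon(\ldpspeed\log\log\ldpspeed)^{1/2}$ is summable over all $n$. This is exactly how the paper proves \eqref{eq:lil.equivalent.xi.kappa} via \cref{prop:lil.upper.tail.sum.finite}. Borel--Cantelli then gives $\centered{D_n}/(\ldpspeed\log\log\ldpspeed)^{1/2}\to 0$ almost surely.

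The same bound shows $\centered{D_n}$ is exponentially negligible on every MDP scale, so your route also yields \cref{cor:mdp.morse.vector}.
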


These results essentially follow from the arguments used for \cref{cor:mdp.vector} and \cref{cor:lil.scalar}.
Indeed, if we define the function $H = (h^{(1)}, \dots, h^{(m)}) $ by
\begin{align}
  h^{(i)}(x_1, x_2, x_3) \coloneqq \indicator[none]{\gamma(x_1, x_2, x_3) \in \convint{x_1, x_2, x_3}, \calR(x_1, x_2, x_3) \le t_i},
  \quad (x_1, x_2, x_3) \in (\reals^2)^{3}, \label{eq:h.morse.critical}
\end{align}
(one can easily verify that $H$ satisfies conditions (H1)--(H6)),
and if we define the indicator function $c(\calY, \calX) $ by
\begin{align}
  c(\calY, \calX) = c(\calY, \calX \setminus \calY)
  \coloneqq \indicator[none]{\calU(\calY) \cap \calX = \emptyset}, \label{eq:indicator.morse}
\end{align}
then $N_n$ can be expressed as $N_n= \sum_{\calY \subset \ppp_n, \card{\calY}=3} c(\calY, \ppp_n) H_n(\calY) $.
However, since the indicator function \eqref{eq:indicator.morse} is different from \eqref{eq:isolated.def},
we cannot directly apply \cref{cor:mdp.vector} and \cref{cor:lil.scalar} to $\{N_n\}_{n \geq 1}$.
Nevertheless, by replacing \eqref{eq:isolated.def} with \eqref{eq:indicator.morse}
and making minor modifications to the proofs of \cref{cor:mdp.vector} and \cref{cor:lil.scalar},
we can obtain \cref{cor:mdp.morse.vector} and \cref{cor:lil.morse.vector}.
The proofs of \cref{cor:mdp.morse.vector} and \cref{cor:lil.morse.vector} are given in \cref{sec:morse.proof}.

\end{example}

\begin{remark}
The LDP for the first persistent Betti number for the alpha complex in dimension two ($d=2$) has been established by Hirsch and Owada (Theorem 4.1 in \cite{ho2023ldp}).
However, their arguments cannot be applied to the MDP and LIL targeted in this paper,
which necessitates the development of further techniques.
\end{remark}

\subsection{General notation}
Finally, we collect some general notation used throughout this paper.
For vectors $\bfu = (u_1, \dots, u_m), \bfv = (v_1, \dots, v_m)$ in  $\reals^m$, we denote the Euclidean inner product by $\iprod{\bfu}{\bfv} \coloneqq \sum_{i=1}^m u_i v_i$,
and the Euclidean norm on $\reals^d$ by $\norm{\bfu}_{\reals^m} \coloneqq \iprod{\bfu}{\bfu}^{1/2}$.
Let $\bfe_i \coloneqq (0,\dots,1,\dots,0) \in \reals^m$ be the $i$-th standard unit vector in $\reals^m$.
Let $Q(a, z) \coloneqq a(\unitcube + z) $ for $a > 0$ and $z \in \integers^d$.
For a Borel set $A \subset \reals^d $
and $a > 0$,
we define $A^{(a)} \coloneqq \{x \in \reals^d : \inf_{y \in A} \norm{x - y}_{\reals^d} \leq a \} $.
For $B \in \naturals $ and $w \in \{0, 1, \dots, B-1\}^{d} $,
let $B\integers^d + w \coloneq \{Bz + w : z \in \integers^d \} $.
We let $\ball{x}{a} \coloneqq \{y \in \reals^d: \norm{x - y}_{\reals^d} \leq a \} $ denote the $d$-dimensional ball,
and for $\bfx = (x_1, \dots, x_M) \in (\reals^d)^M $ and $a > 0$,
we write $\ball{\bfx}{a} = \ball{\{x_1, \dots, x_M\}}{a} \coloneqq \bigcup_{j=1}^M \ball{x_j}{a} $.
We let $\lexineq$ denote the lexicographic order, 
and for a finite set $\calX \subset \reals^d $,
we use $\lex_j(\calX)$ to represent the $j$-th element of $\calX$ with respect to this order;
that is, if $\calX = \{x_1, \dots, x_M\} $ is ordered such that $x_1 \lexineq \dots \lexineq x_M $,
then $\lex_j(\calX) = x_j $.
For $x \in \reals$,
let $\rounddown{x} $ denote the largest integer less than or equal to $x$,
and let $\roundup{x}$ denote the smallest integer greater than or equal to $x$.
Let
\begin{align}
  \tailfunc(x) \coloneqq 1 + x - \sqrt{1 + 2x}, \quad x > 0. \label{eq:tailfunc}
\end{align}
Note that
\begin{align}
  \tailfunc(x) >
  \begin{cases}
  x^2/4 & \text{for } 0 < x \leq 1,\\ 
  x/4  & \text{for } x \geq 1
  \end{cases}, \label{eq:tailfunc.lower.bound}
\end{align}
and for all $s, a, c > 0$, it holds that
\begin{align}
  \inf_{\lambda \in (0, c^{-1})} \ab(-\lambda s + \frac{a\lambda^2}{2(1 - c\lambda)})
  = - \frac{a}{c^2}\tailfunc\ab(\frac{cs}{a}).
\end{align}
For a map $\map{f}{E}{\reals^m} $,
we define $\zeroed{f}(\bfu) \coloneqq \indicator[none]{\bfu \neq \bfzero_m} f(\bfu)$ for $\bfu \in E $.
For a measurable function $f$ on $E$ and a signed measure $\nu \in \signedmeas(E)$,
we write $\pairing{f}{\nu} \coloneqq \int_E f \,d\nu$.
For a function $f \colon E \to \reals$, $\supnorm{f} \coloneqq \sup_{\bfu \in E} \abs{f(\bfu)}$ denotes its supremum norm,
and for a smooth function $f$,
$\nabla f \coloneqq (\partial f / \partial x_i)_{1\leq i \leq m} $ and
$\suprdnorm{\nabla f} \coloneqq \supnorm{ \norm{\nabla f}_{\reals^m}}$.
Let $\pppentire$ be a homogeneous Poisson point process on $\reals^d$ with unit intensity, and $W_n \coloneqq [0, n^{1/d}]^d$,
Throughout this paper, $C > 0$ denotes a generic positive constant that may change from line to line.

\section{Proof of the moderate deviation principles}\label{sec:pf.mdp}

In this section, we establish \cref{thm:mdp.measure} and \cref{cor:mdp.vector}.
Following the approach in \cite{ho2023ldp}, we introduce several auxiliary processes.
Let $Q_{n, z}^{*} \coloneqq Q(\ldpspeed^{-1/d}, z)$ for $z \in \integers^d$
and
$I_n^{*} \coloneqq \{z \in \integers^d : Q_{n,z}^{*} \subset \unitcube \}$.
For a finite set $\calX \subset \reals^d$,
we define
\begin{align}
  \eta_{n,z}(\calX) &\coloneqq \sum_{\calY \subset \calX \cap Q_{n,z}^{*}, \card{\calY}=k} \isolated[n]{\calY}{\calX \cap Q_{n,z}^{*}}{t} \indicator[none]{H_n(\calY) \neq \bfzero_{m}} \diracdelta{H_n(\calY)}, \\
  \eta_n(\calX) &\coloneqq \sum_{z \in I_n^{*}} \eta_{n,z}(\calX), \quad
  \xi_n(\calX)\coloneqq \sum_{\calY \subset \calX, \card{\calY}=k} \isolated[n]{\calY}{\calX}{t} \indicator[none]{H_n(\calY) \neq \bfzero_{m}} \diracdelta{H_n(\calY)}.
\end{align}

The proofs of \cref{thm:mdp.measure} and \cref{cor:mdp.vector}
follow readily from the lemmas established in the following subsections
together with standard techniques from large deviation theory.
We now prove \cref{thm:mdp.measure} and \cref{cor:mdp.vector},
assuming for the moment the lemmas to be established later.

\begin{proof}[Proof of \cref{thm:mdp.measure} and \cref{cor:mdp.vector}]
First, by \cref{prop:mdp.eta}, the sequence $\{\mdpfactor^{-1} \centered{\eta_n(\ppp_n)}\}_{n \geq 1} $ satisfies the same LDP as in \cref{thm:mdp.measure}, which is obtained by applying the G\"{a}rtner--Ellis theorem and the Dawson--G\"{a}rtner theorem.
Next, \cref{prop:exp.eq.eta.xi} ensures that $\{\mdpfactor^{-1} \centered{\eta_n(\ppp_n)}\}_{n \geq 1}$ and $\{\mdpfactor^{-1} \centered{\xi_n(\ppp_n)}\}_{n \geq 1}$ are exponentially equivalent,
and $\{\mdpfactor^{-1} \centered{\xi_n(\ppp_n)}\}_{n \geq 1}$ and $\{\mdpfactor^{-1} \centered{\kappa_n(\ppp_n)}\}_{n \geq 1}$ are also exponentially equivalent by \cref{prop:exp.eq.xi.kappa}.
Note that this exponential equivalence is first established on $\reals^m$, and then extended to $\signedmeas(E)$ by applying the Dawson--G\"{a}rtner theorem.
Consequently, the LDP holds for $\{\mdpfactor^{-1} \centered{\kappa_n(\ppp_n)}\}_{n \geq 1} $, which completes the proof for the Poisson point process case in \cref{thm:mdp.measure}.
Furthermore, applying the contraction principle to $\{\mdpfactor^{-1} \centered{\kappa_n(\ppp_n)}\}_{n \geq 1} $ via \cref{lem:contraction_cutoff}, we obtain the LDP for $\{\mdpfactor^{-1} \centered{T_n(\ppp_n)}\}_{n \geq 1} $.
Note that since the contraction map employed in \cref{lem:contraction_cutoff} is discontinuous, the cutoff procedure requires justification.
Finally, \cref{prop:exp.eq.ppp.bpp} implies the exponential equivalence of $\{\mdpfactor^{-1} \centered{\kappa_n(\ppp_n)}\}_{n \geq 1} $ and $\{\mdpfactor^{-1} \centered{\kappa_n(\bpp_n)}\}_{n \geq 1}$, as well as that of $\{\mdpfactor^{-1} \centered{T_n(\ppp_n)}\}_{n \geq 1} $ and $\{\mdpfactor^{-1} \centered{T_n(\bpp_n)}\}_{n \geq 1}$.
This completes the proof of \cref{thm:mdp.measure} and \cref{cor:mdp.vector} for each point process.
\end{proof}

\subsection{The Poisson case}
In this subsection, we focus on the Poisson point process $\ppp_n$ only. 
For simplicity of notation, we omit $\ppp_n$ from the notation; for instance, $\eta_n(\ppp_n) $ is abbreviated to $\eta_n$.

We first establish a technical proposition concerning the asymptotic behavior of the moment generating function
for a sequence of random variables with uniformly bounded exponential moments.

\begin{proposition}
\label{prop:exp.moment.finite}
Let $\Bab{Z_n}_{n \geq 1}$ be a sequence of $\reals$-valued random variables satisfying $\sup_{n \geq 1}\expect[none]{\exp(\lambda \abs{Z_n})} < \infty $ for some $\lambda > 0$.
Let $\{a_n\}_{n \geq 1} $ be a sequence of positive numbers such that $a_n \to 0 $ as $n \to \infty$, and let
\begin{align}
  \epsilon_n \coloneqq \expect[none]{\exp(a_n  \centered{Z_n})} - \ab\Big(1 + \frac{a_n^2}{2}\expect[big]{(\centered{Z_n})^2}).
\end{align}
Then, $\epsilon_n = \lorder(a_n^3) $ as $n \to \infty$.
\end{proposition}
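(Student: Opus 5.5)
The plan is a Taylor expansion of the exponential with an integral (Lagrange) remainder, controlled by the uniform exponential moment. Write $Y_n \coloneqq \centered{Z_n}$. Since $\expect[none]{Y_n} = 0$, for $x \in \reals$ we have
\begin{align}
  e^{x} = 1 + x + \frac{x^2}{2} + R(x), \qquad \abs{R(x)} \leq \frac{\abs{x}^3}{6} e^{\abs{x}}.
\end{align}
Applying this with $x = a_n Y_n$ and taking expectations kills the linear term. It gives
\begin{align}
  \abs{\epsilon_n} = \abs{\expect[none]{R(a_n Y_n)}} \leq \frac{a_n^3}{6} \expect[big]{\abs{Y_n}^3 e^{a_n \abs{Y_n}}}.
\end{align}
So it suffices to show that $\sup_{n \geq n_0} \expect[none]{\abs{Y_n}^3 e^{a_n \abs{Y_n}}} < \infty$ for some $n_0$.

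First, the exponential moment transfers from $Z_n$ to $Y_n$. Let $M \coloneqq \sup_n \expect[none]{e^{\lambda \abs{Z_n}}} < \infty$. Jensen's inequality gives $e^{\lambda \abs{\expect[none]{Z_n}}} \leq \expect[none]{e^{\lambda\abs{Z_n}}} \leq M$, so $\abs{\expect[none]{Z_n}} \leq \lambda^{-1}\log M$ uniformly in $n$. Combined with $\abs{Y_n} \leq \abs{Z_n} + \abs{\expect[none]{Z_n}}$, this yields $\sup_n \expect[none]{e^{\lambda \abs{Y_n}}} \leq M^2 < \infty$.

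Next, choose $n_0$ with $a_n \leq \lambda/2$ for $n \geq n_0$; this is possible since $a_n \to 0$. Use the elementary bound $\abs{y}^3 \leq C_\lambda e^{\lambda \abs{y}/2}$, with $C_\lambda = (6/\lambda)^3 e^{-3}$. Then for $n \geq n_0$,
\begin{align}
  \expect[none]{\abs{Y_n}^3 e^{a_n\abs{Y_n}}} \leq C_\lambda \expect[none]{e^{\lambda\abs{Y_n}}} \leq C_\lambda M^2.
\end{align}
Hence $\abs{\epsilon_n} \leq C a_n^3$ for $n \geq n_0$, which is $\epsilon_n = \lorder(a_n^3)$. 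The second moment $\expect[none]{Y_n^2}$ appearing in the definition of $\epsilon_n$ is finite by the same bound, so $\epsilon_n$ is well defined.

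There is no real obstacle. The only points needing care are that the uniform exponential moment of $Z_n$ passes to the centered variable, which is handled by bounding the means, and that the remainder is dominated by absorbing the factor $e^{a_n\abs{Y_n}}$ into $e^{\lambda\abs{Y_n}}$ once $a_n \leq \lambda/2$.
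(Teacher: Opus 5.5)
Your proof is correct and follows essentially the same route as the paper: a cubic Taylor remainder bound with the linear term removed by centering, absorption of the factor $e^{a_n|\overline{Z_n}|}$ into $e^{\lambda|\overline{Z_n}|}$ once $a_n$ is small, and Jensen's inequality to get $\mathbb{E}[e^{\lambda|\overline{Z_n}|}]\le (\sup_n\mathbb{E}[e^{\lambda|Z_n|}])^2$. The only difference is in the constants: the paper takes $a_n<\lambda/3$ and uses $|x|^3e^{|x|}\le e^{3|x|}$, where you take $a_n \le \lambda/2$ with the explicit constant $C_\lambda$.
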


\begin{proof}
For sufficiently large $n$, we have $a_n < \lambda/3 $ and
\begin{align}
  \abs{\epsilon_n} \leq a_n^3 \expect{\abs[none]{\centered{Z_n}}^3 \exp(a_n \abs[none]{\centered{Z_n}})}
  \leq a_n^3 \expect{\abs[none]{\centered{Z_n}}^3 \exp(\lambda/3\abs[none]{\centered{Z_n}})}
  \leq a_n^3 (3/\lambda)^3 \expect{\exp(\lambda \abs[none]{\centered{Z_n}})},
\end{align}
where we used the elementary inequality $\abs{e^{x} - (1 + x + x^2/2) } \leq \abs{x}^3 e^{\abs{x}} \leq e^{3\abs{x}} $ for all $x \in \reals$.
An application of Jensen's inequality yields $\expect[none]{\exp(\lambda \abs[none]{\centered{Z_n}})} \leq \expect[none]{\exp(\lambda \abs{Z_n})} \exp(\lambda \expect[none]{\abs{Z_n}}) \leq \expect[none]{\exp(\lambda \abs{Z_n})}^2 $.
Thus, it follows that
\begin{align}
  \limsup_{n \to \infty} \abs{\epsilon_n} / a_n^3
  \leq \limsup_{n \to \infty} (3/\lambda)^3 \expect{\exp(\lambda \abs{Z_n})}^2 < \infty,
\end{align}
which implies $\epsilon_n = \lorder(a_n^3) $.
\end{proof}

Next, we compute the covariance of geometric statistics under the Poisson point process.
The following proposition provides an explicit representation
of the covariance with an error bound of the required order.

\begin{proposition}
\label{prop:cov}
Let $\map{F_1, F_2}{(\reals^d)^k}{\reals}$ be bounded functionals satisfying conditions \textup{(H1)--(H3)} with $H$ replaced by $F_1$ or $F_2$.
Let $A, A' \subset \unitcube$ be Borel sets, and let $t > 0$, $n  \geq 1$.
Define
\begin{align}
  Z_{i} \coloneqq \sum_{\calY \subset \ppp_n \cap A, \card{\calY}=k} \isolated[n]{\calY}{\ppp_n \cap A'}{t} F_i(r_n^{-1} \calY)
\end{align}
for $i=1,2$. Then,
\begin{align}
  \expect[none]{\centered{Z_1}\, \centered{Z_2}}
 = &\frac{\ldpspeed}{k!} \int_A dx \int_{(\reals^d)^{k-1}} d\bfy\, F_1(\bfzero_d, \bfy) F_2(\bfzero_d, \bfy) \prod_{i=1}^{k-1}\indicator[none]{x + r_n y_i \in A} \\
  &\quad \times \exp\ab\big(-n \lebmeas(A' \cap \ball{\Bab{x} \cup (x + r_n \bfy)}{r_n t}))
  + \mathsf{error},
\end{align}
where
\begin{align}
  \abs{\mathsf{error}}
  \leq C \supnorm{F_1} \supnorm{F_2} \lebmeas(A) \ldpspeed \nrnd (1 + \nrnd)^{k-1}
\end{align}
and $C > 0$ is a constant depending only on $d, k, t $, and $L$.
\end{proposition}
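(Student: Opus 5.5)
We will compute $\expect[none]{Z_1 Z_2}$ and $\expect[none]{Z_1}\expect[none]{Z_2}$ with the multivariate Mecke formula and subtract. Expanding the product gives a double sum over pairs $(\calY_1,\calY_2)$ of $k$-subsets of $\ppp_n\cap A$, which we split according to $j \coloneqq \card{\calY_1\cap\calY_2}\in\{0,1,\dots,k\}$. For each $j$, the Mecke formula turns the sum into an integral over $(2k-j)$ points of $A$, with intensity $n^{2k-j}/(j!((k-j)!)^2)$, and integrand $F_1F_2$ times the probability that both isolation events hold in $\ppp_n\cap A'$ after adding the points. The term $j=k$ is the main term. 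After the substitution $x_1=x$, $x_i=x+r_ny_{i-1}$ it gives $\frac{n^k r_n^{d(k-1)}}{k!}\int_A dx\int d\bfy\,F_1F_2\prod\indicator[none]{x+r_ny_i\in A}\,e^{-n\lebmeas(A'\cap \ball{\cdot}{r_nt})}$. Here the intersection of the two isolation events is a single event, because both use the same $t$, and its probability is the void probability. This is exactly the stated leading term, since $\ldpspeed=n^kr_n^{d(k-1)}$.

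Next we bound the remaining contributions. For $1\le j\le k-1$, (H3) forces all $2k-j$ points into a cluster of diameter at most $2Lr_n$ (the two $k$-sets share a point). The integral is therefore bounded by $\supnorm{F_1}\supnorm{F_2}\lebmeas(A)\, n^{2k-j} (Cr_n^d)^{2k-j-1}$, dropping the isolation probabilities, which are at most $1$. This equals $C\lebmeas(A)\ldpspeed (\nrnd)^{k-j}$. Summing over $j$ gives at most $C\lebmeas(A)\ldpspeed\nrnd(1+\nrnd)^{k-1}$.

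For $j=0$ we compare with $\expect[none]{Z_1}\expect[none]{Z_2}$, which by Mecke equals the integral over $(x_1,\dots,x_k)$ and $(x_1',\dots,x_k')$ with the product of two separate void probabilities. The $j=0$ part of $\expect[none]{Z_1Z_2}$ differs in two ways. First, its isolation probability is the void probability of the union of the two neighbourhoods. Second, it carries extra factors $c(\calY_1,\calY_2;r_nt)\,c(\calY_2,\calY_1;r_nt)$, since each cluster must also avoid the other's points. When the two clusters are more than about $2(L+t)r_n$ apart, the extra factors equal $1$. The two neighbourhoods are then disjoint, so by Poisson independence the integrands coincide and the difference vanishes. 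Only configurations where some point of $\calY_2$ lies within $C r_n$ of $\calY_1$ remain. There both integrands are bounded by $\supnorm{F_1}\supnorm{F_2}$, and the volume of such configurations is $\lebmeas(A)(Cr_n^d)^{2k-1}$. Hence the $j=0$ difference is at most $C\supnorm{F_1}\supnorm{F_2}\lebmeas(A)n^{2k}r_n^{d(2k-1)} = C\lebmeas(A)\ldpspeed\,(\nrnd)^{k}$, which is absorbed into the error bound. Note that we never need to evaluate $(\expect[none]{Z_1}\expect[none]{Z_2})$ itself beyond this pairwise comparison, so no cancellation issues arise.

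The main obstacle is the $j=0$ decorrelation step. The careful points are verifying that the two integrands agree exactly whenever the clusters are far apart, including the cross-isolation indicators, and choosing the constant $C$ so that it depends only on $d,k,t,L$. The constants come from the volume of $\ball{\bfzero_d}{(2L+2t)}$ and combinatorial factors. Because $\supnorm{F_i}$ appears linearly in every bound and $\lebmeas(A)$ comes from integrating the anchor point, the claimed form of the error follows.
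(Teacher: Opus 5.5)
Your proposal is correct and follows essentially the same route as the paper. Both arguments expand $\mathbb{E}[Z_1Z_2]$ via the Mecke formula by overlap size and take the full-overlap term ($j=k$) as the main term. Both bound the partial overlaps $1\le j\le k-1$ by (H3)-localization, giving $\rho_n (n r_n^d)^{k-j}$, and compare the disjoint term ($j=0$) with $\mathbb{E}[Z_1]\,\mathbb{E}[Z_2]$, the difference vanishing unless the $r_n t$-neighbourhoods of the two clusters meet. One cosmetic point: the cross-isolation factors in the $j=0$ term should involve only the other cluster's points that lie in $A'$, which does not affect your bound.
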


\begin{proof}
By the Mecke formula (see, e.g., Chapter 4 in \cite{lp2018lecturesppp}), we can write
\begin{align}
  \expect[none]{ Z_1 Z_2 }
  &= \expect[Bigg]{ \sum_{\calY, \calY' \subset \ppp_n \cap A}
  \indicator[none]{\card{\calY}=\card{\calY'}=k}
  \isolated[n]{\calY}{\ppp_n \cap A'}{t}
  \isolated[n]{\calY'}{\ppp_n \cap A'}{t} F_1(r_n^{-1}\calY) F_2(r_n^{-1}\calY') } \\
  &= \sum_{l=0}^{k} \frac{n^{2k-l}}{l!((k-l)!)^2} \int_{A^{l}} d \bfx_0 \int_{A^{k-l}} d \bfx_1 \int_{A^{k-l}} d \bfx_2\, F_1(r_n^{-1}(\bfx_0 \cup \bfx_1)) F_2(r_n^{-1}(\bfx_0 \cup \bfx_2)) \\
  &\quad \times \expect[none]{\isolated[n]{\bfx_0 \cup \bfx_1}{(\ppp_n \cup \bfx_2) \cap A'}{t} \isolated[n]{\bfx_0 \cup \bfx_2}{(\ppp_n \cup \bfx_1) \cap A'}{t}}, \label{eq:cov.cross}
\end{align}
and
\begin{align}
  &\expect[none]{Z_1} \expect[none]{Z_2}\\
  &\quad =\frac{n^{2k}}{(k!)^2} \int_{A^{k}} d \bfx_1 \int_{A^{k}} d \bfx_2\, 
  F_1(r_n^{-1} \bfx_1) F_2(r_n^{-1} \bfx_2)
  \expect[none]{\isolated[n]{\bfx_1}{\ppp_n \cap A'}{t}} \expect[none]{\isolated[n]{\bfx_2}{\ppp_n \cap A'}{t}}. \label{eq:cov.expect.prod}
\end{align}

For each $l = 1, \dots, k-1$, the integral in the sum of \eqref{eq:cov.cross} is bounded by
\begin{align}
  & \supnorm{F_1} \supnorm{F_2}
  \int_{A^l} d \bfx_0 \int_{A^{k-l}} d\bfx_1 \int_{A^{k-l}} d\bfx_2\,
  \indicator[none]{\diam(\bfx_0 \cup \bfx_1) \leq L r_n} \indicator[none]{\diam(\bfx_0 \cup \bfx_2) \leq L r_n} \\
  &\quad = \supnorm{F_1} \supnorm{F_2} r_n^{d(2k-l-1)} \int_{A} dx \int_{(\reals^d)^{l-1}} d\bfy_0 \int_{(\reals^d)^{k-l}} d \bfy_1 \int_{(\reals^d)^{k-l}} d \bfy_2 \\
  &\qquad \times \indicator[none]{\diam(\bfzero_d, \bfy_0 \cup \bfy_1) \leq L} \indicator[none]{\diam(\bfzero_d, \bfy_0 \cup \bfy_2) \leq L}\\
  &\qquad \times \prod_{i=1}^{l-1}\indicator[none]{x + r_n y_{0,i} \in A}
  \prod_{i=1}^{k-l}\indicator[none]{x + r_n y_{1,i} \in A}
  \prod_{i=1}^{k-l}\indicator[none]{x + r_n y_{2,i} \in A}\\
  &\quad\leq \supnorm{F_1} \supnorm{F_2} r_n^{d(2k-l-1)} \lebmeas(A) \lebmeas(\ball{\bfzero_d}{L})^{2k-1}.
\end{align}
For $l=k$, the corresponding integral in \eqref{eq:cov.cross} equals
\begin{align}
  &r_n^{d(k-1)} \int_{A} dx \int_{(\reals^d)^{k-1}} d\bfy\,
  F_1(\bfzero_d, \bfy) F_2(\bfzero_d, \bfy) \prod_{i=1}^{k-1}\indicator[none]{x + r_n y_i \in A} \\
  &\quad \times \exp\ab\big(-n \lebmeas(A' \cap \ball{\Bab{x} \cup (x + r_n \bfy)}{r_n t})).
\end{align}

The term $\expect[none]{\isolated[n]{\bfx_1}{(\ppp_n \cup \bfx_2) \cap A'}{t} \isolated[n]{\bfx_2}{(\ppp_n \cup \bfx_1) \cap A'}{t}}$ in the integral for $l=0$ in \eqref{eq:cov.cross} reduces to
\begin{align}
  \expect[none]{\isolated[n]{\bfx_1}{\ppp_n \cap A'}{t}} \expect[none]{\isolated[n]{\bfx_2}{\ppp_n \cap A'}{t}}
\end{align}
whenever $\ball{\bfx_1}{r_n t} \cap \ball{\bfx_2}{r_n t} = \emptyset $.
Thus, the difference between the $l=0$ term in \eqref{eq:cov.cross} and \eqref{eq:cov.expect.prod} is bounded by
\begin{align}
  &\frac{n^{2k}}{(k!)^2} \supnorm{F_1} \supnorm{F_2}
  \int_{A^k} d\bfx_1 \int_{A^k} d\bfx_2\,
  \indicator[none]{\diam(\bfx_1) \leq L r_n}\indicator[none]{\diam(\bfx_2) \leq L r_n}\\
  &\qquad \times \indicator[none]{\ball{\bfx_1}{r_n t} \cap \ball{\bfx_2}{r_n t} \neq \emptyset} \\
  &\quad\leq \frac{n^{2k}}{(k!)^2} \supnorm{F_1} \supnorm{F_2}
  r_n^{d(2k-1)} \int_{A} dx \int_{(\reals^d)^{k-1}} d\bfy_1 \int_{(\reals^d)^k} d \bfy_2\, \\
  &\quad \qquad \times \prod_{i=1}^{k-1}\indicator[none]{\norm[none]{y_{1, i}}_{\reals^d} \leq L}
  \prod_{i=1}^{k}\indicator[none]{\norm[norm]{y_{2, i}}_{\reals^d} \leq L + 2t} \\
  &\quad \leq \frac{\ldpspeed (\nrnd)^k}{(k!)^2} \supnorm{F_1} \supnorm{F_2}
  \lebmeas(A) \lebmeas(\ball{\bfzero_d}{L})^{k-1} \lebmeas(\ball{\bfzero_d}{L+2t})^{k}.
\end{align}

Combining these estimates, we obtain
\begin{align}
  \expect[none]{\centered{Z_1}\, \centered{Z_2}}
  &= \frac{\ldpspeed}{k!} \int_A dx \int_{(\reals^d)^{k-1}} d\bfy\, F_1(\bfzero_d, \bfy) F_2(\bfzero_d, \bfy) \prod_{i=1}^{k-1}\indicator[none]{x + r_n y_i \in A} \\
  &\quad \times \exp\ab\big(-n \lebmeas(A' \cap \ball{\Bab{x} \cup (x + r_n \bfy)}{r_n t})) + \mathsf{error},
\end{align}
where the error term satisfies
\begin{align}
  \abs{\mathsf{error}} &\leq \lebmeas(\ball{\bfzero_d}{L+2t})^{2k-1} \supnorm{F_1} \supnorm{F_2} \lebmeas(A) \ldpspeed \sum_{l=0}^{k-1} \frac{(\nrnd)^{k-l}}{l!((k-l)!)^2} \\
  &\leq \lebmeas(\ball{\bfzero_d}{L+2t})^{2k-1} \supnorm{F_1} \supnorm{F_2} \lebmeas(A) \ldpspeed \frac{1}{(k-1)!} \sum_{l=0}^{k-1} \frac{(k-1)! (\nrnd)^{k-l}}{l!(k-1-l)!}\\
  &= C \supnorm{F_1} \supnorm{F_2} \lebmeas(A) \ldpspeed (\nrnd) (1 + \nrnd)^{k-1}.
\end{align}
This concludes the proof of \cref{prop:cov}.
\end{proof}

Combining \cref{prop:exp.moment.finite,prop:cov},
we compute the following exponential moment related to $\centered{\eta_n}$,
which plays a fundamental role when applying the G\"{a}rtner--Ellis theorem.

\begin{proposition}
\label{prop:ge.eta}
Let $M \geq 1$ and $f_1, \dots, f_M \in \bddmble(E)$.
Then, for any $\lambda = \ab(\lambda_1, \dots, \lambda_M) \in \reals^{M}$,
\begin{align}
  \lim_{n \to \infty} \frac{\ldpspeed}{\mdpfactor^2} \log \expect[Big]{\exp\ab\Big(\frac{\mdpfactor}{\ldpspeed} \sum_{j=1}^M \lambda_j \pairing{f_j}{\centered{\eta_n}} )}
  = \frac{1}{2}\sum_{i,j=1}^M \lambda_i \lambda_j \int_{E} f_i f_j \,d\tau. \label{eq:ge.eta}
\end{align}
\end{proposition}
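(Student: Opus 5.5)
The plan is to exploit independence across the cubes $Q_{n,z}^*$. Since $\ppp_n$ restricted to disjoint cubes gives independent processes, and $\eta_{n,z}$ depends only on $\ppp_n \cap Q_{n,z}^*$, the random variables
\begin{align}
  Z_{n,z} \coloneqq \sum_{j=1}^M \lambda_j \pairing{f_j}{\eta_{n,z}}
\end{align}
are independent over $z \in I_n^*$. They are also identically distributed by translation invariance of $\ppp_n$ and (H2). Hence the logarithm of the expectation in \eqref{eq:ge.eta} equals $\card{I_n^*}\log \expect[none]{\exp(a_n \centered{Z_{n,0}})}$ with $a_n \coloneqq \mdpfactor/\ldpspeed \to 0$. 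Note $\card{I_n^*} \sim \ldpspeed$.

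To apply \cref{prop:exp.moment.finite}, I need $\sup_n \expect[none]{\exp(\lambda\abs{Z_{n,0}})}<\infty$ for some $\lambda>0$. Since $\abs{Z_{n,0}} \le C\binom{N}{k}$ with $N = \ppp_n(Q_{n,0}^*)$, and $N$ is Poisson with mean $n\ldpspeed^{-1}$, this does not directly give an exponential moment. However, the indicator $\indicator[none]{H_n(\calY)\neq\bfzero_m}$ forces $\calY$ to have diameter at most $Lr_n$. Moreover, the isolation indicator with $t$ (where $L>t$) prevents many points from contributing simultaneously. Each point lies in at most one isolated $k$-cluster, because isolated $k$-sets are connected components at scale $r_n t$ only if connected. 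More carefully: two distinct isolated $k$-subsets $\calY,\calY'$ that share a point are impossible, since a point of $\calY'\setminus\calY$ would need to be at distance $\ge r_n t$ from all of $\calY$. So $\eta_{n,0}$ has total mass at most $N/k$, giving $\abs{Z_{n,0}}\le CN$. The bound then follows from $\expect[none]{e^{\lambda N}} = \exp(n\ldpspeed^{-1}(e^\lambda-1))$, with $n/\ldpspeed = (\nrnd)^{-(k-1)}\cdot(\cdots)$.

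Here lies the main obstacle: $n\ldpspeed^{-1}=n^{1-k}r_n^{-d(k-1)} = (\nrnd)^{-(k-1)}\to\infty$, so $N$ is large and this crude bound fails. I would instead use that only points lying in isolated clusters count. Then I would bound $\expect[none]{\exp(\lambda \eta_{n,0}(E))}$ via a Poisson domination argument. Counting $k$-sets of diameter $\le Lr_n$ in $Q^*_{n,0}$, the expected number is $O(1)$, and the factorial moments of order $p$ are $O(C^p)$, with diagrams of overlapping sets contributing extra powers of $\nrnd\to0$. This yields a uniformly bounded exponential moment. A cleaner alternative is to apply \cref{prop:exp.moment.finite} to a modified statement: the proof only requires $\expect[none]{\abs{\centered{Z}}^3 e^{a_n\abs{\centered Z}}}$ bounded, which the factorial-moment bound provides.

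Given this, \cref{prop:exp.moment.finite} and $\log(1+x)=x+O(x^2)$ give
\begin{align}
  \card{I_n^*}\log\expect[none]{e^{a_n\centered{Z_{n,0}}}} = \card{I_n^*}\ab\Big(\frac{a_n^2}{2}\mathrm{Var}(Z_{n,0}) + O(a_n^3)).
\end{align}
Multiplying by $\ldpspeed/\mdpfactor^2 = 1/(a_n^2\ldpspeed)$ leaves $\frac{\card{I_n^*}}{2\ldpspeed}\mathrm{Var}(Z_{n,0}) + O(a_n)$. For the variance, I apply \cref{prop:cov} with $A=A'=Q_{n,0}^*$ and $F_i=\sum_j\lambda_j f_j\circ H$, zeroed where $H=\bfzero_m$; these are bounded and satisfy (H1)–(H3). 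The error term is $O(\ldpspeed^{-1}\ldpspeed\,\nrnd)\to0$. In the main term, the exponential factor $\to1$ because $n r_n^d\to0$. The boundary indicators $\prod\indicator[none]{x+r_ny_i\in A}$ fail only on a set of $x$ of measure $O(r_n\ldpspeed^{-(d-1)/d})$, which is negligible relative to $\ldpspeed^{-1}$ because $r_n\ldpspeed^{1/d}\to0$ (equivalent to $\nkrndkmo r_n^d\to0$, i.e.\ $n^k r_n^{dk}\to0$, which follows from $\nrnd\to0$). By dominated convergence using (H3), $\mathrm{Var}(Z_{n,0}) \sim \ldpspeed^{-1}\frac{1}{k!}\int(\sum_j\lambda_jf_j(H(\bfzero_d,\bfy)))^2\indicator[none]{H\neq\bfzero_m}d\bfy$. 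This equals $\ldpspeed^{-1}\sum\lambda_i\lambda_j\int f_if_j\,d\tau$, which gives \eqref{eq:ge.eta}.
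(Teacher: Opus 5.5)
Your architecture matches the paper's: i.i.d.\ block variables over $z\in I_n^*$, the second-order expansion from \cref{prop:exp.moment.finite}, and \cref{prop:cov} with $A=A'=Q_{n,\bfzero_d}^*$ for the covariance, together with $\card{I_n^*}/\ldpspeed\to1$. The gap is the one non-routine input, a bound on $\sup_n\mathbb{E}[e^{\lambda\abs{Z_{n,0}}}]$. Neither of your routes delivers it.

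\textbf{The overlap claim is false.} Isolated $k$-sets can overlap, because isolation at scale $r_nt$ does not force connectivity at that scale. A point of $\mathcal{Y}'\setminus\mathcal{Y}$ lying far from $\mathcal{Y}$ is therefore no contradiction. For $k=2$, three points with pairwise distances in $[r_nt,Lr_n]$ form three isolated pairs, and each point lies in two of them.

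\textbf{The domination by $S$ fails more seriously.} Let $S$ be the number of $k$-subsets of $\ppp_n\cap Q_{n,\bfzero_d}^*$ with diameter at most $Lr_n$. Then $\mathbb{E}[e^{\lambda S}]=\infty$ for every $n$ and every $\lambda>0$. Indeed, a fixed ball of diameter $Lr_n$ inside the cube that contains $j$ points contributes $\binom{j}{k}\ge c j^2$ such subsets. The Poisson probability of $j$ points there is only of order $\mu^j/j!$, which cannot compensate $e^{\lambda c j^2}$.

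Consequently the factorial moments of $S$ are not $O(C^p)$ uniformly in $p$. Your diagram expansion gives extra powers of $\nrnd$ for each fixed $p$, but the number and multiplicity of overlap diagrams grow superexponentially in $p$. For the same reason, domination by $S$ cannot bound $\mathbb{E}[\abs{\centered{Z}}^3e^{a_n\abs{\centered{Z}}}]$, since $\mathbb{E}[S^3e^{aS}]=\infty$ for every $a>0$. So your ``cleaner alternative'' does not help either.

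\textbf{The fix is to keep the isolation.} Tile $\reals^d$ by cubes of side $r_nt/\sqrt d$.
\begin{itemize}
\item Almost surely, an isolated $k$-set is determined by the cubes containing its lexicographically ordered points; this is the uniqueness argument in the proof of \cref{prop:exp.moment.common}.
\item Hence at most a constant $M_0=M_0(d,k,L,t)$ of isolated $k$-sets with $H_n\neq\bfzero_m$ have their first point in a given cube $Q_w$.
\item There are none unless $Q_w^{(Lr_n)}$ contains at least $k$ points.
\end{itemize}
So $\abs{Z_{n,0}}$ is at most $C$ times a sum of about $(\nrnd)^{-k}$ finitely dependent indicators, each of probability $O((\nrnd)^k)$. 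Their exponential moments are bounded uniformly in $n$: split the indices into residue classes and apply H\"older as in \cref{prop:exp.moment.common}. The paper itself simply imports this bound from (5.16) of \cite{ho2023ldp}.

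\textbf{Normalization slip.} Since $\lebmeas(Q_{n,\bfzero_d}^*)=\ldpspeed^{-1}$ cancels the prefactor $\ldpspeed$ in \cref{prop:cov}, one has $\mathrm{Var}(Z_{n,0})\to\sum_{i,j}\lambda_i\lambda_j\int_E f_if_j\,d\tau$, with no factor $\ldpspeed^{-1}$. With the asymptotics you state, your expression $\frac{\card{I_n^*}}{2\ldpspeed}\mathrm{Var}(Z_{n,0})$ would tend to $0$. Your $O(\nrnd)$ error bound, and your handling of the boundary indicators and of $\exp(-n\lebmeas(\cdot))$, are consistent with the correct normalization. Once these two points are repaired, your argument coincides with the paper's.
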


\begin{proof}
Since $\{\eta_{n,z}\}_{z \in I_n^{*}} $ is a family of i.i.d. random measures, it follows that
\begin{align}
  \frac{\ldpspeed}{\mdpfactor^2} \log \expect[Big]{ \exp\ab\Big(\frac{\mdpfactor}{\ldpspeed} \sum_{j=1}^M \lambda_j \pairing{f_j}{\centered{\eta_n}} ) }
  &= \frac{\ldpspeed \card{I_n^{*}}}{\mdpfactor^2} \log \expect[Big]{ \exp\ab\Big(\frac{\mdpfactor}{\ldpspeed} \sum_{j=1}^M \lambda_j \pairing{f_j}{\centered{\eta_{n,\bfzero_d}}} ) } \\
  &= \frac{\ldpspeed \card{I_n^{*}}}{\mdpfactor^2} \log \ab\bigg(1 + \frac{1}{2} \frac{\mdpfactor^2}{\ldpspeed^2} \expect[Big]{\ab\Big(\sum_{j=1}^M \lambda_j \pairing{f_j}{\centered{\eta_{n,\bfzero_d}}})^2} + \epsilon_n),
\end{align}
where
\begin{align}
  \epsilon_n \coloneqq \expect[Big]{\exp\ab\Big(\frac{\mdpfactor}{\ldpspeed} \sum_{j=1}^M \lambda_j \pairing{f_j}{\centered{\eta_{n,\bfzero_d}}} )}
  - \ab\bigg(1 + \frac{1}{2} \frac{\mdpfactor^2}{\ldpspeed^2} \expect[Big]{\ab\Big(\sum_{j=1}^M \lambda_j \pairing{f_j}{\centered{\eta_{n,\bfzero_d}}})^2}).
\end{align}

Note that
\begin{align}
  \abs{\sum_{j=1}^M \lambda_j \pairing{f_j}{\eta_{n,\bfzero_d}}}
  \leq \ab\Big(\max_{j=1,\dots,M} \abs{\lambda_j} \supnorm{f_j}) \sum_{\calY \subset \ppp_n \cap Q_{n,\bfzero_d}^{*}, \card{\calY}=k} \isolated[n]{\calY}{\ppp_n \cap Q_{n,\bfzero_d}^{*}}{t} \indicator[none]{H_n(\calY) \neq \bfzero_{m}}, \label{eq:eta.zero.bound}
\end{align}
Since the exponential moments of the right-hand side of \eqref{eq:eta.zero.bound} remain uniformly bounded as $n \to \infty$ (see (5.16) in \cite{ho2023ldp}),
\cref{prop:exp.moment.finite} implies that $\epsilon_n = \lorder( (\mdpfactor / \ldpspeed)^3 )$ as $n \to \infty$.
Applying \cref{prop:cov} with $A=A'=Q_{n,\bfzero_d}^{*}$,
as $n \to \infty$, we obtain
\begin{align}
  &\expect[none]{ \pairing{f_i}{\centered{\eta_{n,\bfzero_d}}} \pairing{f_j}{\centered{\eta_{n,\bfzero_d}}} } \\
  &\quad = \frac{\ldpspeed}{k!} \int_{Q_{n, \bfzero_d}^{*}} dx \int_{(\reals^d)^{k-1}} d \bfy\,
  % \indicator[none]{H(\bfzero_d, \bfy) \neq \bfzero_m}
  \zeroed{f_i}(H(\bfzero_d, \bfy)) \zeroed{f_j}(H(\bfzero_d, \bfy)) \\
  &\qquad \times \exp\ab\big(-n \lebmeas(Q_{n,\bfzero_d}^{*} \cap \ball{\Bab{x} \cup (x + r_n \bfy)}{r_n t})) \prod_{i=1}^{k-1}\indicator[none]{x + r_n y_i \in Q_{n,\bfzero_d}^{*}} + \lorder(\nrnd) \\
  &\quad \to \frac{1}{k!} \int_{(\reals^d)^{k-1}} 
  % \indicator[none]{H(\bfzero_d, \bfy) \neq \bfzero_m}
  \zeroed{f_i}(H(\bfzero_d, \bfy)) \zeroed{f_j}(H(\bfzero_d, \bfy)) \,d \bfy
  = \int_{E} f_i(\bfu) f_j(\bfu) \,\tau(d\bfu).
\end{align}
Therefore, since $\card{I_n^{*}}/\ldpspeed \to 1$, the left-hand side of \eqref{eq:ge.eta} is equal to
\begin{align}
  &\lim_{n\to\infty} \frac{\ldpspeed \card{I_n^{*}}}{\mdpfactor^2}
  \log \ab\Big(1 +  \frac{\mdpfactor^2}{2\ldpspeed^2} \sum_{i,j=1}^M \lambda_i \lambda_j \expect[none]{ \pairing{f_i}{\centered{\eta_{n,\bfzero_d}}} \pairing{f_j}{\centered{\eta_{n,\bfzero_d}}} } + \lorder\ab\Big(\frac{\mdpfactor^3}{\ldpspeed^3})) 
  = \frac{1}{2}\sum_{i,j=1}^M \lambda_i \lambda_j \int_{E} f_i f_j \,d\tau,
\end{align}
which establishes \eqref{eq:ge.eta}.
\end{proof}

By virtue of \cref{prop:ge.eta},
we obtain the MDP for the sequence $\{\mdpfactor^{-1} \centered{\eta_n}\}_{n \geq 1}$.

\begin{lemma}
\label{prop:mdp.eta}
The sequence $\{\mdpfactor^{-1} \centered{\eta_n}\}_{n \geq 1} $ satisfies the LDP in $\signedmeas(E)$,
endowed with the $\tau$-topology and the $\sigma$-algebra $\sigmatau$, with speed $\mdpfactor^2 / \ldpspeed $ and rate function $\Lambda$.
\end{lemma}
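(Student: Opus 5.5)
We obtain the LDP for $\{\mdpfactor^{-1}\centered{\eta_n}\}_{n\geq1}$ by the projective-limit method, with the finite-dimensional LDPs supplied by the Gärtner--Ellis theorem through \cref{prop:ge.eta}.

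\emph{Finite-dimensional LDPs.} Fix $f_1,\dots,f_M\in\bddmble(E)$ and consider the $\reals^M$-valued vectors
\[
V_n\coloneqq\bigl(\pairing{f_j}{\mdpfactor^{-1}\centered{\eta_n}}\bigr)_{j\le M}.
\]
With speed $a_n=\mdpfactor^2/\ldpspeed$, the scaled log-moment generating function is
\[
a_n^{-1}\log\expect[none]{\exp(a_n\iprod{\lambda}{V_n})}
=\frac{\ldpspeed}{\mdpfactor^2}\log\expect[Big]{\exp\ab\Big(\frac{\mdpfactor}{\ldpspeed}\sum_j\lambda_j\pairing{f_j}{\centered{\eta_n}})}.
\]
By \cref{prop:ge.eta} this converges to $\frac12\iprod{\lambda}{\Sigma\lambda}$, where $\Sigma_{ij}=\int_E f_if_j\,d\tau$. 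This limit is finite and differentiable on all of $\reals^M$, so the Gärtner--Ellis theorem gives an LDP for $V_n$ with good rate function
\[
J_{f}(x)=\sup_\lambda\bigl(\iprod{\lambda}{x}-\tfrac12\iprod{\lambda}{\Sigma\lambda}\bigr).
\]
Here $J_f$ is quadratic on the range of $\Sigma$ and $+\infty$ off it.

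\emph{Projective limit.} We identify $\signedmeas(E)$ with a subset of the algebraic dual of $\bddmble(E)$, topologised by the projective system of finite subsets $\{f_1,\dots,f_M\}$. The Dawson--Gärtner theorem, in the form used by de Acosta \cite{acosta1994psldp}, then yields an LDP on the algebraic dual $\bddmble(E)'$ with the weak$^*$ topology and the cylinder $\sigma$-field. The rate function is
\[
\Lambda^*(\ell)=\sup_{f\in\bddmble(E)}\bigl(\ell(f)-\tfrac12\int_E f^2\,d\tau\bigr).
\]

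\emph{Identification and restriction.} Two things remain to pass back to $\signedmeas(E)$ with the $\tau$-topology and $\sigmatau$.

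First, we show that $\Lambda^*=\Lambda$ on $\signedmeas(E)$ and that $\Lambda^*(\ell)<\infty$ forces $\ell$ to be given by a measure $\nu\ll\tau$. The condition $\Lambda^*(\ell)<\infty$ gives $|\ell(f)|\le C\|f\|_{L^2(\tau)}$. Since $\tau$ is a finite measure by (H4) with $\bfa=0$, Riesz representation gives $\ell(f)=\int fg\,d\tau$ for some $g\in L^2(\tau)\subset L^1(\tau)$. Hence $\ell$ corresponds to $\nu=g\,\tau\in\signedmeas(E)$, and $\Lambda^*(\ell)=\frac12\int g^2\,d\tau=\Lambda(\nu)$. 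If instead $\nu$ is not absolutely continuous with respect to $\tau$, we test with $f=c\,\indicator{A}$ for a $\tau$-null set $A$ with $\nu(A)\neq0$ and let $c\to\infty$, which gives $\Lambda^*=\infty$.

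Second, we restrict the LDP to $\signedmeas(E)$. The random measures $\mdpfactor^{-1}\centered{\eta_n}$ take values in $\signedmeas(E)$, and the effective domain of $\Lambda^*$ lies inside $\signedmeas(E)$. By Lemma 4.1.5 of \cite{dz2009ldp}, the LDP therefore transfers to $\signedmeas(E)$ with the induced topology, which is exactly the $\tau$-topology, and the cylinder $\sigma$-field traces to $\sigmatau$.

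\emph{Goodness.} Each level set of $\Lambda$ is a bounded set in $L^2(\tau)$, hence weakly compact there. It embeds continuously into $\signedmeas(E)$ with the $\tau$-topology, since $f\in\bddmble(E)\subset L^2(\tau)$, so the level sets are $\tau$-compact.

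The main obstacle is the measurability subtlety: $\sigmatau$ is not the Borel $\sigma$-field. The projective-limit LDP is only asserted for cylinder-measurable sets, so one must argue, as in de Acosta, that the bounds hold for every $A\in\sigmatau$ using interior and closure in the $\tau$-topology. I would follow \cite{acosta1994psldp} closely here, rather than use the Borel version of Dawson--Gärtner.
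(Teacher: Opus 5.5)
Your proposal is correct, but it identifies the rate function by a different route than the paper.

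\textbf{The paper's route.} The paper obtains the LDP on $\signedmeas(E)$ with the projective-limit rate $\Lambda'(\nu)=\sup_{M}\sup_{f_1,\dots,f_M}I'_{f_1,\dots,f_M}(\pi_{f_1,\dots,f_M}(\nu))$. It then proves $\Lambda'=\Lambda$ via the consistency criterion (Lemma 4.6.5 of \cite{dz2009ldp}). This forces two separate checks:
\begin{itemize}
\item Goodness of $\Lambda$ in the $\tau$-topology, proved by hand. Uniform total-variation bounds and uniform absolute continuity with respect to $\tau$ give relative $\tau$-compactness (Bogachev's criterion). Closedness comes from $\sigma(L^1,L^\infty)$-closedness of a convex norm-closed set.
\item The identity $I'_{f_1,\dots,f_M}(\bfu)=\inf\{\Lambda(\nu):\pi_{f_1,\dots,f_M}(\nu)=\bfu\}$, verified by Young's inequality and the explicit minimizer $d\nu=(\sum_j a_{\bfu,j}f_j)\,d\tau$.
\end{itemize}

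\textbf{Your route.} You compute the dual rate $\Lambda^*(\ell)=\sup_{f}\{\ell(f)-\frac12\int_E f^2\,d\tau\}$ on all of $\bddmble(E)'$. You identify it by Riesz representation in $L^2(\tau)$, using $\tau(E)<\infty$. This duality argument is shorter. It also shows, at the same stroke, that the effective domain of $\Lambda^*$ consists of measures $g\,\tau$ with $g\in L^2(\tau)$, which is exactly what you need to return from the algebraic dual to $\signedmeas(E)$. Goodness then comes essentially for free: either from Dawson--G\"{a}rtner, since $\tau$ is the trace topology, or from weak compactness of $L^2(\tau)$-balls. Both are simpler than the paper's compactness argument.

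\textbf{One step needs more care.} Lemma 4.1.5 of \cite{dz2009ldp} is formulated for a measurable subset carrying full mass. But $\signedmeas(E)$ is \emph{not} in the cylinder $\sigma$-field of $\bddmble(E)'$. A cylinder set is determined by countably many coordinates $\ell(f_i)$. Adding to a measure a linear functional that vanishes on $\mathrm{span}\{f_i\}$ but is unbounded in the supremum norm stays in the cylinder set, yet is no longer a measure.

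The transfer still works, because the random measures take values in $\signedmeas(E)$ and $\Lambda^*$ is good. Take $S\in\sigmatau$, so $S=A\cap\signedmeas(E)$ for a cylinder set $A$.
\begin{itemize}
\item Lower bound: use basic cylinder neighbourhoods $U$ with $U\cap\signedmeas(E)\subset S$. These are measurable, and $\probab{\mdpfactor^{-1}\centered{\eta_n}\in S}\ge\probab{\mdpfactor^{-1}\centered{\eta_n}\in U}$.
\item Upper bound: fix $\alpha<\inf_{\closuretau{S}}\Lambda$. Cover the compact level set $\{\Lambda^*\le\alpha\}\subset\signedmeas(E)$ by finitely many basic open cylinders disjoint from $S$. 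The complement of their union is a closed cylinder set containing $S$ and missing the level set, so the upper bound in $\bddmble(E)'$ applies to it.
\end{itemize}
This is the concrete content of the ``measurability subtlety'' you deferred to de Acosta. The fact that $\sigmatau$ is not Borel causes no further difficulty.
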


\begin{proof}
We apply the Dawson--G\"{a}rtner theorem for topological vector spaces (see Theorem 4.6.9 in \cite{dz2009ldp}).

For any bounded measurable functions $f_1, \dots, f_M \in \bddmble(E)$,
define $\map{\pi_{f_1, \dots, f_{M}}}{\signedmeas(E)}{\reals^M} $ by $\pi_{f_1, \dots, f_{M}}(\nu) \coloneqq (\pairing{f_i}{\nu})_{i=1,\dots,M} $.
For all $\lambda \in \reals^M$, \cref{prop:ge.eta} gives
\begin{align}
  \lim_{n \to \infty} \frac{\ldpspeed}{\mdpfactor^2} \log \expect[Big]{\exp\ab\Big(\frac{\mdpfactor}{\ldpspeed} \iprod[big]{\lambda}{\pi_{f_1, \dots, f_{M}}(\centered{\eta_n})})}
  = \frac{1}{2} \iprod{\lambda}{\Gamma_{f_1, \dots, f_M}' \lambda},
\end{align}
where $\Gamma_{f_1, \dots, f_M}' $ is the $M \times M$ symmetric matrix given by
\begin{align}
  \Gamma_{f_1, \dots, f_M}' \coloneqq \ab\Big(\int_{E} f_i f_j \,d\tau)_{1 \leq i, j \leq M}.
\end{align}
Thus, by the G\"{a}rtner--Ellis theorem (Theorem 2.3.6 in \cite{dz2009ldp}), the sequence $\Bab{\mdpfactor^{-1} \pi_{f_1, \dots, f_{M}}(\centered{\eta_n}) }_{n \geq 1} $ satisfies the LDP in $\reals^M $ with speed $\mdpfactor^2 / \ldpspeed $ and the good rate function
\begin{align}
  I_{f_1, \dots, f_M}'(\bfu)
  \coloneqq \sup_{\bfa \in \reals^M} \ab\Big(\iprod{\bfa}{\bfu} - \frac{1}{2} \iprod{\bfa}{\Gamma_{f_1, \dots, f_M}' \bfa}), \quad \bfu \in \reals^M.
\end{align}
Therefore, Theorem 4.6.9 in \cite{dz2009ldp} ensures that $\Bab{\mdpfactor^{-1} \centered{\eta_n} }_{n \geq 1} $ satisfies the LDP in $\signedmeas(E)$
endowed with the $\tau$-topology and the $\sigma$-field $\sigmatau$,
with the good rate function
\begin{align}
  \Lambda'(\nu) \coloneqq \sup_{M \geq 1} \sup_{f_1 \dots, f_M \in \bddmble(E)} I_{f_1, \dots, f_M}'(\pi_{f_1, \dots, f_{M}}(\nu)), \quad \nu \in \signedmeas(E).
\end{align}

We next show that $\Lambda' = \Lambda $.
According to Lemma 4.6.5 in \cite{dz2009ldp},
it suffices to verify that $\Lambda $ is a good rate function,
and that
\begin{align}
  I_{f_1, \dots, f_M}'(\bfu) = \inf \Bab{\Lambda(\nu) : \nu \in \signedmeas(E), \, \pi_{f_1, \dots, f_{M}}(\nu) = \bfu } \label{eq:proflim.eta.ratefunc}
\end{align}
holds for all $M \geq 1$, $f_1, \dots, f_M \in \bddmble(E) $ and $\bfu \in \reals^M$.

To show that $\Lambda$ is a good rate function,
it is required to prove that the level set $\Lambda^{-1}([0, a])$ is closed and relatively compact in the $\tau$-topology
(or simply $\tau$-closed and relatively $\tau$-compact, respectively) for any $a \ge 0$.
By the criterion for relative $\tau$-compactness (see Theorem 4.7.25 in \cite{bogachev2007measure1}),
it suffices to verify that $\Lambda^{-1}([0, a])$ is uniformly bounded in the total variation norm $\tvnorm{\cdot}$,
that is, $\sup  \{\tvnorm{\nu}: \nu \in \Lambda^{-1}([0, a]) \} < \infty$,
and is uniformly $\tau$-continuous, in the sense of uniform absolute continuity with respect to $\tau$.
To establish the uniform boundedness,
let $\nu \in \Lambda^{-1}([0, a])$.
Since $\nu \abscont \tau$, applying Jensen's inequality (or the Cauchy--Schwarz inequality) yields
\begin{align}
  \tvnorm{\nu} = \int_{E} \abs[Big]{\frac{d\nu}{d\tau}} \,d\tau
  \leq \tau(E)^{1/2} \ab\Big(\int_{E} \abs[Big]{\frac{d\nu}{d\tau}}^2 \,d\tau)^{1/2}
  \leq \tau(E)^{1/2} (2a)^{1/2},
\end{align}
which asserts that $\Lambda^{-1}([0, a])$ is uniformly bounded in the total variation norm.
Next, we establish the uniform $\tau$-continuity.
For any given $\epsilon > 0$, we choose $R = 4a/\epsilon$ and set $\delta = \epsilon / (2R)$.
For any Borel set $A \subset E$ satisfying $\tau(A) \leq \delta$ and any $\nu \in \Lambda^{-1}([0, a])$, we have
\begin{align}
  \abs{\nu(A)} = \abs[Big]{\int_{A} \frac{d\nu}{d\tau} \,d\tau}
  &\leq \int_{A} \abs[Big]{\frac{d\nu}{d\tau}} \ab\Big(\indicator[Big]{\abs[Big]{\frac{d\nu}{d\tau}} \leq R} + \indicator[Big]{\abs[Big]{\frac{d\nu}{d\tau}} > R}) \,d\tau \\
  &\leq R \tau(A) + \frac{1}{R} \int_{A} \abs[Big]{\frac{d\nu}{d\tau}}^2 \,d\tau
  \leq R \frac{\epsilon}{2R} + \frac{2a}{R}
  = \epsilon.
\end{align}
Hence, $\Lambda^{-1}([0, a])$ is uniformly $\tau$-continuous,
and is consequently relatively $\tau$-compact.
To demonstrate that $\Lambda^{-1}([0, a])$ is $\tau$-closed,
let $L^1 = L^1(E, \calB(E), \tau)$ denote the $L^1$-space on $E$ with respect to $\tau$, where $\calB(E)$ denotes the Borel $\sigma$-algebra of $E$.
Consider the set and the map
\begin{align}
  A_a \coloneqq \Bab{g \in L^1: \frac{1}{2} \int_{E} g^2 \,d\tau \leq a },\\
  \map{\phi}{A_a}{\Lambda^{-1}([0, a])},  \quad d\phi(g) \coloneqq g \, d\tau.
\end{align}
Here, $\phi$ is a homeomorphism from $A_a$ equipped with the weak topology $\sigma(L^1, L^{\infty})$ to $\Lambda^{-1}([0, a])$ endowed with the $\tau$-topology.
Since $A_a$ is convex and closed in the norm topology $\norm{\cdot}_{L^1}$,
it is also closed in the weak topology $\sigma(L^1, L^{\infty})$.
Consequently, $\Lambda^{-1}([0, a])$ is $\tau$-closed,
which, combined with the relative $\tau$-compactness, implies that $\Lambda^{-1}([0, a])$ is $\tau$-compact.

Finally, we verify \eqref{eq:proflim.eta.ratefunc}.
For any $\bfa \in \reals^M$ and $\nu \in \signedmeas(E) $ such that $\pi_{f_1, \dots, f_{M}}(\nu) = \bfu $ and $\Lambda(\nu) < \infty $, setting $g = \sum_{j=1}^M a_j f_j$ yields
\begin{align}
  \iprod{\bfa}{\bfu} - \frac{1}{2} \iprod{\bfa}{\Gamma_{f_1, \dots, f_M}' \bfa}
  &= \int_E g \,d\nu - \frac{1}{2} \int_E g^2 \,d\tau \\
  &\leq \int_{E}\frac{1}{2}\ab\Big(g^2 + \ab\Big(\frac{d\nu}{d\tau})^2) \,d\tau - \frac{1}{2} \int_E g^2 \,d\tau = \Lambda(\nu),
\end{align}
which leads to $ I_{f_1, \dots, f_M}'(\bfu) \leq \inf_{\nu} \Lambda(\nu)$.
Conversely, for any $\bfu \in \reals^M$ with $I_{f_1, \dots, f_M}'(\bfu) < \infty $, there exists $\bfa_{\bfu} \in \reals^M$ such that $\Gamma_{f_1, \dots, f_M}' \bfa_{\bfu} = \bfu$ and $I_{f_1, \dots, f_M}'(\bfu) = \iprod{\bfa_{\bfu}}{\bfu} /2$.
Defining $\nu$ via $d\nu \coloneqq (\sum_{j=1}^M a_{\bfu, j}f_j) d\tau$, we obtain $\bfu = \Gamma_{f_1, \dots, f_M}' \bfa_{\bfu} = \pi_{f_1, \dots, f_M}(\nu)$ and
\begin{align}
  I_{f_1, \dots, f_M}'(\bfu) = \frac{1}{2} \iprod{\bfa_{\bfu}}{\bfu}
  = \frac{1}{2} \sum_{j=1}^M a_{\bfu, j} \int_E f_j \,d\nu
  = \frac{1}{2} \int_E \ab\Big(\frac{d\nu}{d\tau})^2 \,d\tau
  = \Lambda(\nu).
\end{align}
This establishes \eqref{eq:proflim.eta.ratefunc}, completing the proof of \cref{prop:mdp.eta}.
\end{proof}

Next, we show the following exponential equivalence between $\{\mdpfactor^{-1} \centered{\xi_n}\}_{n \geq 1}$ and $\{\mdpfactor^{-1} \centered{\eta_n}\}_{n \geq 1}$.

\begin{lemma}
\label{prop:exp.eq.eta.xi}
For any $f \in \bddmble(E)$ and any $\delta > 0$,
\begin{align}
  \limsup_{n \to \infty} \frac{\ldpspeed}{\mdpfactor^2} \log \probab[big]{\abs{\pairing[none]{f}{\centered{\xi_n}} - \pairing{f}{\centered{\eta_n}}} > \delta \mdpfactor} = - \infty. \label{eq:exp.eq.eta.xi}
\end{align}
\end{lemma}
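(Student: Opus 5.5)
The plan is to write the difference as a sum of locally dependent block contributions, each carried by points in a thin boundary layer. Then I would run a Chernoff bound at the moderate-deviation scale, using a variance of order $\lorder(\ldpspeed)$ and uniformly bounded exponential moments.

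\textbf{Step 1 (decomposition).} For each $z$ such that $Q_{n,z}^*$ meets $\unitcube$, let $\xi_{n,z}$ be the part of $\pairing{f}{\xi_n}$ coming from $k$-subsets $\calY$ with $\lex_1(\calY)\in Q_{n,z}^*$. Set $\eta_{n,z}\equiv 0$ if $z\notin I_n^*$, and put $D_z\coloneqq \xi_{n,z}-\pairing{f}{\eta_{n,z}}$. Then $\pairing{f}{\xi_n}-\pairing{f}{\eta_n}=\sum_z D_z$.

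By (H3), a $k$-subset contributes to $\xi_n$ only if its diameter is at most $Lr_n$, and its isolation indicator depends only on points within $tr_n$ of it. Hence $D_z\neq 0$ forces some point of $\ppp_n$ to lie within $(L+t)r_n$ of $\partial Q_{n,z}^*$ (or of $\partial\unitcube$). Moreover, $D_z$ is measurable with respect to $\ppp_n\cap (Q_{n,z}^*)^{((L+t)r_n)}$. Since $r_n\ldpspeed^{1/d}=(\nrnd)^{k/d}\to 0$, this enlargement is much smaller than the cube side. So $\{D_z\}_{z\in 3\integers^d+w}$ is an independent family for each of the $3^d$ residues $w$. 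It therefore suffices to prove \eqref{eq:exp.eq.eta.xi} for $S_w\coloneqq\sum_{z\in 3\integers^d+w}\centered{D_z}$, with $\delta$ replaced by $\delta 3^{-d}$, by a union bound.

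\textbf{Step 2 (moments).} First, $\abs{D_z}\le 2\supnorm{f}\,N_z$, where $N_z$ counts $k$-subsets of diameter at most $Lr_n$ that have a point in the enlarged cube. As in (5.16) of \cite{ho2023ldp}, $\sup_n\sup_z\expect[none]{e^{\lambda N_z}}<\infty$ for some $\lambda>0$.

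Second, I would compute $\expect[none]{\centered{D_z}^2}$ by a Mecke-formula computation exactly as in \cref{prop:cov}. The $l=k$ diagonal term is now restricted to $x$ in the boundary layer of width $(L+t)r_n$, which has volume $\lorder(r_n\ldpspeed^{-(d-1)/d})$. The off-diagonal terms are $\lorder(\nrnd)$ as before. This gives
\begin{align}
\sum_z \expect[none]{\centered{D_z}^2}\le C\ldpspeed\bigl(r_n\ldpspeed^{1/d}+\nrnd\bigr)=o(\ldpspeed).
\end{align}

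\textbf{Step 3 (Chernoff).} Fix $\theta>0$ and set $a_n=\theta\mdpfactor/\ldpspeed\to 0$. Independence and \cref{prop:exp.moment.finite} (whose $\lorder(a_n^3)$ constant is uniform in $z$ by Step 2) give
\begin{align}
\log\expect[none]{e^{a_nS_w}}\le \sum_z\Bigl(\tfrac{a_n^2}{2}\expect[none]{\centered{D_z}^2}+C a_n^3\,\probab{D_z\neq0}^{1/2}\Bigr).
\end{align}
To control the cubic remainder by the boundary-layer probability rather than by $1$, I would bound it by Cauchy--Schwarz against $\indicator{D_z\ne0}$ instead of using the cruder bound in \cref{prop:exp.moment.finite}. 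Since $\probab{D_z\neq0}\le C\ldpspeed^{-1}(r_n\ldpspeed^{1/d}+\nrnd)$, the total is $o(\mdpfactor^2/\ldpspeed)$. Markov's inequality then yields
\begin{align}
\tfrac{\ldpspeed}{\mdpfactor^2}\log\probab{S_w>\delta\mdpfactor}\le -\theta\delta+o(1),
\end{align}
and the same holds for $-S_w$. Letting $\theta\to\infty$ gives $-\infty$.

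The main obstacle is the uniformity in Step 3. One must make sure that the cubic error, summed over roughly $\ldpspeed$ blocks, is negligible. This needs the refined bound $\expect[none]{\abs{\centered{D_z}}^3e^{a_n\abs{\centered{D_z}}}}\lesssim \probab{D_z\ne0}^{1/2}$ rather than a constant. Checking the boundary-layer volume estimate carefully, including the blocks near $\partial\unitcube$ that are not in $I_n^*$, is where I expect the real work to lie.
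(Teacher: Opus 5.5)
Your route is essentially the paper's. It uses a Chernoff bound at scale $a_n=\theta b_n/\rho_n$, a colouring of the blocks for independence, and the second-order expansion of \cref{prop:exp.moment.finite} with a block-uniform remainder. A boundary-layer variance of total order $o(\rho_n)$ then lets $\theta\to\infty$. The only organisational difference is how you group $k$-sets. You assign each one to the cube containing $\lex_1(\calY)$. The paper instead splits the difference into $Z_n^{(1)}$ (sets straddling several cubes) and $Z_n^{(2)}$ (change of isolation inside one cube), and then assigns straddling sets to the face and corner strips $R_{j,z}$. Your bookkeeping is cleaner and avoids the $d=2$ case analysis.

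\textbf{Step~2 fails as written.} Your dominating count $N_z$ counts all $k$-subsets of diameter at most $Lr_n$ meeting the enlarged cube, with no isolation constraint. It has infinite exponential moments for every $\lambda>0$. If $M$ is the Poisson number of points in a ball of radius $Lr_n/2$ inside the cube, then $N_z\ge\binom{M}{k}$. Moreover $\mathbb{E}[e^{\lambda\binom{M}{k}}]=\infty$ for $k\ge2$, since $\lambda m^k/k!$ outgrows $\log m!\approx m\log m$. So $|D_z|\le 2\|f\|_\infty N_z$ gives nothing to feed into \cref{prop:exp.moment.finite}.

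What (5.16) of \cite{ho2023ldp}, and the bound on p.~4027 used in the paper, actually control is the number of \emph{isolated} $k$-sets. You must keep the isolation indicators. Bound $|D_z|$ by $\|f\|_\infty$ times the sum of two counts:
\begin{itemize}
\item the $k$-sets with $\lex_1$ in $Q^*_{n,z}$ that are isolated in $\mathcal{P}_n$;
\item the $k$-sets isolated in $\mathcal{P}_n\cap Q^*_{n,z}$.
\end{itemize}
An isolated set is the union of the full contents of at most $k$ cells of diameter $<tr_n$ near its lexicographic minimum. Hence only $O(1)$ of them have $\lex_1$ in a given cell, and any such set needs $k$ points in an $O(r_n)$-neighbourhood of that cell. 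Independence over separated cells then gives $\sup_n\sup_z\mathbb{E}[e^{\lambda\,\cdot}]<\infty$, because the number of cells times $(nr_n^d)^k$ is $\asymp\rho_n^{-1}r_n^{-d}(nr_n^d)^k=1$.

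The obstacle you single out in Step~3 is not one. The crude block-uniform $O(a_n^3)$ remainder already suffices: summed over $O(\rho_n)$ blocks it contributes $\frac{\rho_n}{b_n^2}\cdot\rho_n a_n^3=\theta^3 b_n/\rho_n\to0$. That is exactly how the paper concludes, so the Cauchy--Schwarz refinement against $\indicatorsymbol\{D_z\ne0\}$ is unnecessary. Your bound $\mathbb{P}(D_z\neq0)\le C\rho_n^{-1}(\cdots)$ also carries a spurious factor $\rho_n^{-1}$; the correct per-block order is $r_n\rho_n^{1/d}=(nr_n^d)^{k/d}$, though this is harmless.

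The partial cubes near $\partial[0,1]^d$ are not real work either, if the tiling is not exact. There are $O(\rho_n^{(d-1)/d})$ of them, each with $O(1)$ variance, so their total is still $o(\rho_n)$.
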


\begin{proof}
For simplicity, we give the proof for the case $d=2$.
The extension of this argument to a general dimension $d$ is straightforward by combining the techniques in \cite{ho2023ldp}.
Our proof fundamentally relies on the method from \cite{ho2023ldp}, but we carefully decompose the sum into disjoint components to utilize the linearity of centering, $\centered{Z_1 + Z_2} = \centered{Z_1} + \centered{Z_2} $.
Furthermore, as in \cite{ho2023ldp}, to simplify the exposition, we assume that $\unitcube $ can be partitioned exactly into the union of cubes $\{Q_{n,z}^{*}\}_{z \in I_n^{*}} $
so that $\unitcube = \bigcup_{z \in I_n^{*}} Q_{n,z}^{*} $ and $\card{I_n^{*}} = \ldpspeed $.

We write the difference as
\begin{align}
  &\pairing{f}{\xi_n} - \pairing{f}{\eta_n} \\
  &\quad = \sum_{\calY \subset \ppp_n}
  \indicator[bigg]{\begin{lgathered}\exists z_1 \neq z_2 \in I_n^{*} \text{ such that} \\ \calY \cap Q_{n, z_1}^{*} \neq \emptyset \text{ and }  \calY \cap Q_{n, z_2}^{*} \neq \emptyset\end{lgathered}}
   \isolated[n]{\calY}{\ppp_n}{t}  \zeroed{f}(H_n(\calY)) \\
  &\qquad - \sum_{z \in I_n^{*}} \sum_{\calY \subset \ppp_n \cap Q_{n,z}^{*}} \ab(\isolated[n]{\calY}{\ppp_n \cap Q_{n,z}^{*}}{t} - \isolated[n]{\calY}{\ppp_n}{t}) \zeroed{f}(H_n(\calY)) \\
  &\quad \eqqcolon Z_n^{(1)} - Z_n^{(2)}.
\end{align}
Thus, to prove \eqref{eq:exp.eq.eta.xi}, it suffices to show that for $i=1,2$,
\begin{align}
  \limsup_{n \to \infty} \frac{\ldpspeed}{\mdpfactor^2} \log \probab[Big]{\abs[Big]{\centered{Z_n^{(i)}}} \geq \delta \mdpfactor} = -\infty. \label{eq:eta.xi.block}
\end{align}

Since \eqref{eq:eta.xi.block} can be shown in a similar manner for both $i=1$ and $i=2$,
we only verify it for $i=1$.
For each $z \in I_n^{*}$, let
\begin{align}
  R_{1, z} &\coloneqq \ab\big([-Lr_n, Lr_n] \times [0, \ldpspeed^{-1/2}] + \ldpspeed^{-1/2}z) \cap \unitcube, \\
  R_{2, z} &\coloneqq \ab\big([0, \ldpspeed^{-1/2}] \times [-Lr_n, Lr_n] + \ldpspeed^{-1/2}z) \cap \unitcube, \\
  R_{3, z} &\coloneqq \ab\big([-Lr_n, Lr_n] \times [-Lr_n, Lr_n] + \ldpspeed^{-1/2}z) \cap \unitcube.
\end{align}
We also define $J_j \coloneqq \{z \in \integers^d : R_{j, z} \neq \emptyset\}$ and $R_j \coloneqq \bigcup_{z \in J_j} R_{j, z}$ for $j=1,2,3$, noting that $R_1 \cap R_2 = R_3 $.
If $\calY \subset \ppp_n $ satisfies the indicator conditions in $Z_n^{(1)}$,
then $\calY \subset \bigcup_{z \in I_n^{*}} (\partial Q_{n,z}^{*})^{(Lr_n)} $,
where $\partial Q_{n,z}^{*} $ denotes the boundary of $Q_{n,z}^{*} $.
This implies that $\calY \subset R_j $ for some $j \in \{1,2,3\} $.
We define
\begin{align}
  j(\calY) \coloneqq
  \begin{cases}
    1 & \text{ if } \calY \subset R_1 \text{ and } \calY \not\subset R_2,\\
    2 & \text{ if } \calY \not\subset R_1 \text{ and } \calY \subset R_2,\\
    3 & \text{ if } \calY \subset R_3.\\
  \end{cases}
\end{align}
If $j(\calY) = 1$, there exists a unique $z \in J_1$ such that $\calY \subset R_{1, z}$.
Indeed, otherwise $\calY \subset R_2$, which contradicts $j(\calY) = 1$.
This uniqueness property also holds for $j(\calY) = 2, 3$.

We decompose $Z_n^{(1)}$ as
\begin{align}
  Z_n^{(1)}
  &= \sum_{j=1}^3 \sum_{z \in J_j} \sum_{\calY \subset \ppp_n \cap R_{j, z}} 
  % \indicator[none]{\exists z_1 \neq z_2 \in I_n^{*} \text{ such that } \calY \cap Q_{n, z_1}^{*} \neq \emptyset \text{ and }  \calY \cap Q_{n, z_2}^{*} \neq \emptyset} 
  \indicator[bigg]{\begin{lgathered}\exists z_1 \neq z_2 \in I_n^{*} \text{ such that} \\ \calY \cap Q_{n, z_1}^{*} \neq \emptyset \text{ and }  \calY \cap Q_{n, z_2}^{*} \neq \emptyset\end{lgathered}}
  \isolated[n]{\calY}{\ppp_n}{t} \zeroed{f}(H_n(\calY)) \\
  &\eqqcolon \sum_{j=1}^3 \sum_{z \in J_j} W_{j, z}.
\end{align}
To prove \eqref{eq:eta.xi.block} for $i=1$, it is sufficient to show that for each $j=1, 2, 3$,
\begin{align}
  \limsup_{n \to \infty} \frac{\ldpspeed}{\mdpfactor^2} \log \probab[bigg]{\abs[bigg]{\sum_{z \in J_j} \centered{ W_{j,z} }} \geq \delta \mdpfactor} = -\infty.
\end{align}

By Chebyshev's inequality, for any $a > 0$, we have
\begin{align}
  \frac{\ldpspeed}{\mdpfactor^2} \log \probab[bigg]{\abs[bigg]{\sum_{z \in J_j} \centered{W_{j, z}}} \geq \delta \mdpfactor}
  \leq -a\delta + \frac{\ldpspeed}{\mdpfactor^2} \log \expect[bigg]{\exp\ab\bigg(a\frac{\mdpfactor}{\ldpspeed} \abs[bigg]{\sum_{z \in J_j} \centered{W_{j, z}}})}.
\end{align}
Let $J_{j, w} \coloneqq J_j \cap (2\integers^d + w) $ for $w \in \{0,1\}^d$.
H\"{o}lder's inequality together with the independence of $\{W_{j, z} \}_{z \in J_{j, w}} $ implies
\begin{align}
  \expect[Big]{\exp\ab\Big(a\frac{\mdpfactor}{\ldpspeed} \sum_{z \in J_j} \centered{W_{j, z}})}
  \leq \prod_{w \in \Bab{0, 1}^d} \prod_{z \in J_{j, w}} \expect[bigg]{\exp\ab\bigg(2^{d} a\frac{\mdpfactor}{\ldpspeed} \centered{W_{j, z}})}^{2^{-d}}.
\end{align}
Applying \cref{prop:cov} with $A=R_{j, z}$ and $A'=\unitcube$, 
we obtain $\expect[none]{(\centered{W_{j, z}})^2} = \lorder(\ldpspeed \lebmeas(R_{j, z})) = \lorder((\nrnd)^{k/d}) $ for all $j \in \{1,2,3\} $ and $z \in J_j$ (recalling $d=2$).
Furthermore, since $\expect[none]{\exp(\abs{W_{j, z}})} $ remains bounded as $n \to \infty$ (see p.4027 in \cite{ho2023ldp}), \cref{prop:exp.moment.finite} yields $\expect[none]{\exp\ab(2^d a \mdpfactor / \ldpspeed \centered{W_{j, z}})} - (1 + (2^d a \mdpfactor / \ldpspeed)^2 \expect[none]{(\centered{W_{j, z}})^2}) = \lorder\ab((\mdpfactor / \ldpspeed)^3) $, which also holds for $- W_{j, z} $.
Consequently, since $\card{J_{j, w}} \leq 2^d \ldpspeed $, we conclude that
\begin{align}
  \limsup_{n \to \infty} \frac{\ldpspeed}{\mdpfactor^2} \log \probab[bigg]{\abs[bigg]{\sum_{z \in J_j} \centered{W_{j, z}}} \geq \delta \mdpfactor}
  \leq -a\delta + \limsup_{n \to \infty} \frac{\ldpspeed}{\mdpfactor^2} \card{J_{j, w}} \ab\ab\Big(\frac{\mdpfactor}{\ldpspeed})^2 \ab\bigg(\lorder((\nrnd)^{k/d}) + \lorder\ab\Big(\frac{\mdpfactor}{\ldpspeed})) = -a\delta.
\end{align}
Taking the limit as $a \to \infty$ completes the proof of \eqref{eq:eta.xi.block} for $i=1$.
\end{proof}

Next, by using several techniques developed in \cite{ho2023ldp},
we establish a general bound for estimating various exponential moments.
We use the following bound
not only to prove exponential equivalence and justify the contraction argument,
but also to prove the LIL.
For this purpose, we introduce additional notation.
Let $\pppentire' $ be an independent copy of $\pppentire$. For a family of cubes $\{Q_z\}_{z \in \integers^d}$, we define
\begin{align}
  \pppentire_{z}'
  \coloneqq (\pppentire' \cap Q_z) \cup (\pppentire \setminus Q_z), 
  \quad \pppentire_{z, n}' \coloneqq \pppentire_{z}' \cap W_n, \label{eq:ppp.indep}
\end{align}
and
\begin{align}
  \calF_{z} \coloneqq
  % \sigma\ab\bigg(\bigcup_{w \in \integers^d \setminus \integers_{\geq 0}^d, w \lexineq z} (\pppentire \cap Q_{w}) \cup \bigcup \Bab{\pppentire \cap Q_{w} : w \in \integers_{\geq 0}^d, w_1 \leq z_1, \dots, w_d \leq z_d }),
  \sigma\ab\bigg(\pppentire \cap Q_{w} : \begin{aligned} & (w \in \integers^d \setminus \integers_{\geq 0}^d,\text{ and } w \lexineq z) \\ &\text{or } (w \in \integers_{\geq 0}^d \text{ and } w_j \leq z_j \text{ for all } 1 \leq j \leq d) \end{aligned}),
\end{align}
where $\integers_{\geq 0} $ denotes the set of all nonnegative integers.

\begin{proposition}
\label{prop:exp.moment.common}
Let $v_1, v_2, v_3 > 0$ satisfy $v_1 \leq v_2 \leq v_3 \leq 2v_1$,
and $L > t > 0 $.
Let $Q_z \coloneqq Q(v_1 t/\sqrt{d}, z) $ for $z \in \integers^d$.
Let $F(\calZ, \calX)$ be a functional defined on finite sets $\calX \subset \reals^d$ and $\calZ \subset \calX$ with $\card{\calZ} = k$.
% Assume that $F$ satisfies $F(\calZ + y, \calX + y) = F(\calZ, \calX)$ for all $y \in \reals^d $.
For a finite set $\calX \subset \reals^d$ and $z \in \integers^d$, let
\begin{align}
  S_z(\calX) \coloneqq \sum_{\calZ \subset \calX \cap Q_z^{(2Lv_3)}} \isolated{\calZ}{\calX \cap Q_z^{(3Lv_3)}}{v_2 t} F(\calZ, \calX \cap Q_z^{(3Lv_3)}).
\end{align}
Assume further that for every $p \geq 2$, there exists a constant $C_p  > 0$ such that for all $z \in \integers^d$,
\begin{align}
  \expect[Bigg]{\sum_{\calZ \subset \pppentire\cap Q_z^{(2Lv_3)}} \abs{F(\calZ, \pppentire \cap Q_{z}^{(3Lv_3)})}^p }
  \leq C_p. \label{eq:p.moment.boundness}
\end{align}
Then, for all $\lambda > 0$ and any finite set $I \subset \integers^d $,
\begin{align}
  \expect[bigg]{\exp\ab\bigg(\lambda \sum_{z \in I} \condexpect[none]{S_z(\pppentire) - S_z(\pppentire_{z}')}{\calF_z })}
  \leq \exp\ab\bigg(C \card{I} \sum_{p = 2}^\infty \frac{C_p (C\lambda)^p}{p!} ), \label{eq:exp.moment.block.mart.general}
\end{align}
where $C > 0$ is a constant depending only on $d, L, t$, and $k$.
\end{proposition}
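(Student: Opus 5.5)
The plan is to treat $D_z \coloneqq \condexpect[none]{S_z(\pppentire) - S_z(\pppentire_{z}')}{\calF_z}$ as a martingale-type difference with respect to the filtration $\{\calF_z\}$ (ordered lexicographically), and to bound the exponential moment by peeling off one factor at a time. Since $\pppentire_z'$ differs from $\pppentire$ only inside $Q_z$, and $S_z$ depends only on $\pppentire\cap Q_z^{(3Lv_3)}$, we have $\condexpect[none]{D_z}{\calF_{z^-}} = 0$ whenever $\calF_{z^-}$ is the $\sigma$-field generated by everything strictly before $z$ in the ordering, because averaging over the resampled cube $Q_z$ and over $\pppentire\cap Q_z$ gives the same law given the past. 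Hence, enumerating $I=\{z_1 \lexineq \dots \lexineq z_N\}$ and conditioning successively, it suffices to show a one-step bound
\begin{align}
  \condexpect[none]{e^{\lambda D_z}}{\calF_{z^-}} \leq \exp\ab\Big(C\sum_{p\geq 2}\frac{C_p (C\lambda)^p}{p!}\Big),
\end{align}
and then the tower property gives the product bound with exponent $C\card{I}$. (If the ordering of $\calF_z$ is not exactly a total order compatible with lexicographic enumeration, I would instead split $I$ into $B^d$ residue classes $B\integers^d+w$ with $B$ large relative to $3Lv_3/(v_1t/\sqrt d)$, apply H\"older as in the proof of \cref{prop:exp.eq.eta.xi}, which only changes the constant $C$.)

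For the one-step bound, expand $e^{x}\le 1+x+\sum_{p\ge2}\abs{x}^p/p!$. The linear term vanishes by the centering above. For $p\ge 2$, by conditional Jensen, $\condexpect[none]{\abs{D_z}^p}{\calF_{z^-}} \le \condexpect[none]{\abs{S_z(\pppentire)-S_z(\pppentire_z')}^p}{\calF_{z^-}}$, and then $\abs{a-b}^p\le 2^{p-1}(\abs a^p+\abs b^p)$. The key reduction is to bound $\abs{S_z(\calX)}^p$ by a $p$-th moment sum of $F$: the isolation indicator $\isolated{\calZ}{\calX\cap Q_z^{(3Lv_3)}}{v_2 t}$ forces distinct contributing $\calZ$'s to be disjoint (two $k$-sets isolated at scale $v_2t$ either coincide or are separated), and moreover each cube of side $v_1t/\sqrt d\le v_2 t/\sqrt d$ can meet at most one isolated component. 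Thus the number of nonzero terms in $S_z$ is at most the number $K$ of cubes $Q_w$ contained in $Q_z^{(2Lv_3)}$ (a constant depending only on $d,L,t$ since $v_3\le 2v_1$), and by H\"older $\abs{S_z}^p\le K^{p-1}\sum_{\calZ}\abs{F(\calZ,\cdot)}^p$. Taking expectations and using \eqref{eq:p.moment.boundness} (the conditional expectation is handled by noting $\pppentire$ restricted to $Q_z^{(3Lv_3)}$ conditionally on the past is still dominated by a Poisson process, or more simply by taking the unconditional version of the hypothesis stated uniformly in $z$ together with independence of disjoint regions) gives $\condexpect[none]{\abs{D_z}^p}{\calF_{z^-}}\le (CK)^p C_p$.

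The main obstacle is the conditional moment: the hypothesis \eqref{eq:p.moment.boundness} is unconditional, while $D_z$ is conditioned on the past, which fixes part of the configuration in $Q_z^{(3Lv_3)}$. I would handle this by working with the conditional expectation $\condexpect[none]{\cdot}{\calF_z}$ built into $D_z$: because $D_z$ already integrates out the future, and the $p$-th moment of $D_z$ is bounded by that of the unconditioned difference via Jensen for the outer expectation, I would do the peeling in unconditional form where possible, using $\log\expect[none]{e^{\lambda D_z}}$ bounds combined with the residue-class H\"older decomposition so that factors involve independent blocks. Finally $1+x\le e^x$ yields \eqref{eq:exp.moment.block.mart.general}.
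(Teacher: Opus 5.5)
Your primary route, peeling off one factor at a time through $\mathbb{E}[e^{\lambda D_z}\mid\calF_{z^-}]$, does not go through. It needs an \emph{almost sure} bound on $\mathbb{E}[|D_z|^p\mid\calF_{z^-}]$, and \eqref{eq:p.moment.boundness} cannot supply one, because it controls $F$ only on average over Poisson configurations.

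Conditioning on the past freezes the points of $\pppentire$ in the already revealed cubes that meet $Q_z^{(3Lv_3)}$. Suppose these contain an isolated $k$-set $\calZ_0\subset Q_z^{(2Lv_3)}$ with a very large value of $|F|$, and its $v_2t$-neighbourhood meets $Q_z$ in positive volume. Then whether $\calZ_0$ stays isolated depends on the points in $Q_z$, original or resampled. So $D_z$ has conditional fluctuations of the size of that value of $|F|$, and $\mathbb{E}[|D_z|^p\mid\calF_{z^-}]$ is unbounded over such pasts whenever $F$ is unbounded, as in the applications with unbounded $H$. Domination by a Poisson process does not help, since the frozen points are not random. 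Without an a.s.\ one-step bound, the tower property yields nothing.

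The paper never conditions on the past. The observation you are missing is that $D_z$ is $\sigma(\pppentire\cap Q_z^{(3Lv_3)})$-measurable. Indeed, $D_z$ is the conditional expectation, given $\pppentire$ on a union of cubes, of a function of $\pppentire\cap Q_z^{(3Lv_3)}$ and $\pppentire'\cap Q_z$. By independence of the Poisson process on disjoint sets, the revealed points outside $Q_z^{(3Lv_3)}$ drop out.

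Hence, for a constant $B_1$ of order $\sqrt d\,L/t$ (this is where $v_3\le 2v_1$ is used), the $D_z$ with $z$ in a fixed class $I\cap(B_1\integers^d+w)$ are independent. H\"older over the $B_1^d$ classes plus independence reduce everything to unconditional one-block quantities $\mathbb{E}[e^{B_1^d\lambda D_z}]$. Conditional Jensen (on the exponential as in the paper, or on $p$-th moments as you suggest) passes to $X_z=S_z(\pppentire)-S_z(\pppentire_z')$. This variable has mean zero because $\pppentire_z'$ has the same law as $\pppentire$, so only $\mathbb{E}[|S_z(\pppentire)|^p]$ appears and \eqref{eq:p.moment.boundness} applies directly.

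Your last paragraph gestures at this combination. But without the measurability statement the blocks do not factor, and once they do there is nothing left to peel. Since no martingale property is used, your worry about the ordering of the $\calF_z$ also never arises.

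Separately, your counting step rests on a false claim: distinct $k$-sets isolated at scale $v_2t$ need not be disjoint. Take $k=2$ and three points $a,b,c$ at mutual distances $\ge v_2t$. Then both $\{a,b\}$ and $\{a,c\}$ are isolated, and the cube containing $a$ meets both. With $N$ such points, all $\binom{N}{k}$ subsets are isolated.

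The correct statement, which the paper uses, is that at most one isolated $\calZ$ has its lexicographically ordered points in a prescribed $k$-tuple of cubes. This holds because two distinct points of one cube are a.s.\ at distance $<v_2t$. It bounds the number of nonzero terms by $K^k$ rather than $K$, giving $|S_z|^p\le K^{k(p-1)}\sum_{\calZ}|F|^p$. This fix is harmless, since $C$ may depend on $k$.
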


\begin{proof}
Let $B_1 \coloneqq \roundup{12\sqrt{d}L/t}$.
We define $I_{w} \coloneqq I \cap (B_1\integers^d + w) $ for $w \in \{0,1,\dots, B_1-1\}^d$.
Since $\condexpect[none]{S_z(\pppentire) - S_z(\pppentire_z')}{\calF_z}$ is $\sigma (\pppentire \cap Q_z^{(3Lv_3)}) $-measurable,
the random variables $\{\condexpect[none]{S_z(\pppentire) - S_z(\pppentire_z')}{\calF_z} \}_{z \in I_{w}} $ are independent for each $w$.
Using H\"{o}lder's inequality, the independence of $\{\condexpect[none]{S_z(\pppentire) - S_z(\pppentire_z')}{\calF_z}\}_{z \in I_{w}} $, and Jensen's inequality for conditional expectations sequentially, we obtain
\begin{align}
  &\expect[Big]{\exp\ab\Big(\lambda \sum_{z \in I} \condexpect[none]{S_z(\pppentire) - S_z(\pppentire_{z}')}{\calF_z})} \\
  &\quad \leq \prod_{w \in \Bab{0, \dots, B_1-1}^d} \expect[Big]{\exp\ab\Big(B_1^d \lambda \sum_{z \in I_{w} } \condexpect[none]{S_z(\pppentire) - S_z(\pppentire_{z}')}{\calF_z})}^{B_1^{-d}} \\
  &\quad = \prod_{w \in \Bab{0, \dots, B_1-1}^d} \prod_{z \in I_{w}} \expect[Big]{\exp\ab\big(B_1^d \lambda \condexpect[none]{S_z(\pppentire) - S_z(\pppentire_{z}')}{\calF_z})}^{B_1^{-d}} \\
  &\quad \leq \prod_{w \in \Bab{0, \dots, B_1-1}^d} \prod_{z \in I_{w}} \expect[Big]{\exp\ab\big(B_1^d \lambda (S_z(\pppentire) - S_z(\pppentire_{z}')) )}^{B_1^{-d}}.
\end{align}

Now, fix $z \in \integers^d$. Since $\expect[none]{S_z(\pppentire) - S_z(\pppentire_{z}')} = 0 $, it follows that
\begin{align}
  \expect[none]{\exp\ab(B_1^d \lambda (S_z(\pppentire) - S_z(\pppentire_{z}')))} 
  &= 1 + \sum_{p =2}^\infty \frac{(B_1^d \lambda)^p}{p!} \expect[none]{(S_z(\pppentire) - S_z(\pppentire_{z}'))^p} \\
  &\leq 1 + \sum_{p = 2}^\infty \frac{(2B_1^d \lambda)^p}{p!} \expect[none]{\abs{S_z(\pppentire)}^p}.
\end{align}
Let $J \coloneqq \{z' \in \integers^d : Q_{\bfzero_d}^{(2Lv_3)} \cap Q_{z'} \neq \emptyset\} $.
Note that $\card{J}$ is bounded by a constant $B_2 > 0$ depending only on $d, L$, and $t$.
We can then bound $\abs{S_z(\pppentire)}$ by
\begin{align}
  \abs{S_z(\pppentire)}
  \leq \sum_{z_1', \dots, z_k' \in J} \sum_{\calZ \subset \pppentire \cap Q_z^{(2Lv_3)}}
  \isolated{\calZ}{\pppentire \cap Q_z^{(3Lv_3)}}{v_2 t}
  \prod_{i=1}^k \indicator[none]{\lex_i(\calZ) \in Q_{z + z_i'}}
  \abs{F(\calZ, \pppentire \cap Q_z^{(3Lv_3)})}.
\end{align}
Furthermore, there exists at most one subset $\calZ \subset \pppentire \cap Q_z^{(2Lv_3)}$ such that $\isolated{\calZ}{\pppentire \cap Q_z^{(3Lv_3)}}{v_2t} = 1$ and for all $i=1,\dots,k,$ $\indicator[none]{\lex_i(\calZ) \in Q_{z + z_i'}} = 1 $.
Thus, for $p \geq 2$, we have
\begin{align}
  &\expect[none]{\abs{S_z(\pppentire)}^p} \\
  &\quad \leq \begin{aligned}[t]
   \card{J}^{k(p-1)} \sum_{z_1', \dots, z_k' \in J} \expectsymbol\Biggl[\Biggl(\sum_{\calZ \subset \pppentire \cap Q_z^{(2Lv_3)}}
  &\isolated{\calZ}{\pppentire \cap Q_z^{(3Lv_3)}}{v_2t} \\
  &\quad \times \prod_{i=1}^k \indicator[none]{\lex_i(\calZ) \in Q_{z + z_i'}}
  \abs{F(\calZ, \pppentire \cap Q_z^{(3Lv_3)})} \Biggr)^p \Biggr] \end{aligned}\\
  &\quad = \begin{aligned}[t]
  \card{J}^{k(p-1)} \sum_{z_1', \dots, z_k' \in J} \expectsymbol\Biggl[ \sum_{\calZ \subset \pppentire \cap Q_z^{(2Lv_3)}}
  &\isolated{\calZ}{\pppentire \cap Q_z^{(3Lv_3)}}{v_2t} \\
  &\quad \times \prod_{i=1}^k \indicator[none]{\lex_i(\calZ) \in Q_{z + z_i'}}
  \abs{F(\calZ, \pppentire \cap Q_z^{(3Lv_3)})}^p \Biggr]\end{aligned}\\
  % &\quad \leq \card{J}^{k(p-1)} \sum_{z_1', \dots, z_k' \in J} \expect[Bigg]{\ab\bigg(\sum_{\calZ \subset \pppentire \cap Q_z^{(2Lv_3)}}
  % \isolated{\calZ}{\pppentire \cap Q_z^{(3Lv_3)}}{v_2t}
  % \prod_{i=1}^k \indicator[none]{\lex_i(\calZ) \in Q_{z + z_i'}}
  % \abs{F(\calZ, \pppentire \cap Q_z^{(3Lv_3)})})^p}\\
  % &\quad = \card{J}^{k(p-1)} \sum_{z_1', \dots, z_k' \in J} \expect[Bigg]{\sum_{\calZ \subset \pppentire \cap Q_z^{(2Lv_3)}}
  % \isolated{\calZ}{\pppentire \cap Q_z^{(3Lv_3)}}{v_2t}
  % \prod_{i=1}^k \indicator[none]{\lex_i(\calZ) \in Q_{z + z_i'}}
  % \abs{F(\calZ, \pppentire \cap Q_z^{(3Lv_3)})}^p}\\
  &\quad \leq \card{J}^{kp} \expect[Bigg]{\sum_{\calZ \subset \pppentire \cap Q_z^{(2Lv_3)}}
  \abs{F(\calZ, \pppentire \cap Q_z^{(3Lv_3)})}^p}
  \leq B_2^{kp} C_p, \label{eq:exp.moment.common.p.moment}
\end{align}
where the uniqueness of $\calZ $ is utilized in the third line.

Consequently, the left-hand side of \eqref{eq:exp.moment.block.mart.general} is bounded by
\begin{align}
  \prod_{w \in \Bab{0, \dots, B_1-1}^d} \ab\bigg(1 + \sum_{p = 2}^\infty \frac{(2B_1^d \lambda)^p}{p!} B_2^{kp} C_p)^{B_1^{-d}\card{I_{w}}}
  \leq \exp\ab\bigg(B_1^{-d} \card{I}  \sum_{p = 2}^\infty \frac{C_p (2B_1^d B_2^k \lambda)^p}{p!}), \label{eq:exp.moment.bound.common.lambda}
\end{align}
where we used $\sum_{w \in \Bab{0, \dots, B_1-1}^d} \card{I_{w}} = \card{I} $.
This proves \eqref{eq:exp.moment.block.mart.general}.
\end{proof}

\begin{remark}
(i)
\cref{prop:exp.moment.common} also holds 
when $\pppentire$ is replaced by $\pppentire \cap A$, 
where $A \subset \reals^d$ is any Borel set.
That is, 
if the condition
\begin{align}
  \expect[Bigg]{\sum_{\calZ \subset \pppentire \cap A \cap Q_z^{(2Lv_3)}} \abs{F(\calZ, \pppentire \cap A \cap Q_{z}^{(3Lv_3)})}^p } \leq C_p
\end{align}
is satisfied, 
then we have
\begin{align}
  \expect[bigg]{\exp\ab\bigg(\lambda \sum_{z \in I} \condexpect[none]{S_z(\pppentire \cap A) - S_z(\pppentire_{z}' \cap A)}{\calF_z })}
  \leq \exp\ab\bigg(C \card{I} \sum_{p = 2}^\infty \frac{C_p (C\lambda)^p}{p!} ).
\end{align}
Typically, 
we apply this result with $A = W_n$.

(ii)
In particular, 
if $I = I_n \coloneqq \{z \in \integers^d : W_n \cap Q_z \neq \emptyset \} $ for $n \geq 1$ 
and $v_1 \leq t d^{-1/2} $, 
then we obtain
\begin{align}
  \expect[bigg]{\exp\ab\bigg(\lambda \sum_{z \in I_n} \condexpect[none]{S_z(\pppentire_n) - S_z(\pppentire_{z, n}')}{\calF_z })}
  \leq \exp\ab(C n v_1^{-d} \sum_{p=2}^\infty \frac{C_p (C\lambda)^p}{p!} ). \label{eq:exp.moment.block.mart}
\end{align}
Indeed, 
\eqref{eq:exp.moment.block.mart} follows from the bound 
$\card{I_n} = \roundup{n^{1/d} / v_1 t d^{-1/2}}^d \leq (2 n^{1/d} / v_1 t d^{-1/2})^d $ 
and \eqref{eq:exp.moment.bound.common.lambda}.
\end{remark}

Applying \cref{prop:exp.moment.common},
we establish the exponential equivalence between $\{\mdpfactor^{-1} \centered{\xi}_n\}_{n \geq 1}$ and $\{\mdpfactor^{-1} \centered{\kappa_n}\}_{n \geq 1}$.

\begin{lemma}
\label{prop:exp.eq.xi.kappa}
Let $f \in \bddmble(E)$ or $f = \iprod{\bfa}{\cdot}$ for some $\bfa = (a_1, \dots, a_m) \in \reals^m$.
Let $0 < t_1 \leq \dots \leq t_m$ denote the time parameters for $\{\kappa_n \}_{n \geq 1}$,
and let $t = t_1$ be the time parameter for $\{\xi_n \}_{n \geq 1}$,
and assume that $L > t_m $.
Then, there exists a constant $C > 0$, independent of $n$ and $f$,
such that for all sufficiently large $n$ and all $R > 0$,
\begin{align}
  \probab[big]{\abs{\pairing{f}{\centered{\xi_n}} - \pairing{f}{\centered{\kappa_n}}} > R}
  \leq 2\exp\ab\Big(- C C_f \nrnd \ldpspeed \tailfunc\ab\Big(\frac{R}{C^2 C_f \nrnd \ldpspeed})), \label{eq:xi.kappa.diff.tail}
\end{align}
where $\tailfunc$ is given by \eqref{eq:tailfunc}, and $C_f$ is a constant defined by
\begin{align}
  C_f \coloneqq
  \begin{cases}
    e^{2\supnorm{f}} \tau(E) & \text{if } f \in \bddmble(E), \\
    \displaystyle
    \int_E \exp\ab\bigg({\sum_{i=1}^m \abs{a_i} u_i}) \,\tau(d\bfu) & \text{if } f = \iprod{\bfa}{\cdot}. \label{eq:xi.kappa.tail.const.f}
  \end{cases}
\end{align}
In particular, for any $\delta > 0$,
\begin{align}
  \limsup_{n \to \infty} \frac{\ldpspeed}{\mdpfactor^2} \log \probab[big]{\abs{\pairing{f}{\centered{\xi_n}} - \pairing{f}{\centered{\kappa_n}}} > \delta \mdpfactor} = - \infty. \label{eq:xi.kappa.exp.eq}
\end{align}
\end{lemma}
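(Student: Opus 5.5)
The plan is to write the difference as a sum over $k$-sets that are isolated at scale $r_n t_1$ but not at some larger scale, and to control its centered version by a Bernstein-type bound. The bound comes from a spatial martingale-difference decomposition fed into \cref{prop:exp.moment.common}. Set
\begin{align}
  D_n \coloneqq \pairing{f}{\xi_n} - \pairing{f}{\kappa_n} = \sum_{\calY \subset \ppp_n, \card{\calY}=k} \Bigl( \isolated[n]{\calY}{\ppp_n}{t_1} \zeroed{f}(H_n(\calY)) - \zeroed{f}(G_n(\calY, \ppp_n; \bft)) \Bigr).
\end{align}
Since $t_1 \le t_i$, each summand vanishes unless $\calY$ is $r_n t_1$-isolated and some point of $\ppp_n \setminus \calY$ lies within distance $r_n t_m$ of $\calY$. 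By (H3) we may also assume $\diam(\calY) \le L r_n$. So every nonzero summand involves $k+1$ points in a ball of radius $C r_n$, and its expected number is of order $n^{k+1} r_n^{dk} = \nrnd\, \ldpspeed$. This is the variance scale appearing in \eqref{eq:xi.kappa.diff.tail}.

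First I pass to the coupling $\ppp_n = n^{-1/d}(\pppentire \cap W_n)$. After rescaling, the interaction radius becomes $v \coloneqq n^{1/d} r_n$, and I take $v_1 = v_2 = v_3 = v$ in \cref{prop:exp.moment.common}. I attach each $\calY$ to the cube $Q_z$ containing its lexicographically smallest point. This gives a local sum $S_z$ of the form in \cref{prop:exp.moment.common}, with
\begin{align}
  F(\calZ, \calX) = \isolated{\calZ}{\calX}{v t_1} \zeroed{f}(H(v^{-1}\calZ)) - \zeroed{f}\bigl(G(v^{-1}\calZ, v^{-1}\calX; \bft)\bigr) \,\indicator[none]{\lex_1(\calZ) \in Q_z}.
\end{align}
Both isolation indicators only see $Q_z^{(3Lv)}$ once $L > t_m$, so $D_n = \sum_{z \in I_n} S_z(\pppentire_n)$. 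Ordering the cubes as in the definition of $\calF_z$ yields the martingale decomposition
\begin{align}
  \centered{D_n} = \sum_{z \in I_n} \bigl(\condexpect[none]{D_n}{\calF_z} - \condexpect[none]{D_n}{\calF_{z^-}}\bigr).
\end{align}
Resampling $\pppentire$ inside $Q_z$ only affects the $S_{z'}$ with $z'$ near $z$. I therefore expect each increment to equal $\condexpect[none]{\tilde S_z(\pppentire_n) - \tilde S_z(\pppentire'_{z,n})}{\calF_z}$, where $\tilde S_z$ is the (still local) sum of the $S_{z'}$ over this bounded neighbourhood. This puts us exactly in the setting of \cref{prop:exp.moment.common} and its remark with $A = W_n$.

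The key input, and the main obstacle, is the moment condition \eqref{eq:p.moment.boundness}, which must be obtained with the correct dependence on $n$ and $f$. I aim for
\begin{align}
  C_p \le p!\, C^p\, C_f\, v^{dk}.
\end{align}
The $f$-dependence works as follows. For bounded $f$, the summand is at most $2\supnorm{f}$ in absolute value, and $(2\supnorm{f})^p \le p!\, e^{2\supnorm{f}}$. For $f = \iprod{\bfa}{\cdot}$ we have $\abs{\iprod{\bfa}{H}}^p \le p! \exp(\sum_i \abs{a_i} u_i)$, which is integrable against $\tau$ by (H4); this is the origin of the two cases of $C_f$. The $v$-dependence comes from the Mecke formula: the $k$ points of $\calZ$ contribute $v^{d(k-1)}$ for a fixed anchor in $Q_z$, the anchor cube has volume $\asymp v^d$, and the requirement of at least one extra point within $v t_m$ contributes a further factor $\le C v^d$. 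The awkward part is that this extra point is not part of $\calZ$, so I would bound the non-isolation indicator by the count of extra points and take its conditional Poisson expectation. The $p$-th powers of that count remain harmless because the proof of \cref{prop:exp.moment.common} already reduces $\abs{S_z}^p$ to the $p$-th powers of single summands through the uniqueness argument.

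Given this, \eqref{eq:exp.moment.block.mart} with $\card{I_n} \le C n v^{-d}$ gives a cumulant bound
\begin{align}
  \log \expect{e^{\lambda \centered{D_n}}} \le C C_f\, n v^{d(k-1)} \sum_{p \ge 2} (C\lambda)^p \le \frac{C C_f\, \nrnd\, \ldpspeed\, \lambda^2}{2(1 - C\lambda)},
\end{align}
using $n v^{d(k-1)} = \ldpspeed\, \nrnd$; the same bound holds for $-D_n$. Chernoff's bound combined with the identity for $\inf_\lambda$ stated after \eqref{eq:tailfunc} then yields \eqref{eq:xi.kappa.diff.tail}, with the factor $2$ accounting for the two signs.

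For \eqref{eq:xi.kappa.exp.eq}, take $R = \delta \mdpfactor$, write $s = C C_f \nrnd\, \ldpspeed$ and $x = R/(C s)$, and use \eqref{eq:tailfunc.lower.bound}. If $x \le 1$, the exponent is at least $s x^2/4 \asymp \mdpfactor^2 / (\nrnd\, \ldpspeed)$. Multiplying by $\ldpspeed/\mdpfactor^2$ gives order $(\nrnd)^{-1} \to \infty$. If $x \ge 1$, the exponent is at least $s x/4 \asymp \delta \mdpfactor$, and multiplying by $\ldpspeed/\mdpfactor^2$ gives order $\ldpspeed/\mdpfactor \to \infty$ by \eqref{eq:mdpfactor.speed}. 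In both cases the limsup is $-\infty$.
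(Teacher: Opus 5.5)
Your route is the paper's: you represent the centred difference as $\sum_{z\in I_n}\condexpect[none]{S_z(\pppentire_n)-S_z(\pppentire_{z,n}')}{\calF_z}$, bound the $p$-th moments by the Mecke formula with the extra factor $v^d$ from non-isolation at scale $vt_m$, and finish with \cref{prop:exp.moment.common}, Chernoff and optimisation over $\lambda$. Your lexicographic anchoring is unnecessary but harmless; the paper's $S_z$ simply runs over all $k$-subsets of $Q_z^{(2Lv)}$.

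Three points in the bookkeeping need fixing.

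First, your exponents are off in two places that happen to cancel. Your own count ($v^{d(k-1)}$ for the relative positions, $v^d$ for the anchor cube, $v^d$ for the extra point) gives $C_p\le p!\,C\,C_f\,v^{d(k+1)}$, not $v^{dk}$. Also $n v^{d(k-1)}=\ldpspeed$, not $\ldpspeed\nrnd$. The correct product is $\card{I_n}\,C_p\asymp n v^{-d}v^{d(k+1)}=n v^{dk}=\nrnd\,\ldpspeed$. This is not cosmetic. With $C_p\asymp v^{dk}$ as you wrote it, the variance proxy would be $\ldpspeed$, which is the MDP scale itself. You would then only get $\limsup\le -c\,\delta^2$ in \eqref{eq:xi.kappa.exp.eq}, not $-\infty$.

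Second, no powers of the count of extra points should appear. Since $1-\isolated{\bfx}{\pppentire}{vt_m}$ is an indicator, its $p$-th power is itself. After Mecke one only needs $\expect[none]{1-\isolated{\bfx}{\pppentire\cap Q_z^{(3Lv)}}{vt_m}}=1-e^{-\lebmeas(Q_z^{(3Lv)}\cap\ball{\bfx}{vt_m})}\le Cv^d$. If you bound the indicator by the count before taking $p$-th powers, you bring in Poisson moments with superexponential growth in $p$. Combined with $x^p\le p!\,e^x$, this destroys the geometric series in \cref{prop:exp.moment.common}.

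Third, for $f=\iprod{\bfa}{\cdot}$ the summand is bounded by $\sum_i\abs{a_i}h^{(i)}$, not by $\abs{\iprod{\bfa}{H}}$. The reason is that the isolation indicators differ across coordinates. This bound is what produces the stated $C_f$.
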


\begin{proof}
Note that $\pairing{f}{\xi_n}$ can be expressed as
\begin{align}
  \pairing{f}{\xi_n}
  = \sum_{\calZ \subset \pppentire_n} \isolated{\calZ}{\pppentire_n}{(\nrnd)^{1/d} t_1} \zeroed{f}(H((\nrnd)^{-1/d}\calZ)).
\end{align}
Similarly, by setting $v = v_1 = v_2 = v_3 \coloneqq (\nrnd)^{1/d} $, $Q_z \coloneqq Q(v_1 t_1 / \sqrt{d}, z) $ and 
\begin{align}
  F(\calZ, \calX)
  &\coloneqq \zeroed{f}(H(v^{-1}\calZ)) - \zeroed{f}(G(v^{-1}\calZ, v^{-1}\calX; \bft)),
\end{align}
we can rewrite the difference $\pairing[none]{f}{\centered{\xi_n}} - \pairing{f}{\centered{\kappa_n}}$ in the form $\sum_{z \in I_n} \condexpect[none]{S_z(\pppentire_n) - S_z(\pppentire_{z,n}')}{\calF_z}$ using the notation from \cref{prop:exp.moment.common},
which is a decomposition also employed in \cite{py2001clt,krebs2021lilsip}.

We now check the condition \eqref{eq:p.moment.boundness} for $p \geq 2$ to apply \cref{prop:exp.moment.common}. 
Observe that $\abs{F(\calZ, \pppentire_n)}$ is bounded by
\begin{align}
  \indicator[none]{H(v^{-1} \calZ) \neq \bfzero_m} (1 - \isolated{\calZ}{\pppentire_n \cap Q_{z}^{(3Lv)}}{v t_m}) \abs{f(H(v^{-1} \calZ)) - f(G(v^{-1}\calZ, v^{-1}\pppentire_n; \bft))}.
\end{align}
When $f$ is bounded, the inequality $\abs{f(H(v^{-1} \calZ)) - f(G(v^{-1}\calZ, v^{-1}\pppentire_n; \bft))} \leq 2\supnorm{f}$ holds. Thus, the left-hand side of \eqref{eq:p.moment.boundness} is bounded by
\begin{align}
  &\expect[Bigg]{\sum_{\calZ \subset \pppentire_n \cap Q_z^{(2Lv)}} \indicator[none]{H(v^{-1} \calZ) \neq \bfzero_m} (1 - \isolated{\calZ}{\pppentire_n \cap Q_{z}^{(3Lv)}}{v t_m}) (2\supnorm{f})^p } \\
  &\quad= \frac{(2\supnorm{f})^p}{k!} \int_{(W_n \cap Q_z^{(2Lv)})^{k}} \indicator[none]{H(v^{-1} \bfx) \neq \bfzero_m}
  \expect[none]{1 - \isolated{\bfx}{\pppentire_n \cap Q_{z}^{(3Lv)}}{v t_m}} \,d\bfx \\
  &\quad\leq \frac{p! e^{2\supnorm{f}}}{k!} \int_{(Q_z^{(2Lv)})^{k}} \indicator[none]{H(v^{-1} \bfx) \neq \bfzero_m}
  \expect[none]{1 - \isolated{\bfx}{\pppentire_n \cap Q_{z}^{(3Lv)}}{v t_m}}\,d\bfx , \label{eq:xi.kappa.pth.moment.bdd}
\end{align}
where we have used the elementary inequality $x^p \leq p! e^x$ for $x > 0$.
When $f = \iprod{\bfa}{\cdot} $, we have
\begin{align}
  \abs{f(H(v^{-1} \calZ)) - f(G(v^{-1}\calZ, v^{-1}\pppentire_n; \bft))}
  &= \abs{\sum_{i=1}^m a_i \ab(1 - \isolated{\calZ}{\pppentire_n \cap Q_{z}^{(3Lv)}}{v t_i})h^{(i)}(v^{-1} \calZ) } \\
  &\leq \sum_{i=1}^m \abs{a_i} h^{(i)}(v^{-1} \calZ) 
  = \iprod{\bfa'}{H(v^{-1} \calZ)},
\end{align}
where $\bfa' \coloneqq (\abs{a_1}, \dots, \abs{a_m})$.
In this case, the left-hand side of \eqref{eq:p.moment.boundness} is bounded by
\begin{align}
  &\expect[Bigg]{\sum_{\calZ \subset \pppentire_n \cap Q_z^{(2Lv)}} (1 - \isolated{\calZ}{\pppentire_n \cap Q_{z}^{(3Lv)}}{v t_m}) \iprod{\bfa'}{H(v^{-1} \calZ)}^p } \\
  &\quad = \frac{1}{k!} \int_{(W_n \cap Q_z^{(2Lv)})^{k}}
  \iprod{\bfa'}{H(v^{-1} \bfx)}^p
  \expect[none]{1 - \isolated{\bfx}{\pppentire_n \cap Q_{z}^{(3Lv)}}{v t_m}} \,d\bfx \\
  &\quad \leq \frac{p!}{k!} \int_{(Q_z^{(2Lv)})^{k}}
  e^{\iprod{\bfa'}{H(v^{-1} \bfx)}} \indicator[none]{H(v^{-1} \bfx) \neq \bfzero_m}
  \expect[none]{1 - \isolated{\bfx}{\pppentire_n \cap Q_{z}^{(3Lv)}}{v t_m}} \,d\bfx .\label{eq:xi.kappa.pth.moment.linear}
\end{align}
Since the relation
\begin{align}
  &\expect[none]{1 - \isolated{\bfx}{\pppentire_n \cap Q_{z}^{(3Lv)}}{v t_m}} 
  = 1 - \exp\ab\big(-\lebmeas(W_n \cap Q_{z}^{(3Lv)} \cap \ball{\bfx}{v t_m})) \\
  &\quad \leq \lebmeas\ab\big(W_n \cap Q_{z}^{(3Lv)} \cap \ball{\bfx}{v t_m})
  \leq k \lebmeas(\ball{\bfzero_d}{1}) (v t_m)^d
\end{align}
holds for all $\bfx = (x_1, \dots, x_k) \in (\reals^d)^k$,
it follows that both \eqref{eq:xi.kappa.pth.moment.bdd} and \eqref{eq:xi.kappa.pth.moment.linear} are bounded by
\begin{align}
  k \lebmeas(\ball{\bfzero_d}{1}) (v t_m)^d v^{d(k-1)} \lebmeas(Q_{z}^{(2Lv)}) p! C_f
  = p! C C_f v^{d(k+1)},
\end{align}
where $C > 0$ is a constant depending only on $d, k, L, t_1$, and $t_m$,
and $C_f$ is given by \eqref{eq:xi.kappa.tail.const.f}.

Applying \cref{prop:exp.moment.common}, for $n $ such that $v = (\nrnd)^{1/d} < td^{-1/2} $, we obtain
\begin{align}
  \probab[big]{\abs{\pairing{f}{\centered{\xi_n}} - \pairing{f}{\centered{\kappa_n}}} > R} 
  &\leq \probab[big]{\pairing{f}{\centered{\xi_n}} - \pairing{f}{\centered{\kappa_n}} > R} + \probab[big]{\pairing{-f}{\centered{\xi_n}} - \pairing{-f}{\centered{\kappa_n}} > R}\\
  &\leq 2e^{- \lambda R} \exp\ab\bigg(C n v^{-d} \sum_{p=2}^\infty C C_f v^{d(k+1)} (C\lambda)^p)  \\
  &\leq 2e^{- \lambda R} \exp\ab(CC_f \nrnd \ldpspeed \frac{C^2 \lambda^2}{1 - C\lambda}), \label{eq:xi.kappa.exp.eq.tail.lambda}
\end{align}
where the last inequality holds for any $0 < \lambda < C^{-1}$.
Optimizing with respect to $\lambda$,
we arrive at \eqref{eq:xi.kappa.diff.tail}.

Furthermore, for any $a > 0 $, by setting $R = \delta \mdpfactor$ and $\lambda = a\mdpfactor/\ldpspeed $ in \eqref{eq:xi.kappa.exp.eq.tail.lambda}, we find
\begin{align}
  \limsup_{n \to \infty} \frac{\ldpspeed}{\mdpfactor^2} \log \probab[big]{\abs{\pairing{f}{\centered{\xi_n}} - \pairing{f}{\centered{\kappa_n}}} > \delta \mdpfactor} 
  &\leq -a\delta + \limsup_{n \to \infty} \frac{\ldpspeed}{\mdpfactor^2} CC_f \nrnd \ldpspeed \frac{C^2 a^2 \mdpfactor^2/\ldpspeed^2}{1 - C a \mdpfactor/\ldpspeed}
  = -a\delta.
\end{align}
Letting $a \to \infty$ yields \eqref{eq:xi.kappa.exp.eq}.
\end{proof}

At the end of this subsection,
we prepare to apply an extension of the contraction principle
(see, e.g., Theorem 4.2.23 in \cite{dz2009ldp}) to the sequence $\{\mdpfactor^{-1} \centered{\kappa_n}\}_{n \geq 1}$.
Let $\map{\Phi}{\signedmeas(E)}{\reals^m}$ be defined by
\begin{align}
  \Phi(\nu) \coloneqq
  \begin{cases}
    \displaystyle
    \ab(\int_{E} u_i \,\nu(d\bfu))_{i=1,\dots,m} & \displaystyle \text{for all } i=1,\dots, m,  \int_{E} u_i \,\abs{\nu}(d\bfu) < \infty,\\
    \bfzero_m & \text{otherwise}.
  \end{cases}
  \label{eq:map.contract.meas.to.vec}
\end{align}
Note that $\Phi(\kappa_n) = T_n$.
Since $\Phi$ is not continuous on $\signedmeas(E)$,
we cannot directly apply the classical contraction principle.
To circumvent this issue, we introduce a continuous cutoff approximation $\Phi_K$ of $\Phi$.
For $K > 0$, we define
\begin{align}
  \Phi_K(\nu) \coloneqq \ab(\int_{E} \phi_K(u_i) \,\nu(d\bfu))_{i=1,\dots,m},
\end{align}
where
\begin{align}
  \phi_K(x) =
  \begin{cases}
      x & \text{ if } 0 \leq x < K, \\
      -K^2(x-K) + K & \text{ if } K \leq x \leq K + K^{-1},\\
      0 & \text{otherwise}.
  \end{cases}
\end{align}

The following lemma establishes the exponential equivalence between $\Phi(\mdpfactor^{-1} \centered{\kappa_n})$ and $\Phi_K(\mdpfactor^{-1} \centered{\kappa_n})$,
as well as the uniformity on level sets required to invoke the extension of the contraction principle.

\begin{lemma}
\label{lem:contraction_cutoff}

\textup{(i)} For any $\delta > 0$,
\begin{align}
  \limsup_{K \to \infty} \limsup_{n \to \infty} \frac{\ldpspeed}{\mdpfactor^2} \log \probab[big]{\norm{\Phi(\mdpfactor^{-1} \centered{\kappa_n}) - \Phi_K(\mdpfactor^{-1} \centered{\kappa_n})}_{\reals^m} \geq \delta } = -\infty. \label{eq:cutoff.exp}
\end{align}

\textup{(ii)} For any $a > 0$,
\begin{align}
  \limsup_{K \to \infty} \sup_{\nu \in \Lambda^{-1}([0, a])} \norm{\Phi(\nu) - \Phi_K(\nu)}_{\reals^m} = 0. \label{eq:cutoff.sup}
\end{align}

\textup{(iii)} For any $\bfu \in \reals^m$,
\begin{align}
  I(\bfu) = \inf \Bab{\Lambda(\nu) : \nu \in \signedmeas(E), \, \Phi(\nu) = \bfu}. \label{eq:rate.func.contractio.principle}
\end{align}

\end{lemma}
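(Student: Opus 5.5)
We prove the three parts separately. The guiding observation is that $\Phi(\nu)-\Phi_K(\nu)=(\int_E (u_i-\phi_K(u_i))\,\nu(d\bfu))_i$, and the integrand $g_{K,i}(\bfu)\coloneqq u_i-\phi_K(u_i)$ vanishes for $u_i<K$ and satisfies $\abs{g_{K,i}(\bfu)}\le u_i\indicator[none]{u_i\ge K}$.

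\textbf{Part (ii).} This is a Cauchy--Schwarz estimate. For $\nu\in\Lambda^{-1}([0,a])$, write $g=d\nu/d\tau$. Then
\begin{align}
\abs[Big]{\int_E g_{K,i}\,d\nu}\le \ab\Big(\int_E u_i^2\indicator[none]{u_i\ge K}\,\tau(d\bfu))^{1/2}(2a)^{1/2}.
\end{align}
By (H4), $\int_E u_i^2\,d\tau<\infty$, since $x^2\le 2e^{x}$ and $\tau$ is finite. Hence the first factor tends to $0$ as $K\to\infty$, uniformly in $\nu$. Along the way this shows that $\Phi(\nu)$ is given by the integral formula, i.e.\ $\int_E u_i\,d\abs{\nu}<\infty$.

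\textbf{Part (iii).} We minimize $\Lambda$ under the linear constraints.
\begin{itemize}
\item Lower bound: if $\Lambda(\nu)<\infty$ and $\Phi(\nu)=\bfu$, then for every $\bfa\in\reals^m$, with $g=d\nu/d\tau$,
\begin{align}
\iprod{\bfa}{\bfu}-\tfrac12\iprod{\bfa}{\Gamma\bfa}=\int_E\iprod{\bfa}{\cdot}\,d\nu-\tfrac12\int_E\iprod{\bfa}{\cdot}^2\,d\tau\le \tfrac12\int_E g^2\,d\tau=\Lambda(\nu),
\end{align}
exactly as in the proof of \cref{prop:mdp.eta}. Here we use $\Gamma_{ij}=\int_E u_iu_j\,d\tau$, which holds by the definition of $\tau$ (the set $\{H=\bfzero_m\}$ does not contribute). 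Taking the supremum over $\bfa$ gives $I(\bfu)\le\Lambda(\nu)$.
\item Upper bound: set $d\nu\coloneqq\iprod{\Gamma^{-1}\bfu}{\cdot}\,d\tau$. Then $\Phi(\nu)=\Gamma\Gamma^{-1}\bfu=\bfu$ and $\Lambda(\nu)=\tfrac12\iprod{\bfu}{\Gamma^{-1}\bfu}=I(\bfu)$. The invertibility of $\Gamma$ is guaranteed by (H5).
\end{itemize}

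\textbf{Part (i).} This is the main obstacle. By the first observation,
\begin{align}
\Phi(\centered{\kappa_n})-\Phi_K(\centered{\kappa_n})=\big(\langle g_{K,i},\centered{\kappa_n}\rangle\big)_i,
\end{align}
where $g_{K,i}$ is unbounded and supported on $\{u_i\ge K\}$. It therefore suffices to bound, for each $i$ and for a sign $\pm$, the probability that $\pm\langle g_{K,i},\centered{\kappa_n}\rangle\ge \delta\mdpfactor/\sqrt m$.

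The approach is to repeat the chain $\eta_n\to\xi_n\to\kappa_n$ with the test function $g_{K,i}$ in place of a bounded $f$, so that $g_{K,i}$ plays the role of $f=\iprod{\bfa}{\cdot}$.
\begin{enumerate}
\item \emph{From $\kappa_n$ to $\xi_n$.} The argument of \cref{prop:exp.eq.xi.kappa} applies verbatim, because $\abs{g_{K,i}}\le u_i$. It uses the martingale decomposition and \cref{prop:exp.moment.common}, with $p$-th moments controlled via $x^p\le p!e^{x}$ and (H4), and it shows that the difference between $\kappa_n$ and $\xi_n$ is exponentially negligible at speed $\mdpfactor^2/\ldpspeed$.
\item \emph{From $\xi_n$ to $\eta_n$.} Likewise for the exponential equivalence of \cref{prop:exp.eq.eta.xi}, whose block exponential moments remain finite by (H4).
\item \emph{The i.i.d.\ sum $\eta_n$.} Applying Chebyshev's inequality with parameter $\lambda\mdpfactor/\ldpspeed$, together with \cref{prop:exp.moment.finite,prop:cov}, gives
\begin{align}
\limsup_{n}\frac{\ldpspeed}{\mdpfactor^2}\log\probab[big]{\pm\langle g_{K,i},\centered{\eta_n}\rangle\ge\delta'\mdpfactor}\le-\lambda\delta'+\frac{\lambda^2}{2}\int_E g_{K,i}^2\,d\tau.
\end{align}
Optimizing over $\lambda$ yields the bound $-\delta'^2/\big(2\int_E u_i^2\indicator[none]{u_i\ge K}\,d\tau\big)$, which tends to $-\infty$ as $K\to\infty$.
\end{enumerate}

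One point needs care. \cref{prop:exp.moment.finite} requires uniformly bounded exponential moments of $\abs{\langle g_{K,i},\eta_{n,\bfzero_d}\rangle}$. For this we dominate $\abs{\langle g_{K,i},\eta_{n,\bfzero_d}\rangle}$ by $\sum\iprod{\bfe_i}{H_n(\calY)}$ over isolated $\calY$ in the block, and then use the exponential-moment bound from \cite{ho2023ldp} (their (5.16)), generalized via (H4).

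Combining the three pieces with a union bound over $i$ and signs proves \eqref{eq:cutoff.exp}.
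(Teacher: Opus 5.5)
Parts (ii) and (iii) are correct. In (iii) you carry out the computation the paper omits (the argument behind \eqref{eq:proflim.eta.ratefunc}, with $f_j=\iprod{\bfe_j}{\cdot}$ and minimizer $d\nu=\iprod{\Gamma^{-1}\bfu}{\cdot}\,d\tau$). In (ii) your single Cauchy--Schwarz step replaces the paper's splitting of $\abs{d\nu/d\tau}$ at level $Ru_i$, followed by $K\to\infty$ and then $R\to\infty$. Your version is shorter and gives the explicit rate $(2a\int_E u_i^2\indicator[none]{u_i\ge K}\,d\tau)^{1/2}$. It also makes explicit a point the paper uses tacitly: $\Lambda(\nu)<\infty$ forces $\int_E u_i\,d\abs{\nu}<\infty$, so $\Phi(\nu)$ is given by the integral formula.

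Part (i) is also correct, but your route differs from the paper's. The paper never passes through $\xi_n$ or $\eta_n$ there. Write $\varphi_K(x)\coloneqq x-\phi_K(x)$ (your $g_{K,i}$ in the $i$-th coordinate). The paper uses $\varphi_K(cx)=c\,\varphi_K(x)$ for $c\in\{0,1\}$ to see that $\pairing{\varphi_K(u_i)}{\kappa_n}=\sum_{\calY}c_n(\calY,\ppp_n;t_i)\,\varphi_K(h_n^{(i)}(\calY))$ is itself a single-threshold isolated statistic. The martingale-difference representation and \cref{prop:exp.moment.common} then apply directly to the whole centered sum. Via $x^p\le p!e^x$, the $p$-th moments are at most $p!\,Cv^{dk}C_{i,K}$, where $C_{i,K}=\int e^{h^{(i)}(\bfzero_d,\bfy)}\indicator[none]{h^{(i)}(\bfzero_d,\bfy)>K}\,d\bfy\to0$ by (H4). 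This gives a Bernstein-type tail with variance proxy $CC_{i,K}\ldpspeed$, hence the bound $-\delta^2/(4C^3C_{i,K})\to-\infty$.

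Your chain ($\kappa_n\to\xi_n\to\eta_n$, then a G\"artner--Ellis computation on the i.i.d.\ blocks) also works, and even yields the sharp constant $\int_E g_{K,i}^2\,d\tau$. It costs more than you acknowledge, though. \cref{prop:cov}, \cref{prop:exp.eq.eta.xi}, and the exponential-moment bounds quoted from \cite{ho2023ldp} are all stated for bounded test functions. So your Steps 2 and 3 need extensions to the unbounded $g_{K,i}$:
\begin{itemize}
\item control the overlap terms in the covariance by Cauchy--Schwarz, using the bounded support of $H$;
\item get uniform exponential moments of block sums of $c_n(\calY,\cdot;t)\,h_n^{(i)}(\calY)$ from a small-cube decomposition as in \cref{prop:exp.moment.common}, together with (H4).
\end{itemize}
These extensions do hold. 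The paper's one-step argument simply avoids them and gives an explicit non-asymptotic tail bound of the same type that is reused later for the LIL.
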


\begin{proof}
Let $\varphi_K(x) \coloneqq x - \phi_K(x) $. Note that $\varphi_K(x) \leq \indicator[none]{x > K}x$ and $\varphi_K(cx) = c\varphi_K(x)$ for $c \in \{0, 1\}$.

(i) Since the bound
\begin{align}
  \norm{\Phi(\centered{\kappa_n}) - \Phi_K(\centered{\kappa_n})}_{\reals^m}
  \leq \sum_{i=1}^m \abs{\centered{\sum_{\calY \subset \ppp_n} \isolated[n]{\calY}{\ppp_n}{t_i} \varphi_K(h_n^{(i)}(\calY)) }}
\end{align}
holds, to prove \eqref{eq:cutoff.exp} it is enough to show that for each $i=1,\dots, m$,
\begin{align}
  \limsup_{K \to \infty} \limsup_{n \to \infty} \frac{\ldpspeed}{\mdpfactor^2} \log \probab[Bigg]{\abs[bigg]{\centered{\sum_{\calY \subset \ppp_n} \isolated[n]{\calY}{\ppp_n}{t_i} \varphi_K(h_n^{(i)}(\calY)) }} \geq \delta \mdpfactor} = -\infty. \label{eq:cutoff.exp.neg.element}
\end{align}

% Without loss of generality, we consider only the case $i=1$.
Setting $v = v_1 = v_2 = v_3 \coloneqq (\nrnd)^{1/d}$, $t \coloneqq t_i$ and $F(\calZ) \coloneqq \varphi_K(h^{(i)}(v^{-1}\calZ))$, we can express the centered sum as
\begin{align}
  \centered{\sum_{\calY \subset \ppp_n} \isolated[n]{\calY}{\ppp_n}{t_i} \varphi_K(h_n^{(i)}(\calY))} 
  &= \centered{\sum_{\calZ \subset \pppentire_n} \isolated{\calZ}{\pppentire_n}{v t_i} F(\calZ)}
  = \sum_{z \in I_n} \condexpect[none]{S_z(\pppentire_n) - S_z(\pppentire_{z,n}')}{\calF_z}.
\end{align}
For $p \geq 2$, the inequality
\begin{align}
  \expect[Bigg]{\sum_{\calZ \subset \pppentire_n \cap Q_z^{(2Lv)}} \abs{F(\calZ)}^p } 
  % \leq \frac{1}{k!} \int_{(Q_z^{(2Lv)})^k}  \varphi_K(h_n^{(i)} (\bfx))^p d\bfx \\
  &\leq v^{d(k-1)} \lebmeas(Q_{z}^{(2Lv)}) \frac{1}{k!} \int_{(\reals^d)^{k-1}} \varphi_K(h^{(i)} (\bfzero, \bfy))^p \,d\bfy \\
  &\leq p! (L+2t_i)^d v^{dk} \int_{(\reals^d)^{k-1}} e^{h^{(i)}(\bfzero_d, \bfy)} \indicator[none]{h^{(i)}(\bfzero_d, \bfy) > K} \,d\bfy
\end{align}
holds.
Thus, \cref{prop:exp.moment.common} implies that
\begin{align}
  \expect[bigg]{\exp\ab\bigg(\lambda \centered{\sum_{\calY \subset \ppp_n} \isolated[n]{\calY}{\ppp_n}{t_i} \varphi_K(h_n^{(i)}(\calY))} )}
  \leq \exp\ab\bigg(C C_{i, K} \ldpspeed \frac{C^2 \lambda^2}{1 - C\lambda}),
\end{align}
where $0 < \lambda < C^{-1}$, $C > 0$ is a constant independent of $n$ and $K$, 
and $C_{i, K}$ is the constant defined by
\begin{align}
  C_{i, K} \coloneqq \int_{(\reals^d)^{k-1}} e^{h^{(i)}(\bfzero_d, \bfy)} \indicator[none]{h^{(i)}(\bfzero_d, \bfy) > K} \,d\bfy.
\end{align}
Note that by condition (H4), $C_{i, K}$ is finite and vanishes as $K \to \infty$.
Therefore, we obtain for all sufficiently large $n$,
\begin{align}
  \probab[Bigg]{\abs[bigg]{\centered{\sum_{\calY \subset \ppp_n} \isolated[n]{\calY}{\ppp_n}{t_i} \varphi_K(h_n^{(i)}(\calY)) }} \geq \delta \mdpfactor}
  \leq 2\exp\ab\Big(- C C_{i,K} \ldpspeed \tailfunc\ab\Big(\frac{\delta \mdpfactor}{C^2 C_{i,K} \ldpspeed})),
\end{align}
and
\begin{align}
  &\limsup_{K \to \infty} \limsup_{n \to \infty} \frac{\ldpspeed}{\mdpfactor^2} \log \probab[Bigg]{\centered{\sum_{\calY \subset \ppp_n} \isolated[n]{\calY}{\ppp_n}{t_i} \varphi_K(h_n^{(i)}(\calY)) } \geq \delta \mdpfactor} \\
  &\quad \leq \limsup_{K \to \infty} \limsup_{n \to \infty} \frac{\ldpspeed}{\mdpfactor^2} \ab\bigg(-C C_{i,K} \ldpspeed \frac{1}{4}\ab\Big(\frac{\delta \mdpfactor}{C^2 C_{i,K} \ldpspeed})^2 )
  = \limsup_{K \to \infty} (- \delta^2 4^{-1} C^{-3} C_{i,K}^{-1})
  = - \infty.
\end{align}

(ii) Fix $R > 0$.
For any $\nu \in \Lambda^{-1}([0, a])$, we have
\begin{align}
  \norm{\Phi(\nu) - \Phi_K(\nu)}_{\reals^m}
  &\leq \sum_{i=1}^m \int_{E} \varphi_K(u_i) \,\abs{\nu} (d\bfu) \\
  &\leq \sum_{i=1}^m \int_{E} \indicator[none]{u_i \geq K} u_i
  \ab\Big(\indicator[Big]{\abs[Big]{\frac{d\nu}{d\tau}(\bfu)} \geq R u_i} + \indicator[Big]{\abs[Big]{\frac{d\nu}{d\tau}(\bfu)} < R u_i})
  \abs[Big]{\frac{d\nu}{d\tau}(\bfu)} \,\tau(d\bfu) \\
  &\leq \frac{2am}{R} + R\sum_{i=1}^m \int_{E} \indicator[none]{u_i \geq K} u_i^2 \,\tau(d\bfu).
\end{align} 
Under condition (H4), the integral $\int_{E} \indicator[none]{u_i \geq K} u_i^2 \tau(d\bfu)$ is finite and vanishes as $K \to \infty$. Consequently,
\begin{align}
  \limsup_{K \to \infty} \sup_{\nu \in \Lambda^{-1}([0, a])}
  \norm{\Phi(\nu) - \Phi_K(\nu)}_{\reals^m}
  \leq \frac{2am}{R}.
\end{align}
Taking the limit as $R \to \infty$ yields \eqref{eq:cutoff.sup}.

(iii)
The proof follows arguments similar to those in the proof of \eqref{eq:proflim.eta.ratefunc} and is therefore omitted.
\end{proof}

\subsection{The binomial case}

In this subsection, we establish the exponential equivalence between $\{\mdpfactor^{-1} \centered{\kappa_n(\ppp_n)}\}_{n \geq 1}$
and $\{\mdpfactor^{-1} \centered{\kappa_n(\bpp_n)}\}_{n \geq 1}$,
as well as between $\{\mdpfactor^{-1} \centered{T_n(\ppp_n)}\}_{n \geq 1}$ and $\{\mdpfactor^{-1} \centered{T_n(\bpp_n)}\}_{n \geq 1}$.
Without loss of generality, we assume that the time parameters $t_1,\ldots, t_m$
for $\{\kappa_n(\ppp_n)\}_{n \geq 1}$ and $\{\kappa_n(\bpp_n)\}_{n \geq 1}$
satisfy $0 < t_1 \leq \dots \leq t_m < \infty $.
For convenience,
we adopt the convention that $t_{m+1} \coloneqq \infty$
and $\isolated[n]{\calY}{\calX}{t_{m+1}} = 0$ for all finite sets $\calY, \calX \subset \reals^d$.

To establish the exponential equivalences,
we begin by estimating the difference between the mean measures of $\kappa_n(\mathcal P_n)$ and $\kappa_n(\mathcal B_n)$.
The following proposition allows us to reduce the estimate
for the difference between the centered versions to that for the corresponding uncentered versions.

\begin{proposition}
\label{prop:ppp.bpp.expect.vanish}
Let $f \in \bddmble(E)$ or
$f = \iprod{\bfa}{\cdot}$ for some $\bfa = (a_1, \dots, a_m) \in \reals^m$.
Then, 
there exists a constant $C > 0$ independent of $n$ and $f$ such that for all $n \geq 1$,
\begin{align}
  \abs[big]{\expect[none]{\pairing{f}{\kappa_n(\ppp_n)}} - \expect[none]{\pairing{f}{\kappa_n(\bpp_n)}}} 
  \leq C C_f \ab\big((\nrnd)^{k-1} + (\nrnd)^k (\nrnd + 1)) , \label{eq:kappa.ppp.bpp.expect.bound}
\end{align}
where
\begin{align}
  C_f \coloneqq \int_{E} g_f \, d\tau, 
  \quad g_f(\bfu) \coloneqq \sum_{i=1}^m \abs{f(u_1, \dots, u_i, \bfzero_{m-i})}, \quad \bfu = (u_1, \dots, u_m) \in E. \label{eq:kappa.f.projection.sum}
\end{align}
In particular,
\begin{align}
  \lim_{n \to \infty} \ab\big(\expect[none]{\pairing{f}{\kappa_n(\ppp_n)}} - \expect[none]{\pairing{f}{\kappa_n(\bpp_n)}}) = 0. \label{eq:kappa.ppp.bpp.expect.vanish}
\end{align}
\end{proposition}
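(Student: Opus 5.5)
We compute both expectations explicitly and compare them term by term.

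\textbf{Step 1: reduce $f$ to a sum of indicator-type pieces.} Order $t_1 \leq \dots \leq t_m$. For a $k$-set $\calY$, the vector $\isolated[n]{\calY}{\calX}{\bft}$ is monotone in the index. It equals $(1,\dots,1,0,\dots,0)$ with exactly $i$ leading ones, where $i$ is the largest index with $\isolated[n]{\calY}{\calX}{t_i}=1$. Hence
\begin{align}
  \zeroed{f}(G_n(\calY,\calX;\bft)) = \sum_{i=1}^{m} \ab\big(\isolated[n]{\calY}{\calX}{t_i} - \isolated[n]{\calY}{\calX}{t_{i+1}}) \zeroed{f}\ab\big(h_n^{(1)}(\calY),\dots,h_n^{(i)}(\calY),\bfzero_{m-i}),
\end{align}
using the convention $\isolated[n]{\cdot}{\cdot}{t_{m+1}}=0$. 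Each summand has the form $g(\calY)\,\isolated[n]{\calY}{\calX}{s}$ with $s=t_i$ or $t_{i+1}$ and $g(\calY)=\zeroed{f}(\dots)$. The function $g$ is locally determined, and $\abs{g}$ integrates against Lebesgue measure to at most $k!\int g_f\,d\tau$ after the change of variables $\bfx=(x,x+r_n\bfy)$. So it suffices to bound $\abs{\expect[none]{\sum_{\calY\subset\ppp_n} g\,c_n(\calY,\ppp_n;s)}-\expect[none]{\sum_{\calY\subset\bpp_n} g\,c_n(\calY,\bpp_n;s)}}$ for a single $s$.

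\textbf{Step 2: explicit formulas.} By the Mecke formula the Poisson expectation is
\begin{align}
  \frac{n^k}{k!}\int_{(\unitcube)^k} g(\bfx)\,e^{-nV(\bfx)}\,d\bfx, \qquad V(\bfx)\coloneqq\lebmeas\ab\big(\unitcube\cap\ball{\bfx}{r_n s}).
\end{align}
The binomial expectation is
\begin{align}
  \binom{n}{k}\int_{(\unitcube)^k} g(\bfx)\,(1-V(\bfx))^{n-k}\,d\bfx.
\end{align}
On the support of $g$ we have $\diam(\bfx)\leq Lr_n$, so $V(\bfx)\leq C r_n^d$. The change of variables gives $\int\abs{g}\,d\bfx\leq C r_n^{d(k-1)}\,k!\,C_f$. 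We split the difference into two parts:
\begin{align}
  \ab\Big(\frac{n^k}{k!}-\binom nk)\int g\,e^{-nV},
  \qquad
  \binom nk\int g\ab\big(e^{-nV}-(1-V)^{n-k}).
\end{align}
For the first part, $0\le n^k/k!-\binom nk\le C n^{k-1}$. This yields a bound $C n^{k-1}r_n^{d(k-1)}C_f=C C_f(\nrnd)^{k-1}$.

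For the second part, write $(1-V)^{n-k}=e^{(n-k)\log(1-V)}$ and use $\abs{\log(1-V)+V}\le V^2$ for small $V$. Together with $\abs{e^{a}-e^{b}}\le\abs{a-b}$ for $a,b\le0$, this gives
\begin{align}
  \abs{e^{-nV}-(1-V)^{n-k}}\le kV+nV^2\le C\ab\big(r_n^d+n r_n^{2d}).
\end{align}
Multiplying by $n^k r_n^{d(k-1)}C_f$ yields $C C_f\ab\big((\nrnd)^k+(\nrnd)^{k+1})$, which is the second term $(\nrnd)^k(\nrnd+1)$ in \eqref{eq:kappa.ppp.bpp.expect.bound}. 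For $n$ small relative to $k$ (or $V$ not small), the bound is trivial after enlarging $C$, since both expectations are bounded by $C C_f(\ldots)$.

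\textbf{Step 3: conclusion.} Summing over $i$ and the two choices $s\in\{t_i,t_{i+1}\}$ costs at most a factor $2m$, which is absorbed into $C$. The total is controlled by $C_f=\int g_f\,d\tau$. Finally, \eqref{eq:kappa.ppp.bpp.expect.vanish} follows from \eqref{eq:kappa.ppp.bpp.expect.bound}, because $\nrnd\to0$ and $k\ge2$.

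The main bookkeeping obstacle is Step 1. When $f=\iprod{\bfa}{\cdot}$ it is unbounded, so we must check that $g_f$ is $\tau$-integrable and that the truncated arguments $(u_1,\dots,u_i,\bfzero)$ still lie in the support where (H4) gives finiteness. This holds because each $h^{(j)}$ has exponential moments under $\tau$. If $C_f=\infty$, the statement is vacuous.
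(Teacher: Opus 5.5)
Your proof is correct and follows the paper's argument essentially step for step. Both use the telescoping decomposition over $i$ via $c_n(\cdot;t_i)-c_n(\cdot;t_{i+1})$, the Mecke and binomial formulas, and a split into a prefactor term of order $(\nrnd)^{k-1}$ plus a term controlled by $\abs{e^{-nV}-(1-V)^{n-k}}\le kV+nV^2$. The only cosmetic difference is how that last bound is obtained: the paper derives it from $\abs{e^{-x}-(1-x)}\le x^2$ and $\abs{x^p-y^p}\le p\abs{x-y}$, which holds for all $V\in[0,1]$, so it needs no separate treatment of the case where $V$ is not small.
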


\begin{proof}
Let $H_n^{(i)}(\calY) \coloneqq ( h_n^{(1)}(\calY), \dots, h_n^{(i)}(\calY)) \in [0, \infty)^i $,
and observe that
\begin{align}
  \kappa_n(\calX)
  &= \sum_{\calY \subset \calX, \card{\calY}=k} \indicator[none]{G_n(\calY,\calX;\bft) \neq \bfzero_{m}} \diracdelta{G_n(\calY,\calX;\bft)} \\
  &= \sum_{i=1}^m \sum_{\calY \subset \calX, \card{\calY}=k} (\isolated[n]{\calY}{\calX}{t_{i}} - \isolated[n]{\calY}{\calX}{t_{i+1}}) \indicator[none]{H_n^{(i)}(\calY) \neq \bfzero_{i}} \diracdelta{(H_n^{(i)}(\calY), \bfzero_{m-i})}. \label{eq:kappa.decompose}
\end{align}

From the Mecke formula, 
we can write
\begin{align}
  \expect[none]{\pairing{f}{\kappa_n(\ppp_n)}}
  &= \sum_{i=1}^m \expect[Bigg]{ \sum_{\calY \subset \ppp_n, \card{\calY}=k} (\isolated[n]{\calY}{\ppp_n}{t_{i}} - \isolated[n]{\calY}{\ppp_n}{t_{i+1}}) \zeroed{f}(H_n^{(i)}(\calY), \bfzero_{m-i})}\\
  &= \sum_{i=1}^m \frac{n^k}{k!} \int_{(\unitcube)^k} d\bfx\, \zeroed{f}(H_n^{(i)}(\bfx), \bfzero_{m-i})
   \expect[none]{ \isolated[n]{\bfx}{\ppp_n}{t_{i}} - \isolated[n]{\bfx}{\ppp_n}{t_{i+1}} } \\
  &= \sum_{i=1}^m \frac{n^k}{k!} \int_{(\unitcube)^k} d\bfx\, \zeroed{f}(H_n^{(i)}(\bfx), \bfzero_{m-i})\\
  &\quad\times \ab\Big(\exp\ab\big(- n \lebmeas(\unitcube \cap \ball{\bfx}{r_n t_i})) - \indicator{i < m}\exp\ab\big(- n \lebmeas(\unitcube \cap \ball{\bfx}{r_n t_{i+1}}))).
\end{align}
Similarly, 
for the binomial point process, 
we have
\begin{align}
  \expect[none]{\pairing{f}{\kappa_n(\bpp_n)}}
  &= \sum_{i=1}^m \expect[Bigg]{ \sum_{\calY \subset \bpp_n, \card{\calY}=k} (\isolated[n]{\calY}{\bpp_n}{t_{i}} - \isolated[n]{\calY}{\bpp_n}{t_{i+1}}) \zeroed{f}(H_n^{(i)}(\calY), \bfzero_{m-i})}\\
  &= \sum_{i=1}^m \frac{n!}{k!(n-k)!} \expect[bigg]{ (\isolated[n]{\bpp_k}{\bpp_n}{t_{i}} - \isolated[n]{\bpp_k}{\bpp_n}{t_{i+1}}) \zeroed{f}(H_n^{(i)}(\bpp_k), \bfzero_{m-i})}\\
  &= \sum_{i=1}^m \frac{n!}{k!(n-k)!} \int_{(\unitcube)^k} d\bfx\, \zeroed{f}(H_n^{(i)}(\bfx), \bfzero_{m-i})\\
  & \quad \times \ab\Big(\lebmeas\ab\big(\unitcube \setminus \ball{\bfx}{r_n t_i})^{n-k} - \indicator{i < m}\lebmeas\ab\big(\unitcube \setminus \ball{\bfx}{r_n t_{i+1}})^{n-k}).
\end{align}
Thus, 
the difference $\abs[big]{\expect[none]{\pairing{f}{\kappa_n(\ppp_n)}} - \expect[none]{\pairing{f}{\kappa_n(\bpp_n)}}}$ is bounded by
\begin{align}
\begin{split}
  &\sum_{i=1}^m \frac{1}{k!} \ab\Big(n^k - \frac{n!}{(n-k)!}) \int_{(\unitcube)^k} d\bfx\, \abs{\zeroed{f}(H_n^{(i)}(\bfx), \bfzero_{m-i})}\\
  &+ \sum_{i=1}^m \frac{n^k}{k!} \int_{(\unitcube)^k} d\bfx\, \abs{\zeroed{f}(H_n^{(i)}(\bfx), \bfzero_{m-i})}\\
  &\quad \times \Bigl(\abs[big]{\exp\ab\big(- n \lebmeas(\unitcube \cap \ball{\bfx}{r_n t_i})) - \lebmeas\ab\big(\unitcube \setminus \ball{\bfx}{r_n t_i})^{n-k}} \Bigr.\\
  &\qquad \bigl. + \indicator{i < m}\abs[big]{\exp\ab\big(- n \lebmeas(\unitcube \cap \ball{\bfx}{r_n t_{i+1}})) - \lebmeas(\unitcube \setminus \ball{\bfx}{r_n t_{i+1}})^{n-k}}\Bigr).
\end{split} \label{eq:kappa.ppp.bpp.expect.bound.halfway}
\end{align}

Since $ n^k - n!/(n-k)! \leq k^2 n^{k-1}$ and
\begin{align}
  \int_{(\unitcube)^k} \abs{\zeroed{f}(H_n^{(i)}(\bfx), \bfzero_{m-i})} \,d\bfx
  \leq r_n^{d(k-1)} \int_{(\reals^d)^{k-1}} \abs{\zeroed{f}(H^{(i)}(\bfzero_d, \bfy), \bfzero_{m-i})} \,d\bfy, \label{eq:kappa.ppp.bpp.int.f.order}
\end{align}
the first term of \eqref{eq:kappa.ppp.bpp.expect.bound.halfway} is bounded by
$C C_f (\nrnd)^{k-1}$.
Next, 
we estimate the second term of \eqref{eq:kappa.ppp.bpp.expect.bound.halfway}.
For each $i=1, \dots, m$, 
let $\Delta_{n,i,\bfx} \coloneqq \lebmeas(\unitcube \cap \ball{\bfx}{r_n t_i}) $.
Note that $\Delta_{n, i, \bfx} \leq k \lebmeas(\ball{\bfzero_d}{r_n t_i}) \leq k \lebmeas(\ball{\bfzero_d}{1}) (r_n t_m)^d $ holds uniformly in $\bfx$.
By the elementary inequality $\abs{e^{-x} - (1 - x)} \leq x^2 $ 
and the fact that $\abs{x^p - y^p} \leq p\abs{x-y} $ for any $x, y \in (0, 1) $ and $p \geq 1$, 
it follows that
\begin{align}
  &\abs{\exp(- n \lebmeas(\unitcube \cap \ball{\bfx}{r_n t_i}))
   - \lebmeas(\unitcube \setminus \ball{\bfx}{r_n t_i})^{n-k}} \\
  &\quad = \abs{\exp(-n\Delta_{n, i, \bfx}) - (1 - \Delta_{n, i, \bfx})^{n-k}}
  \leq n \Delta_{n,i,\bfx}^2 + k \Delta_{n,i,\bfx}
  \leq C r_n^d (\nrnd + 1)
\end{align}
for $i=1, \dots, m $.
Thus, 
combining this with \eqref{eq:kappa.ppp.bpp.int.f.order}, 
we can bound the second term in \eqref{eq:kappa.ppp.bpp.expect.bound.halfway} by
$C C_f (\nrnd)^{k} (\nrnd + 1)$.
Therefore, 
we obtain the desired bound \eqref{eq:kappa.ppp.bpp.expect.bound}.
Consequently, 
as $n \to \infty$, 
we have 
$\expect[none]{\pairing{f}{\kappa_n(\ppp_n)}} - \expect[none]{\pairing{f}{\kappa_n(\bpp_n)}}
  = \lorder\ab\big((\nrnd)^{k-1}) \to 0$, 
which yields \eqref{eq:kappa.ppp.bpp.expect.vanish}.
\end{proof}

To establish the exponential equivalence between the Poisson and binomial models,
we introduce auxiliary Poisson point processes.
For each $l \geq 1$,
let $N_{2n, l}'$ be a Poisson random variable with mean $2n$,
and let $X_{n, l, 1}'', X_{n, l, 2}'', \dots$ be independent and identically distributed random vectors uniformly distributed on $\unitcube$,
both independent of $\{\bpp_n\}_{n \geq 1}$ and $\{\ppp_n\}_{n \geq 1}$.
We define
\begin{align}
  \ppp_{2n, l}' \coloneqq \Bab{X_i: i \in \Bab{1, \dots, n \land N_{2n, l}'} } \cup \Bab{X_{n, l, i}'' : i \in \Bab{1, \dots, (N_{2n, l}' - n) \lor 0}}.
\end{align}
Similarly, for $l_1 < l_2$ and $\epsilon > 0$,
let $\ppp_{2n\epsilon, l_1, l_2}'$ be a Poisson point process on $\unitcube$ with intensity $2n\epsilon$
constructed from $\bpp_{l_2} \setminus \bpp_{l_1}$.

With these auxiliary processes,
we obtain the following tail estimate for the difference between $\kappa_n(\bpp_{l_1})$ and $\kappa_n(\bpp_{l_2})$.

\begin{proposition}
\label{prop:tail.bppl1.bppl2}
Let $f \in \bddmble(E)$ or
$f = \iprod{\bfa}{\cdot}$ for some $\bfa = (a_1, \dots, a_m) \in \reals^m$.
Let $\epsilon \in (0, 1) $ and $l_1, l_2 \geq 1$ such that $l_1 < l_2 \leq l_1 + \epsilon n $.
Then, there exist constants $C_1, C_2 > 0$ independent of $n, l_1, l_2, \epsilon$ and $f$,
such that for sufficiently large $n$ and $R > (C_1^2 C_f' \epsilon \ldpspeed) \lor (C_1^2 C_f' \epsilon \nrnd \ldpspeed ) $,
\begin{align}
\begin{split}
  &\probab[Big]{\abs[big]{\pairing{f}{\kappa_n(\bpp_{l_1})} - \pairing{f}{\kappa_n(\bpp_{l_2})}} > R, \, \bpp_{l_1} \subset \ppp_{2n, l_1}', \, \bpp_{l_2} \setminus \bpp_{l_1} \subset \ppp_{2n\epsilon, l_1, l_2}'} \\
  &\quad \leq \sum_{j=0}^{k-1} \exp\ab\Big(- C_1 \ab\big((C_1 C_2)^{-1} R^{1/2} - (C_f' \epsilon^{k-j} \ldpspeed)^{1/2})^2 ) \\
  &\qquad + \exp\ab\Big(- C_1 \ab\big((C_1 C_2)^{-1} R^{1/2} - (C_f' \epsilon \nrnd \ldpspeed)^{1/2})^2),
\end{split} \label{eq:tail.bppl1.bppl2}
\end{align}
where
\begin{align}
  C_f' \coloneqq \int_{E} e^{g_f} \,d\tau, \label{eq:bpp.ppp.const.2}
\end{align}
and $g_f $ is defined by \eqref{eq:kappa.f.projection.sum}.
\end{proposition}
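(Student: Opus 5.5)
The plan is to dominate the difference by a bounded number of nonnegative Poisson functionals, one for each way in which the new points $\bpp_{l_2} \setminus \bpp_{l_1}$ can affect $\kappa_n$, and to bound the upper tail of each separately. The two inclusion events in the statement are what make this possible: they let us replace the binomial configurations by the Poisson processes $\ppp_{2n, l_1}'$ and $\ppp_{2n\epsilon, l_1, l_2}'$, which are independent of each other. Write $\Delta \coloneqq \bpp_{l_2} \setminus \bpp_{l_1}$ and use the decomposition \eqref{eq:kappa.decompose}. The difference $\pairing{f}{\kappa_n(\bpp_{l_1})} - \pairing{f}{\kappa_n(\bpp_{l_2})}$ only receives contributions from $k$-sets $\calY$ of the following two kinds.

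\emph{Type (a).} For $j \in \{0, \dots, k-1\}$, the set $\calY$ has $j$ points in $\bpp_{l_1}$ and $k-j$ points in $\Delta$. Such a set appears only in $\kappa_n(\bpp_{l_2})$.

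\emph{Type (b).} The set $\calY \subset \bpp_{l_1}$ has $H_n(\calY) \neq \bfzero_m$, and some point of $\Delta$ lies in $\ball{\calY}{r_n t_m}$. Only for such sets can the isolation indicators $\isolated[n]{\calY}{\cdot}{t_i}$ change.

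In both cases the modulus of the contribution is at most $g_f(H_n(\calY))$, or a constant times it. This follows as in the proof of \cref{prop:ppp.bpp.expect.vanish}, since every atom of $\kappa_n$ has the form $(H_n^{(i)}(\calY), \bfzero_{m-i})$. Dropping all isolation constraints and using the inclusions, we obtain on the event in question
\begin{align}
  \abs[big]{\pairing{f}{\kappa_n(\bpp_{l_1})} - \pairing{f}{\kappa_n(\bpp_{l_2})}}
  \leq C_2 \Bigl( \sum_{j=0}^{k-1} U_j + V \Bigr).
\end{align}
Here $U_j$ is the sum of $g_f(H_n(\calY))\indicator[none]{H_n(\calY) \neq \bfzero_m}$ over $k$-sets $\calY$ with $j$ points in $\ppp_{2n, l_1}'$ and $k-j$ points in $\ppp_{2n\epsilon, l_1, l_2}'$. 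The functional $V$ is the same sum over $\calY \subset \ppp_{2n, l_1}'$, restricted to those $\calY$ for which $\ppp_{2n\epsilon, l_1, l_2}' \cap \ball{\calY}{r_n t_m} \neq \emptyset$.

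Both $U_j$ and $V$ are nonnegative local functionals of the superposition of two independent Poisson processes, with intensities $2n$ and $2n\epsilon$. By the Mecke formula, $\expect[none]{U_j}$ is at most a constant times $(2n)^j (2n\epsilon)^{k-j} r_n^{d(k-1)} \int_E g_f\, d\tau \leq C \epsilon^{k-j} \ldpspeed C_f'$. Combining the same formula with the bound $1 - e^{-x} \leq x$ for the probability that $\ppp_{2n\epsilon, l_1, l_2}'$ hits $\ball{\calY}{r_n t_m}$ gives $\expect[none]{V} \leq C \epsilon \nrnd \ldpspeed C_f'$. These are exactly the quantities appearing under the square roots in \eqref{eq:tail.bppl1.bppl2}. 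The hypothesis $R > (C_1^2 C_f' \epsilon \ldpspeed) \lor (C_1^2 C_f' \epsilon \nrnd \ldpspeed)$ guarantees that $R/(C_2(k+1))$ exceeds every one of these means by a fixed factor, so that the bases $(C_1C_2)^{-1}R^{1/2} - (\cdot)^{1/2}$ in \eqref{eq:tail.bppl1.bppl2} are positive. A union bound over the $k+1$ functionals then reduces the claim to showing that each of $U_j$ and $V$ has an upper tail of the form $\exp(-C_1(\sqrt{s} - \sqrt{\text{mean}})^2)$ at level $s$.

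The key step, and the main obstacle, is this square-root concentration. The functionals $U_j$ carry no isolation constraint, so they are genuine Poisson $U$-statistics of order $k$, and the component-uniqueness trick used in \cref{prop:exp.moment.common} is not available. The bound $\exp(-c(\sqrt{s}-\sqrt{\expect{F}})^2)$ is the typical form of upper-tail concentration for local $U$-statistics: it is obtained from convex-distance or Talagrand-type inequalities applied to $\sqrt{F}$, using that $F$ is monotone and the change of $F$ when a point is added is controlled by $F$ itself, as in the work of Bachmann and Reitzner. I would first try to apply such an inequality directly. The weights $g_f(H_n(\calY))$ are unbounded, so the inequality must be applied in a form that allows this. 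One option is to replace bounded kernels by the exponential-moment weights $e^{g_f}$ that define $C_f'$, which controls all the moments in the Mecke computations. Alternatively, I would split the configuration into cubes of side comparable to $r_n$, coloured as in the proof of \cref{prop:exp.moment.common}, and bound each coloured sum by a Poisson-type Chernoff argument. For $V$ the same route applies after conditioning on $\ppp_{2n, l_1}'$: given that process, $V$ is controlled by how many $k$-clusters the sparse process $\ppp_{2n\epsilon, l_1, l_2}'$ hits, which produces the factor $\epsilon \nrnd$.
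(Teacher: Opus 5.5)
There is a genuine gap, and it sits in the step you yourself flag as the main obstacle. Once you drop \emph{all} isolation constraints, the dominating functionals $U_j$ and $V$ are genuine order-$k$ Poisson $U$-statistics. Their upper tails are not of the form $\exp(-C_1(\sqrt{s}-\sqrt{\mu})^2)\approx\exp(-cs)$, because clustering defeats such a bound.

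To see this, take $H$ the indicator of $\mathrm{diam}\le L$ and $f\equiv 1$. If $N$ points of $\mathcal{P}'_{2n\epsilon,l_1,l_2}$ fall in a fixed ball of radius $Lr_n/2$, then $U_0\ge\binom{N}{k}$. This event costs only probability $\exp(-O(N\log N))$. Hence, at fixed $n$, $\log\mathbb{P}(U_0\ge s)\ge -Cs^{1/k}\log s$ for large $s$, which beats $-cs$ whenever $k\ge 2$. The same construction works for $V$.

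This is also why Bachmann--Reitzner-type inequalities only give stretched-exponential upper tails, with $s^{1/k}$ in the exponent. Your alternative cube-by-cube Chernoff argument fails even earlier: $\mathbb{E}\exp(\lambda\binom{N_Q}{k})=\infty$ for every $\lambda>0$ when $N_Q$ is Poisson.

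So the union bound over $U_0,\dots,U_{k-1},V$ cannot give \eqref{eq:tail.bppl1.bppl2} for all $R$ above the threshold. That form, linear in $R$ in the exponent, is exactly what \cref{prop:exp.eq.ppp.bpp} needs when $b_n$ is close to $\rho_n$.

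The missing idea is that the component-uniqueness trick \emph{is} available, provided you keep the isolation indicator at the smallest radius $t_1$ when you dominate the difference. A type (a) set contributes only if it is $r_nt_1$-isolated in $\mathcal{B}_{l_2}$, and a type (b) set only if it is $r_nt_1$-isolated in $\mathcal{B}_{l_1}$.

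Partition into cubes $Q_{n,z}$ of side $6Lr_n$. By packing, at most a constant $M=M(d,k,L,t_1)$ of such isolated $k$-sets have their lexicographically first point in a given cube. So each cube contributes at most $M$ times the \emph{maximum} of $\widetilde{g_f}(H_n(\mathcal{Y}))$ over all $k$-subsets of the Poisson supersets with first point in that cube. Only at this stage is isolation discarded, and only inside the maximum.

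Since $(\max_i a_i)^p\le\sum_i a_i^p$ for $a_i\ge0$, the Mecke formula and $x^p\le p!\,e^x$ bound the $p$-th moments by $p!\,C\,C_f'(nr_n^d)^k\epsilon^{k-j}$. This gives a per-cube moment generating function $1+CC_f'(nr_n^d)^k\epsilon^{k-j}\sum_{p\ge1}(2^d\lambda)^p$.

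Independence over a $2^d$-colouring of the cubes then yields $\exp\bigl(CC_f'\epsilon^{k-j}\rho_n\,C\lambda/(1-C\lambda)\bigr)$. Optimizing in $\lambda$ produces exactly the $(R^{1/2}/C-(\cdot)^{1/2})^2$ form.

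For the type (b) term, additionally condition on $\mathcal{B}_{l_2}\setminus\mathcal{B}_{l_1}$. Only cubes within $2Lr_n$ of its at most $\epsilon n$ points contribute, which gives the factor $\epsilon\,nr_n^d\rho_n$.

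Your mean computations are consistent with these bounds, but means alone do not control the tail. It is the bounded count of isolated sets per cube that makes the exponential moments finite.
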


\begin{proof}
\textit{Step} 1.
By \eqref{eq:kappa.decompose}, we observe that
\begin{align}
  &\abs{\pairing{f}{\kappa_n(\bpp_{l_2})} - \pairing{f}{\kappa_n(\bpp_{l_1})}}\\
  &\quad \leq \sum_{j=0}^{k-1} \sum_{\substack{\calY \subset \bpp_{l_1}, \calY' \subset \bpp_{l_2} \setminus \bpp_{l_1} \\ \card{\calY}=j, \card{\calY'}= k-j}}
  \sum_{i=1}^m \ab\big(\isolated[n]{\calY \cup \calY'}{\bpp_{l_2}}{t_{i}} - \isolated[n]{\calY \cup \calY'}{\bpp_{l_2}}{t_{i+1}}) \abs{\zeroed{f}(H_n^{(i)}(\calY \cup \calY'), \bfzero_{m-i})} \\
  &\qquad + \sum_{\calY \subset \bpp_{l_1}, \card{\calY}=k} \sum_{i=1}^m  \abs[Big]{\ab\big(\isolated[n]{\calY}{\bpp_{l_2}}{t_{i}} - \isolated[n]{\calY}{\bpp_{l_2}}{t_{i+1}}) - \ab\big(\isolated[n]{\calY}{\bpp_{l_1}}{t_{i}} - \isolated[n]{\calY}{\bpp_{l_1}}{t_{i+1}})} \\
  &\quad \qquad \times \abs{\zeroed{f}(H_n^{(i)}(\calY), \bfzero_{m-i})} \\
  &\quad \leq \sum_{j=0}^{k-1} \sum_{\substack{\calY \subset \bpp_{l_1}, \calY' \subset \bpp_{l_2} \setminus \bpp_{l_1} \\ \card{\calY}=j, \card{\calY'}= k-j}}
  \isolated[n]{\calY \cup \calY'}{\bpp_{l_2}}{t_{1}} \zeroed{g_f}(H_n(\calY \cup \calY')) \\
  &\qquad + 2\sum_{\calY \subset \bpp_{l_1}, \card{\calY}=k} \isolated[n]{\calY}{\bpp_{l_1}}{t_{1}} \ab\big(1 - \isolated[n]{\calY}{\bpp_{l_2}\setminus \bpp_{l_1}}{t_{m}}) \zeroed{g_f}(H_n(\calY)), \label{eq:bpp.l1l2.1}
\end{align}
where we used the inequality
\begin{align}
  & \abs[Big]{\ab\big(\isolated[n]{\calY}{\bpp_{l_2}}{t_{i}} - \isolated[n]{\calY}{\bpp_{l_2}}{t_{i+1}}) - \ab\big(\isolated[n]{\calY}{\bpp_{l_1}}{t_{i}} - \isolated[n]{\calY}{\bpp_{l_1}}{t_{i+1}})} \\
  &\quad \leq \isolated[n]{\calY}{\bpp_{l_2}}{t_{i}}
  \isolated[n]{\calY}{\bpp_{l_1}}{t_{i+1}}
  \ab\big(1 - \isolated[n]{\calY}{\bpp_{l_2}\setminus \bpp_{l_1}}{t_{i+1}}) \\
  &\qquad + \isolated[n]{\calY}{\bpp_{l_1}}{t_{i}}
  \ab\big(1 - \isolated[n]{\calY}{\bpp_{l_1}}{t_{i+1}})
  \ab\big(1 - \isolated[n]{\calY}{\bpp_{l_2}\setminus \bpp_{l_1}}{t_{i}}) \\
  &\quad \leq 2\isolated[n]{\calY}{\bpp_{l_1}}{t_{1}} \ab\big(1 - \isolated[n]{\calY}{\bpp_{l_2}\setminus \bpp_{l_1}}{t_{m}})
\end{align}
for $i=1, \dots, m$.

Let $Q_{n, z} \coloneqq Q(6Lr_n, z)$ for $z \in \integers^d $ and
$I_n \coloneqq \Bab{z \in \integers^d : Q_{n,z} \cap \unitcube \neq \emptyset} $.
Note that the number of $\calY \subset \bpp_{l_1} $ satisfying $\isolated[n]{\calY}{\bpp_{l_1}}{t_{1}} = 1$ and $\lex_1(\calY) \in Q_{n, z}$ is bounded by a constant, 
which is independent of $n$ and depends only on $d, k, L$.
Similarly, 
the number of pairs $(\calY, \calY')$ with $\calY \subset \bpp_{l_1}$ and $\calY' \subset \bpp_{l_2} \setminus \bpp_{l_1} $ such that $\isolated[n]{\calY \cup \calY'}{\bpp_{l_2}}{t_{1}} = 1$ and $\lex_1(\calY \cup \calY') \in Q_{n,z} $ is also bounded by a constant.
Let $M$ be such a constant.
Then, on the event $\{\bpp_{l_1} \subset \ppp_{2n, l_1}'\} \cap \{\bpp_{l_2} \setminus \bpp_{l_1} \subset \ppp_{2n\epsilon, l_1, l_2}'\}$, 
\eqref{eq:bpp.l1l2.1} is bounded by
\begin{align}
  &\sum_{j=0}^{k-1} \sum_{z \in I_n} \max_{\substack{\calY \subset \bpp_{l_1}, \calY' \subset \bpp_{l_2} \setminus \bpp_{l_1} \\ \card{\calY}=j, \card{\calY'}= k-j}}
  M \indicator[none]{\lex_1(\calY \cup \calY') \in Q_{n,z}} \zeroed{g_f}(H_n(\calY \cup \calY'))\\
  &\qquad + 2\sum_{z \in I_n} \max_{\calY \subset \bpp_{l_1}, \card{\calY}=k} M \ab\big(1 - \isolated[n]{\calY}{\bpp_{l_2}\setminus \bpp_{l_1}}{t_{m}})
  \indicator[none]{\lex_1(\calY) \in Q_{n,z}} \zeroed{g_f}(H_n(\calY)) \\
  &\quad \leq M\sum_{j=0}^{k-1} \sum_{z \in I_n} \max_{\substack{\calY \subset \ppp_{2n, l_1}', \calY' \subset \ppp_{2n\epsilon, l_1, l_2}'\\ \card{\calY}=j, \card{\calY'}= k-j}}
  \indicator[none]{\lex_1(\calY \cup \calY') \in Q_{n,z}} \zeroed{g_f}(H_n(\calY \cup \calY')) \\
  &\qquad + 2M\sum_{z \in I_n} \max_{\calY \subset \ppp_{2n, l_1}', \card{\calY}=k}
  \ab\big(1 - \isolated[n]{\calY}{\bpp_{l_2}\setminus \bpp_{l_1}}{t_{m}})
  \indicator[none]{\lex_1(\calY) \in Q_{n,z}} \zeroed{g_f}(H_n(\calY)) \\
  &\quad \eqqcolon M \sum_{j=0}^{k-1} Z_{n, j} + 2M V_{n}.
\end{align}
Applying the union bound, 
we see that the left-hand side of \eqref{eq:tail.bppl1.bppl2} is bounded by
\begin{align}
  \sum_{j=0}^{k-1} \probab[Big]{Z_{n, j} > \frac{R}{2Mk}}
   + \probab[Big]{V_{n} > \frac{R}{4M}}
  \leq \sum_{j=0}^{k-1} \probab[Big]{Z_{n, j} > \frac{R}{2Mk}}
   + \probab[Big]{V_{n} > \frac{R}{2Mk}}
\end{align}

\textit{Step} 2.
Now we estimate $Z_{n,  j}$.
Let $R' \coloneqq R/2Mk $ and $I_{n, w} \coloneqq I_n \cap (2\integers^d + w) $ for $w \in \{0, 1\}^d $.
For any $\lambda > 0$, 
by Chebyshev's inequality, 
$\probab[none]{Z_{n, j} > R'}$ is bounded by
\begin{align}
  e^{-\lambda R' } \expect[bigg]{\exp\ab\bigg(\lambda\sum_{z \in I_n} \max_{(\calY, \calY') \in \calI}
  \indicator[none]{\lex_1(\calY \cup \calY') \in Q_{n, z}} \zeroed{g_f}(H_n(\calY \cup \calY')))}, \label{eq:ppp.bpp.exp.spread}
\end{align}
where $\calI \coloneqq \{(\calY, \calY') : \calY \subset \ppp_{2n,  l_1}',\, \calY' \subset \ppp_{2n\epsilon, l_1, l_2}',\, \card{\calY}=j,\, \card{\calY'}= k-j \} $.
Using Jensen's inequality, the independence of the summands in \eqref{eq:ppp.bpp.exp.spread} indexed by $z \in I_{n, w} $
and an argument similar to that in the proof of \cref{prop:exp.moment.common},
we can bound the expectation in \eqref{eq:ppp.bpp.exp.spread} by 
\begin{align}
  &\prod_{w \in \Bab{0, 1}^d} \prod_{z \in I_{n, w}} \expect[bigg]{\exp\ab\bigg(2^d\lambda \max_{(\calY, \calY') \in \calI}
  \indicator[none]{\lex_1(\calY \cup \calY') \in Q_{n, z}} \zeroed{g_f}(H_n(\calY \cup \calY')))}^{2^{-d}}, \label{eq:ppp.bpp.exp.spread.1}
\end{align}
and the expectation in \eqref{eq:ppp.bpp.exp.spread.1} is bounded by
\begin{align}
  &\ab\bigg(1 + \sum_{p=1}^{\infty}\frac{(2^d\lambda)^p}{p!} \expect[bigg]{\max_{(\calY, \calY') \in \calI}
  \indicator[none]{\lex_1(\calY \cup \calY') \in Q_{n,z}} \zeroed{g_f}(H_n(\calY \cup \calY'))^p})^{2^{-d}} \\
  &\quad \leq \ab\Bigg(1 + \sum_{p=1}^{\infty}\frac{(2^d\lambda)^p}{p!} \expect[Bigg]{\sum_{(\calY, \calY') \in \calI}
  \indicator[none]{\lex_1(\calY \cup \calY') \in Q_{n,z}} \zeroed{g_f}(H_n(\calY \cup \calY'))^p})^{2^{-d}}. \label{eq:ppp.bpp.exp.spread.2}
\end{align}
Furthermore, 
by the Mecke formula, 
for all $p \geq 1$, 
the expectation in \eqref{eq:ppp.bpp.exp.spread.2} is bounded by
\begin{align}
  \frac{(2n)^j}{j!} \frac{(2n\epsilon)^{k-j}}{(k-j)!} \int_{(\unitcube)^k} \indicator[none]{\lex_1(\bfx) \in Q_{n,z}} \zeroed{g_f}(H_n(\bfx))^p \,d\bfx
  \leq \frac{k! 2^k n^k \epsilon^{k-j}}{j!(k-j)!} \lebmeas(Q_{n,z}) r_n^{d(k-1)} p! \int_{E} e^{g_f} \,d\tau.
\end{align}

Using the relations $\lebmeas(Q_{n,z}) = (6L r_n)^d $ and $\card{I_n} = \sum_{w \in \Bab{0, 1}^d} \card{I_{n, w}} \leq 2^d (6Lr_n)^{-d} $, 
and combining the estimates above, 
we can bound \eqref{eq:ppp.bpp.exp.spread} by
\begin{align}
  &e^{- \lambda R'} \prod_{w \in \Bab{0, 1}^d} \ab\bigg(1 + C C_f' (\nrnd)^{k}\epsilon^{k-j} \sum_{p=1}^{\infty} (2^d \lambda)^p )^{2^{-d} \card{I_{n, w}}} \\
  &\quad \leq e^{- \lambda R'} \exp\ab\bigg(C C_f' 2^{-d} \card{I_{n}} (\nrnd)^{k}\epsilon^{k-j} \sum_{p=1}^{\infty} (2^d \lambda)^p ) 
  \leq e^{- \lambda R'} \exp\ab\Big(C C_f' \epsilon^{k-j} \ldpspeed \frac{C\lambda}{1 - C\lambda} ),
\end{align}
where $C > 0$ is a constant depending only on $k$, $j$, and $L$.
Recall that for any $c > 0 $, $s > a > 0$, 
\begin{align}
  \inf_{\lambda \in (0, c^{-1})} \ab(-\lambda s + \frac{a\lambda}{1 - c\lambda}) = - \frac{\ab\big(s^{1/2} - a^{1/2})^2}{c}.
\end{align}
Optimizing over $\lambda$, 
we obtain for $R' > C^2 C_f' \ldpspeed \epsilon^{k-j}$,
\begin{align}
  \probab[none]{Z_{n, j} > R'}
  \leq \exp\ab\big(- C (C^{-1} R'^{1/2} - (C_f' \epsilon^{k-j} \ldpspeed)^{1/2})^2 ).
\end{align}

\textit{Step} 3.
Next, 
we estimate $V_{n}$.
We write $V_{n} \eqqcolon \sum_{z \in I_n} U_{n, z} $ for simplicity.
For any $\lambda > 0$, 
by Chebyshev's and Jensen's inequalities, 
$\probab[none]{V_{n} > R'}$ is bounded by
\begin{align}
  e^{-\lambda R'} \expect[bigg]{\exp\ab\bigg(\lambda \sum_{z \in I_n} U_{n, z} )}
  \leq e^{-\lambda R'} \prod_{w \in \Bab{0, 1}^d} \expect[bigg]{\exp\ab\bigg(2^d \lambda \sum_{z \in I_{n, w}} U_{n, z})}^{2^{-d}}.
  \label{eq:ppp.bpp.exp.2}
\end{align}
Let $J_{n, w}(\bpp_{l_2}\setminus \bpp_{l_1}) \coloneqq \{z \in I_{n, w} : Q_{n,z}^{(2Lr_n)} \cap (\bpp_{l_2}\setminus \bpp_{l_1}) \neq \emptyset \} $.
Since for $z \in I_{n, w} \setminus J_{n, w}(\bpp_{l_2}\setminus \bpp_{l_1}) $, there is no subset $\calY \subset \ppp_{2n, l_1}'$ with $\card{\calY}=k$ 
such that $\isolated[n]{\calY}{\bpp_{l_2}\setminus \bpp_{l_1}}{t_{m}} = 0$ and $\lex_1(\calY) \in Q_{n,z} $, 
we have $\sum_{z \in I_{n, w}} U_{n, z} = \sum_{z \in J_{n, w}(\bpp_{l_2}\setminus \bpp_{l_1})} U_{n, z}$.
Furthermore, 
the random variables $\{U_{n, z} : z \in J_{n, w}(\bpp_{l_2}\setminus \bpp_{l_1}) \} $ are conditionally independent given $\bpp_{l_2}\setminus \bpp_{l_1}$.
Thus, 
the expectation on the right-hand side of \eqref{eq:ppp.bpp.exp.2} can be rewritten as
\begin{align}
  \expect[Bigg]{ \prod_{z \in J_{n, w}(\bpp_{l_2}\setminus \bpp_{l_1})} \condexpect[big]{\exp(2^d \lambda U_{n, z})}{\bpp_{l_2}\setminus \bpp_{l_1}}}^{2^{-d}}.
\end{align}
As in Step 2,
each conditional expectation $\condexpect[big]{\exp(2^d \lambda U_{n, z})}{\bpp_{l_2}\setminus \bpp_{l_1}}$ is bounded by
\begin{align}
  &1 + \sum_{p=1}^{\infty}\frac{(2^d\lambda)^p}{p!}
  \expect[bigg]{\max_{\calY \subset \ppp_{2n, l_1}', \card{\calY}=k} 
  \indicator[none]{\lex_1(\calY) \in Q_{n,z}} \zeroed{g_f}(H_n(\calY))^p} \\
  &\quad \leq 1 + \sum_{p=1}^{\infty}\frac{(2^d\lambda)^p}{p!} 
  \expect[Bigg]{\sum_{\calY \subset \ppp_{2n, l_1}', \card{\calY}=k} 
  \indicator[none]{\lex_1(\calY) \in Q_{n,z}} \zeroed{g_f}(H_n(\calY))^p} \\
  &\quad \leq 1 + \sum_{p=1}^{\infty} (2^d\lambda)^p 
  \frac{(2n)^k}{k!} \lebmeas(Q_{n,z}) r_n^{d(k-1)} C_f'.
\end{align}
Moreover, 
since the cubes $\{Q_{n,z}^{(2Lr_n)}\}_{z \in I_{n, w}} $ are disjoint, 
we have the bound $\card{J_{n, w}(\bpp_{l_2}\setminus \bpp_{l_1})} \leq \card{\bpp_{l_2}\setminus \bpp_{l_1}} \leq n \epsilon $.
Combining these inequalities, 
we can bound \eqref{eq:ppp.bpp.exp.2} by
\begin{align}
  e^{-\lambda R'} \prod_{w \in \Bab{0, 1}^d}
  \expect[bigg]{\ab\Big(1 + C C_f' (\nrnd)^{k} \sum_{p=1}^\infty (2^d \lambda)^p )^{\card{J_{n, w}(\bpp_{l_2}\setminus \bpp_{l_1})}}}^{2^{-d}} 
   \leq e^{-\lambda R'} \exp\ab(C C_f' \epsilon \nrnd \ldpspeed \frac{C\lambda}{1 - C\lambda}).
\end{align}
Consequently, 
we obtain for $R' > C^2 C_f' \epsilon \nrnd \ldpspeed $,
\begin{align}
  \probab[none]{V_{n} > R'}
  \leq \exp\ab\big(- C (C^{-1} R'^{1/2} - (C_f' \epsilon \nrnd \ldpspeed )^{1/2})^2),
\end{align}
Combining the estimates from the three steps completes the proof of \cref{prop:tail.bppl1.bppl2}.
\end{proof}

By combining the estimates above,
we obtain the desired exponential equivalences examined at the beginning of this subsection.

\begin{lemma}
\label{prop:exp.eq.ppp.bpp}
For any $f \in \bddmble(E)$ and $\delta > 0$,
\begin{align}
  \lim_{n \to \infty} \frac{\ldpspeed}{\mdpfactor^2} \log \probab[big]{\abs[big]{\pairing[big]{f}{\centered{\kappa_n(\ppp_n)}} - \pairing[big]{f}{\centered{\kappa_n(\bpp_n)}}} > \delta \mdpfactor}
  = -\infty. \label{eq:kappa.ppp.bpp.exp.eq}
\end{align}
Furthermore, for any $\delta > 0$,
\begin{align}
  \lim_{n \to \infty} \frac{\ldpspeed}{\mdpfactor^2} \log \probab[big]{\norm[big]{\centered{T_n(\ppp_n)} - \centered{T_n(\bpp_n)}}_{\reals^m} > \delta \mdpfactor}
  = -\infty. \label{eq:T.ppp.bpp.exp.eq}
\end{align}
\end{lemma}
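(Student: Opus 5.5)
Under the standard coupling $\ppp_n = \bpp_{N_n}$ with $N_n \sim \mathrm{Poisson}(n)$ independent of the i.i.d.\ sequence, the proof reduces everything to \cref{prop:ppp.bpp.expect.vanish} and \cref{prop:tail.bppl1.bppl2}. This coupling should be compatible with the auxiliary processes: $\ppp_{2n,l}'$ is built from the first points of the same sequence. Write
\begin{align}
  \pairing{f}{\centered{\kappa_n(\ppp_n)}} - \pairing{f}{\centered{\kappa_n(\bpp_n)}}
  = \ab\big(\pairing{f}{\kappa_n(\bpp_{N_n})} - \pairing{f}{\kappa_n(\bpp_n)}) - \ab\big(\expect[none]{\pairing{f}{\kappa_n(\ppp_n)}} - \expect[none]{\pairing{f}{\kappa_n(\bpp_n)}}).
\end{align}
By \eqref{eq:kappa.ppp.bpp.expect.vanish}, the deterministic bracket tends to $0$, so it is eventually below $\delta\mdpfactor/2$ and can be dropped. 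It then suffices to show that the uncentered difference exceeds $\delta\mdpfactor/2$ only with superexponentially small probability at speed $\mdpfactor^2/\ldpspeed$.

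\textbf{Step 1: localize $N_n$.} Fix $\epsilon\in(0,1)$. The Poisson tail gives
\begin{align}
  \probab{\abs{N_n - n} > \epsilon n} \leq 2e^{-cn\epsilon^2}.
\end{align}
Since $n/(\mdpfactor^2/\ldpspeed) \geq n/\ldpspeed = (\nrnd)^{-(k-1)} \to \infty$ (because $\mdpfactor^2 \geq \ldpspeed$ eventually), this event is negligible for every fixed $\epsilon$. Likewise, the events $\{\bpp_{l_1}\not\subset\ppp_{2n,l_1}'\}$ and $\{\bpp_{l_2}\setminus\bpp_{l_1}\not\subset\ppp_{2n\epsilon,l_1,l_2}'\}$ amount to a Poisson variable with mean $2n$ (resp.\ $2n\epsilon$) falling below $n$ (resp.\ below $\epsilon n$). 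They therefore also have probability $e^{-cn}$ (resp.\ $e^{-cn\epsilon}$), which is negligible on the same scale.

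\textbf{Step 2: apply \cref{prop:tail.bppl1.bppl2}.} On $\{n\le N_n\le n(1+\epsilon)\}$, apply the proposition with $l_1=n$, $l_2=N_n$; on $\{n(1-\epsilon)\le N_n<n\}$, use $l_1=N_n$, $l_2=n$. The random endpoint is handled by conditioning on $N_n$, which is independent of the point locations, or else by a union bound over at most $2\epsilon n+1$ values; the polynomial factor is harmless. Take $R=\delta\mdpfactor/2$ and $f\in\bddmble(E)$, so that $C_f'\le \tau(E)e^{m\supnorm f}<\infty$. The side condition $R>C_1^2C_f'\epsilon\ldpspeed$ holds for large $n$ only if $\epsilon$ is allowed to depend on $n$, since $\mdpfactor\ll\ldpspeed$. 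So choose $\epsilon=\epsilon_n$ with $\epsilon_n\ldpspeed=o(\mdpfactor)$ and $\epsilon_n^2 n\gg \mdpfactor^2/\ldpspeed$. Both requirements are met by, say,
\begin{align}
  \epsilon_n = \frac{\mdpfactor}{\ldpspeed}\,\ab(\frac{\ldpspeed}{n})^{1/4}.
\end{align}
Indeed $\ldpspeed/n=(\nrnd)^{k-1}\to0$, and $n\epsilon_n^2\cdot\ldpspeed/\mdpfactor^2=(n/\ldpspeed)^{1/2}\to\infty$. With this choice, each exponent in \eqref{eq:tail.bppl1.bppl2} is of order $-cR=-c\delta\mdpfactor$. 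Since $\mdpfactor\cdot \ldpspeed/\mdpfactor^2=\ldpspeed/\mdpfactor\to\infty$, this gives $-\infty$ after normalizing by the speed, which proves \eqref{eq:kappa.ppp.bpp.exp.eq}.

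\textbf{Step 3: the statement for $T_n$.} Take $f=\iprod{\bfe_i}{\cdot}$ for each coordinate $i$. Both \cref{prop:ppp.bpp.expect.vanish} and \cref{prop:tail.bppl1.bppl2} allow linear $f$, and then $C_f'=\int e^{g_f}\,d\tau<\infty$ by (H4). Repeating Steps 1--2 for each coordinate and taking a union bound over $i=1,\dots,m$ yields \eqref{eq:T.ppp.bpp.exp.eq}.

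\textbf{Main obstacle.} I expect the delicate point to be the bookkeeping of $\epsilon_n$: it must vanish quickly enough that the thresholds in \cref{prop:tail.bppl1.bppl2} are met (both are $o(\mdpfactor)$ since $\nrnd\to0$), yet slowly enough that the Poisson concentration of $N_n$ stays superexponential at speed $\mdpfactor^2/\ldpspeed$. The gap between $n$ and $\ldpspeed$ in the sparse regime is exactly what makes such a choice possible. A second point to check is that the constants in that proposition are uniform in $\epsilon$, which its statement asserts.
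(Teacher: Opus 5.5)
Your proposal is correct and follows the paper's proof essentially step for step. Your $\epsilon_n = \frac{\mdpfactor}{\ldpspeed}(\ldpspeed/n)^{1/4}$ is exactly the paper's $(\nrnd)^{(k-1)/4}\mdpfactor/\ldpspeed$. The paper likewise bounds the probability by the Poisson tail of $N_n$, the two auxiliary-inclusion failures, and $\max_l$ of the tail from \cref{prop:tail.bppl1.bppl2}, and it reduces the $T_n$ statement to the coordinates $f=\iprod{\bfe_i}{\cdot}$.

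Two small corrections are needed. First, you must use the conditioning-on-$N_n$ route, which yields $\max_l$ as in the paper, and not the union bound over $l$. The factor $2\epsilon_n n+1$ is polynomial in $n$, while $\mdpfactor$ may grow more slowly than $\log n$ when $\ldpspeed\to\infty$ slowly, so that factor is not harmless. Second, in Step 1 the inequality $n\ldpspeed/\mdpfactor^2\ge n/\ldpspeed$ follows from $\mdpfactor\le\ldpspeed$, not from $\mdpfactor^2\ge\ldpspeed$.
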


\begin{proof}
We first note that the proof of \eqref{eq:T.ppp.bpp.exp.eq} reduces to that of \eqref{eq:kappa.ppp.bpp.exp.eq}.
Indeed, setting $f_i \coloneqq \iprod{\bfe_i}{\cdot}$ for $i=1,\dots,m$,
we have
\begin{align}
  \norm{\centered{T_n(\ppp_n)} - \centered{T_n(\bpp_n)}}_{\reals^m}
  \leq \sum_{i=1}^m \abs{\pairing[big]{f_i}{\centered{\kappa_n(\ppp_n)}} - \pairing[big]{f_i}{\centered{\kappa_n(\bpp_n)}}}.
\end{align}
Thus, it suffices to prove \eqref{eq:kappa.ppp.bpp.exp.eq}.

By \eqref{eq:kappa.ppp.bpp.expect.vanish},
in order to establish \eqref{eq:kappa.ppp.bpp.exp.eq},
it is sufficient to show that
\begin{align}
  \lim_{n \to \infty} \frac{\ldpspeed}{\mdpfactor^2} \log \probab[big]{\abs[big]{\pairing{f}{\kappa_n(\ppp_n)} - \pairing{f}{\kappa_n(\bpp_n)}} > \delta \mdpfactor} = -\infty.
\end{align}
Let
\begin{align}
  \epsilon_n \coloneq (\nrnd)^{(k-1)/4} \frac{\mdpfactor}{\ldpspeed}. \label{eq:pp.bpp.epsilon}
\end{align}
By the union bound, 
we have
\begin{align}
  &\probab[big]{\abs[big]{\pairing{f}{\kappa_n(\ppp_n)} - \pairing{f}{\kappa_n(\bpp_n)}} > \delta \mdpfactor} \\
  &\quad \leq \probab{\abs{N_n - n} > n\epsilon_n}
   + \sum_{l: \abs{n-l} \leq n\epsilon_n} \condprobab[big]{\abs[big]{\pairing{f}{\kappa_n(\ppp_n)} - \pairing{f}{\kappa_n(\bpp_n)}} > \delta \mdpfactor}{N_n = l} \probab{N_n = l} \\
  &\quad \leq \probab{\abs{N_n - n} > n\epsilon_n}
   + \max_{l: \abs{n-l} \leq n\epsilon_n} \probab[big]{\abs[big]{\pairing{f}{\kappa_n(\bpp_l)} - \pairing{f}{\kappa_n(\bpp_n)}} > \delta \mdpfactor}. \label{eq:tail.ppp.bpp.union.bound}
\end{align}
Let $l_1 \coloneqq l \land n$ and $ l_2 \coloneqq l \lor n $.
Then we see that \eqref{eq:tail.ppp.bpp.union.bound} is bounded by
\begin{align}
  \begin{split}
  &\probab{\abs{N_n - n} > n\epsilon_n}
  + \max_{l: \abs{n-l} \leq n\epsilon_n} \ab\Big(\probab[big]{\bpp_{l_1} \not\subset \ppp_{2n, l_1}'} + \probab[big]{\bpp_{l_2} \setminus \bpp_{l_1} \not\subset \ppp_{2n\epsilon_n, l_1, l_2}'}) \\
  &\quad + \max_{l: \abs{n-l} \leq n\epsilon_n} \probab[Big]{\abs[big]{\pairing{f}{\kappa_n(\bpp_{l_1})} - \pairing{f}{\kappa_n(\bpp_{l_2})}} > \delta \mdpfactor, \, \bpp_{l_1} \subset \ppp_{2n, l_1}', \, \bpp_{l_2} \setminus \bpp_{l_1} \subset \ppp_{2n\epsilon_n, l_1, l_2}'}.
  \end{split} \label{eq:ppp.bpp.prob.decomposition}
\end{align}

By the well-known tail probability bounds for the Poisson distribution (see, e.g., Lemma 1.2 in \cite{penrose2003rgg}), 
we have
\begin{align}
  \probab{\abs{N_n - n} > n\epsilon_n}
  &= \probab{N_n > (1 + \epsilon_n)n} + \probab{N_n < (1 - \epsilon_n)n} \\
  &\leq \exp\ab\big(- n\varphi\ab((1+\epsilon_n)n/n) ) + \exp\ab\big(- n\varphi\ab((1-\epsilon_n)n/n) ),
\end{align}
where $\varphi(x) \coloneqq x \log x - x + 1$.
Furthermore, 
for $l $ satisfying $\abs{l - n} \leq n\epsilon_n$, 
since $l_1 = l \land n \leq n $ and $l_2- l_1 = \abs{n - l} \leq n\epsilon_n $, 
it holds that
\begin{align}
  \probab[big]{\bpp_{l_1} \not\subset \ppp_{2n, l_1}'} + \probab[big]{\bpp_{l_2} \setminus \bpp_{l_1} \not\subset \ppp_{2n\epsilon_n, l_1, l_2}'}
  &= \probab[big]{\ppp_{2n, l_1}'(\unitcube) < l_1} + \probab[big]{\ppp_{2n\epsilon_n, l_1, l_2}'(\unitcube) < l_2 - l_1} \\
  &\leq \exp\ab\big(- 2n\varphi\ab(l_1/2n) ) + \exp\ab\big(- 2n\epsilon_n \varphi\ab((l_2 - l_1)/2n\epsilon_n) ) \\
  % &\leq \exp\ab(- 2n\varphi\ab(n/2n) ) + \exp\ab(- 2n\epsilon_n \varphi\ab(n\epsilon_n/2n\epsilon_n) ) \\
  &\leq \exp\ab(- 2\varphi\ab(1/2)n ) + \exp\ab(- 2 \varphi\ab(1/2)n\epsilon_n ).
\end{align}
Since $\varphi(1 + x) \geq x^2/3$ for all $x \in (-1, 1)$, 
for all sufficiently large $n \geq 1$,
\begin{align}
  \probab{\abs{N_n - n} > n\epsilon_n}
  + \max_{l: \abs{n-l} \leq n\epsilon_n} \ab\Big(\probab[big]{\bpp_{l_1} \not\subset \ppp_{2n, l_1}'} + \probab[big]{\bpp_{l_2} \setminus \bpp_{l_1} \not\subset \ppp_{2n\epsilon_n, l_1, l_2}'}) 
  \leq 4 \exp\ab(- n\epsilon_n^2 / 3 ). \label{eq:ppp.num.prob.bound}
\end{align}

Furthermore, since $\mdpfactor / (\epsilon_n^{k-j} \ldpspeed) \to \infty$ for $j=0,\dots,k-1$ and $\mdpfactor / (\epsilon_n \nrnd \ldpspeed) \to \infty$ as $n \to \infty$,
applying \eqref{eq:ppp.bpp.prob.decomposition} and \eqref{eq:tail.bppl1.bppl2} yields, for all sufficiently large $n$,
\begin{align}
  \probab[big]{\abs[big]{\pairing{f}{\kappa_n(\ppp_n)} - \pairing{f}{\kappa_n(\bpp_n)}} > \delta \mdpfactor}
  \leq 4 \exp\ab(- n\epsilon_n^2 / 3 )
   + k \exp(- C \delta \mdpfactor). \label{eq:tail.ppp.bpp}
\end{align}
For either choice $a_n = - n\epsilon_n^2/3$ or $a_n = - C \delta \mdpfactor $ in \eqref{eq:tail.ppp.bpp}, we have
\begin{align}
  \limsup_{n \to \infty} \frac{\ldpspeed}{\mdpfactor^2} a_n = -\infty,
\end{align}
which yields \eqref{eq:kappa.ppp.bpp.exp.eq}.
\end{proof}

\section{Proof of the laws of the iterated logarithm}
\label{sec:lil.upper}
In this section, 
we prove \cref{thm:lil.measure} and \cref{cor:lil.scalar},
assuming for the moment the estimates to be established later.
Throughout this section,
we set $\mdpfactor = \sqrt{\ldpspeed \log \log \ldpspeed}$,
and let $r_n$ be given by \eqref{eq:rn.decay.speed}.
Let $\accptmeas \coloneqq \Lambda^{-1}([0, 1])$ and $\accptvec \coloneqq I^{-1}([0, 1])$.
For $\nu \in \signedmeas(E)$ and $\bfu \in \reals^m$, we define the distances to these sets by
$\distancemeas(\nu, \accptmeas) \coloneqq \inf\{\distancemeas(\nu, \mu) : \mu \in \accptmeas\}$
and $\distancevec(\bfu, \accptvec) \coloneqq \inf\{\norm{\bfu - \bfv}_{\reals^m} : \bfv \in \accptvec\}$.

\begin{proof}[Proof of \cref{thm:lil.measure} and \cref{cor:lil.scalar}]
First, we establish the LIL for $\{\mdpfactor^{-1} \centered{\xi_n(\ppp_n)}\}_{n \geq 1}$ and $\{\mdpfactor ^{-1} \Phi(\centered{\xi_n(\ppp_n)})\}_{n \geq 1}$,
where $\Phi$ is defined by \eqref{eq:map.contract.meas.to.vec}.
Specifically, we will show that almost surely,
\begin{align}
  \lim_{n \to \infty} \distancemeas(\mdpfactor^{-1} \centered{\xi_{n}(\ppp_n)}, \accptmeas) &= 0, \label{eq:lil.upper.meas} \\
  \lim_{n \to \infty} \distancevec(\mdpfactor^{-1} \Phi(\centered{\xi_{n}(\ppp_n)}), \accptvec) &= 0, \label{eq:lil.upper.vec}
\end{align}
under condition (H6) for \eqref{eq:lil.upper.meas} and \eqref{eq:lil.upper.vec}.
Furthermore, we will show that almost surely,
\begin{align}
  \liminf_{n \to \infty} \distancemeas(\mdpfactor^{-1} \centered{\xi_{n}(\ppp_n)}, \theta) = 0 \quad \text{for all } \theta \in \accptmeas, \label{eq:lil.lower.meas} \\
  \liminf_{n \to \infty} \norm[big]{\mdpfactor^{-1} \Phi(\centered{\xi_{n}(\ppp_n)}) - \bfu}_{\reals^m} = 0 \quad \text{for all } \bfu \in \accptvec. \label{eq:lil.lower.vec}
\end{align}

Next, by establishing that almost surely,
\begin{align}
  \lim_{n \to \infty} \distancemeas\ab\big(\mdpfactor^{-1} \centered{\xi_n(\ppp_n)}, \mdpfactor^{-1} \centered{\kappa_n(\ppp_n)}) &= 0, \label{eq:lil.equivalent.xi.kappa} \\
  \lim_{n \to \infty} \distancemeas\ab\big(\mdpfactor^{-1} \centered{\kappa_n(\ppp_n)}, \mdpfactor^{-1} \centered{\kappa_n(\bpp_n)}) &= 0, \label{eq:lil.equivalent.kappappp.kappabpp}
\end{align}
we deduce the LIL for $\{\mdpfactor^{-1} \centered{\kappa_n(\ppp_n)}\}_{n \geq 1}$ and $\{\mdpfactor^{-1} \centered{\kappa_n(\bpp_n)}\}_{n \geq 1}$,
thereby completing the proof of \cref{thm:lil.measure}.
Here, the time parameters for $\{\kappa_n\}_{n \geq 1}$ are given by $0 < t_1 \leq \dots \leq t_m$,
while $t = t_1$ is taken to be the time parameter for $\{\xi_n\}_{n \geq 1}$.
Similarly, by proving that almost surely,
\begin{align}
  \lim_{n \to \infty} \norm[big]{\mdpfactor^{-1} \Phi(\centered{\xi_n(\ppp_n)}) - \mdpfactor^{-1} \centered{T_n(\ppp_n)}}_{\reals^m} &= 0, \label{eq:lil.equivalent.phixi.t}\\
  \lim_{n \to \infty} \norm[big]{\mdpfactor^{-1} \centered{T_n(\ppp_n)} - \mdpfactor^{-1} \centered{T_n(\bpp_n)}}_{\reals^m} &= 0, \label{eq:lil.equivalent.tppp.tbpp}
\end{align}
we obtain the LIL for $\{\mdpfactor^{-1} \centered{T_n(\ppp_n)}\}_{n \geq 1}$ and $\{\mdpfactor^{-1} \centered{T_n(\bpp_n)}\}_{n \geq 1}$,
which completes the proof of \cref{cor:lil.scalar}.
\end{proof}

\subsection{Proof of \eqref{eq:lil.upper.meas} and \eqref{eq:lil.upper.vec}}
In this subsection, we focus on the Poisson point process $\ppp_n$ only.
Thus, we simplify the notation by omitting $\ppp_n$,
and write $\xi_n$ instead of $\xi_n(\ppp_n)$.
First, we prove \eqref{eq:lil.upper.meas},
and then we establish \eqref{eq:lil.upper.vec}.

For any $s > 1$, we define the subsequence $\lilsubseq{s}{l} \coloneqq \rounddown{s^{l/(k - \alpha(k-1))}}$,
which is chosen so that $\ldpspeed[\lilsubseq{s}{l}]$ is close to $s^l$.
Recall that $\ldpspeed = n^{k - \alpha(k-1)}$ under \eqref{eq:rn.decay.speed}.
Furthermore, throughout this section, we assume without loss of generality that $\supnorm{f_q} = 1$.

By a standard LIL argument,
we obtain the following convergence along the subsequence $\{\lilsubseq{s}{l}\}_{l \geq 1}$.

\begin{lemma}
\label{lem:lil.subseqence.cptness}
For all $s > 1$,
it holds almost surely that
\begin{align}
  \lim_{l \to \infty} \distancemeas(\mdpfactor[\lilsubseq{s}{l}]^{-1} \centered{\xi_{\lilsubseq{s}{l}}}, \accptmeas) = 0. \label{eq:lil.subseqence.cptness.meas}
\end{align}
\end{lemma}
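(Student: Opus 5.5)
The plan is the standard Borel--Cantelli argument along the geometric subsequence, driven by the MDP upper bound for $\xi_n$ but with $\mdpfactor=\sqrt{\ldpspeed\log\log\ldpspeed}$. This is the borderline case excluded by \eqref{eq:mdpfactor.speed} only through $\ldpspeed/\mdpfactor^2\to 0$, which still holds, and the speed is $\mdpfactor^2/\ldpspeed=\log\log\ldpspeed$. Write $n_l=\lilsubseq{s}{l}$, so $\ldpspeed[n_l]\sim s^l$ and $\log\log\ldpspeed[n_l]\sim\log l$.

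\emph{Step 1: reduction to a finite set of test functions.} Fix $\epsilon>0$ and choose $Q$ with $\sum_{q>Q}2^{-q}\cdot 2\sup|\pairing{f_q}{\cdot}|$ small. On $\Lambda^{-1}([0,a])$ these pairings are bounded by $\sqrt{2a\tau(E)}$. For the random measure, $\pairing{f_q}{\mdpfactor^{-1}\centered{\xi_n}}$ is at most $\mdpfactor^{-1}(\xi_n(E)+\expect{\xi_n(E)})$, which tends to $2\tau(E)\ldpspeed/\mdpfactor\to 0$ almost surely along $n_l$ by a crude tail bound. Hence it suffices to show, almost surely, that
\begin{align}
  \limsup_{l\to\infty}\inf_{\mu\in\accptmeas}\max_{q\le Q}\abs{\pairing{f_q}{\mdpfactor[n_l]^{-1}\centered{\xi_{n_l}}}-\pairing{f_q}{\mu}}\le\epsilon .
\end{align}
This is the finite-dimensional statement that $\pi_Q(\mdpfactor[n_l]^{-1}\centered{\xi_{n_l}})$, with $\pi_Q=\pi_{f_1,\dots,f_Q}$, eventually lies in the $\epsilon$-neighbourhood of $\pi_Q(\accptmeas)=\{I'_{f_1,\dots,f_Q}\le 1\}$. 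The equality of these sets follows from \eqref{eq:proflim.eta.ratefunc} and the compactness of the level sets of $\Lambda$.

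\emph{Step 2: exponential bound at speed $\log\log\ldpspeed$.} I would show that for each closed $F\subset\reals^Q$,
\begin{align}
  \limsup_{n\to\infty}\frac{1}{\log\log\ldpspeed}\log\probab{\pi_Q(\mdpfactor^{-1}\centered{\xi_n})\in F}\le-\inf_F I'_{f_1,\dots,f_Q}.
\end{align}
The argument runs as in the MDP proof. \cref{prop:ge.eta}, together with the G\"artner--Ellis upper bound, gives this for $\eta_n$, since $\mdpfactor$ satisfies \eqref{eq:mdpfactor.speed}. Then \cref{prop:exp.eq.eta.xi} transfers it to $\xi_n$ with superexponential error. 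Apply this with $F=\{\bfu:\mathrm{dist}(\bfu,\{I'\le1\})\ge\epsilon\}$; by convexity and quadratic growth of $I'$, $\inf_F I'\ge 1+\delta$ for some $\delta=\delta(\epsilon)>0$. The probability is then at most $\exp(-(1+\delta/2)\log\log\ldpspeed[n_l])\le C l^{-(1+\delta/2)}$, which is summable in $l$. Borel--Cantelli finishes the proof.

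\emph{Main obstacle.} The delicate step is checking that the error terms in \cref{prop:ge.eta} and \cref{prop:exp.eq.eta.xi} are uniformly negligible when $\mdpfactor/\ldpspeed=\sqrt{\log\log\ldpspeed/\ldpspeed}$. The $O((\mdpfactor/\ldpspeed)^3)$ term from \cref{prop:exp.moment.finite} and the $O(\nrnd)$ covariance error only need to be $o(\mdpfactor^2/\ldpspeed^2)$ per block, which holds. Since only the upper bound is used, G\"artner--Ellis applies without extra care. I would verify these explicitly rather than citing \cref{thm:mdp.measure} as a black box. If there is any doubt, I would instead bound the probability directly by Chebyshev with $\lambda=\mdpfactor/\ldpspeed\cdot\bfa$, taking a finite $\epsilon$-net of directions $\bfa$ on the compact boundary of $F$, which avoids invoking the full theorem.
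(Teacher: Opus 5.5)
\textbf{Gap in Step~1.} You bound $\abs{\pairing{f_q}{\mdpfactor^{-1}\centered{\xi_n}}}$ by $\mdpfactor^{-1}(\xi_n(E)+\expect{\xi_n(E)})\approx 2\tau(E)\ldpspeed/\mdpfactor$ and assert that this tends to $0$. But for $\mdpfactor=\sqrt{\ldpspeed\log\log\ldpspeed}$ one has $\ldpspeed/\mdpfactor=\sqrt{\ldpspeed/\log\log\ldpspeed}\to\infty$; this is exactly the first condition in \eqref{eq:mdpfactor.speed}.

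A sharper total-variation estimate cannot repair this. $\expect{\xi_n}$ has a density with respect to $\tau$, so when $\tau$ is non-atomic it is singular to the purely atomic $\xi_n$. Then $\tvnorm{\mdpfactor^{-1}\centered{\xi_n}}\ge\mdpfactor^{-1}\expect{\xi_n(E)}\to\infty$. Hence the tail $\sum_{q>Q}2^{-q}\abs{\pairing{f_q}{\mdpfactor[n_l]^{-1}\centered{\xi_{n_l}}}}$ is not controlled, and your reduction to finitely many test functions is unjustified as written. Knowing the finite-dimensional conclusion for each fixed $Q$ does not suffice either: exchanging $\limsup_l$ with $\sum_{q>Q}$ needs a domination you do not have.

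\textbf{How to close it.} The tail must be controlled through the fluctuations of the individual pairings, uniformly in $q$. Use the martingale-difference representation of $\pairing{f}{\centered{\xi_n}}$ together with \cref{prop:exp.moment.common}, exactly as in the proofs of \cref{prop:exp.eq.xi.kappa} and \cref{lemma:lil.upper.z1}. This gives $\log\expect{\exp(\lambda\pairing{f}{\centered{\xi_n}})}\le C\ldpspeed\,C^2\lambda^2/(1-C\lambda)$ for $0<\lambda<C^{-1}$, with $C$ independent of $f$ whenever $\supnorm{f}\le 1$.

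The constant is not sharp, but the prefactor $2^{-Q}$ makes this harmless. Apply \cref{prop:lil.upper.tail.sum.finite} with $\beta=0$ to $Y_{l,q}=\abs{\pairing{f_{Q+q}}{\centered{\xi_{n_l}}}}$ and $A_l=2^{Q}\epsilon\,\mdpfactor[n_l]$. Its summability condition becomes $\sum_l(2^{-1}l\log s)^{-c4^{Q}\epsilon^2}<\infty$, which holds once $Q$ is large. Borel--Cantelli then gives $\limsup_l\sum_{q>Q}2^{-q}\abs{\pairing{f_q}{\mdpfactor[n_l]^{-1}\centered{\xi_{n_l}}}}\le\epsilon$ almost surely.

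\textbf{Comparison with the paper.} With this fix, your Step~2 is fine and is in substance the paper's proof. The paper applies \cref{thm:mdp.measure} directly to $\{\nu:\distancemeas(\nu,\accptmeas)\ge\delta\}$ and uses goodness of $\Lambda$ to get an infimum strictly above $1$ there. It then concludes by Borel--Cantelli from the bound $(2^{-1}l\log s)^{-\beta}$ with $\beta>1$. Your detour through finitely many coordinates is a reasonable precaution, since $\distancemeas(\cdot,\accptmeas)$ is well behaved for the $\tau$-topology only on sets of bounded total variation. It only closes, however, once the tail is handled as above.
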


\begin{proof}
For any $\delta > 0$,
since $\Lambda$ is a good rate function, we have
\begin{align}
  \inf \Bab{\Lambda(\nu) : \nu \in \signedmeas(E), \, \distancemeas(\nu, \accptmeas) \geq \delta} > 1.
\end{align}
Thus, by \cref{thm:mdp.measure},
there exists a constant $\beta > 1$ such that for all sufficiently large $l$,
\begin{align}
  \probab[none]{\distancemeas(\mdpfactor[\lilsubseq{s}{l}]^{-1} \centered{\xi_{\lilsubseq{s}{l}}}, \accptmeas) \geq \delta }
  \leq \exp(- \beta \log \log \ldpspeed[\lilsubseq{s}{l}])
  \leq \exp(- \beta \log (2^{-1}\log s^l))
  = (2^{-1} l \log s )^{- \beta}.
\end{align}
Therefore, by the first Borel--Cantelli lemma,
we obtain \eqref{eq:lil.subseqence.cptness.meas}.
\end{proof}

Using \cref{lem:lil.subseqence.cptness},
we have for all $s > 1$, almost surely,
\begin{align}
  &\limsup_{n \to \infty} \distancemeas(\mdpfactor^{-1} \centered{\xi_{n}}, \accptmeas) \\
  &\quad \leq \limsup_{l \to \infty} \mdpfactor[\lilsubseq{s}{l}]^{-1} \max_{\lilsubseq{s}{l} \leq n \leq \lilsubseq{s}{l+1}} \distancemeas(\centered{\xi_{n}}, \centered{\xi_{\lilsubseq{s}{l}}})
   + (1 - s^{-1/2})\sup_{\nu \in \accptmeas} \distancemeas(\nu, 0). \label{eq:lil.upper.subseq.bound}
\end{align}
Indeed, for $s > 1$ and $\lilsubseq{s}{l} \leq n \leq \lilsubseq{s}{l+1}$,
\begin{align}
  &\distancemeas(\mdpfactor^{-1} \centered{\xi_{n}}, \accptmeas)\\
  &\quad \leq \distancemeas(\mdpfactor^{-1} \centered{\xi_{n}}, \mdpfactor^{-1} \centered{\xi_{\lilsubseq{s}{l}}}) 
   + \distancemeas(\mdpfactor^{-1} \centered{\xi_{\lilsubseq{s}{l}}}, \mdpfactor[\lilsubseq{s}{l}]^{-1} \centered{\xi_{\lilsubseq{s}{l}}})
   + \distancemeas(\mdpfactor[\lilsubseq{s}{l}]^{-1} \centered{\xi_{\lilsubseq{s}{l}}}, \accptmeas) \\
  &\quad \leq \mdpfactor[\lilsubseq{s}{l}]^{-1} \distancemeas(\centered{\xi_{n}}, \centered{\xi_{\lilsubseq{s}{l}}}) 
   + (1 - \mdpfactor[\lilsubseq{s}{l}]/\mdpfactor[\lilsubseq{s}{l+1}])\distancemeas(\mdpfactor[\lilsubseq{s}{l}]^{-1}\centered{\xi_{\lilsubseq{s}{l}}}, 0)
   + \distancemeas(\mdpfactor[\lilsubseq{s}{l}]^{-1} \centered{\xi_{\lilsubseq{s}{l}}}, \accptmeas).
\end{align}
Thus, together with
\begin{align}
  \distancemeas(\mdpfactor[\lilsubseq{s}{l}]^{-1}\centered{\xi_{\lilsubseq{s}{l}}}, 0)
  \leq \distancemeas(\mdpfactor[\lilsubseq{s}{l}]^{-1}\centered{\xi_{\lilsubseq{s}{l}}}, \accptmeas) + \sup_{\nu \in \accptmeas} \distancemeas(\nu, 0),
\end{align}
applying \cref{lem:lil.subseqence.cptness} yields \eqref{eq:lil.upper.subseq.bound}.
Note that the second term on the right-hand side of \eqref{eq:lil.upper.subseq.bound} vanishes as $s \to 1$.
Consequently, to establish \eqref{eq:lil.upper.meas},
it suffices to show that for every $\delta > 0$,
there exists $s > 1$ such that almost surely,
\begin{align}
  \limsup_{l \to \infty} \mdpfactor[\lilsubseq{s}{l}]^{-1} \max_{\lilsubseq{s}{l} \leq n \leq \lilsubseq{s}{l+1}} \distancemeas(\centered{\xi_{n}}, \centered{\xi_{\lilsubseq{s}{l}}}) &\leq \delta. \label{eq:lil.upper.max.limsup}
\end{align}

To establish \eqref{eq:lil.upper.max.limsup},
we introduce some additional notation.
Let $v_{n} \coloneqq (\nrnd)^{1/d}$,
$Q_{z} \coloneqq Q(v_{\lilsubseq{s}{l+1}}t/\sqrt{d}, z)$,
\begin{align}
  \calF_z \coloneqq
  \sigma\ab\bigg(\pppentire \cap Q_{w} : \begin{aligned} & (w \in \integers^d \setminus \integers_{\geq 0}^d,\text{ and } w \lexineq z) \\ &\text{or } (w \in \integers_{\geq 0}^d \text{ and } w_j \leq z_j \text{ for all } 1 \leq j \leq d) \end{aligned}),
\end{align}
% $\calF_{z} \coloneqq \sigma(\bigcup_{w \in \integers^d, w \lexineq z} \pppentire \cap Q_{w})$,
and $I_{n} \coloneqq \{z \in \integers^d : Q_{z} \cap W_{n} \neq \emptyset \}$.
For a function $\map{f}{E}{\reals}$ and $u, v > 0$,
let
\begin{gather}
  S_{z,f}^{(u, v)}(\calX) \coloneqq \sum_{\calZ \subset \calX \cap Q_{z}^{(2Lv_{\lilsubseq{s}{l}})}} \isolated{\calZ}{\calX \cap Q_{z}^{(3Lv_{\lilsubseq{s}{l}})}}{u t} \zeroed{f}(H(v^{-1} \calZ)), \\
  \Delta_{n,z,f}^{(u, v)} \coloneqq S_{z,f}^{(u, v)}(\pppentire_n) - S_{z,f}^{(u, v)}(\pppentire_{z, n}'),
  \quad \Delta_{\infty,z,f}^{(u, v)} \coloneqq S_{z,f}^{(u, v)}(\pppentire) - S_{z,f}^{(u, v)}(\pppentire_{z}'),
\end{gather}
where $\pppentire_{z, n}' $ and $\pppentire_z' $ is given by \eqref{eq:ppp.indep}.
Then, we can write
\begin{align}
  \pairing{f}{\centered{\xi_n}}
  &= \centered{\sum_{\calZ \subset \pppentire_n} \isolated{\calZ}{\pppentire_n}{v_n t} \zeroed{f}(H(v_n^{-1} \calZ))}
  = \sum_{z \in I_{n}} \condexpect[Big]{\Delta_{n,z,f}^{(v_{n}, v_{n})}}{\calF_z} \\
  &= \sum_{z \in I_{n}} \condexpect[Big]{\Delta_{\infty,z,f}^{(v_n, v_n)}}{\calF_z}
  + Z_{n, f}^{(1)},
\end{align}
where
\begin{align}
  Z_{n,f}^{(1)} \coloneqq
  \sum_{z \in I_{n}} \condexpect[none]{\Delta_{n,z,f}^{(v_n, v_n)} - \Delta_{\infty,z,f}^{(v_n, v_n)}}{\calF_z}. \label{eq:lil.upper.z1.def}
\end{align}
Furthermore, we decompose the difference as
\begin{align}
  \pairing{f}{\centered{\xi_n}} - \pairing{f}{\centered{\xi_{\lilsubseq{s}{l}}}} 
  = Z_{n,f}^{(1)} - Z_{\lilsubseq{s}{l},f}^{(1)} + Z_{n,f}^{(2)} + Z_{n,f}^{(3)} + Z_{n,f}^{(4)}, \label{eq:lil.upper.fxi.decomposition}
\end{align}
where
\begin{align}
  Z_{n,f}^{(2)} &\coloneqq \sum_{z \in I_{n} \setminus I_{\lilsubseq{s}{l}}} \condexpect[Big]{\Delta_{\infty,z,f}^{(v_{\lilsubseq{s}{l}}, v_{\lilsubseq{s}{l}})}}{\calF_z}, \label{eq:lil.upper.z2.def} \\
  Z_{n,f}^{(3)} &\coloneqq \sum_{z \in I_{n}} \condexpect[Big]{\Delta_{\infty,z,f}^{(v_{n}, v_{n})} - \Delta_{\infty,z,f}^{(v_{\lilsubseq{s}{l}}, v_{n})}}{\calF_z}, \label{eq:lil.upper.z3.def}\\
  Z_{n,f}^{(4)} &\coloneq \sum_{z \in I_{n}} \condexpect[Big]{\Delta_{\infty,z,f}^{(v_{\lilsubseq{s}{l}}, v_{n})} - \Delta_{\infty,z,f}^{(v_{\lilsubseq{s}{l}}, v_{\lilsubseq{s}{l}})}}{\calF_z}. \label{eq:lil.upper.z4.def}
\end{align}

For $Z_{n,f}^{(1)}, Z_{n,f}^{(2)}, Z_{n,f}^{(3)}$, and $Z_{n,f}^{(4)}$,
the following upper bounds hold.
Recall that $\tailfunc$ is given by \eqref{eq:tailfunc}.

\begin{lemma}
\label{lemma:lil.upper.z1}
Let $s \in (1, 2^{d(k - \alpha(k-1))/(\alpha - 1)})$, and let $f \in \bddmble(E)$ or $f = \iprod{\bfa}{\cdot}$ for some $\bfa \in \reals^m$.
Then, there exists a constant $C > 0$, independent of $s$, $n$, and $f$,
such that for all sufficiently large $l$, all $\lilsubseq{s}{l} \leq n \leq \lilsubseq{s}{l+1}$, and all $R > 0$,
\begin{align}
  \probab[big]{Z_{n,f}^{(1)} > R}
  \leq \exp\ab\bigg(- C C_f r_{\lilsubseq{s}{l}} \ldpspeed[\lilsubseq{s}{l}] \tailfunc\ab\bigg(\frac{R}{C^2 C_f r_{\lilsubseq{s}{l}} \ldpspeed[\lilsubseq{s}{l}] })), \label{eq:lil.upper.z1.tail}
\end{align}
where
\begin{align}
  C_f \coloneqq \int_{E} e^{\abs{f}} \,d\tau. \label{eq:lil.upper.const.f.1}
\end{align}
\end{lemma}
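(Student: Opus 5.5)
The plan is to run the martingale-difference argument behind \cref{prop:exp.moment.common} once more. The key observation is that $Z_{n,f}^{(1)}$ only collects contributions from a thin layer of cubes near $\partial W_n$. Its exponential moment is therefore controlled by (number of boundary cubes)$\times$(per-cube $p$-th moment), and this product is of order $r_{\lilsubseq{s}{l}}\ldpspeed[\lilsubseq{s}{l}]$ rather than $\ldpspeed$.

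\textbf{Step 1: scale comparability.} Under \eqref{eq:rn.decay.speed} we have $v_n^d=\nrnd=n^{1-\alpha}$, which is decreasing in $n$. Hence for $\lilsubseq{s}{l}\le n\le \lilsubseq{s}{l+1}$,
\[
v_{\lilsubseq{s}{l+1}}\le v_n\le v_{\lilsubseq{s}{l}},
\]
and the ratio $v_{\lilsubseq{s}{l}}/v_{\lilsubseq{s}{l+1}}$ tends to $s^{(\alpha-1)/(d(k-\alpha(k-1)))}$. This limit is $<2$ precisely because $s<2^{d(k-\alpha(k-1))/(\alpha-1)}$. So for large $l$ the choice $v_1=v_{\lilsubseq{s}{l+1}}$, $v_2=v_n$, $v_3=v_{\lilsubseq{s}{l}}$ satisfies $v_1\le v_2\le v_3\le 2v_1$, which is exactly the geometry required in \cref{prop:exp.moment.common}. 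Moreover, $r_n,\ldpspeed$ and $v_n$ are comparable, up to constants depending on $s$ only through an upper bound on it, to their values at $\lilsubseq{s}{l}$.

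\textbf{Step 2: localization to the boundary.} Write $G_z(\calX)\coloneqq S_{z,f}^{(v_n,v_n)}(\calX\cap W_n)-S_{z,f}^{(v_n,v_n)}(\calX)$. Then
\[
Z_{n,f}^{(1)}=\sum_{z\in I_n}\condexpect[none]{G_z(\pppentire)-G_z(\pppentire_z')}{\calF_z}.
\]
Since $S_{z,f}$ only sees $\calX\cap Q_z^{(3Lv_{\lilsubseq{s}{l}})}$, the functional $G_z$ vanishes identically unless $Q_z^{(3Lv_{\lilsubseq{s}{l}})}\not\subset W_n$. The relevant index set $I_n^{\partial}$ therefore has
\[
\card{I_n^{\partial}}\le C\,(n^{1/d}/v_{\lilsubseq{s}{l+1}})^{d-1}.
\]
I then repeat the proof of \cref{prop:exp.moment.common} verbatim with $S_z$ replaced by $G_z$. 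That proof uses only three ingredients: the $B_1$-sublattice independence, Jensen's inequality, and a $p$-th moment bound on $\abs{G_z}$. The uniqueness trick used there (at most one isolated $\calZ$ per $k$-tuple of cells) applies separately to each of the two terms of $G_z$.

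\textbf{Step 3: moment bound and Chernoff.} By the Mecke formula, $x^p\le p!e^x$, and the argument of \eqref{eq:xi.kappa.pth.moment.bdd}, I expect
\[
\expect[none]{\abs{G_z}^p}\le C^p\,p!\,C_f\,v_n^{d(k-1)}\lebmeas\ab\big(Q_z^{(2Lv)})\le C^p\,p!\,C_f\,v_{\lilsubseq{s}{l}}^{dk},
\]
with $C_f$ as in \eqref{eq:lil.upper.const.f.1}. This yields
\[
\expect{e^{\lambda Z_{n,f}^{(1)}}}\le \exp\ab\Big(C C_f\, n^{(d-1)/d}v^{dk-d+1}\,\tfrac{C^2\lambda^2}{1-C\lambda}\Big).
\]
Since $n^{(d-1)/d}(\nrnd)^{k-(d-1)/d}=\nkrndkmo\, r_n=\ldpspeed r_n$, and these quantities are comparable to their values at $\lilsubseq{s}{l}$, the exponent is $\le CC_f r_{\lilsubseq{s}{l}}\ldpspeed[\lilsubseq{s}{l}]\lambda^2/(1-C\lambda)$. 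Chebyshev's inequality and the displayed identity for $\inf_\lambda(-\lambda s+a\lambda^2/(2(1-c\lambda)))$ then give \eqref{eq:lil.upper.z1.tail}.

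\textbf{Main obstacle.} The delicate point is Step 3 for $f=\iprod{\bfa}{\cdot}$: one needs $p$-moments uniform in $p$ with the factor $p!$, and boundary isolation effects must not destroy the $C_f$ bound. I would handle this exactly as in \cref{prop:exp.eq.xi.kappa}, by dominating $\abs{G_z}$ by the two counts of non-vanishing $k$-sets with weight $e^{\abs{f}}$. A secondary technical point is checking that all constants are uniform in $n\in[\lilsubseq{s}{l},\lilsubseq{s}{l+1}]$; this is where Step 1 is used.
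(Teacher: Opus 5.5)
Your proposal is correct and follows the paper's proof. The paper likewise restricts the sum to the boundary index set $J_n=\{z\in I_n: Q_z^{(3Lv_{\lilsubseq{s}{l}})}\not\subset W_n\}$ and bounds the $p$-th moments by $p!\,C\,C_f v_{\lilsubseq{s}{l}}^{dk}$ via \eqref{eq:lil.p.moment.common}. It then feeds this into \cref{prop:exp.moment.common}, uses $\card{J_n}\le C r_{\lilsubseq{s}{l}}^{-(d-1)}$ to turn $C_f\card{J_n}v_{\lilsubseq{s}{l}}^{dk}$ into $C_f r_{\lilsubseq{s}{l}}\ldpspeed[\lilsubseq{s}{l}]$, and optimizes in $\lambda$. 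Your explicit rerun of that proposition's proof for the difference $G_z$ (splitting $\abs{G_z}^p$ into its two terms and applying the uniqueness trick to each) just spells out what the paper's ``combining \eqref{eq:lil.p.moment.common} with \cref{prop:exp.moment.common}'' leaves implicit.
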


\begin{lemma}
\label{lemma:lil.upper.z2}
Let $s \in (1, 2^{d(k - \alpha(k-1))/(\alpha - 1)})$ and $f \in \bddmble(E)$ or $f = \iprod{\bfa}{\cdot}$ for some $\bfa \in \reals^m$.
Then, there exists a constant $C > 0$, independent of $s$, $l$ and $f$,
such that for all sufficiently large $l$ and all $R > 0$,
\begin{align}
\begin{split}
  &\probab[Big]{\max_{\lilsubseq{s}{l} \leq n \leq \lilsubseq{s}{l+1}} Z_{n,f}^{(2)} > R} \\
  &\quad \leq \exp\ab\bigg(- C C_f (1 - r_{\lilsubseq{s}{l+1}}/r_{\lilsubseq{s}{l}}) \ldpspeed[\lilsubseq{s}{l}] \tailfunc\ab\bigg(\frac{R}{C^2 C_f (1 - r_{\lilsubseq{s}{l+1}}/r_{\lilsubseq{s}{l}}) \ldpspeed[\lilsubseq{s}{l}]})),
\end{split} \label{eq:lil.upper.z2.tail.max}
\end{align}
where $C_f$ is given by \eqref{eq:lil.upper.const.f.1}.
\end{lemma}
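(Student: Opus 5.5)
The summands of $Z_{n,f}^{(2)}$ do not depend on $n$: only the index set $I_n \setminus I_{\lilsubseq{s}{l}}$ does, and it increases with $n$. So I will treat $\{Z_{n,f}^{(2)}\}_{\lilsubseq{s}{l} \leq n \leq \lilsubseq{s}{l+1}}$ as partial sums over a growing index set, control the maximum with Doob's maximal inequality, and bound the exponential moment of the full sum with \cref{prop:exp.moment.common}.

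Write $D_z \coloneqq \condexpect[none]{\Delta_{\infty,z,f}^{(v_{\lilsubseq{s}{l}}, v_{\lilsubseq{s}{l}})}}{\calF_z}$. Each $D_z$ has mean zero, and $D_z$ is $\sigma(\pppentire \cap Q_z^{(3Lv_{\lilsubseq{s}{l}})})$-measurable.
\begin{itemize}
\item I will use the coloring from the proof of \cref{prop:exp.moment.common}. Set $B_1 = \roundup{12\sqrt{d}L/t}$ and split $I_{\lilsubseq{s}{l+1}}\setminus I_{\lilsubseq{s}{l}}$ into the classes $B_1\integers^d + w$. This needs $v_{\lilsubseq{s}{l}} \leq 2 v_{\lilsubseq{s}{l+1}}$.
\item That ratio bound is exactly what the restriction $s < 2^{d(k-\alpha(k-1))/(\alpha-1)}$ provides. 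Indeed $v_n^d = n r_n^d = n^{1-\alpha}$ is decreasing, and $v_{\lilsubseq{s}{l}}/v_{\lilsubseq{s}{l+1}} \approx s^{(\alpha-1)/(d(k-\alpha(k-1)))} < 2$.
\item With this, the blow-ups $Q_z^{(3Lv_{\lilsubseq{s}{l}})}$ within one class are disjoint, so the $D_z$ in a class are independent.
\end{itemize}
Since $\max_n \sum_w(\cdot) \leq \sum_w \max_n(\cdot)$, a union bound gives
\begin{align}
  \probab[Big]{\max_n Z^{(2)}_{n,f} > R} \leq \sum_{w} \probab[Big]{\max_n \sum_{z \in (I_n\setminus I_{\lilsubseq{s}{l}}) \cap (B_1\integers^d + w)} D_z > R B_1^{-d}}.
\end{align}

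Within a fixed class, order the indices so that each set $I_n \setminus I_{\lilsubseq{s}{l}}$ is an initial segment. This is possible because the sets are nested; ties within one layer $I_{n+1}\setminus I_n$ may be broken arbitrarily. The partial sums of the independent centered $D_z$ then form a martingale. Hence $\exp(\lambda \cdot)$ of the partial sums is a nonnegative submartingale, and Doob's maximal inequality yields
\begin{align}
  \probab[Big]{\max_n \sum D_z > R'} \leq e^{-\lambda R'}\, \expect[Big]{\exp\Big(\lambda \sum_{z \in (I_{\lilsubseq{s}{l+1}}\setminus I_{\lilsubseq{s}{l}})\cap(B_1\integers^d+w)} D_z\Big)}.
\end{align}
The right-hand expectation is bounded by \cref{prop:exp.moment.common}, applied with $I$ equal to this class and with $v_1 = v_{\lilsubseq{s}{l+1}}$ and $v_2 = v_3 = v_{\lilsubseq{s}{l}}$.

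Its moment hypothesis \eqref{eq:p.moment.boundness} follows from the Mecke formula, exactly as in the proof of \cref{prop:exp.eq.xi.kappa}, using $x^p \leq p!e^x$:
\begin{align}
  C_p \leq p!\, C\, C_f\, v_{\lilsubseq{s}{l}}^{dk}.
\end{align}
This gives the bound
\begin{align}
  \exp\ab\Big(C C_f\, \card{I_{\lilsubseq{s}{l+1}}\setminus I_{\lilsubseq{s}{l}}}\, v_{\lilsubseq{s}{l}}^{dk}\, \frac{C^2\lambda^2}{1-C\lambda}\Big).
\end{align}

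The main step is counting the index set. We have
\begin{align}
  \card{I_{\lilsubseq{s}{l+1}}\setminus I_{\lilsubseq{s}{l}}} \lesssim (\lilsubseq{s}{l+1} - \lilsubseq{s}{l})\, v_{\lilsubseq{s}{l+1}}^{-d},
\end{align}
up to a boundary error of order $\lilsubseq{s}{l}^{(d-1)/d} v^{-(d-1)}$, which I will check is of the same or smaller order. Using $v_{\lilsubseq{s}{l}}^{d} \leq 2^d v_{\lilsubseq{s}{l+1}}^d$, I will then rewrite
\begin{align}
  (\lilsubseq{s}{l+1} - \lilsubseq{s}{l})\, v_{\lilsubseq{s}{l}}^{d(k-1)} \lesssim \ldpspeed[\lilsubseq{s}{l}]\, (1 - \lilsubseq{s}{l}/\lilsubseq{s}{l+1}).
\end{align}
Since $r_n = n^{-\alpha/d}$, one has $1 - \lilsubseq{s}{l}/\lilsubseq{s}{l+1} \leq C(1 - r_{\lilsubseq{s}{l+1}}/r_{\lilsubseq{s}{l}})$. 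Finally, I will optimize over $\lambda \in (0, C^{-1})$ using the identity stated with \eqref{eq:tailfunc} to obtain the $\tailfunc$ form, and absorb the finite sum over colors $w$ and the factor $B_1^{-d}$ into the constant $C$.
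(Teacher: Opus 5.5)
Your proof uses the same ingredients as the paper's. These are Doob's maximal inequality for an exponential submartingale, \cref{prop:exp.moment.common} with $v_1=v_{n(s,l+1)}$ and $v_2=v_3=v_{n(s,l)}$, the Mecke bound $C_p\le p!\,C\,C_f\,v_{n(s,l)}^{dk}$, and a count of $I_{n(s,l+1)}\setminus I_{n(s,l)}$. The difference is how you arrange the maximal step. The paper applies Doob once, to $Z^{(2)}_{n,f}$ itself, viewed as a martingale in $n$ with respect to $\mathcal{G}_n=\sigma(\bigcup_{z\in I_n}\mathcal{F}_z)$. It lets \cref{prop:exp.moment.common} do the coloring only inside the exponential moment of the terminal value. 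You color first and apply Doob to the partial sums of the independent, centered $D_z$ within each class. Your version therefore needs only the within-class independence that \cref{prop:exp.moment.common} already relies on, not the martingale property of the full sequence with respect to $\mathcal{G}_n$. Your count, $(n(s,l+1)-n(s,l))\,v_{n(s,l+1)}^{-d}$ followed by $1-x\le\max(1,d/\alpha)(1-x^{\alpha/d})$, gives the same order as the paper's $C r_{n(s,l+1)}^{-(d-1)}(r_{n(s,l+1)}^{-1}-r_{n(s,l)}^{-1})$. In both versions the bounded ratio $n(s,l+1)/n(s,l)$, or $\rho_{n(s,l+1)}/\rho_{n(s,l)}$, is absorbed using the upper restriction on $s$.

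The price of your route is the final step, which does not work as you state it. The union bound over the $B_1^d$ color classes leaves a factor $B_1^d$ in front of the exponential, and this cannot be absorbed into $C$. As $R\downarrow 0$, the right-hand side of \eqref{eq:lil.upper.z2.tail.max} increases to $1$ but stays strictly below it, whereas $B_1^d e^{-x}$ with $x\to 0$ exceeds $1$. Only the rescaling $R\mapsto RB_1^{-d}$ can be absorbed, at the cost of replacing $C$ by $B_1^d C$. This uses $c\,\psi(x/c)\le\psi(x)$ for $c\ge 1$, which follows from convexity of $\psi$ and $\psi(0)=0$. So your argument proves \eqref{eq:lil.upper.z2.tail.max} with an extra constant prefactor. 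That weaker form is all the paper ever uses: \cref{prop:lil.upper.tail.sum.finite} allows a prefactor $C_j$ with $\inf_j C_j>0$, and the paper already inserts a factor $4$ when it applies this lemma. To obtain the inequality exactly as stated for all $R>0$, you need a single Doob/Chernoff step on the whole sum. In the paper's proof that step is supplied by the global martingale with respect to $\mathcal{G}_n$.
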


\begin{lemma}
\label{lemma:lil.upper.z3}
Let $s \in (1, 2^{d(k - \alpha(k-1))/(\alpha - 1)})$ and $f \in \bddmble(E)$ or $f = \iprod{\bfa}{\cdot}$ for some $\bfa \in \reals^m$.
Then, there exists a constant $C > 0$, independent of $s, n$ and $f$,
such that for all sufficiently large $l$, $\lilsubseq{s}{l} \leq n \leq \lilsubseq{s}{l+1}$, and all $R > 0$,
\begin{align}
\begin{split}
  \probab[big]{Z_{n,f}^{(3)} > R}
  \leq \exp\ab\bigg(- C C_f \lilsubseq{s}{l} r_{\lilsubseq{s}{l}}^d \ldpspeed[\lilsubseq{s}{l}] \tailfunc\ab\bigg(\frac{R}{C^2 C_f \lilsubseq{s}{l} r_{\lilsubseq{s}{l}}^d \ldpspeed[\lilsubseq{s}{l}] })),
\end{split} \label{eq:lil.upper.z3.tail.max}
\end{align}
where $C_f$ is given by \eqref{eq:lil.upper.const.f.1}.
\end{lemma}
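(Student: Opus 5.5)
The statement to prove is \cref{lemma:lil.upper.z3}, which bounds $Z_{n,f}^{(3)}$. The plan is to write it in the martingale-difference form required by \cref{prop:exp.moment.common} and then check the moment condition \eqref{eq:p.moment.boundness}. Abbreviate $n_l \coloneqq \lilsubseq{s}{l}$ and $n_{l+1} \coloneqq \lilsubseq{s}{l+1}$. For fixed $n$ with $n_l \leq n \leq n_{l+1}$, the quantity $\Delta_{\infty,z,f}^{(v_n, v_n)} - \Delta_{\infty,z,f}^{(v_{n_l}, v_n)}$ equals $S_z(\pppentire) - S_z(\pppentire_z')$ with
\begin{align}
  S_z(\calX) = \sum_{\calZ \subset \calX \cap Q_z^{(2Lv_{n_l})}} \ab\big(\isolated{\calZ}{\calX \cap Q_z^{(3Lv_{n_l})}}{v_n t} - \isolated{\calZ}{\calX \cap Q_z^{(3Lv_{n_l})}}{v_{n_l} t}) \zeroed{f}(H(v_n^{-1}\calZ)).
\end{align}
Since $r_n$ is decreasing, we have $v_n \leq v_{n_l}$. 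The difference of indicators is therefore nonnegative, and it is dominated by $\isolated{\calZ}{\cdot}{v_n t}\bigl(1 - \isolated{\calZ}{\cdot}{v_{n_l} t}\bigr)$. This is the same structure as in the proof of \cref{prop:exp.eq.xi.kappa}, where one factor enforces isolation at a small radius and the complementary factor requires a point at a larger radius. Note that \cref{prop:exp.moment.common} needs $v_1 \leq v_2 \leq v_3 \leq 2v_1$. I would take $v_1 = v_{n_{l+1}}$ (matching the cube side of $Q_z$), $v_2 = v_n t$-scale and $v_3 = v_{n_l}$. The restriction $s < 2^{d(k-\alpha(k-1))/(\alpha-1)}$ is exactly what gives $v_{n_l} \leq 2 v_{n_{l+1}}$ for large $l$. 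Indeed, $v_n = n^{(1-\alpha)/d}$, so $v_{n_l}/v_{n_{l+1}} \approx s^{(\alpha-1)/(d(k-\alpha(k-1)))} < 2$. The isolation indicator at radius $v_{n_l}t$ can be written with the outer window $Q_z^{(3Lv_{n_l})}$ as required, since $L > t$.

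Next I would verify \eqref{eq:p.moment.boundness}. By the Mecke formula and the bound $x^p \leq p!\, e^{x}$,
\begin{align}
  \expect[Big]{\sum_{\calZ} \abs{F(\calZ,\cdot)}^p} \leq \frac{p!}{k!} \int_{(Q_z^{(2Lv_{n_l})})^k} e^{\abs{f}(H(v_n^{-1}\bfx))} \indicator[none]{H(v_n^{-1}\bfx)\neq \bfzero_m}\, \expect[none]{1 - \isolated{\bfx}{\pppentire}{v_{n_l} t}}\, d\bfx .
\end{align}
For the linear case $f = \iprod{\bfa}{\cdot}$ I would use $\abs{f} \leq \iprod{\bfa'}{H}$ exactly as in \cref{prop:exp.eq.xi.kappa}. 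The last expectation is at most $C v_{n_l}^d$. Substituting $\bfx = x + v_n \bfy$ gives a factor $v_n^{d(k-1)}$ times $\lebmeas(Q_z^{(2Lv_{n_l})}) \leq C v_{n_l}^d$, times $k!\,C_f$. Altogether $C_p \leq p!\, C C_f\, v_{n_l}^{2d} v_n^{d(k-1)}$.

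Applying \eqref{eq:exp.moment.block.mart} with $\card{I_n} \leq C n v_{n_{l+1}}^{-d}$ yields
\begin{align}
  \expect[none]{e^{\lambda Z_{n,f}^{(3)}}} \leq \exp\ab\Big(C C_f\, n v_{n_{l+1}}^{-d} v_{n_l}^{2d} v_n^{d(k-1)} \frac{C^2\lambda^2}{1-C\lambda}).
\end{align}
Using $v_{n_l}/v_{n_{l+1}} \leq 2$ and $v_n \leq v_{n_l}$, the prefactor is at most $C n v_{n_l}^{d} v_n^{d(k-1)}$. This equals $C\, n r_n^d \cdot n^{k-1} r_n^{d(k-1)} = C\, \nrnd\,\ldpspeed$ up to comparison of $n$ with $n_l$. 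Since $n/n_l \leq n_{l+1}/n_l \leq C s^{1/(k-\alpha(k-1))}$ is bounded, this is comparable to $n_l r_{n_l}^d \ldpspeed[n_l]$. Here I would use that $n r_n^d$ is decreasing in $n$ while $\ldpspeed$ is increasing, and absorb the bounded ratios into $C$, which must not depend on $s$ in the stated range. The Chernoff bound with the stated infimum identity for $\tailfunc$ then gives \eqref{eq:lil.upper.z3.tail.max}.

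The main obstacle is the bookkeeping of the three scales $v_n$, $v_{n_l}$, $v_{n_{l+1}}$, so that the hypotheses of \cref{prop:exp.moment.common} hold with constants uniform in $n \in [n_l, n_{l+1}]$ and in $s$. In particular one must check that the windows $Q_z^{(2Lv_{n_l})}$ and $Q_z^{(3Lv_{n_l})}$ used in the definitions match that proposition, and that each $z \in I_n$ gives a genuine martingale-difference term.
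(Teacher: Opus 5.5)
Your proposal is correct and follows essentially the same route as the paper. Both use the identity $c(\calZ,\cdot\,;v_n t)-c(\calZ,\cdot\,;v_{\lilsubseq{s}{l}}t)=c(\calZ,\cdot\,;v_n t)\bigl(1-c(\calZ,\cdot\,;v_{\lilsubseq{s}{l}}t)\bigr)$, bound the $p$-th moments via the Mecke formula and $\mathbb{E}[1-c]\le C v_{\lilsubseq{s}{l}}^d$, apply \cref{prop:exp.moment.common} with $v_{\lilsubseq{s}{l+1}}\le v_n\le v_{\lilsubseq{s}{l}}\le 2v_{\lilsubseq{s}{l+1}}$, and optimize over $\lambda$. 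The only cosmetic difference is that the paper bounds $v_n^{d(k-1)}\le v_{\lilsubseq{s}{l}}^{d(k-1)}$ and absorbs $n/\lilsubseq{s}{l}$, whereas you keep $v_n$ and absorb $\ldpspeed/\ldpspeed[\lilsubseq{s}{l}]$; both ratios are bounded uniformly for $s$ in the stated range.
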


\begin{lemma}
\label{lemma:lil.upper.z4}
Let $s \in (1, 2^{d(k - \alpha(k-1))/(\alpha - 1)})$ and $f \in \bddcont(E)$.
Assume that $f$ is continuously differentiable with bounded gradient $\norm{\nabla f}_{\reals^m}$.
% and satisfies $f(\bfzero_m) = 0$.
Further assume that $H$ satisfies condition \textup{(H6)}.
Then, there exists a constant $C > 0$, independent of $s, l$ and $f$,
such that for all sufficiently large $l$ and all $R > 2 C (\suprdnorm{\nabla f} + \supnorm{f}) \ldpspeed[\lilsubseq{s}{l}]^{1/2}$,
\begin{align}
\begin{split}
  &\probab[Big]{\max_{\lilsubseq{s}{l} \leq n \leq \lilsubseq{s}{l+1}} Z_{n,f}^{(4)} > R} \\
  &\quad \leq \exp\ab\bigg(- C C_f' (1 - v_{\lilsubseq{s}{l+1}}/v_{\lilsubseq{s}{l}}) \ldpspeed[\lilsubseq{s}{l}] \tailfunc\ab\bigg(\frac{R}{2 C^2 C_f' (1 - v_{\lilsubseq{s}{l+1}}/v_{\lilsubseq{s}{l}})\ldpspeed[\lilsubseq{s}{l}] } )),
\end{split} \label{eq:lil.upper.z4.tail.max}
\end{align}
where
\begin{align}
  C_f' \coloneqq e^{\supnorm{f}} (\supnorm{f}^{-1}\suprdnorm{\nabla f} + \supnorm{f}^{-2}\suprdnorm{\nabla f}^2 + 2). \label{eq:lil.upper.const.f.2}
\end{align}
\end{lemma}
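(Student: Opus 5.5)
The plan is to combine a single-$n$ exponential moment bound from \cref{prop:exp.moment.common} with a reduction of the maximum over $n$ to a controlled perturbation in the scale ratio. The ratio enters only through (H6) and the smoothness of $f$.

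\emph{Step 1: the single-$n$ bound.} Fix $l$ and write $b_n \coloneqq v_n/v_{\lilsubseq{s}{l}}$. Since $v_n = n^{(1-\alpha)/d}$ is decreasing, $b_n \in [a_l,1]$ with $a_l \coloneqq v_{\lilsubseq{s}{l+1}}/v_{\lilsubseq{s}{l}}$. The cubes $Q_z$, the isolation radius $v_{\lilsubseq{s}{l}}t$ and the filtration $\calF_z$ do not depend on $n$. Hence $Z^{(4)}_{n,f}$ is a sum over $z\in I_n$ of $\condexpect[none]{S_z(\pppentire)-S_z(\pppentire_z')}{\calF_z}$ with $F(\calZ) \coloneqq \zeroed f(H(v_n^{-1}\calZ))-\zeroed f(H(v_{\lilsubseq{s}{l}}^{-1}\calZ))$. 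I would apply \cref{prop:exp.moment.common} with $v_1=v_2=v_3=v_{\lilsubseq{s}{l}}$ (up to the factor-two slack allowed there) to get exponential moments.

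\emph{Step 2: the moment condition \eqref{eq:p.moment.boundness}.} After the Mecke formula and the substitution $\calZ = x + v_{\lilsubseq{s}{l}}(\bfzero_d,\bfy)$, the integrand becomes $\zeroed f(H(\bfzero_d,b_n^{-1}\bfy)) - \zeroed f(H(\bfzero_d,\bfy))$, integrated against the Lebesgue measure in $\bfy$. I would bound it by
\begin{align}
  \suprdnorm{\nabla f}\,\norm{H(\bfzero_d,b_n^{-1}\bfy)-H(\bfzero_d,\bfy)}_{\reals^m} + 2\supnorm{f}\,\indicator[none]{\text{exactly one of } H(\bfzero_d,\bfy),\, H(\bfzero_d,b_n^{-1}\bfy) \text{ vanishes}}.
\end{align}
The zero set matters because $\zeroed f$ jumps at $\bfzero_m$. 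After rescaling $\bfy \mapsto b_n\bfy$, which costs a Jacobian $b_n^{d(k-1)}\le 1$, both pieces are controlled by (H6) with $a=b_n$. This yields $C_p \le p!\,C^p\, C_f'\,(1-b_n)\,v_{\lilsubseq{s}{l}}^{dk}$ with $C_f'$ as in \eqref{eq:lil.upper.const.f.2}; the $e^{\supnorm f}$ factor comes from the cross terms when the two pieces are raised to the $p$-th power. Since $1-b_n \le 1-a_l$ and $\card{I_n}\, v_{\lilsubseq{s}{l}}^{dk} \le C\ldpspeed[\lilsubseq{s}{l}]$ for $n \le \lilsubseq{s}{l+1}$ (using $s < 2^{d(k-\alpha(k-1))/(\alpha-1)}$), Chebyshev and optimization in $\lambda$ as in \cref{prop:exp.eq.xi.kappa} give the $\tailfunc$-tail in \eqref{eq:lil.upper.z4.tail.max} for each fixed $n$.

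\emph{Step 3: the maximum over $n$ (the main obstacle).} A naive union bound over the roughly $\lilsubseq{s}{l+1}-\lilsubseq{s}{l}$ values of $n$ costs a polynomial factor in $\ldpspeed[\lilsubseq{s}{l}]$, which the stated bound cannot absorb. I would separate the two sources of $n$-dependence.
\begin{itemize}
  \item[(a)] \emph{Growing index set.} The sets $I_n$ are nested boxes in $\integers^d_{\geq0}$, and $\calF_z$ is built from the coordinatewise order on $\integers_{\geq0}^d$. So, for a fixed parameter $b$, the sums over $I_n$ should be arranged as a martingale indexed by $n$ and controlled with an exponential Doob maximal inequality. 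This removes the maximum over index sets without any loss.
  \item[(b)] \emph{Varying scale $b_n$.} I would freeze $b$ at finitely many grid points and bound the oscillation between neighbouring points deterministically. The Lipschitz bound $\suprdnorm{\nabla f}$, together with the Lebesgue estimate in (H6) for points crossing the zero set, bounds the expected oscillation by $C(\suprdnorm{\nabla f}+\supnorm f)\ldpspeed[\lilsubseq{s}{l}]^{1/2}$ once the mesh is of order $\ldpspeed[\lilsubseq{s}{l}]^{-1/2}$. This is exactly what the hypothesis $R > 2C(\suprdnorm{\nabla f}+\supnorm f)\ldpspeed[\lilsubseq{s}{l}]^{1/2}$ absorbs, which explains the factor $2$ in the final $\tailfunc$ argument.
\end{itemize}
If the grid is still too fine for a union bound, I would fall back on a chaining argument over dyadic refinements of $[a_l,1]$. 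The increments at level $j$ have moment parameter proportional to $2^{-j}(1-a_l)$ by (H6), so their tails are summable, and Bousquet's inequality handles the supremum at each level. Making this chaining step sharp, with no extra logarithmic factor and a discontinuity at $\bfzero_m$ controlled only through the first part of (H6), is the step I expect to require the most care.
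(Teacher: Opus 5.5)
Your Step~2 is essentially the paper's moment estimate \eqref{eq:lil.upper.z4.p.moment}: the Mecke formula, rescaling by the ratio, and (H6) applied both to the Lipschitz part and to the jump of $\zeroed{f}$ at $\bfzero_m$. Step~3(a) is also correct for a fixed scale. The genuine gap is Step~3(b), which is where the lemma actually lives.

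Three things fail there. Write $Y_{n,z}$ for the $z$-th summand of $Z^{(4)}_{n,f}$, as in \eqref{eq:lil.upper.z4.y}.
\begin{itemize}
\item A grid of mesh $\rho_{n(s,l)}^{-1/2}$ in the scale has about $\rho_{n(s,l)}^{1/2}$ points. At the level $R\asymp\delta\sqrt{\rho_{n(s,l)}\log\log\rho_{n(s,l)}}$ where the lemma is used, a single-scale tail is only of order $(l\log s)^{-c\delta^2/(1-a_l)}$. The union bound is therefore fatal, not merely lossy.
\item The oscillation $\sum_{z\in I_n}(Y_{n,z}-Y_{n',z})$ inside a cell is a random centered sum, so it has no deterministic bound of order $\rho_{n(s,l)}^{1/2}$. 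Even bounding its expected supremum by summing over $z$ would require $\int\sup_{a\in[a_1,a_2]}|\zeroed{f}(H(\bfzero_d,\bfy))-\zeroed{f}(H(\bfzero_d,a\bfy))|\,d\bfy\lesssim a_2-a_1$. (H6) is an estimate for each fixed $a$ and does not give this.
\item The chaining fallback is only named, and it inherits the real difficulty. In $Z^{(4)}_{n,f}$ the index set $I_n$ and the ratio $b_n$ move with the same $n$, and you never say how a Doob inequality at fixed $b$ is merged with a supremum over $b$.
\end{itemize}

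The missing idea is to put the supremum over scales inside the martingale. Set $V_j\coloneqq\max_{n(s,l)\le n\le n(s,l+1)}\sum_{z\in I_j}Y_{n,z}$. This is a submartingale in $j$ with respect to $\calG_j=\sigma(\bigcup_{z\in I_j}\calF_z)$, because $\mathbb{E}[Y_{n,z}\mid\calG_j]=0$ for $z\in I_{j+1}\setminus I_j$; moreover $\max_n Z^{(4)}_{n,f}\le\max_j V_j$. A single exponential Doob inequality then reduces everything to $V_{n(s,l+1)}$, a supremum over $n$ of sums over one fixed index set.

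After splitting into $B^d$ sublattices, the summands are independent, centered, and bounded almost surely by $C\supnorm{f}$. Bousquet's inequality then bounds the exponential moment of each sublattice supremum $U_w$ in terms of $\mathbb{E}[U_w]$ and the weak variance. By (H6) the weak variance is $\lesssim(\suprdnorm{\nabla f}^2+\supnorm{f}^2)(1-a_l)\rho_{n(s,l)}$.

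Chaining (\cref{prop:chaining.modify}) is needed only for the mean, in the metric $d_l(n_1,n_2)=(1-v_{n_1\vee n_2}/v_{n_1\wedge n_2})^{1/2}$. Its entropy integral is $O(1)$ and its linear term is $O(l^2)$, giving $\mathbb{E}[U_w]\le C(\suprdnorm{\nabla f}+\supnorm{f})\rho_{n(s,l)}^{1/2}$. It is this expected supremum, not a grid oscillation, that the threshold on $R$ and the factor $2$ absorb.

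Relatedly, Step~2 cannot give $C_p\le p!\,C^pC_f'(1-b_n)v_{n(s,l)}^{dk}$ with $C$ independent of $f$. The $p$-th moments are of order $C^p(\suprdnorm{\nabla f}+\supnorm{f})^p(1-b_n)v_{n(s,l)}^{dk}$, so routing through \cref{prop:exp.moment.common} yields a Bernstein scale proportional to $\suprdnorm{\nabla f}+\supnorm{f}$. The shape of $C_f'$, in which the gradient enters only through variance-type terms, comes from feeding the almost sure bound $C\supnorm{f}$ into Bousquet's inequality.
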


We will prove \cref{lemma:lil.upper.z1,lemma:lil.upper.z2,lemma:lil.upper.z3,lemma:lil.upper.z4} in \cref{sec:proof.zi.bound}.
Using these results, we establish \eqref{eq:lil.upper.meas}.

\begin{proof}[Proof of \eqref{eq:lil.upper.meas}]
It remains to show \eqref{eq:lil.upper.max.limsup}.
By the decomposition \eqref{eq:lil.upper.fxi.decomposition},
it suffices to verify that for every $\delta > 0$ and $i=1,2,3,4$,
\begin{align}
  \sum_{l=1}^{\infty} \probab[bigg]{\sum_{q=1}^{\infty} \frac{1}{2^q} \max_{\lilsubseq{s}{l} \leq n \leq \lilsubseq{s}{l+1}} \abs[big]{Z_{n,f_q}^{(i)}} > \delta \mdpfactor[\lilsubseq{s}{l}] } < \infty. \label{eq:lil.upper.max.z.summable}
\end{align}
Take $s > 1$ sufficiently close to $1$.
By \cref{lemma:lil.upper.z1,lemma:lil.upper.z2,lemma:lil.upper.z3},
there exists a constant $C > 0$, independent of $s, l$, and $q$,
such that for sufficiently large $l$ and all $R > 0$,
\begin{align}
  &\probab[bigg]{\max_{\lilsubseq{s}{l} \leq n \leq \lilsubseq{s}{l+1}} \abs[big]{Z_{n,f_q}^{(1)}} > R }\\
  &\quad \leq \probab[bigg]{\max_{\lilsubseq{s}{l} \leq n \leq \lilsubseq{s}{l+1}} Z_{n,f_q}^{(1)} > R } + \probab[bigg]{\max_{\lilsubseq{s}{l} \leq n \leq \lilsubseq{s}{l+1}} (-Z_{n,f_q}^{(1)}) > R } \\
  &\quad \leq 4(\lilsubseq{s}{l+1} - \lilsubseq{s}{l})\exp\ab\bigg(- e \tau(E)C r_{\lilsubseq{s}{l}} \ldpspeed[\lilsubseq{s}{l}] \tailfunc\ab\bigg(\frac{R}{e \tau(E)C^2 r_{\lilsubseq{s}{l}} \ldpspeed[\lilsubseq{s}{l}] })),
\end{align}
and similarly,
\begin{align}
  &\probab[bigg]{\max_{\lilsubseq{s}{l} \leq n \leq \lilsubseq{s}{l+1}} \abs[big]{Z_{n,f_q}^{(2)}} > R } \\
   &\quad \leq 4\exp\ab\bigg(- e \tau(E)C (1 - r_{\lilsubseq{s}{l+1}}/r_{\lilsubseq{s}{l}}) \ldpspeed[\lilsubseq{s}{l}] \tailfunc\ab\bigg(\frac{R}{e \tau(E)C^2 (1 - r_{\lilsubseq{s}{l+1}}/r_{\lilsubseq{s}{l}}) \ldpspeed[\lilsubseq{s}{l}]})), \\
  &\probab[bigg]{\max_{\lilsubseq{s}{l} \leq n \leq \lilsubseq{s}{l+1}} \abs[big]{Z_{n,f_q}^{(3)}} > R } \\
   &\quad \leq 4(\lilsubseq{s}{l+1} - \lilsubseq{s}{l})\exp\ab\bigg(- e \tau(E)C \lilsubseq{s}{l} r_{\lilsubseq{s}{l}}^d \ldpspeed[\lilsubseq{s}{l}] \tailfunc\ab\bigg(\frac{R}{e \tau(E)C^2 \lilsubseq{s}{l} r_{\lilsubseq{s}{l}}^d \ldpspeed[\lilsubseq{s}{l}] })).
\end{align}
Note that the constant $C_{f_q}$ given by \eqref{eq:lil.upper.const.f.1} is uniformly bounded as $C_{f_q} \leq e \tau(E) < \infty$ for all $q \geq 1$, since $\supnorm{f_q} = 1$.

Next, we address the term $Z_{n,f_q}^{(4)}$.
By \eqref{eq:test.func.grad.bound.cond}, for all sufficiently large $l$, we have
$a_{2\fqexp} q^{2\fqexp+1} \delta \mdpfactor[\lilsubseq{s}{l}] / 4 \geq 2 C (q^{\fqexp} + 1) \ldpspeed[\lilsubseq{s}{l}]^{1/2}
\geq 2 C (\suprdnorm{\nabla f_q} + \supnorm{f_q}) \ldpspeed[\lilsubseq{s}{l}]^{1/2}$,
where $a_{2\fqexp}$ is defined by \eqref{eq:lil.upper.tail.sum.const}.

Thus, applying \cref{lemma:lil.upper.z4},
we have for $R \geq a_{2\fqexp} q^{2\fqexp+1} \delta \mdpfactor[\lilsubseq{s}{l}] /4$,
\begin{align}
  &\probab[bigg]{\max_{\lilsubseq{s}{l} \leq n \leq \lilsubseq{s}{l+1}} \abs[big]{Z_{n,f_q}^{(4)}} \geq R } \\
  &\quad \leq 4\exp\ab\bigg(- 4e C (1 - v_{\lilsubseq{s}{l+1}}/v_{\lilsubseq{s}{l}}) \ldpspeed[\lilsubseq{s}{l}] q^{2\fqexp} \tailfunc\ab\bigg(\frac{R}{8e C^2 (1 - v_{\lilsubseq{s}{l+1}}/v_{\lilsubseq{s}{l}})\ldpspeed[\lilsubseq{s}{l}]q^{2\fqexp}} )).
\end{align}
Note that the constant $C_{f_q}'$ given by \eqref{eq:lil.upper.const.f.2} is bounded as $C_{f_q}' \leq 4e q^{2\fqexp}$ for $q \geq 1$.

We choose the variables in \cref{prop:lil.upper.tail.sum.finite} as follows:
$Y_{l, q} =
\max_{\lilsubseq{s}{l} \leq n \leq \lilsubseq{s}{l+1}}
\abs{Z_{n,f_q}^{(i)}}$, $i=1,2,3,4$, and
\begin{align}
  (A_l, B_l, C_l, \beta, a') =
  \begin{cases}
    \ab\big(\delta \mdpfactor[\lilsubseq{s}{l}],\, e \tau(E)C r_{\lilsubseq{s}{l}} \ldpspeed[\lilsubseq{s}{l}],\, 4(\lilsubseq{s}{l+1} - \lilsubseq{s}{l}),\, 0,\, C) & i=1, \\
    \ab\big(\delta \mdpfactor[\lilsubseq{s}{l}],\, e \tau(E)C (1 - r_{\lilsubseq{s}{l+1}}/r_{\lilsubseq{s}{l}}) \ldpspeed[\lilsubseq{s}{l}],\, 4,\, 0,\, C) & i=2, \\
    \ab\big(\delta \mdpfactor[\lilsubseq{s}{l}],\, e \tau(E)C \lilsubseq{s}{l} r_{\lilsubseq{s}{l}}^d \ldpspeed[\lilsubseq{s}{l}],\, 4(\lilsubseq{s}{l+1} - \lilsubseq{s}{l}),\, 0,\, C) & i=3, \\
    \ab\big(\delta \mdpfactor[\lilsubseq{s}{l}],\, 4e C (1 - v_{\lilsubseq{s}{l+1}}/v_{\lilsubseq{s}{l}}) \ldpspeed[\lilsubseq{s}{l}],\, 4,\, 2\fqexp,\, 2C) & i=4.
  \end{cases}
\end{align}
With these choices and the bounds above, assumption \eqref{eq:lil.upper.tail.sum.ass} is satisfied for $s > 1$ sufficiently close to $1$.
Note that $r_{\lilsubseq{s}{l+1}}/r_{\lilsubseq{s}{l}} \to s^{-\alpha/d(k - \alpha(k-1))}$ and $v_{\lilsubseq{s}{l+1}}/v_{\lilsubseq{s}{l}} \to s^{- (\alpha-1)/d(k - \alpha(k-1))}$ as $l \to \infty$.
Thus, we obtain \eqref{eq:lil.upper.max.z.summable}.
\end{proof}

To establish the summability from the obtained tail estimates,
we use the following proposition.

\begin{proposition}
\label{prop:lil.upper.tail.sum.finite}
Let $\{Y_{j, q}\}_{j, q \geq 1}$ be real-valued random variables.
Let $\beta \geq 0$ and
\begin{align}
  a_{\beta} \coloneqq \min_{q \in \naturals} \frac{2^q}{q^{\beta + 3}}. \label{eq:lil.upper.tail.sum.const}
\end{align}
Assume that there exist sequences of positive numbers $\{A_j\}_{j \geq 1}$, $\{B_j\}_{j \geq 1}$, $\{C_j\}_{j\geq 1}$, and a constant $a' > 0$ such that for all $j, q \geq 1$ and $R \geq a_{\beta} q^{\beta+1} A_j/4$,
\begin{align}
  \probab[none]{Y_{j, q} \geq R} \leq C_{j} \exp\ab\Big(- q^{\beta} B_j \tailfunc\ab\Big(\frac{R}{a' q^{\beta} B_j})),
\end{align}
where $\tailfunc$ is given by \eqref{eq:tailfunc}.
Further assume that $\inf_{j \geq 1} C_j > 0$ and
\begin{align}
  \sum_{j = 1}^{\infty} C_j \exp\ab\Big(- \frac{a_{\beta}^2 A_j^2}{64 a'^2 B_j}) < \infty \quad
  \text{and} \quad 
  \sum_{j = 1}^{\infty} C_j \exp\ab\Big(- \frac{a_{\beta} A_j}{16 a'}) < \infty. \label{eq:lil.upper.tail.sum.ass}
\end{align}
Then,
\begin{align}
  \sum_{j = 1}^{\infty} \probab[bigg]{\sum_{q = 1}^{\infty} \frac{1}{2^q} Y_{j, q} \geq A_j } < \infty. \label{eq:lil.upper.tail.sum.finite}
\end{align}

\end{proposition}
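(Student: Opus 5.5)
The plan is to split the event according to the index $q$, using a union bound with $q$-dependent thresholds. The constant $a_\beta$ is chosen so that $\sum_{q\ge1} a_\beta q^{\beta+1}\cdot q^{-(\beta+3)}\cdot 2^{q}\cdot 2^{-q}$-type weights sum to less than $1$.

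Concretely, set $R_{j,q} \coloneqq a_\beta q^{\beta+1} A_j/4 \cdot 2^{q}/ (a_\beta q^{\beta+3}) \cdot a_\beta$. More simply, I would choose thresholds $R_{j,q} \coloneqq 2^q A_j /(2 q^2)$. Since $\sum_q q^{-2} < 2$, we have $\sum_q 2^{-q} R_{j,q} < A_j$. Hence
\begin{align}
  \probab[bigg]{\sum_{q} 2^{-q} Y_{j,q} \geq A_j} \leq \sum_{q=1}^\infty \probab{Y_{j,q} \geq R_{j,q}}.
\end{align}
By definition of $a_\beta$, $2^q \geq a_\beta q^{\beta+3}$, so $R_{j,q} \geq a_\beta q^{\beta+1} A_j/2 \geq a_\beta q^{\beta+1}A_j/4$, and the hypothesis applies. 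This gives
\begin{align}
  \probab{Y_{j,q} \geq R_{j,q}} \leq C_j \exp\ab\Big(-q^\beta B_j \tailfunc\ab\Big(\frac{a_\beta q A_j}{2 a' B_j}\Big)\Big).
\end{align}

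Next, I would use \eqref{eq:tailfunc.lower.bound} and split into the two regimes of $x \coloneqq a_\beta q A_j/(2a' B_j)$. If $x \le 1$, the exponent is at least $q^\beta B_j x^2/4 = q^{\beta+2} a_\beta^2 A_j^2/(16 a'^2 B_j)$. If $x\ge1$, it is at least $q^{\beta+1} a_\beta A_j/(8a')$. In both cases the exponent is bounded below by $q\cdot c_j$, where $c_j \coloneqq \min\{a_\beta^2A_j^2/(16a'^2B_j),\, a_\beta A_j/(8a')\}$, using $q^{\beta+2},q^{\beta+1}\ge q$. Therefore
\begin{align}
  \sum_q \probab{Y_{j,q}\ge R_{j,q}} \le C_j \sum_{q\ge1} e^{-q c_j} = C_j \frac{e^{-c_j}}{1-e^{-c_j}}.
\end{align}

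The main obstacle is controlling the geometric factor $(1-e^{-c_j})^{-1}$ uniformly, i.e.\ showing $c_j$ is bounded below for large $j$. This is where the summability assumptions \eqref{eq:lil.upper.tail.sum.ass} and $\inf_j C_j>0$ enter. Since $C_j \exp(-a_\beta^2A_j^2/(64a'^2B_j))$ and $C_j\exp(-a_\beta A_j/(16a'))$ are summable, they tend to $0$. Because $C_j$ is bounded away from $0$, the exponents $a_\beta^2A_j^2/(64a'^2B_j)$ and $a_\beta A_j/(16a')$ tend to $\infty$, so $c_j\to\infty$ and $(1-e^{-c_j})^{-1}\le 2$ for large $j$.

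Finally, $e^{-c_j}$ is at most the sum of the two exponentials $\exp(-a_\beta^2A_j^2/(16a'^2B_j))$ and $\exp(-a_\beta A_j/(8a'))$. These are dominated by the exponentials appearing in \eqref{eq:lil.upper.tail.sum.ass}, whose exponents are only smaller by constant factors. Hence $\sum_j C_j e^{-c_j}<\infty$, and \eqref{eq:lil.upper.tail.sum.finite} follows after discarding finitely many $j$.
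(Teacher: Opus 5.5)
Your proof is correct and follows essentially the same route as the paper. Both arguments take a union bound over $q$ with thresholds of order $2^q A_j/q^2$, apply the two-regime lower bound \eqref{eq:tailfunc.lower.bound} for $\tailfunc$, sum a geometric series in $q$, and use $\inf_j C_j>0$ together with \eqref{eq:lil.upper.tail.sum.ass} to send the exponents to infinity, so that the factor $(1-e^{-c_j})^{-1}$ stays bounded. The differences are cosmetic: your effective threshold $a_\beta q^{\beta+1}A_j/2$ (compared with the paper's $a_\beta q^{\beta+1}A_j/4$, via the increasing $\tailfunc$) only improves the constants $64,16$ to $16,8$, and you merge the two regimes through a minimum rather than bounding by the sum of the two exponentials from the start.
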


\begin{proof}
Since $2^{q}/ q^{2} \geq a_{\beta} q^{\beta+1}$ for $q \in \naturals$,
the following set inclusions hold:
\begin{align}
  \Bab{\sum_{q=1}^{\infty} \frac{1}{2^q} Y_{j, q} > A_j }
  \subset \bigcup_{q=1}^{\infty} \Bab{\frac{1}{2^q} Y_{j, q} > \frac{6}{\pi^2 q^2} A_j  }
  \subset \bigcup_{q=1}^{\infty} \Bab{Y_{j, q} > \frac{6 a_{\beta} q^{\beta+1} A_j}{\pi^2}}
  \subset \bigcup_{q=1}^{\infty} \Bab{Y_{j, q} > \frac{a_{\beta} q^{\beta+1} A_j}{4}}.
\end{align}
Thus, we have
\begin{align}
  \probab[bigg]{\sum_{q = 1}^{\infty} \frac{1}{2^q}Y_{j, q} \geq A_j }
  &\leq \sum_{q= 1}^{\infty} \probab[bigg]{ Y_{j, q} \geq \frac{a_{\beta} q^{\beta+1} A_j}{4}  }
  \leq \sum_{q= 1}^{\infty}C_j \exp\ab\Big(- q^{\beta} B_j \tailfunc\ab\Big(\frac{a_{\beta} q^{\beta+1} A_j}{4 a' q^{\beta} B_j})). \label{eq:tail.summable.1}
\end{align}
By \eqref{eq:tailfunc.lower.bound},
\eqref{eq:tail.summable.1} is bounded by
\begin{align}
  &C_j\sum_{q = 1}^{\infty} \ab\bigg(\exp\ab\Big(- q^{\beta} B_j \frac{1}{4}\ab\Big(\frac{a_{\beta} q^{\beta+1} A_j}{4 a' q^{\beta} B_j})^2)
   + \exp\ab\Big(- q^{\beta} B_j \frac{1}{4}\ab\Big(\frac{a_{\beta} q^{\beta+1} A_j}{4 a' q^{\beta} B_j}))) \\
  &\quad \leq C_j\sum_{q = 1}^{\infty} \exp\ab\Big(- \frac{a_{\beta}^2 A_j^2}{64 a'^2 B_j} q)
  + C_j\sum_{q = 1}^{\infty} \exp\ab\Big(- \frac{a_{\beta} A_j}{16 a'} q) \\
  % &\leq C_j \frac{\exp\ab\Big(- \frac{a_{\beta}^2 A_j^2}{64 a'^2 B_j})}{1 - \exp\ab\Big(- \frac{a_{\beta}^2 A_j^2}{64 a'^2 B_j})}
  % + C_j \frac{\exp\ab\Big(- \frac{a_{\beta} A_j}{16 a'})}{1 - \exp\ab\Big(- \frac{a_{\beta} A_j}{16 a'})}.
  &\quad \leq C_j \frac{\exp(- a_{\beta}^2 A_j^2/(64 a'^2 B_j))}{1 - \exp(- a_{\beta}^2 A_j^2/(64 a'^2 B_j))}
  + C_j \frac{\exp(- a_{\beta} A_j/(16 a'))}{1 - \exp(- a_{\beta} A_j/(16 a'))}.
\end{align}
Since \eqref{eq:lil.upper.tail.sum.ass} holds and $\inf_{j \geq 1} C_j > 0$,
we have $\exp(- a_{\beta}^2 A_j^2 / (64 a'^2 B_j)) \to 0$ and $\exp(- a_{\beta} A_j / (16 a')) \to 0$ as $j \to \infty$.
Summing over $j \ge 1$ thus yields \eqref{eq:lil.upper.tail.sum.finite}.
\end{proof}

Next, we prove \eqref{eq:lil.upper.vec}.
We need the following lemma, which modifies \cref{lemma:lil.upper.z4} for the choice $f = \iprod{\bfe_i}{\cdot}$.

\begin{lemma}
\label{lemma:lil.upper.z4.ei}
Let $s \in (1, 2^{d(k - \alpha(k-1))/(\alpha - 1)})$ and $f = \iprod{\bfe_i}{\cdot}$, $i=1,\dots, m$.
Assume that $H$ satisfies condition \textup{(H6)}.
Then, there exists a constant $C > 0$, independent of $s$, $l$ and $i$,
such that for all sufficiently large $l$ and all $R > C \ldpspeed[\lilsubseq{s}{l}]^{1/2}$,
\begin{align}
  \probab[Big]{\max_{\lilsubseq{s}{l} \leq n \leq \lilsubseq{s}{l+1}} Z_{n,f}^{(4)} > R}
  \leq C\exp\ab\bigg(- \frac{R^2}{C (1 - v_{\lilsubseq{s}{l+1}}/v_{\lilsubseq{s}{l}}) \ldpspeed[\lilsubseq{s}{l}] })
  + C\exp\ab\bigg(- \frac{R}{Cl}). \label{eq:lil.upper.z4.ei.tail.max}
\end{align}
\end{lemma}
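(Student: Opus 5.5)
The plan is to reproduce the structure behind \cref{lemma:lil.upper.z4}, but with \cref{prop:exp.moment.common} fed by the moment bounds of (H6) instead of the Lipschitz and gradient bounds of a bounded test function. Fix $i$, write $h=h^{(i)}$ and $n_l\coloneqq\lilsubseq{s}{l}$. Since $v_n=n^{(1-\alpha)/d}$ is decreasing, for $n_l\le n\le n_{l+1}$ we can set $a_n\coloneqq v_n/v_{n_l}\in[v_{n_{l+1}}/v_{n_l},1]$. Then
\begin{align}
  \Delta_{\infty,z,f}^{(v_{n_l}, v_{n})}-\Delta_{\infty,z,f}^{(v_{n_l}, v_{n_l})}
\end{align}
is the martingale-difference increment for the functional
\begin{align}
  F_n(\calZ)\coloneqq h(a_n^{-1}v_{n_l}^{-1}\calZ)-h(v_{n_l}^{-1}\calZ),
\end{align}
with the isolation radius frozen at $v_{n_l}t$. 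For two indices $n'<n$ in the block, $Z^{(4)}_{n,f}-Z^{(4)}_{n',f}$ is again of the form treated in \cref{prop:exp.moment.common}, with $v_1=v_2=v_3=v_{n_l}$ (so the cube size stays fixed) and $F=F_n-F_{n'}$. There is an additional boundary contribution from $z\in I_n\setminus I_{n'}$; I will handle it exactly as in \cref{lemma:lil.upper.z2}, because these boxes number only $O((1-r_{n}/r_{n'})\ldpspeed[n_l]v^{-d}\cdots)$.

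\textbf{Step 1: moment input.} I will verify \eqref{eq:p.moment.boundness} for $F=F_n-F_{n'}$. By the Mecke formula and translation invariance, after rescaling by $v_{n_l}$ and writing $a=a_n/a_{n'}$ (and rescaling $\bfy$ once more), the quantity $\expect{\sum_{\calZ}\abs{F}^p}$ is bounded by
\begin{align}
  C v_{n_l}^{dk}\int_{(\reals^d)^{k-1}}\abs{h(\bfzero_d,\bfy)-h(\bfzero_d,a\bfy)}^p\,d\bfy .
\end{align}
By \eqref{eq:h.condition.h6} this is at most $C^{p}v_{n_l}^{dk}(1-a)$, and $1-a\le C(1-a_n/a_{n'})$. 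The first half of (H6) is only needed if one also wants to control the number of configurations whose support changes. With this input, \cref{prop:exp.moment.common} and \eqref{eq:exp.moment.block.mart} give, for all $\lambda>0$,
\begin{align}
  \expect{e^{\lambda(Z^{(4)}_{n}-Z^{(4)}_{n'})}}\le\exp\ab\big(C(1-a_n/a_{n'})\ldpspeed[n_l](e^{C\lambda}-1-C\lambda)).
\end{align}
This is a Poisson-type (Bennett) bound. It yields a tail that is Gaussian with variance proxy $C(1-a_n/a_{n'})\ldpspeed[n_l]$ for moderate $R$, and exponential, $e^{-R/C}$ up to log factors, for large $R$. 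No $p!$ growth appears, which is why no $\tailfunc$ is needed here.

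\textbf{Step 2: chaining over $n$.} The block contains $n_{l+1}-n_l\le e^{Cl}$ indices, so a dyadic chaining over $n$ has depth $J\le Cl$. At level $j$ the consecutive pairs satisfy $1-a_n/a_{n'}\lesssim 2^{-j}(1-v_{n_{l+1}}/v_{n_l})$, and there are $2^j$ such pairs. I will allot the threshold $R_j\coloneqq c R\,\big(2^{-j/4}\wedge J^{-1}\big)$, normalised so that $\sum_j R_j\le R$, and apply the bound of Step 1 with a union bound at each level. The Gaussian regime then contributes
\begin{align}
  \sum_j 2^j\exp\ab\big(-c R^2 2^{j/2}/((1-v_{n_{l+1}}/v_{n_l})\ldpspeed[n_l])),
\end{align}
which is at most $C\exp(-R^2/(C(1-v_{n_{l+1}}/v_{n_l})\ldpspeed[n_l]))$. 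The exponential regime contributes $\sum_j 2^j e^{-R_j/C}\le C e^{-R/(Cl)}$, after the factors $2^j\le e^{Cl}$ are absorbed.

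The assumption $R>C\ldpspeed[n_l]^{1/2}$ is used precisely for this absorption. In each of the two regimes it allows the combinatorial factors $2^j$ and $J$ to be swallowed by enlarging $C$. It also makes the bounded mean shift from the $I_n\setminus I_{n'}$ boundary term negligible. The last link of the chain goes from $n_l$ itself, where $Z^{(4)}_{n_l}=0$, so no separate initial term arises.

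The main obstacle I anticipate is Step 1 together with the boundary boxes. The difference $F_n-F_{n'}$ involves the unbounded $h$, so I must make sure that the $p$-th moment bound from (H6) really has the form $C^p(1-a)$, uniformly over pairs, after the change of variables $\bfy\mapsto a\bfy$. Its Jacobian is harmless because $a$ is close to $1$ when $s$ is close to $1$. If this uniformity fails, my fallback is to truncate $h$ at level $K\sim l$. The truncated part would be handled by the previous argument, and the tail part via (H4) as in \cref{lem:contraction_cutoff}; the tail part then produces the $e^{-R/(Cl)}$ term directly.
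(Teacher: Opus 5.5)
Your route works, but it differs from the paper's, and two points in it need repair.

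\textbf{Comparison with the paper.} The paper never chains over pairs $n'<n$ whose index sets differ. In each residue class $w$ it dominates $\max_n Z^{(4)}_{n,f}$ by $\max_j V_{j,w}$, where $V_{j,w}=\max_n\sum_{z\in I_{j,w}}Y_{n,z}$ is a submartingale in the spatial index $j$. It then applies the maximal Adamczak-type inequality \cref{prop:adamczak.ineq}, with three inputs:
\begin{itemize}
\item the variance bound $C(1-v_{n_{l+1}}/v_{n_l})\rho_{n_l}$ from (H6);
\item $\mathbb{E}[V_{n_{l+1},w}]\le C\rho_{n_l}^{1/2}$, from \cref{prop:chaining.modify} applied over $n$ with the index set frozen at $I_{n_{l+1},w}$;
\item a $\psi_1$-bound of order $l$ on $\max_{n,z}\lvert Y_{n,z}\rvert$, from \cref{prop:orliczc.norm}.
\end{itemize}
There the hypothesis $R>C\rho_{n_l}^{1/2}$ absorbs $\tfrac32\mathbb{E}[V_{n_{l+1},w}]$. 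You instead chain the tail directly: Bennett bounds for the increments $Z^{(4)}_{n,f}-Z^{(4)}_{n',f}$ from \cref{prop:exp.moment.common} (its proof is box by box, so a $z$-dependent $F$ is allowed), then level-wise union bounds starting from $Z^{(4)}_{n_l,f}=0$. The $e^{-R/(Cl)}$ term then comes from thresholds of order $R/J$ on $J\le Cl$ levels against a distance-independent scale. The hypothesis on $R$ is used mainly to absorb $2^{J}\le e^{Cl}$. Your route avoids \cref{prop:adamczak.ineq} (whose proof the paper only sketches), Orlicz norms and the submartingale domination. The price is that the growth of $I_n$ now enters every increment.

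\textbf{First repair: the boundary boxes.} You cannot count $I_n\setminus I_{n'}$ as in \cref{lemma:lil.upper.z2}. For nearby $n'<n$ this set is either empty or a full layer of $\asymp r_{n_l}^{-(d-1)}$ boxes, which happens whenever $n^{1/d}$ crosses a multiple of the side $v_{n_{l+1}}t/\sqrt d$. On those boxes the summand is $Y_{n,z}$ itself, with moment input $C^p v_{n_l}^{dk}(1-v_n/v_{n_l})$, measured from $n_l$ rather than from $n'$. The correct variance proxy is therefore $C\rho_{n_l}\,[(1-v_n/v_{n'})+r_{n_l}]$.
\begin{itemize}
\item The part of size $\lesssim (n-n')v_{n_{l+1}}^{-d}$ is fine, since $(n-n')/n_l\le C(1-v_n/v_{n'})$.
\item Each crossed layer adds $C r_{n_l}\rho_{n_l}$, which is not $O\bigl((1-v_n/v_{n'})\rho_{n_l}\bigr)$ at fine levels.
\end{itemize}
This is harmless. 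At levels with $2^{-j}(1-v_{n_{l+1}}/v_{n_l})\ge r_{n_l}$ the extra term is absorbed. At finer levels a threshold $R_j\ge cR/J$ gives an exponent $\gtrsim R^2/(J^2 r_{n_l}\rho_{n_l})$. Since $J^2 r_{n_l}\to 0$ polynomially, for large $l$ this dominates both $R^2/((1-v_{n_{l+1}}/v_{n_l})\rho_{n_l})$ and $J$.

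\textbf{Second repair: the thresholds.} Both this argument and your two displayed sums need $R_j=cR\,(2^{-j/4}\vee J^{-1})$, not $\wedge$. With the minimum, level $0$ receives only $cR/J$. That yields $\exp(-R^2/(Cl^2(1-v_{n_{l+1}}/v_{n_l})\rho_{n_l}))$, which is not summable along $R=\delta b_{n_l}$. Also, at the finest levels $cR2^{-j/4}$ is polynomially small compared with $R/l$. With the maximum, $\sum_j R_j\le cR\,(1+(1-2^{-1/4})^{-1})$, so the normalisation still works.

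\textbf{Minor points.} The boundary terms are centred, so there is no mean shift to neglect. The truncation fallback is unnecessary. The parameters in \cref{prop:exp.moment.common} are $v_1=v_{n_{l+1}}$ and $v_2=v_3=v_{n_l}$.
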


We will prove \cref{lemma:lil.upper.z4.ei} in \cref{sec:proof.zi.bound}.
With \cref{lemma:lil.upper.z4.ei},
we are now ready to prove \eqref{eq:lil.upper.vec}.

\begin{proof}[Proof of \eqref{eq:lil.upper.vec}]
The proof is similar to that of \eqref{eq:lil.upper.meas}.
As in \cref{lem:lil.subseqence.cptness},
for all $s > 1$, it holds almost surely that
\begin{align}
  \lim_{l \to \infty} \distancevec(\mdpfactor[\lilsubseq{s}{l}]^{-1 } \Phi(\centered{\xi_{\lilsubseq{s}{l}}}), \accptvec) &= 0
\end{align}
and
\begin{align}
  &\limsup_{n \to \infty} \distancevec(\mdpfactor^{-1} \Phi(\centered{\xi_{n}}), \accptvec) \\
  &\quad \leq \limsup_{l \to \infty} \mdpfactor[\lilsubseq{s}{l}]^{-1} \max_{\lilsubseq{s}{l} \leq n \leq \lilsubseq{s}{l+1}} \norm{\Phi(\centered{\xi_{n}}) - \Phi(\centered{\xi_{\lilsubseq{s}{l}}})}_{\reals^m}
   + (1 - s^{-1/2})\sup_{\bfu \in \accptvec} \norm{\bfu}_{\reals^m} \\
  &\quad \leq \sum_{i=1}^m \limsup_{l \to \infty} \mdpfactor[\lilsubseq{s}{l}]^{-1} \max_{\lilsubseq{s}{l} \leq n \leq \lilsubseq{s}{l+1}} \abs{\pairing{f_i}{\centered{\xi_{n}}} - \pairing{f_i}{\centered{\xi_{\lilsubseq{s}{l}}}}}
  + (1 - s^{-1/2})\sup_{\bfu \in \accptvec} \norm{\bfu}_{\reals^m}, \label{eq:lil.upper.subseq.bound.vec}
\end{align}
where $f_i \coloneqq \iprod{\bfe_i}{\cdot}$.
Consequently, it remains to show that for all $\delta > 0$ and $i=1,\dots, m$,
there exists $s > 1$ such that almost surely,
\begin{align}
  \limsup_{l \to \infty} \mdpfactor[\lilsubseq{s}{l}]^{-1} \max_{\lilsubseq{s}{l} \leq n \leq \lilsubseq{s}{l+1}} \abs[big]{\pairing{f_i}{\centered{\xi_{n}}} - \pairing{f_i}{\centered{\xi_{\lilsubseq{s}{l}}}}} &\leq \delta. \label{eq:lil.upper.max.limsup.vec}
\end{align}
Furthermore, to establish \eqref{eq:lil.upper.max.limsup.vec},
it is enough to show that for all $\delta > 0$ and $j=1,2,3,4$,
\begin{align}
  \sum_{l=1}^{\infty} \probab[bigg]{\max_{\lilsubseq{s}{l} \leq n \leq \lilsubseq{s}{l+1}} \abs[big]{Z_{n,f_i}^{(j)}} > \delta \mdpfactor[\lilsubseq{s}{l}] } < \infty. \label{eq:lil.upper.max.z.summable.vec}
\end{align}
As in the proof of \eqref{eq:lil.upper.meas},
for $s > 1$ sufficiently close to $1$,
by \cref{lemma:lil.upper.z1,lemma:lil.upper.z2,lemma:lil.upper.z3},
we have for sufficiently large $l$,
\begin{align}
  &\probab[bigg]{\max_{\lilsubseq{s}{l} \leq n \leq \lilsubseq{s}{l+1}} \abs[big]{Z_{n,f_i}^{(1)}} > \delta \mdpfactor[\lilsubseq{s}{l}] } \\
   &\quad \leq 4(\lilsubseq{s}{l+1} - \lilsubseq{s}{l})\exp\ab\bigg(- C C_{f_i}  r_{\lilsubseq{s}{l}} \ldpspeed[\lilsubseq{s}{l}] \tailfunc\ab\bigg(\frac{\delta \mdpfactor[\lilsubseq{s}{l}]}{C^2 C_{f_i} r_{\lilsubseq{s}{l}} \ldpspeed[\lilsubseq{s}{l}] })), \\
  &\probab[bigg]{\max_{\lilsubseq{s}{l} \leq n \leq \lilsubseq{s}{l+1}} \abs[big]{Z_{n,f_i}^{(2)}} > \delta \mdpfactor[\lilsubseq{s}{l}] } \\
   &\quad \leq 4\exp\ab\bigg(- C C_{f_i} (1 - r_{\lilsubseq{s}{l+1}}/r_{\lilsubseq{s}{l}}) \ldpspeed[\lilsubseq{s}{l}] \tailfunc\ab\bigg(\frac{\delta \mdpfactor[\lilsubseq{s}{l}]}{C^2 C_{f_i} (1 - r_{\lilsubseq{s}{l+1}}/r_{\lilsubseq{s}{l}}) \ldpspeed[\lilsubseq{s}{l}]})), \\
  &\probab[bigg]{\max_{\lilsubseq{s}{l} \leq n \leq \lilsubseq{s}{l+1}} \abs[big]{Z_{n,f_i}^{(3)}} > \delta \mdpfactor[\lilsubseq{s}{l}] } \\
   &\quad \leq 4(\lilsubseq{s}{l+1} - \lilsubseq{s}{l})\exp\ab\bigg(- C C_{f_i} \lilsubseq{s}{l} r_{\lilsubseq{s}{l}}^d \ldpspeed[\lilsubseq{s}{l}] \tailfunc\ab\bigg(\frac{\delta \mdpfactor[\lilsubseq{s}{l}]}{C^2 C_{f_i} \lilsubseq{s}{l} r_{\lilsubseq{s}{l}}^d \ldpspeed[\lilsubseq{s}{l}] })).
\end{align}
Next, we bound $Z_{n,f_i}^{(4)}$.
For sufficiently large $l$, we have $\delta \mdpfactor[\lilsubseq{s}{l}] > C \ldpspeed[\lilsubseq{s}{l}]^{1/2}$,
and by \cref{lemma:lil.upper.z4.ei},
\begin{align}
  \probab[bigg]{\max_{\lilsubseq{s}{l} \leq n \leq \lilsubseq{s}{l+1}} \abs[big]{Z_{n,f_i}^{(4)}} > \delta \mdpfactor[\lilsubseq{s}{l}] }
  \leq C\exp\ab\bigg(- \frac{\delta^2 \mdpfactor[\lilsubseq{s}{l}]^2}{C (1 - v_{\lilsubseq{s}{l+1}}/v_{\lilsubseq{s}{l}}) \ldpspeed[\lilsubseq{s}{l}] })
  + C\exp\ab\bigg(- \frac{\delta \mdpfactor[\lilsubseq{s}{l}]}{Cl}).
\end{align}
Consequently, for $s > 1$ sufficiently close to $1$,
these probabilities are summable over $l \geq 1$.
Thus, we obtain \eqref{eq:lil.upper.max.z.summable.vec}.

\end{proof}

\subsection{Proof of the bounds for $Z_{n, f}^{(1)}, Z_{n, f}^{(2)}, Z_{n, f}^{(3)}, Z_{n, f}^{(4)}$}
\label{sec:proof.zi.bound}

In this subsection, we prove \cref{lemma:lil.upper.z1,lemma:lil.upper.z2,lemma:lil.upper.z3,lemma:lil.upper.z4,lemma:lil.upper.z4.ei},
which provide upper bounds for $Z_{n,f}^{(i)}$ ($i=1,2,3,4$).
Note that $v_{\lilsubseq{s}{l}} / v_{\lilsubseq{s}{l+1}} \to s^{(\alpha - 1)/d(k - \alpha(k-1))}$ as $l \to \infty$.
Thus, for any $s \in (1, 2^{d(k - \alpha(k-1))/(\alpha - 1)})$,
we have $v_{\lilsubseq{s}{l}} \leq 2 v_{\lilsubseq{s}{l+1}}$ for all sufficiently large $l$.

\begin{proof}[Proof of \cref{lemma:lil.upper.z1}]
If $z \in \integers^d$ satisfies $Q_{z}^{(3L v_{\lilsubseq{s}{l}})} \subset W_n $,
then $\Delta_{n,z,f}^{(v_n, v_n)} = \Delta_{\infty,z,f}^{(v_n, v_n)} $.
Thus, defining $J_{n} \coloneqq \{z \in I_n : Q_{z}^{(3L v_{\lilsubseq{s}{l}})} \not\subset W_n \} $,
we can write $Z_{n,f}^{(1)} = \sum_{z \in J_n} \condexpect[none]{\Delta_{n,z,f}^{(v_n, v_n)} - \Delta_{\infty,z,f}^{(v_n, v_n)}}{\calF_z} $.
Observe that for any $\lilsubseq{s}{l} \leq n \leq \lilsubseq{s}{l+1} $ and any integer $p \geq 2$,
we have the moment bound
\begin{align}
  &\expect[biggg]{\sum_{\calZ \subset \pppentire \cap Q_{z}^{(2Lv_{\lilsubseq{s}{l}})}} \abs{\zeroed{f}(H(v_n^{-1} \calZ))}^p } \\
  &\quad \leq \frac{v_n^{d(k-1)}}{k!} \int_{Q_{z}^{(2Lv_{\lilsubseq{s}{l}})}} dx \int_{(\reals^d)^{k-1}} d\bfy\, \abs{\zeroed{f}(H(\bfzero_d, \bfy))}^p \\
  &\quad \leq \frac{v_{\lilsubseq{s}{l}}^{d(k-1)}}{k!} \ab\big(t d^{-1/2}v_{\lilsubseq{s}{l+1}} +  2Lv_{\lilsubseq{s}{l}})^d p! \int_{(\reals^d)^{k-1}}  e^{\abs{f(H(\bfzero_d, \bfy))}} \indicator[none]{H(\bfzero_d, \bfy) \neq \bfzero_m} \,d\bfy\\
  % = C v^{dk} \int_{E} (e^{f(x)} - 1) \tau(dx)
  &\quad \leq p! C C_f v_{\lilsubseq{s}{l}}^{dk}. \label{eq:lil.p.moment.common}
\end{align}
Therefore, combining \eqref{eq:lil.p.moment.common} with \cref{prop:exp.moment.common},
we obtain that for all $n$ in the range $\lilsubseq{s}{l} \leq n \leq \lilsubseq{s}{l+1} $ and all $\lambda > 0$,
\begin{align}
 \probab[big]{Z_{n,f}^{(1)} > R}
 \leq e^{-\lambda R}\expect[none]{\exp(\lambda Z_{n,f}^{(1)})}
 \leq e^{-\lambda R} \exp\ab\bigg(C C_f \card{J_n}v_{\lilsubseq{s}{l}}^{dk} \sum_{p=2}^{\infty} (C\lambda)^p ). \label{eq:lil.upper.boundary.tail.bound}
\end{align}
Since $\card{J_n} \leq C (n^{1/d}/v_{\lilsubseq{s}{l}})^{d-1} \leq C r_{\lilsubseq{s}{l}}^{-(d-1)} $,
the right-hand side of \eqref{eq:lil.upper.boundary.tail.bound} is bounded by
\begin{align}
  e^{-\lambda R}\exp\ab\bigg(C C_f \lilsubseq{s}{l}^{k} r_{\lilsubseq{s}{l}}^{d(k-1) +1} \sum_{p=2}^{\infty} (C\lambda)^p)
  = e^{-\lambda R}\exp\ab\Big(C C_f r_{\lilsubseq{s}{l}} \ldpspeed[\lilsubseq{s}{l}] \frac{C^2 \lambda^2}{1 - C\lambda}),
\end{align}
for $0 < \lambda < C^{-1} $.
Optimizing over $\lambda$,
we obtain \eqref{eq:lil.upper.z1.tail}.
\end{proof}

\begin{proof}[Proof of \cref{lemma:lil.upper.z2}]
Observe that the sequence $\{Z_{n,f}^{(2)}\}_{\lilsubseq{s}{l} \leq n \leq \lilsubseq{s}{l+1}} $
forms a martingale with respect to the filtration
\begin{align}
  \Bab{\calG_{n}}_{\lilsubseq{s}{l} \leq n \leq \lilsubseq{s}{l+1}} \coloneqq
  \Bab{\sigma\ab\bigg(\bigcup_{z \in I_{n} } \calF_{z}) }_{\lilsubseq{s}{l} \leq n \leq \lilsubseq{s}{l+1}}. \label{eq:lil.filtration}
\end{align}
Thus, applying Doob's inequality, we see that for all $\lambda > 0$,
\begin{align}
  \probab[Big]{\max_{\lilsubseq{s}{l} \leq n \leq \lilsubseq{s}{l+1}} Z_{n,f}^{(2)} > R}
  \leq e^{- \lambda R} \expect[none]{\exp(\lambda Z_{\lilsubseq{s}{l+1}, f}^{(2)})}. \label{eq:lil.upper.mart.tail.bound}
\end{align}
It then follows from \eqref{eq:lil.p.moment.common} and \cref{prop:exp.moment.common} that
\begin{align}
  \expect[none]{\exp(\lambda Z_{\lilsubseq{s}{l+1}, f}^{(2)})}
  \leq \exp\ab\bigg(C C_f \card{I_{\lilsubseq{s}{l+1}} \setminus I_{\lilsubseq{s}{l}}} v_{\lilsubseq{s}{l+1}}^{dk} \sum_{p=2}^{\infty} (C\lambda)^p ). 
\end{align}
Furthermore, for sufficiently large $l$, 
\begin{align}
  \card{I_{\lilsubseq{s}{l+1}} \setminus I_{\lilsubseq{s}{l}}}
  &= \roundup{{\lilsubseq{s}{l+1}}^{1/d}/v_{\lilsubseq{s}{l+1}}td^{-1/2}}^d - \roundup{{\lilsubseq{s}{l}}^{1/d}/v_{\lilsubseq{s}{l+1}}td^{-1/2}}^d \\
  &= \roundup{t^{-1}d^{1/2} r_{\lilsubseq{s}{l+1}}^{-1}}^d - \roundup{t^{-1}d^{1/2} r_{\lilsubseq{s}{l}}^{-1}}^d \\
  &\leq C r_{\lilsubseq{s}{l+1}}^{-(d-1)} (r_{\lilsubseq{s}{l+1}}^{-1} - r_{\lilsubseq{s}{l}}^{-1}).
\end{align}
Consequently, \eqref{eq:lil.upper.mart.tail.bound} is bounded by
\begin{align}
  &e^{-\lambda R} \exp\ab\bigg(C C_f \lilsubseq{s}{l+1}^k r_{\lilsubseq{s}{l+1}}^{d(k-1)}  (1 - r_{\lilsubseq{s}{l+1}} / r_{\lilsubseq{s}{l}})\sum_{p=2}^{\infty} (C\lambda)^p ) \\
  &\quad = e^{-\lambda R} \exp\ab\Big(C C_f (1 - r_{\lilsubseq{s}{l+1}} / r_{\lilsubseq{s}{l}}) \ldpspeed[\lilsubseq{s}{l}] \frac{C^2 \lambda^2}{1 - C\lambda}),
\end{align}
for any $0 < \lambda < C^{-1} $.
By choosing the optimal $\lambda$,
we arrive at \eqref{eq:lil.upper.z2.tail.max}.
\end{proof}

\begin{proof}[Proof of \cref{lemma:lil.upper.z3}]
We begin by observing the identity
\begin{align}
  &S_{z,f}^{(v_{n}, v_{n})}(\calX) - S_{z,f}^{(v_{\lilsubseq{s}{l}}, v_{n})}(\calX) \\
  &= \sum_{\calZ \subset \calX \cap Q_{z}^{(2Lv_{\lilsubseq{s}{l}})}} 
  \isolated{\calZ}{\calX \cap Q_{z}^{(3Lv_{\lilsubseq{s}{l}})}}{v_{n} t} 
  \ab\big(1 - \isolated{\calZ}{\calX \cap Q_{z}^{(3Lv_{\lilsubseq{s}{l}})}}{v_{\lilsubseq{s}{l}} t}) \zeroed{f}(H(v_{n}^{-1} \calZ)),
\end{align}
which implies that for any integer $p \geq 2$,
\begin{align}
  &\expect[biggg]{\sum_{\calZ \subset \pppentire \cap Q_{z}^{(2Lv_{\lilsubseq{s}{l}})}} 
  \ab\big(1 - \isolated{\calZ}{\pppentire \cap Q_{z}^{(3Lv_{\lilsubseq{s}{l}})}}{v_{\lilsubseq{s}{l}} t}) \zeroed{f}(H(v_{n}^{-1} \calZ))^p } \\
  &\quad \leq \frac{1}{k!} \int_{\ab\big(Q_{z}^{(2Lv_{\lilsubseq{s}{l}})})^{k}} \zeroed{f}(H(v_n^{-1}\bfx))^p \expect[big]{1 - \isolated{\bfx}{\pppentire \cap Q_{z}^{(3Lv_{\lilsubseq{s}{l}})}}{v_{\lilsubseq{s}{l}} t}} \,d\bfx. \label{eq:lil.upper.rad.p.moment}
\end{align}
Since
\begin{align}
  \expect[big]{1 - \isolated{\bfx}{\pppentire \cap Q_{z}^{(3Lv_{\lilsubseq{s}{l}})}}{v_{\lilsubseq{s}{l}} t}}
  = 1 - \exp\ab\big(- \lebmeas(Q_{z}^{(3Lv_{\lilsubseq{s}{l}})} \cap \ball{\bfx}{v_{\lilsubseq{s}{l}} t}))
  \leq C v_{\lilsubseq{s}{l}}^{d},
\end{align}
we can bound \eqref{eq:lil.upper.rad.p.moment} by $p! C C_f v_{\lilsubseq{s}{l}}^{d(k+1)}$,
using a calculation similar to that in \eqref{eq:lil.p.moment.common}.
Therefore, by applying \cref{prop:exp.moment.common} to \eqref{eq:lil.upper.rad.p.moment},
we find that
\begin{align}
  \probab[big]{ Z_{n,f}^{(3)} > R}
  \leq e^{-\lambda R} \exp\ab\bigg(C C_f \card{I_{n}} v_{\lilsubseq{s}{l}}^{d(k+1)} \sum_{p=2}^{\infty} (C\lambda)^p) 
  \leq e^{-\lambda R} \exp\ab\Big(C C_f \lilsubseq{s}{l} r_{\lilsubseq{s}{l}}^d  \ldpspeed[\lilsubseq{s}{l}] \frac{C^2 \lambda^2}{1 - C\lambda})
\end{align}
for all $0 < \lambda< C^{-1}$.
Optimizing with respect to $\lambda$,
we obtain \eqref{eq:lil.upper.z3.tail.max}.
\end{proof}

\begin{proof}[Proof of \cref{lemma:lil.upper.z4}]
\textit{Step} 1.
To evaluate the sum in $Z_{n,f}^{(4)}$ over an index set independent of $n$,
we make the following observation.
For each $z \in \integers^d$, define
\begin{align}
  Y_{n, z} \coloneqq \condexpect[Big]{\Delta_{\infty,z,f}^{(v_{\lilsubseq{s}{l}}, v_{n})} - \Delta_{\infty,z,f}^{(v_{\lilsubseq{s}{l}}, v_{\lilsubseq{s}{l}})}}{\calF_z}, \label{eq:lil.upper.z4.y}
\end{align}
and for $j = \lilsubseq{s}{l}, \dots, \lilsubseq{s}{l+1}$,
let
\begin{align}
  V_{j} \coloneqq \max_{\lilsubseq{s}{l} \leq n \leq \lilsubseq{s}{l+1}} \sum_{z \in I_{j}} Y_{n, z}.
\end{align}
We claim that the process $\{V_{j}\}_{\lilsubseq{s}{l} \leq j \leq \lilsubseq{s}{l+1}}$ forms a submartingale with respect to the filtration \eqref{eq:lil.filtration}.
Indeed, since $\condexpect[none]{Y_{n, z}}{\calG_{j}} = 0$ for any $z \in I_{j+1} \setminus I_j$,
it follows that for all $j = \lilsubseq{s}{l}, \dots, \lilsubseq{s}{l+1}-1$
and all $n = \lilsubseq{s}{l}, \dots, \lilsubseq{s}{l+1}$,
\begin{align}
  \condexpect[big]{V_{j+1}}{\calG_j}
  \geq \condexpect[Bigg]{\sum_{z \in I_{j+1}} Y_{n, z} }{\calG_{j}}
  = \sum_{z \in I_{j}} Y_{n, z}.
\end{align}
Taking the maximum over $n$ on both sides,
we obtain $\condexpect[none]{V_{j+1}}{\calG_j} \geq V_{j}$.
Consequently, by Doob's inequality, for any $\lambda > 0$, we obtain
\begin{align}
  \probab[Big]{\max_{\lilsubseq{s}{l} \leq n \leq \lilsubseq{s}{l+1}} Z_{n,f}^{(4)} > R}
  \leq \probab[Big]{\max_{\lilsubseq{s}{l} \leq j \leq \lilsubseq{s}{l+1}} V_{j} > R}
  % &\leq e^{-\lambda R} \expect[bigg]{\max_{\lilsubseq{s}{l} \leq j \leq \lilsubseq{s}{l+1}} \exp\ab\big(\lambda W_{j})}
  \leq e^{-\lambda R} \expect[Big]{\exp\ab\big(\lambda V_{\lilsubseq{s}{l+1}})}. \label{eq:lil.upper.h.diff.bound}
\end{align}

\textit{Step} 2.
We next evaluate \eqref{eq:lil.upper.h.diff.bound}.
Let $B \coloneqq \roundup{12 \sqrt{d}L/t}$,
and for each $w \in \{0, 1, \dots, B-1\}^{d}$,
define $J_{w} \coloneqq I_{\lilsubseq{s}{l+1}} \cap (B \integers^d + w)$ and
\begin{align}
  U_{w}
  \coloneqq \max_{\lilsubseq{s}{l} \leq n \leq \lilsubseq{s}{l+1}} \sum_{z \in J_w} Y_{n, z}.
\end{align}
Then, by H\"{o}lder's inequality, we have
\begin{align}
  \log \expect[Big]{\exp\ab\big(\lambda V_{\lilsubseq{s}{l+1}})}
  &\leq B^{-d} \sum_{w \in \{0, 1, \dots, B-1\}^{d}} \log \expect[Big]{\exp\ab\big(B^d \lambda U_{w})}.  \label{eq:lil.upper.v.exp.moment}
\end{align}
For any fixed $n$ in the range $\lilsubseq{s}{l} \leq n \leq \lilsubseq{s}{l+1}$,
the random variables $\{Y_{n, z}\}_{z \in J_{w}}$ are independent and identically distributed.
Furthermore, by an argument analogous to that in \cref{prop:exp.eq.xi.kappa},
we have $\abs{Y_{n, z}} \leq C \supnorm{f}$ for some constant $C > 0$ independent of $s, n, z$, and $f$.
Therefore, applying Bousquet's inequality (\cref{prop:bousquet.ineq}) yields
\begin{align}
  \log \expect[Big]{\exp\ab\big(B^d \lambda U_{w})}
  &\leq B^d\expect[big]{U_{w}}\lambda
   + A_{w}\ab\big(e^{B^d C\supnorm{f}\lambda} - B^d C\supnorm{f} \lambda - 1), \label{eq:lil.upper.u.exp.moment}
\end{align}
where
\begin{align}
  A_{w} \coloneqq 2 (C \supnorm{f})^{-1} \expect[big]{U_{w}} + (C \supnorm{f})^{-2} \max_{\lilsubseq{s}{l} \leq n \leq \lilsubseq{s}{l+1}} \sum_{z \in J_{w}} \expect[none]{Y_{n, z}^2}.
\end{align}

\textit{Step} 3.
We evaluate $A_w$.
Under condition (H6),
since $f$ is continuously differentiable,
for $\lilsubseq{s}{l} \leq n \leq \lilsubseq{s}{l+1}$, we have
\begin{align}
  \expect[none]{Y_{n, z}^2}
  &\leq C v_{\lilsubseq{s}{l}}^{dk} \int_{(\reals^d)^{k-1}} \abs{\zeroed{f}(H(\bfzero_d, \bfy)) - \zeroed{f}(H(\bfzero_d, v_n v_{\lilsubseq{s}{l}}^{-1} \bfy))}^2 \,d\bfy\\
  &\leq C v_{\lilsubseq{s}{l}}^{dk} \sum_{i=1}^m \biggl( \suprdnorm{\nabla f}^2 \int_{(\reals^d)^{k-1}} \abs{h^{(i)}(\bfzero_d, \bfy) - h^{(i)}(\bfzero_d, v_n v_{\lilsubseq{s}{l}}^{-1} \bfy)}^2 \,d\bfy  \\
  &\quad + \supnorm{f}^2 \lebmeas \ab\big(\{\bfy \in (\reals^{d})^{k-1} : h^{(i)}(\bfzero_d, \bfy) = 0 \text{ and } h^{(i)}(\bfzero_d, v_n v_{\lilsubseq{s}{l}}^{-1}\bfy) \neq 0\} )  \\
  &\quad + \supnorm{f}^2 \lebmeas \ab\big(\{\bfy \in (\reals^{d})^{k-1} : h^{(i)}(\bfzero_d, \bfy) \neq 0 \text{ and } h^{(i)}(\bfzero_d, v_n v_{\lilsubseq{s}{l}}^{-1}\bfy) = 0 \}) \biggr)\\
  &\leq C v_{\lilsubseq{s}{l}}^{dk} \ab\big(\suprdnorm{\nabla f}^2 + \supnorm{f}^2) (1 - v_{\lilsubseq{s}{l+1}} / v_{\lilsubseq{s}{l}}). \label{eq:lil.upper.z4.y.expect}
\end{align}

Furthermore, we show that for sufficiently large $l$ and all $w \in \{0, 1, \dots, B-1\}^{d}$,
\begin{align}
  \expect[big]{U_{w}} \leq C (\suprdnorm{\nabla f} + \supnorm{f}) \ldpspeed[\lilsubseq{s}{l}]^{1/2}. \label{eq:lil.upper.z4.u.expect}
\end{align}
To apply \cref{prop:chaining.modify},
for any pair of indices satisfying $\lilsubseq{s}{l} \leq n_1, n_2 \leq \lilsubseq{s}{l+1}$,
we estimate the exponential moment of
\begin{align}
  Y_{n_1, z} - Y_{n_2, z}
  = \condexpect[Big]{\Delta_{\infty,z,f}^{(v_{\lilsubseq{s}{l}}, v_{n_1})} - \Delta_{\infty,z,f}^{(v_{\lilsubseq{s}{l}}, v_{n_2})}}{\calF_z}.
\end{align}
Condition \eqref{eq:p.moment.boundness} is verified analogously to \eqref{eq:lil.upper.z4.y.expect} as follows:
\begin{align}
  &\expect[Bigg]{\sum_{\calZ \subset \pppentire \cap Q_{z}^{(2Lv_{\lilsubseq{s}{l}})}} \abs{\zeroed{f}(H(v_{n_1}^{-1}\calZ)) - \zeroed{f}(H(v_{n_2}^{-1}\calZ))}^p} \\
  &\quad \leq C^p v_{\lilsubseq{s}{l}}^{dk} \int_{(\reals^d)^{k-1}} \abs{\zeroed{f}(H(\bfzero_d, \bfy)) - \zeroed{f}(H(\bfzero_d, v_{n_1 \lor n_2} v_{n_1 \land n_2}^{-1} \bfy))}^p \,d\bfy \\
  &\quad \leq C^p v_{\lilsubseq{s}{l}}^{dk} (\suprdnorm{\nabla f}^{p} + \supnorm{f}^p) (1 - v_{n_1\lor n_2} / v_{n_1 \land n_2}) \\
  &\quad \leq C^p v_{\lilsubseq{s}{l}}^{dk} (\suprdnorm{\nabla f} + \supnorm{f})^p (1 - v_{n_1\lor n_2} / v_{n_1 \land n_2}). \label{eq:lil.upper.z4.p.moment}
\end{align}
Thus, applying \cref{prop:exp.moment.common}, we obtain
\begin{align}
  &\log \expect[Bigg]{\exp\ab\bigg(\lambda\ab\bigg(\sum_{z \in J_w} Y_{n_{1}, z} - \sum_{z \in J_w} Y_{n_{2}, z}))} \\
  &\leq \card{J_w}  v_{\lilsubseq{s}{l}}^{dk} (1 - v_{n_1 \lor n_2} / v_{n_1 \land n_2}) \sum_{p=2}^{\infty} \frac{\ab\big(C (\suprdnorm{\nabla f} + \supnorm{f})\lambda)^p}{p!}\\
  &\leq \ldpspeed[\lilsubseq{s}{l}] (1 - v_{n_1 \lor n_2} / v_{n_1 \land n_2}) \frac{\ab\big(C(\suprdnorm{\nabla f} + \supnorm{f}))^{2} \lambda^2}{2\ab\big(1 - C(\suprdnorm{\nabla f} + \supnorm{f}) \lambda)},
\end{align}
where the last inequality holds for any $0 < \lambda < (C(\suprdnorm{\nabla f} + \supnorm{f}))^{-1}$.

We now define a metric $d_l$ on $\calT_l \coloneqq\{\lilsubseq{s}{l}, \dots, \lilsubseq{s}{l+1}\}$ by
\begin{align}
  d_l(n_1, n_2) \coloneqq (1 - v_{n_1 \lor n_2}/ v_{n_1 \land n_2})^{1/2}, \quad \lilsubseq{s}{l} \leq n_1,  n_2 \leq \lilsubseq{s}{l+1}.
\end{align}
Applying \cref{prop:chaining.modify}, we obtain
\begin{align}
  \expect[big]{U_{w}}
  &\leq 12 \ab\big(C^2 (\suprdnorm{\nabla f} + \supnorm{f})^2 \ldpspeed[\lilsubseq{s}{l}])^{1/2} \int_0^{\delta_l/2} \sqrt{\log N(u, \calT_l)} \,du  \\
  &\quad + 2 C(\suprdnorm{\nabla f} + \supnorm{f}) \sum_{j=1}^{D_l} \log N(2^{-j} \delta_l, \calT_l),
\end{align}
where $N(u, \calT_l)$ denotes the $u$-packing number of $\calT_l$,
$D_l$ is a positive integer satisfying $D_l \leq \roundup{\log_2 (\delta_l \delta_l'^{-1})}$,
and $\delta_l, \delta_l'$ are given by
\begin{align}
  \delta_l
  &\coloneqq \max_{\lilsubseq{s}{l} \leq n_1 < n_2 \leq \lilsubseq{s}{l+1}} d_l(n_1, n_2)
  = (1 - v_{\lilsubseq{s}{l+1}} / v_{\lilsubseq{s}{l}})^{1/2}, \\
  \delta_l'
  &\coloneqq \min_{\lilsubseq{s}{l} \leq n_1 < n_2 \leq \lilsubseq{s}{l+1}} d_l(n_1, n_2)
  = (1 - v_{\lilsubseq{s}{l+1}}/ v_{\lilsubseq{s}{l+1}-1})^{1/2}.
\end{align}
Using the trivial bound $N(u, \calT_l) \leq \card{\calT_l} = \lilsubseq{s}{l+1} - \lilsubseq{s}{l} + 1$,
together with the asymptotic relations as $l \to \infty$,
\begin{align}
  \int_0^{\delta_l/2} \sqrt{\log N(u, \calT_l)} \,du &= \lorder(1), \label{eq:packing.num.integral.const}\\
  D_l = \roundup{\log_2 (\delta_l \delta_l'^{-1})} &= \lorder(l), \label{eq:chaining.stop.num.l}
\end{align}
which will be shown in Step 4,
we arrive at
\begin{align}
  \expect[big]{U_{w}}
  &\leq C (\suprdnorm{\nabla f} + \supnorm{f}) \ab\big(\ldpspeed[\lilsubseq{s}{l}]^{1/2} 
  + l \log (\lilsubseq{s}{l+1} - \lilsubseq{s}{l} + 1) )\\
  &\leq C (\suprdnorm{\nabla f} + \supnorm{f}) \ldpspeed[\lilsubseq{s}{l}]^{1/2}.
\end{align}

\textit{Step} 4.
We show \eqref{eq:packing.num.integral.const} and \eqref{eq:chaining.stop.num.l}.
To this end, let $a = N(u, \calT_l)$.
Then, we can choose a sequence of indices $n_1 < \dots < n_a$ in $\calT_l$ such that $d_l(n_i, n_{i+1}) > u$ for each $i$.
Since this implies $v_{n_{i+1}}/v_{n_i} < 1 - u^2$, we have
\begin{align}
  \frac{v_{n_2}}{v_{n_1}} \cdots \frac{v_{n_a}}{v_{n_{a-1}}} \leq (1 - u^2)^{a-1},
\end{align}
which gives $v_{n_{a}}/v_{n_1} \leq (1- u^2)^{a-1}$.
Consequently, we obtain $\delta_l \geq (1 - v_{n_a}/v_{n_1})^{1/2} \geq (1 - (1 - u^2)^{a-1})^{1/2}$,
which yields the bound
\begin{align}
  N(u, \calT_l) = a
  \leq 1 + \frac{\log(1 - \delta_l^2)}{\log(1 - u^2)}.
\end{align}
Thus, we have
\begin{align}
  \int_{0}^{\delta_l/2} \sqrt{\log N(u, \calT_l)} \,du
  \leq \int_{0}^{\delta_l/2} \ab\bigg(\log \ab\Big(1 + \frac{\log(1 - \delta_l^2)}{\log(1 - u^2)}))^{1/2} \,du.
\end{align}
Furthermore, as $l \to \infty$,
we have $\delta_l = (1 - v_{\lilsubseq{s}{l+1}}/v_{\lilsubseq{s}{l}})^{1/2} \to (1 - s^{-(\alpha-1)/d(k -\alpha(k-1))})^{1/2} \leq (1 - 2^{-1})^{1/2}$.
Therefore, the integral converges to a finite constant,
establishing \eqref{eq:packing.num.integral.const}.
Furthermore, using the inequality $1 - (1 - x)^{\beta} \ge \beta x$ for $0 < x, \beta < 1$,
we have
\begin{align}
  {\delta_l'}^2
  % = 1 - v_{\lilsubseq{s}{l+1}}/ v_{\lilsubseq{s}{l+1}-1}
  = 1 - \ab\Big(1 - \frac{1}{\lilsubseq{s}{l+1}})^{(\alpha-1)/d}
  \ge \frac{\alpha-1}{d}\frac{1}{\lilsubseq{s}{l+1}},
\end{align}
which implies $D_l = \roundup{\log_2 (\delta_l \delta_l'^{-1})} = \lorder(\log\lilsubseq{s}{l+1}) = \lorder(l)$ as $l \to \infty$.

\textit{Step} 5.
By \eqref{eq:lil.upper.z4.y.expect}, we have
\begin{align}
  \max_{\lilsubseq{s}{l} \leq n \leq \lilsubseq{s}{l+1}} \sum_{z \in J_{w}} \expect[none]{Y_{n, z}^2}
  \leq C \card{J_{w}} v_{\lilsubseq{s}{l}}^{dk} (\suprdnorm{\nabla f}^2 + \supnorm{f}^2) (1 - v_{\lilsubseq{s}{l+1}} / v_{\lilsubseq{s}{l}}) \label{eq:lil.upper.z4.var.bound}
\end{align}
and using \eqref{eq:lil.upper.z4.u.expect},
we can bound
\begin{align}
  &\sum_{w \in \{0, 1, \dots, B-1\}^{d}} A_{w} \\
  &\quad \leq 2 (C \supnorm{f})^{-1} B^d C (\suprdnorm{\nabla f} + \supnorm{f})  \ldpspeed[\lilsubseq{s}{l}]^{1/2}\\
   &\qquad + (C \supnorm{f})^{-2} C \card{I_{\lilsubseq{s}{l+1}}} (\suprdnorm{\nabla f}^2 + \supnorm{f}^2) v_{\lilsubseq{s}{l}}^{dk} (1 - v_{\lilsubseq{s}{l+1}} / v_{\lilsubseq{s}{l}}) \\
  &\quad \leq C(\supnorm{f}^{-1}\suprdnorm{\nabla f} + 1) \ldpspeed[\lilsubseq{s}{l}]^{1/2} + C (\supnorm{f}^{-2}\suprdnorm{\nabla f}^2 + 1) (1 - v_{\lilsubseq{s}{l+1}}/v_{\lilsubseq{s}{l}}) \ldpspeed[\lilsubseq{s}{l}].
\end{align}
Consequently, for sufficiently large $l$,
since $\ldpspeed[\lilsubseq{s}{l}]^{1/2} \leq C (1 - v_{\lilsubseq{s}{l+1}}/v_{\lilsubseq{s}{l}}) \ldpspeed[\lilsubseq{s}{l}]$,
the right-hand side of \eqref{eq:lil.upper.v.exp.moment} is bounded by
\begin{align}
  &\sum_{w \in \{0, 1, \dots, B-1\}^{d}} \expect[big]{U_{w}}\lambda
   + \sum_{w \in \{0, 1, \dots, B-1\}^{d}} A_{w} \ab\big(e^{B^d C\supnorm{f}\lambda} - B^d C\supnorm{f} \lambda - 1) \\
  &\quad \leq C (\suprdnorm{\nabla f} + \supnorm{f}) \ldpspeed[\lilsubseq{s}{l}]^{1/2} \lambda \\
   &\qquad + C \ab\big(\supnorm{f}^{-1}\suprdnorm{\nabla f} + \supnorm{f}^{-2}\suprdnorm{\nabla f}^2 + 2)  \\
   &\quad \qquad \times (1 - v_{\lilsubseq{s}{l+1}}/v_{\lilsubseq{s}{l}}) \ldpspeed[\lilsubseq{s}{l}]\ab\big(e^{B^d C\supnorm{f}\lambda} - B^d C\supnorm{f} \lambda - 1) \\
  &\quad \leq C (\suprdnorm{\nabla f} + \supnorm{f}) \ldpspeed[\lilsubseq{s}{l}]^{1/2} \lambda
   + C C_f' (1 - v_{\lilsubseq{s}{l+1}}/v_{\lilsubseq{s}{l}}) \ldpspeed[\lilsubseq{s}{l}] \frac{C^2 \lambda^2}{ 1 - C\lambda},
\end{align}
where the last inequality holds for $0 < \lambda < C^{-1}$.
Combining this with \eqref{eq:lil.upper.h.diff.bound} and choosing the optimal $\lambda$,
we obtain
\begin{align}
  &\probab[Big]{\max_{\lilsubseq{s}{l} \leq n \leq \lilsubseq{s}{l+1}} Z_{n,f}^{(4)} > R} \\
  &\quad \leq \exp\ab\bigg(- C C_f' (1 - v_{\lilsubseq{s}{l+1}}/v_{\lilsubseq{s}{l}}) \ldpspeed[\lilsubseq{s}{l}] \tailfunc\ab\bigg(\frac{R - C (\suprdnorm{\nabla f} + \supnorm{f}) \ldpspeed[\lilsubseq{s}{l}]^{1/2}}{C^2 C_f'  (1 - v_{\lilsubseq{s}{l+1}}/v_{\lilsubseq{s}{l}}) \ldpspeed[\lilsubseq{s}{l}] }))
\end{align}
for all $R > C (\suprdnorm{\nabla f} + \supnorm{f}) \ldpspeed[\lilsubseq{s}{l}]^{1/2}$.
Consequently, for $R > 2C (\suprdnorm{\nabla f} + \supnorm{f}) \ldpspeed[\lilsubseq{s}{l}]^{1/2}$,
we have $R - C (\suprdnorm{\nabla f} + \supnorm{f}) \ldpspeed[\lilsubseq{s}{l}]^{1/2} > R/2$,
which yields \eqref{eq:lil.upper.z4.tail.max}.
\end{proof}

\begin{remark}
\label{rem:lil.upper.z4.bound.f.boundness}
The boundedness of $f$ is required only for \eqref{eq:lil.upper.u.exp.moment},
and not for \eqref{eq:lil.upper.z4.y.expect} and \eqref{eq:lil.upper.z4.u.expect}.
Thus, \eqref{eq:lil.upper.z4.var.bound} and \eqref{eq:lil.upper.z4.u.expect} hold for $f = \iprod{\bfa}{\cdot}$ with $\bfa \in \reals^m$,
owing to $\norm{\nabla f}_{\reals^m} = \norm{\bfa}_{\reals^m} < \infty$.
\end{remark}

\begin{proof}[Proof of \cref{lemma:lil.upper.z4.ei}]
Unlike in \cref{lemma:lil.upper.z4}, the function $h^{(i)}$ is not necessarily bounded,
so Bousquet's concentration inequality is not applicable.
Therefore, we apply \cref{prop:adamczak.ineq} instead.

Let $Y_{n, z}$ be as defined in \eqref{eq:lil.upper.z4.y}.
Set $B \coloneqq \roundup{12 \sqrt{d}L/t}$.
For $w \in \{0, 1, \dots, B-1\}^{d}$ and $\lilsubseq{s}{l} \leq j \leq \lilsubseq{s}{l+1}$,
we define $I_{j, w} \coloneqq I_{j} \cap (B \integers^d + w)$ and
\begin{align}
  V_{j, w}
  \coloneqq \max_{\lilsubseq{s}{l} \leq n \leq \lilsubseq{s}{l+1}} \sum_{z \in I_{j, w}} Y_{n, z}.
\end{align}
By the union bound, we have
\begin{align}
  \probab[Big]{\max_{\lilsubseq{s}{l} \leq n \leq \lilsubseq{s}{l+1}} Z_{n,f}^{(4)} > R}
  &\leq \sum_{w \in \{0, 1, \dots, B-1\}^{d}} \probab[bigg]{\max_{\lilsubseq{s}{l} \leq n \leq \lilsubseq{s}{l+1}} \sum_{z \in I_{n, w}} Y_{n, z} > \frac{R}{B^d}} \\
  &\leq \sum_{w \in \{0, 1, \dots, B-1\}^{d}} \probab[Big]{\max_{\lilsubseq{s}{l} \leq j \leq \lilsubseq{s}{l+1}} V_{j, w} > \frac{R}{B^d}}. \label{eq:lil.upper.z4.ei.bound.V.w}
\end{align}

To apply \cref{prop:adamczak.ineq} to $V_{j, w}$,
we now estimate the variance of $Y_{n, z}$,
the exponential moments of $Y_{n, z}$,
and the $\psi_1$-Orlicz norm (defined by \eqref{eq:orlicz.norm.def}) of $\max_{z, n} \abs{Y_{n, z}}$.
In the following, $C_1 > 0$ denotes a generic constant independent of $s, l, i, w$,
which may change from line to line.
As noted in \cref{rem:lil.upper.z4.bound.f.boundness},
\eqref{eq:lil.upper.z4.var.bound} holds (recall that $\norm{\nabla f}_{\reals^m} = \norm{\bfe_i}_{\reals^m} = 1$).
Hence, we obtain
\begin{align}
  \max_{\lilsubseq{s}{l} \leq n \leq \lilsubseq{s}{l+1}} \sum_{z \in I_{\lilsubseq{s}{l+1}, w}} \expect[none]{Y_{n, z}^2}
  &\leq C_1 \card{I_{\lilsubseq{s}{l+1}, w}} v_{\lilsubseq{s}{l}}^{dk} (1 - v_{\lilsubseq{s}{l+1}} / v_{\lilsubseq{s}{l}})\\
  &\leq C_1 (1 - v_{\lilsubseq{s}{l+1}} / v_{\lilsubseq{s}{l}}) \ldpspeed[\lilsubseq{s}{l}].
\end{align}
Similarly, since \eqref{eq:lil.upper.z4.u.expect} also holds,
we have $\expect[none]{V_{\lilsubseq{s}{l+1}, w}} \leq C_1 \ldpspeed[\lilsubseq{s}{l}]^{1/2}$ for all $w \in \{0, \dots, B-1\}^d$.
Furthermore, using \eqref{eq:lil.upper.z4.p.moment} and a calculation similar to that leading to \eqref{eq:exp.moment.common.p.moment},
we deduce that for all $\lambda \in \reals$,
\begin{align}
  \expect[none]{\exp(\lambda Y_{n, z})}
  \leq 1 + v_{\lilsubseq{s}{l}}^{dk} (1 - v_{\lilsubseq{s}{l+1}} / v_{\lilsubseq{s}{l}}) (e^{C_1\abs{\lambda}} - C_1\abs{\lambda} - 1).
\end{align}
Thus, since $v_{\lilsubseq{s}{l}}^{dk} < 1/4$ for sufficiently large $l$,
\cref{prop:orliczc.norm} (i) yields $\orlicznorm{Y_{n, z}} \leq C_1$
for all $\lilsubseq{s}{l} \leq n \leq \lilsubseq{s}{l+1} $ and $z \in I_{\lilsubseq{s}{l+1}, w} $,
where $\orlicznorm{\cdot}$ denotes the $\psi_1$-Orlicz norm defined by \eqref{eq:orlicz.norm.def}.
Then, by \cref{prop:orliczc.norm} (iii),
we find that
\begin{align}
  \orlicznorm{\max_{\lilsubseq{s}{l} \leq n \leq \lilsubseq{s}{l+1}} \abs{Y_{n, z}}}
  &\leq \orlicznorm{\max_{z \in I_{\lilsubseq{s}{l+1}, w}} \max_{\lilsubseq{s}{l} \leq n \leq \lilsubseq{s}{l+1}} \abs{Y_{n, z}}} \\
  &\leq C_1 \ab\Big(1 + \log_2\ab\big((\lilsubseq{s}{l+1} - \lilsubseq{s}{l} + 1)\card{I_{\lilsubseq{s}{l+1}, w}}) )
  \leq C_1 l < \infty.
\end{align}

Consequently, applying \cref{prop:adamczak.ineq} with $\eta = \delta = 1/2$,
for $R > 2 B^d C_1 \ldpspeed[\lilsubseq{s}{l}]^{1/2} $,
we have $R/B^d - 3\expect[none]{V_{\lilsubseq{s}{l+1}, w}}/2 \geq R/4B^d $
for all $w \in \{0, \dots, B-1\}^d$,
and \eqref{eq:lil.upper.z4.ei.bound.V.w} can be bounded by
\begin{align}
  % &B^d\exp\ab\Big(- \frac{(\frac{R}{B^d} - \frac{3}{2}C_2 \ldpspeed[\lilsubseq{s}{l}]^{1/2} )^2}{3(C_1 \ldpspeed[\lilsubseq{s}{l}] (1 - v_{\lilsubseq{s}{l+1}} / v_{\lilsubseq{s}{l}}))^2})
  % + 3 B^d \exp\ab\bigg(- \frac{\frac{R}{B^d} - \frac{3}{2} C_2 \ldpspeed[\lilsubseq{s}{l}]^{1/2} }{C(\frac{1}{2}, \frac{1}{2}) C_3 l}) \\
  & B^d\exp\ab\Big(- \frac{(R/4B^d)^2}{3C_1 (1 - v_{\lilsubseq{s}{l+1}} / v_{\lilsubseq{s}{l}}) \ldpspeed[\lilsubseq{s}{l}]})
  + 3 B^d \exp\ab\bigg(- \frac{R/4B^d}{C(\frac{1}{2}, \frac{1}{2}) C_1 l}) \\
  &\quad = B^d\exp\ab\Big(- \frac{R^2}{48 B^{2d} C_1 (1 - v_{\lilsubseq{s}{l+1}} / v_{\lilsubseq{s}{l}}) \ldpspeed[\lilsubseq{s}{l}]})
  + 3 B^d \exp\ab\bigg(- \frac{R}{4B^d C(\frac{1}{2}, \frac{1}{2}) C_1 l}),
\end{align}
where $C(1/2, 1/2)$ is the constant provided in \cref{prop:adamczak.ineq}.
Finally, by taking $C$ as
\begin{align}
  C \coloneqq \max\Bab[Big]{2 B^d C_1, 3B^d, 48B^{2d}C_1, 4 B^d C\ab\Big(\frac{1}{2}, \frac{1}{2})C_1},
\end{align}
we establish \eqref{eq:lil.upper.z4.ei.tail.max}.
\end{proof}

\subsection{Proof of \eqref{eq:lil.lower.meas} and \eqref{eq:lil.lower.vec}}

As the proofs for \eqref{eq:lil.lower.meas} and \eqref{eq:lil.lower.vec} are essentially identical,
we present the proof of \eqref{eq:lil.lower.meas} only.

\begin{proof}[Proof of \eqref{eq:lil.lower.meas}]
\textit{Step} 1.
Since $\signedmeas(E)$ is separable with respect to the weak topology,
it is also separable with respect to the metric $\distancemeas$.
Thus, to establish \eqref{eq:lil.lower.meas},
it is sufficient to show that for all $\theta \in \accptmeas$, $g_1, \dots, g_M \in \bddmble(E)$ and $\epsilon > 0$,
there exists $s > 1$ such that
% \begin{align}
%   \liminf_{n \to \infty} \max_{1\leq i \leq M} \abs{\pairing[big]{g_i}{\mdpfactor^{-1} \centered{\xi_{n}} - \pairing{g_i}{\theta} }} = 0 \quad \text{a.s.} \label{eq:lil.lower.subseq}
% \end{align}
\begin{align}
  \liminf_{l \to \infty} \max_{1\leq i \leq M} \abs{\pairing[big]{g_i}{\mdpfactor[\lilsubseq{s}{l}]^{-1} \centered{\xi_{\lilsubseq{s}{l}}}} - \pairing{g_i}{\theta} } \leq \epsilon \quad \text{a.s.} \label{eq:lil.lower.subseq}
\end{align}
Indeed,
if we denote $\distancemeas(\cdot, \cdot)$ by $\distancemeas(\cdot, \cdot; \{f_q\}_{q \geq 1})$
to make the family of test functions $\{f_q\}_{q \geq 1}$ explicit,
we observe that for any positive integer $M \geq 1$,
\begin{align}
  \distancemeas(\mdpfactor^{-1} \centered{\xi_{n}}, \theta; \Bab{f_q}_{q \geq 1})
  &= \sum_{q=1}^M \frac{1}{2^q \supnorm{f_q}} \abs{\pairing{f_q}{\mdpfactor^{-1} \centered{\xi_{n}}} - \pairing{f_q}{\theta} }
   + \frac{1}{2^{M-1}} \distancemeas(\mdpfactor^{-1} \centered{\xi_{n}}, \theta; \Bab{f_{q+M}}_{q \geq 1})\\
  &\leq \sum_{q=1}^M \frac{1}{2^q \supnorm{f_q}} \abs{\pairing{f_q}{\mdpfactor^{-1} \centered{\xi_{n}}} - \pairing{f_q}{\theta} }
   + \frac{1}{2^{M-1}} \distancemeas(\mdpfactor^{-1} \centered{\xi_{n}}, \accptmeas; \Bab{f_{q+M}}_{q \geq 1}) \\
   &\quad + \frac{1}{2^{M-1}} \sup_{\nu \in \accptmeas} \distancemeas(\nu, \theta; \Bab{f_{q+M}}_{q \geq 1}).
\end{align}
By \cref{lem:lil.subseqence.cptness},
it follows that 
$\distancemeas(\mdpfactor[\lilsubseq{s}{l}]^{-1} \centered{\xi_{\lilsubseq{s}{l}}}, \accptmeas; \Bab{f_{q+M}}_{q \geq 1}) \to 0$ as $l \to \infty$ for all $s > 1$.
Furthermore, since
\begin{align}
  \distancemeas(\nu, \theta; \Bab{f_{q+M}}_{q \geq 1})
  \leq \sum_{q = 1}^{\infty} \frac{1}{2^q} \ab\big(\tvnorm{\nu} + \tvnorm{\theta})
  = \tvnorm{\nu} + \tvnorm{\theta},
\end{align}
where $\tvnorm{\cdot} $ denotes the total variation norm,
if \eqref{eq:lil.lower.subseq} holds,
then applying \eqref{eq:lil.lower.subseq} to $g_1 = f_1/\supnorm{f_1}, \dots, g_M = f_M/\supnorm{f_M}$ yields
\begin{align}
  \liminf_{l \to \infty} \distancemeas(\mdpfactor[\lilsubseq{s}{l}]^{-1} \centered{\xi_{\lilsubseq{s}{l}}}, \theta; \Bab{f_q}_{q \geq 1})
  \leq \epsilon + \frac{1}{2^{M-1}} \ab\Big(\sup_{\nu \in \accptmeas} \tvnorm{\nu} + \tvnorm{\theta}).
\end{align}
Since this inequality holds for all $M \geq 1$, we obtain \eqref{eq:lil.lower.meas}.

For $s > 1 $, let
\begin{align}
  W_{\lilsubseq{s}{l}}' \coloneqq \cube{0}{\lilsubseq{s}{l-1}^{1/d} + 2 v_{\lilsubseq{s}{l}} t}
\end{align}
and
\begin{align}
  \xi_{\lilsubseq{s}{l}}' \coloneqq \sum_{\calZ \subset \pppentire_{\lilsubseq{s}{l}} \setminus W_{\lilsubseq{s}{l}}'} \isolated{\calZ}{\pppentire_{\lilsubseq{s}{l}}}{v_{\lilsubseq{s}{l}} t} \indicator[none]{H(v_{\lilsubseq{s}{l}}^{-1} \calZ) \neq \bfzero_{m}} \diracdelta{H(v_{\lilsubseq{s}{l}}^{-1} \calZ)}. \label{eq:lil.lower.xiprime.def}
\end{align}
To prove \eqref{eq:lil.lower.subseq},
it is enough to show that for all $\epsilon > 0$,
there exists $s > 1$ such that
\begin{align}
  \liminf_{l \to \infty} \max_{1\leq i \leq M} \abs{\pairing[big]{g_i}{\mdpfactor[\lilsubseq{s}{l}]^{-1} \centered{\xi_{\lilsubseq{s}{l}}'}} - \pairing{g_i}{\theta} } \leq \frac{\epsilon}{2} \quad \text{a.s.} \label{eq:lil.lower.xidash.liminf}
\end{align}
and
\begin{align}
  \sum_{l = 1}^{\infty} \probab[Big]{\max_{1\leq i \leq M} \abs{\pairing[big]{g_i}{\mdpfactor[\lilsubseq{s}{l}]^{-1} \centered{\xi_{\lilsubseq{s}{l}}}} - \pairing[big]{g_i}{\mdpfactor[\lilsubseq{s}{l}]^{-1} \centered{\xi_{\lilsubseq{s}{l}}'}}} > \frac{\epsilon}{4}} < \infty. \label{eq:lil.lower.xierror.limsup}
\end{align}

\textit{Step} 2.
We first prove \eqref{eq:lil.lower.xidash.liminf}.
Define the events
\begin{align}
  \calE_l \coloneqq \Bab{\max_{1\leq i \leq M} \abs{\pairing[big]{g_i}{\mdpfactor[\lilsubseq{s}{l}]^{-1}\centered{\xi_{\lilsubseq{s}{l}}}} - \pairing{g_i}{\theta} } \geq \frac{\epsilon}{4}}, \quad
  \calE_l' \coloneqq \Bab{\max_{1\leq i \leq M} \abs{\pairing[big]{g_i}{\mdpfactor[\lilsubseq{s}{l}]^{-1}\centered{\xi_{\lilsubseq{s}{l}}'}} - \pairing{g_i}{\theta} } \geq \frac{\epsilon}{2}}.
\end{align}
By the definition of $W_{\lilsubseq{s}{l}}' $,
the indicator $\isolated{\calZ}{\pppentire_{\lilsubseq{s}{l}}}{v_{\lilsubseq{s}{l}} t}$ in \eqref{eq:lil.lower.xiprime.def}
is equal to $\isolated{\calZ}{\pppentire_{\lilsubseq{s}{l}} \setminus \pppentire_{\lilsubseq{s}{l-1}}}{v_{\lilsubseq{s}{l}} t}$,
which implies that $\{\xi_{\lilsubseq{s}{l}}'\}_{l \geq 1} $ are independent.
Therefore, by the second Borel--Cantelli lemma, it suffices to show that
\begin{align}
  \sum_{l=1}^{\infty} \probab[none]{\calE_l'^{c}} = \infty. \label{eq:lil.lower.event}
\end{align}
Note that
\begin{align}
  \probab[none]{\calE_l'^{c}}
  &\geq \probab[none]{\calE_l^{c}} - \probab[bigg]{\max_{1\leq i \leq M} \abs{\pairing[big]{g_i}{\mdpfactor[\lilsubseq{s}{l}]^{-1} \centered{\xi_{\lilsubseq{s}{l}}}} - \pairing[big]{g_i}{\mdpfactor[\lilsubseq{s}{l}]^{-1} \centered{\xi_{\lilsubseq{s}{l}}'}} } >  \frac{\epsilon}{4}}.
\end{align}
Since $\theta \in \accptmeas$,
we have
\begin{align}
  \inf \Bab{\Lambda(\nu) : \nu \in \signedmeas(E), \, \max_{1\leq i \leq M} \abs{\pairing{g_i}{\nu} - \pairing{g_i}{\theta} } < \frac{\epsilon}{4} } < 1.
\end{align}
Thus, by \cref{thm:mdp.measure}, there exists a constant $\beta < 1$ such that
\begin{align}
  \sum_{l=1}^{\infty} \probab[none]{\calE_l^{c}}
  \geq \sum_{l=1}^{\infty} \exp(- \beta \log \log \ldpspeed[\lilsubseq{s}{l}])
  = \sum_{l=1}^{\infty} (l \log s)^{- \beta}
  = \infty.
\end{align}
Combining this with \eqref{eq:lil.lower.xierror.limsup},
we obtain \eqref{eq:lil.lower.event}.

\textit{Step} 3.
Next, we prove \eqref{eq:lil.lower.xierror.limsup}.
To show \eqref{eq:lil.lower.xierror.limsup},
it suffices to show that for all $g \in \bddmble(E)$ and all $\epsilon > 0$,
\begin{align}
  \sum_{l = 1}^{\infty} \probab[Big]{\abs[big]{\pairing[big]{g}{ \centered{\xi_{\lilsubseq{s}{l}}}} - \pairing[big]{g}{ \centered{\xi_{\lilsubseq{s}{l}}'}}} > \epsilon \mdpfactor[\lilsubseq{s}{l}]} < \infty. \label{eq:lil.lower.xierror.limsup.f}
\end{align}
We can decompose the difference as $\pairing{g}{\xi_{\lilsubseq{s}{l}}} - \pairing{g}{\xi_{\lilsubseq{s}{l}}'} = Y_{l}^{(1)} + Y_{l}^{(2)} + Y_{l}^{(3)} $,
where
\begin{align}
  Y_{l}^{(1)} &\coloneqq \sum_{\calZ \subset \pppentire_{
  \lilsubseq{s}{l-1}}} \isolated{\calZ}{\pppentire_{\lilsubseq{s}{l-1}}}{v_{\lilsubseq{s}{l}}t} \zeroed{g}(H(v_{\lilsubseq{s}{l}}^{-1} \calZ)), \\
  Y_{l}^{(2)} &\coloneqq \sum_{\calZ \subset \pppentire_{
  \lilsubseq{s}{l-1}}} \ab\big(\isolated{\calZ}{\pppentire_{\lilsubseq{s}{l}}}{v_{\lilsubseq{s}{l}}t} - \isolated{\calZ}{\pppentire_{\lilsubseq{s}{l-1}}}{v_{\lilsubseq{s}{l}}t}) \zeroed{g}(H(v_{\lilsubseq{s}{l}}^{-1} \calZ)), \\
  Y_{l}^{(3)} &\coloneqq \sum_{\calZ \subset \pppentire_{
  \lilsubseq{s}{l}}} \isolated{\calZ}{\pppentire_{\lilsubseq{s}{l}}}{v_{\lilsubseq{s}{l}}t} \indicator[none]{\calZ \cap W'_{\lilsubseq{s}{l}} \neq \emptyset, \, \calZ \not\subset W_{\lilsubseq{s}{l-1}}} \zeroed{g}(H(v_{\lilsubseq{s}{l}}^{-1} \calZ)).
\end{align}
Then, to show \eqref{eq:lil.lower.xierror.limsup.f}, it is enough to show that for $j=1,2,3$,
\begin{align}
  \sum_{l=1}^\infty \probab[Big]{\abs[Big]{\centered{Y_{l}^{(j)}}} \geq \epsilon \mdpfactor[\lilsubseq{s}{l}]} < \infty. \label{eq:lil.lower.xierror.sum}
\end{align}

The bound for $Y_{l}^{(2)}$ follows by an argument similar to that used for $Z_n^{(3)} $, defined in \eqref{eq:lil.upper.z3.def}.
Similarly, the bound for $Y_{l}^{(3)}$ follows by an argument analogous to that used for $Z_n^{(1)} $, defined in \eqref{eq:lil.upper.z1.def}.
Therefore, we only need to show \eqref{eq:lil.lower.xierror.sum} for $Y_{l}^{(1)}$.
We choose $s > 1$ such that $s^{1/(k - \alpha(k-1))}$ is an integer,
which ensures that $\lilsubseq{s}{l} = s^{l/(k - \alpha(k-1))} $.
Since $v_{\lilsubseq{s}{l}} = s^{-(\alpha-1)/d(k - \alpha(k-1))} v_{\lilsubseq{s}{l-1}} $,
we let $t' \coloneqq s^{-(\alpha-1)/d(k - \alpha(k-1))} t $ and
$H'(\calY) \coloneqq H(s^{(\alpha-1)/d(k - \alpha(k-1))} \calY) $.
Let $\xi_n^{(t', H')}$ denote the point process obtained by replacing $t$ and $H$ in $\xi_n \eqqcolon \xi_n^{(t,H)}$ with $t'$ and $H'$, respectively.
Then, we can rewrite $Y_{l}^{(1)}$ as
\begin{align}
  Y_{l}^{(1)} = \sum_{\calZ \subset \pppentire_{
  \lilsubseq{s}{l-1}}} \isolated{\calZ}{\pppentire_{\lilsubseq{s}{l-1}}}{v_{\lilsubseq{s}{l-1}} t'} \zeroed{g}(H'(v_{\lilsubseq{s}{l-1}}^{-1} \calZ))
  = \pairing[big]{g}{\xi_{\lilsubseq{s}{l-1}}^{(t', H')}}.
\end{align}
Since $H'$ also satisfies conditions (H1)--(H5),
we can apply \cref{thm:mdp.measure} to the sequence $\{\mdpfactor^{-1} \centered{\xi_n^{(t', H')}} \}_{n \geq 1} $.
This yields
\begin{align}
  &\limsup_{l \to \infty} \frac{1}{\log\log \ldpspeed[\lilsubseq{s}{l}]} \log \probab[Big]{\abs[Big]{\centered{Y_{l}^{(1)}}} \geq \epsilon \mdpfactor[\lilsubseq{s}{l}]} \\
  &\quad = \limsup_{l \to \infty} \frac{1}{\log\log \ldpspeed[\lilsubseq{s}{l}]} \log \probab[Big]{\frac{1}{\mdpfactor[\lilsubseq{s}{l-1}]} \abs{\pairing[Big]{g}{\centered{\xi_{\lilsubseq{s}{l-1}}^{(t', H')}}}} \geq \epsilon \frac{\mdpfactor[\lilsubseq{s}{l}]}{\mdpfactor[\lilsubseq{s}{l-1}]} } \\
  &\quad \leq \limsup_{l \to \infty} \frac{1}{\log\log \rho_{\lilsubseq{s}{l-1}}} \log \probab[Big]{\frac{1}{\mdpfactor[\lilsubseq{s}{l-1}]} \abs{\pairing[Big]{g}{\centered{\xi_{\lilsubseq{s}{l-1}}^{(t', H')}}}} \geq \epsilon \frac{\sqrt{s}}{2} } \\
  &\quad \leq - \inf \Bab{\Lambda^{(H')}(\nu) : \nu \in \signedmeas(E), \, \abs{\pairing{g}{\nu}} \geq \epsilon \frac{\sqrt{s}}{2}},
\end{align}
where $\Lambda^{(H')} $ is the rate function for $\{\mdpfactor^{-1} \centered{\xi_n^{(t', H')}} \}_{n \geq 1} $ obtained by replacing $H $ in $\Lambda \eqqcolon \Lambda^{(H)}$ with $H'$.
Similarly, let $\tau^{(H')}$ be the measure obtained by replacing $H$ in $\tau \eqqcolon \tau^{(H)}$ with $H'$.
Then, we have $\tau^{(H')} = s^{-\alpha'} \tau^{(H)} $,
where $\alpha' \coloneqq (k-1)(\alpha - 1) / (k - \alpha(k-1))$,
and for any $\nu \in \signedmeas(E)$, the absolute continuity $\nu \abscont \tau^{(H)}$ is equivalent to $\nu \abscont \tau^{(H')}$.
Consequently, if $\nu \abscont \tau^{(H)}$,
\begin{align}
  \Lambda^{(H')}(\nu)
  = \frac{1}{2} \int_{E} \ab\Big(\frac{d \nu}{d \tau^{(H')}})^2 \,d\tau^{(H')}
  = \frac{1}{2} \int_{E} \ab\bigg(\frac{d \nu}{s^{-\alpha'} d \tau^{(H)}})^2 s^{-\alpha'} \,d\tau^{(H)}
  = s^{\alpha'} \Lambda^{(H)}(\nu).
\end{align}
Thus, we obtain
\begin{align}
  \inf \Bab{\Lambda^{(H')}(\nu) : \nu \in \signedmeas(E), \, \abs{\pairing{g}{\nu}} \geq \epsilon \frac{\sqrt{s}}{2}}
  = s^{\alpha'} \inf \Bab{\Lambda^{(H)}(\nu) : \nu \in \signedmeas(E), \, \abs{\pairing{g}{\nu}} \geq \epsilon \frac{\sqrt{s}}{2}}. \label{eq:lil.lower.rate.func.convert}
\end{align}
We now choose $s$ large enough so that
$\epsilon \sqrt{s}/2 > \sup \{\abs{\pairing{g}{\nu}} : \nu \in \signedmeas(E), \, \Lambda^{(H)}(\nu) \leq 1\}$.
Then, we can find a constant $\beta$ such that the infimum on the left-hand side of \eqref{eq:lil.lower.rate.func.convert} is strictly bounded from below by $s^{\alpha'} > \beta > 1$.
It follows that for all sufficiently large $l$,
\begin{align}
  \probab[Big]{\abs[Big]{\centered{Y_{l}^{(1)}}} \geq \epsilon \mdpfactor[\lilsubseq{s}{l}]} \leq \exp(- \beta \log \log s^{l}) = (l \log s)^{-\beta},
\end{align}
which yields \eqref{eq:lil.lower.xierror.sum}.
\end{proof}

\subsection{Proof of \eqref{eq:lil.equivalent.xi.kappa}, \eqref{eq:lil.equivalent.kappappp.kappabpp}, \eqref{eq:lil.equivalent.phixi.t}, and \eqref{eq:lil.equivalent.tppp.tbpp}}

In this subsection, we prove \eqref{eq:lil.equivalent.xi.kappa}, \eqref{eq:lil.equivalent.kappappp.kappabpp}, \eqref{eq:lil.equivalent.phixi.t}, and \eqref{eq:lil.equivalent.tppp.tbpp} by combining the estimates established above.

\begin{proof}[Proof of \eqref{eq:lil.equivalent.xi.kappa} and \eqref{eq:lil.equivalent.phixi.t}]
To show \eqref{eq:lil.equivalent.xi.kappa},
it is enough to show that for all $\delta > 0$,
\begin{align}
  \sum_{n=1}^{\infty} \probab[big]{\distancemeas\ab\big(\centered{\xi_n(\ppp_n)}, \centered{\kappa_n(\ppp_n)}) > \delta \mdpfactor} < \infty. \label{eq:xi.kappa.dist.summable}
\end{align}
By \cref{prop:exp.eq.xi.kappa},
we have for all $q \geq 1$ and $R > 0$,
\begin{align}
  \probab[big]{\abs[big]{\pairing[big]{f_q}{\centered{\xi_n(\ppp_n)}} - \pairing[big]{f_q}{\centered{\kappa_n(\ppp_n)}}} > R}
  \leq 2\exp\ab\Big(- e^2 \tau(E) C \nrnd \ldpspeed \tailfunc\ab\Big(\frac{R}{e^2 \tau(E) C^2 \nrnd \ldpspeed})).
\end{align}
Note that the constant $C_{f_q}$ given in \eqref{eq:xi.kappa.diff.tail} is uniformly bounded by $e^2\tau(E)$ for all $q \geq 1$,
since $\supnorm{f_q} = 1$.
Setting the variables in \cref{prop:lil.upper.tail.sum.finite} as
$Y_{n, q} = \abs[big]{\pairing[big]{f_q}{\centered{\xi_n(\ppp_n)}} - \pairing[big]{f_q}{\centered{\kappa_n(\ppp_n)}}}$
and
\begin{align}
  (A_n, B_n, C_n, \beta, a') = (\delta \mdpfactor, e^2 \tau(E) C \nrnd \ldpspeed, 2, 0, C),
\end{align}
we see that the assumption \eqref{eq:lil.upper.tail.sum.ass} is satisfied.
Thus, by \cref{prop:lil.upper.tail.sum.finite},
we obtain \eqref{eq:xi.kappa.dist.summable}.

We next prove \eqref{eq:lil.equivalent.phixi.t}.
Setting $f_i \coloneqq \iprod{\bfe_i}{\cdot}$ for $i=1,\dots,m$,
we have
\begin{align}
  \norm{\mdpfactor^{-1} \Phi(\centered{\xi_n(\ppp_n)}) - \mdpfactor^{-1} \centered{T_n(\ppp_n)}}_{\reals^m}
  \leq \sum_{i=1}^m \mdpfactor^{-1}\abs[Big]{\pairing[big]{f_i}{\centered{\xi_n(\ppp_n)}} - \pairing[big]{f_i}{\centered{\kappa_n(\ppp_n)}}}.
\end{align}
As in \eqref{eq:xi.kappa.dist.summable},
we have for all $\delta > 0$ and $i=1,\dots, m$,
\begin{align}
  \sum_{n=1}^{\infty} \probab[big]{\abs[big]{\pairing[big]{f_i}{\centered{\xi_n(\ppp_n)}} - \pairing[big]{f_i}{\centered{\kappa_n(\ppp_n)}}} > \delta\mdpfactor} < \infty,
\end{align}
which yields \eqref{eq:lil.equivalent.phixi.t}.
\end{proof}

\begin{proof}[Proof of \eqref{eq:lil.equivalent.kappappp.kappabpp} and \eqref{eq:lil.equivalent.tppp.tbpp}]
As the proof of \eqref{eq:lil.equivalent.tppp.tbpp} is completely analogous to that of \eqref{eq:lil.equivalent.kappappp.kappabpp},
we present only the proof of \eqref{eq:lil.equivalent.kappappp.kappabpp}.
The constant $C_{f_q}$ in \eqref{eq:kappa.f.projection.sum} is uniformly bounded by $m \tau(E)$ for $q \geq 1$,
because $\supnorm{f_q} = 1$.
Thus, by \eqref{eq:kappa.ppp.bpp.expect.bound},
we have
\begin{align}
  \lim_{n \to \infty} \sup_{q \geq 1} \abs[big]{\expect[none]{\pairing{f_q}{\kappa_n(\ppp_n)}} - \expect[none]{\pairing{f_q}{\kappa_n(\bpp_n)}}} = 0.
\end{align}
Therefore, to show \eqref{eq:lil.equivalent.kappappp.kappabpp},
it is enough to show that for all $\delta > 0$,
\begin{align}
  \sum_{n=1}^{\infty} \probab[bigg]{\sum_{q=1}^{\infty} \frac{1}{2^q} \abs[big]{\pairing{f_q}{\kappa_n(\ppp_n)} - \pairing{f_q}{\kappa_n(\bpp_n)}} > \delta \mdpfactor} < \infty.
\end{align}

Set $\epsilon_n = (\nrnd)^{(k-1)/4} \mdpfactor / \ldpspeed$ as in \eqref{eq:pp.bpp.epsilon},
$l_1 \coloneqq l \land n$, and $l_2 \coloneqq l \lor n$ for $\abs{n - l} \leq n \epsilon_n$.
Arguing as in \eqref{eq:ppp.bpp.prob.decomposition} and using \eqref{eq:ppp.num.prob.bound},
we obtain, for all sufficiently large $n$,
\begin{align}
  &\probab[bigg]{\sum_{q=1}^{\infty} \frac{1}{2^q} \abs{\pairing{f_q}{\kappa_n(\ppp_n)} - \pairing{f_q}{\kappa_n(\bpp_n)}} > \delta \mdpfactor} \\
  &\quad \leq 4 \exp(- n \epsilon_n^2/3) \\
  &\qquad
  + \max_{l: \abs{n-l} \leq n\epsilon_n} \probab[bigg]{\frac{1}{2^q}\sum_{q=1}^{\infty} \abs[big]{\pairing{f_q}{\kappa_n(\bpp_{l_1})} - \pairing{f_q}{\kappa_n(\bpp_{l_2})}} > \delta \mdpfactor, \, \bpp_{l_1} \subset \ppp_{2n, l_1}', \, \bpp_{l_2} \setminus \bpp_{l_1} \subset \ppp_{2n\epsilon_n, l_1, l_2}'}.
\end{align}
As in \eqref{eq:tail.summable.1} for $\beta = 0$,
we obtain
\begin{align}
  &\probab[bigg]{\sum_{q=1}^{\infty} \frac{1}{2^q} \abs[big]{\pairing{f_q}{\kappa_n(\bpp_{l_1})} - \pairing{f_q}{\kappa_n(\bpp_{l_2})}} > \delta \mdpfactor, \, \bpp_{l_1} \subset \ppp_{2n, l_1}', \, \bpp_{l_2} \setminus \bpp_{l_1} \subset \ppp_{2n\epsilon_n, l_1, l_2}'} \\
  &\quad \leq \sum_{q=1}^{\infty} \probab[Big]{\abs[big]{\pairing{f_q}{\kappa_n(\bpp_{l_1})} - \pairing{f_q}{\kappa_n(\bpp_{l_2})}} > \frac{\delta \mdpfactor q}{16}, \, \bpp_{l_1} \subset \ppp_{2n, l_1}', \, \bpp_{l_2} \setminus \bpp_{l_1} \subset \ppp_{2n\epsilon_n, l_1, l_2}'}. \label{eq:lil.bppl1.bppl2.bound}
\end{align}
Furthermore, since $\mdpfactor / (\epsilon_{n}^{k-j} \ldpspeed) \to \infty$ for $j=0, \dots, k-1$
and $\mdpfactor / (\epsilon_{n} \nrnd\ldpspeed) \to \infty$,
applying \eqref{eq:tail.bppl1.bppl2} shows that,
for all sufficiently large $n$,
\eqref{eq:lil.bppl1.bppl2.bound} is bounded by
\begin{align}
  \sum_{q=1}^{\infty} k \exp\ab\Big(- C \frac{\delta \mdpfactor q}{16} )
  \leq k \frac{\exp(- C \delta \mdpfactor/16)}{1 - \exp(- C \delta \mdpfactor/16)}.
\end{align}
Note that $C_{f_q}'$ given by \eqref{eq:bpp.ppp.const.2} is uniformly bounded by $e^{m} \tau(E)$ for $q \geq 1$.
Consequently, we obtain
\begin{align}
  &\sum_{n=1}^{\infty} \probab[bigg]{\sum_{q=1}^{\infty} \frac{1}{2^q} \abs[big]{\pairing{f_q}{\kappa_n(\ppp_n)} - \pairing{f_q}{\kappa_n(\bpp_n)}} > \delta \mdpfactor} \\
  &\quad \leq 4 \sum_{n=1}^{\infty} \exp(- n \epsilon_n^2/3) + k\sum_{n=1}^{\infty} \frac{\exp(- C \delta \mdpfactor/16)}{1 - \exp(- C \delta \mdpfactor/16)}
  < \infty,
\end{align}
which completes the proof.
\end{proof}

\section*{Acknowledgments}
The authors are grateful to Professor Takashi Owada for his helpful explanations of \cite{ho2023ldp} and related topics.
YS was supported by the WISE program (MEXT) at Kyushu University.
KT was supported by JSPS KAKENHI Grant Number JP26K06832.

\section*{Declaration on the use of AI}
The authors used generative AI tools to assist with language editing and with checking notation and selected technical estimates.
All mathematical content was independently verified by the authors, who take full responsibility for the manuscript.

\appendix
\crefalias{section}{appendix} % "Appendix A"のように参照

\section{Technical Tools and Concentration Inequalities}
\label{sec:technical.tools}

In this appendix, we introduce concentration inequalities
and related estimates that were used in the proofs of our main results.

First, we recall Bousquet's concentration inequality for the supremum of sums of independent random vectors.

\begin{proposition}[Bousquet's inequality]
\label{prop:bousquet.ineq}
Let $\calT$ be a finite or countable index set,
let $X_1 = (X_{1, s})_{s\in \calT}, \dots, X_n = (X_{n, s})_{s\in \calT}$ be independent and identically distributed random vectors,
and let $Z = \sup_{s \in \mathcal{T}} \sum_{i=1}^n X_{i,s}$.
Assume that $\expect[none]{X_{i,s}} = 0$ and
that there exists $a > 0$ such that $X_{i,s} \le a$ for all $s \in \mathcal{T}$.
Let $v = 2a^{-1}\expect[none]{Z} + a^{-2}\sup_{s \in \mathcal{T}} \sum_{i=1}^n \expect[none]{X_{i,s}^2}$.
Then, for all $\lambda \ge 0$,
\begin{align}
  \log \expect[none]{e^{\lambda(Z - \expect[none]{Z})}} \le v (e^{a\lambda} - a\lambda - 1).
\end{align}
\end{proposition}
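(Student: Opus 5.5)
This is Bousquet's inequality for suprema of empirical processes, and I would prove it by the entropy method, i.e.\ a Herbst-type argument built on tensorization of entropy. By monotone convergence it suffices to treat a finite index set $\calT$. Scaling $X_{i,s}\mapsto X_{i,s}/a$ reduces everything to $a=1$. The i.i.d.\ assumption is used only to make the variance term uniform; independence is what drives the proof. Throughout, write $\phi(x)=e^x-x-1$ and $F(\lambda)=\log\expect[none]{e^{\lambda(Z-\expect[none]{Z})}}$.

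\emph{Step 1 (leave-one-out and a modified log-Sobolev inequality).}
\begin{itemize}
\item Set $Z_k\coloneqq \sup_{s}\sum_{i\ne k}X_{i,s}$, which is independent of $X_k$.
\item Let $\hat s$ be a (measurably chosen) maximizer for $Z$. Then $Z-Z_k\le X_{k,\hat s}\le 1$.
\item Let $s_k$ be a maximizer for $Z_k$. Then $Z-Z_k\ge X_{k,s_k}$, and $\condexpect[none]{X_{k,s_k}}{(X_i)_{i\neq k}}=0$ because $s_k$ does not depend on $X_k$.
\item Summing the first bound over $k$ gives $\sum_k(Z-Z_k)\le \sum_k X_{k,\hat s}=Z$.
\item Tensorization of entropy, together with the variational bound $\mathrm{Ent}(e^{\lambda Z})\le \expect[none]{e^{\lambda Z}\phi(-\lambda(Z-Z_k))}$ applied in each coordinate, yields
\begin{align}
  \lambda\expect[none]{Ze^{\lambda Z}}-\expect[none]{e^{\lambda Z}}\log\expect[none]{e^{\lambda Z}}\le \sum_{k=1}^n \expect[none]{e^{\lambda Z}\,\phi(-\lambda(Z-Z_k))}.
\end{align}
\end{itemize}

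\emph{Step 2 (bounding the right-hand side).} The aim is to bound $\sum_k\phi(-\lambda(Z-Z_k))$ by a combination of $\phi(\lambda)$ times the weak variance $\sigma^2\coloneqq\sup_s\sum_i\expect[none]{X_{i,s}^2}$ and a multiple of $\sum_k(Z-Z_k)\le Z$.
\begin{itemize}
\item On $\{Z-Z_k\ge0\}$, the facts that $x\mapsto\phi(-x)/x^2$ is decreasing and $Z-Z_k\le 1$ should give $\phi(-\lambda(Z-Z_k))\le \phi(-\lambda)(Z-Z_k)^2$ for the nonnegative part.
\item The negative part is controlled through $X_{k,s_k}$, using its zero conditional mean.
\item The cross terms are then reorganized via $\sum_k (Z-Z_k)\le Z$.
\end{itemize}
This is Bousquet's original route, where the boundedness $Z-Z_k\le 1$ in the \emph{upper} direction only is exactly the one-sided assumption $X_{i,s}\le a$.

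\emph{Step 3 (differential inequality).}
\begin{itemize}
\item Insert the bound of Step 2 into Step 1 and divide by $\expect[none]{e^{\lambda Z}}$.
\item This produces a differential inequality for $G(\lambda)=\log\expect[none]{e^{\lambda Z}}$ of the form
\begin{align}
  (\lambda - \phi(\lambda)\cdot c_1)\,G'(\lambda)-G(\lambda)\le c_2\,\phi(\lambda)\,\sigma^2 ,
\end{align}
with the constant $c_1$ in front of $\expect[none]{Ze^{\lambda Z}}$ producing the $2\expect[none]{Z}$ contribution.
\item Integrate this Herbst-style, using the boundary condition $G(\lambda)/\lambda\to\expect[none]{Z}$ as $\lambda\to0$.
\item The result is $F(\lambda)\le v\,\phi(\lambda)$ with $v=2\expect[none]{Z}+\sigma^2$, which is the claim after undoing the scaling by $a$.
\end{itemize}

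\emph{Main obstacle.} I expect Step 2 to be the hardest. A crude bound on $\sum_k(Z-Z_k)^2$ would involve $\sup_s\sum_iX_{i,s}^2$ rather than the weak variance $\sigma^2$. Bringing in $\sigma^2$ requires a second entropy or decoupling step, which is the delicate part of Bousquet's argument, and the precise constant $2$ in $v$ hinges on doing it carefully. My first attempt would follow Bousquet's decomposition into $Z-Z_k$ and $X_{k,s_k}$ combined with the monotonicity of $\phi(-x)/x^2$. If the constant does not close, I would instead cite the result directly, since it is standard.
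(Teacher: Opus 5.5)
The paper gives no proof of this proposition. It simply cites Theorem 12.5 of \cite{blm2013concentrationineq}, so your fallback of citing the result is exactly what the paper does. As a proof, however, your outline has a genuine gap at Step 2, which is where all of the content lies. Step 1 and the Herbst scheme of Step 3 are the right skeleton of Bousquet's entropy-method argument, but what you write in Step 2 does not work. The bound for the nonnegative part is reversed: since $x\mapsto\phi(-x)/x^2$ is decreasing and $\lambda u\le\lambda$ for $u\in[0,1]$, you get $\phi(-\lambda u)\ge u^2\phi(-\lambda)$. For instance, with $\lambda=4$ and $u=1/2$, $\phi(-2)\approx1.14>\tfrac14\phi(-4)\approx0.75$. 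What does hold, by convexity, is the linear bound $\phi(-\lambda u)\le u\,\phi(-\lambda)$. On $\{Z-Z_k<0\}$ the factor $\phi(-\lambda(Z-Z_k))$ grows exponentially in $|Y_k|$, because the $X_{k,s}$ are not bounded below. Moreover, the zero conditional mean of $Y_k$ cannot be exploited while the weight $e^{\lambda Z}$ still depends on $X_k$. Your diagnosis of the obstacle is also off: no second entropy or decoupling step is needed to bring in the weak variance.

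The missing idea is to move the weight off the $k$-th coordinate:
\[
e^{\lambda Z}\phi(-\lambda(Z-Z_k))=e^{\lambda Z_k}\,\chi(\lambda(Z-Z_k)),\qquad \chi(t)\coloneqq e^{t}\phi(-t)=1+(t-1)e^{t}.
\]
Here $e^{\lambda Z_k}$ depends only on $(X_i)_{i\ne k}$, $0\le\chi\le1$ on $(-\infty,0]$, and $\chi(t)/t^2$ is nondecreasing on $\reals$. The last fact gives $\chi(\lambda Y_k)\le\chi(\lambda)Y_k^2$. Integrating over $X_k$ alone then gives $\mathbb{E}_k[Y_k^2]=\mathbb{E}[X_{1,s}^2]|_{s=s_k}\le\sigma^2/n$, where $\sigma^2=\sup_s\sum_i\mathbb{E}[X_{i,s}^2]$; this is where identical distribution enters. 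Together with $\mathbb{E}e^{\lambda Z_k}\le\mathbb{E}e^{\lambda Z}$ (Jensen, since $Z\ge Z_k+Y_k$), this produces the $\sigma^2$ term.

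What remains is a pointwise comparison, for $Y_k\le x\le 1$, of $\chi(\lambda x)$ with a multiple of $Y_k^2$ plus a multiple of $xe^{\lambda x}-Y_k$. After multiplication by $e^{\lambda Z_k}$, the second part becomes $(Z-Z_k)e^{\lambda Z}-e^{\lambda Z_k}Y_k$. Its sum over $k$ has expectation at most $\mathbb{E}[Ze^{\lambda Z}]$, and this is how $\mathbb{E}Z$ enters $v$.

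The constant $2$ depends on how sharp this comparison is, and your proposal does not supply it. For example, the easily verified bound $\chi(\lambda x)\le\chi(\lambda)y^2+\frac{\lambda^2}{1+\lambda}(xe^{\lambda x}-y)$, valid for $y\le x\le 1$ and $\lambda\ge0$, closes the Herbst argument. However, it only yields $\lambda(e^{\lambda}-1)\,\mathbb{E}Z$ in place of $2\phi(\lambda)\,\mathbb{E}Z$. As it stands, your outline therefore does not prove the stated inequality with $v=2a^{-1}\mathbb{E}Z+a^{-2}\sigma^2$.
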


For the proof of \cref{prop:bousquet.ineq},
we refer the reader to Theorem 12.5 in \cite{blm2013concentrationineq}.

Since Bousquet's inequality is not applicable when the random variables $X_{i,s}$ are unbounded,
we use the following alternative inequality.
For a real-valued random variable $X$,
let $\orlicznorm{X}$ denote the $\psi_1$-Orlicz norm defined by
\begin{align}
  \orlicznorm{X} \coloneqq \inf
  \Bab{\lambda > 0 : \expect[Big]{\psi_1\ab\Big(\frac{\abs{X}}{\lambda})} \leq 1}, \label{eq:orlicz.norm.def}
\end{align}
where $\psi_1(x) \coloneqq e^x - 1$ for $x \geq 0$.

The following proposition provides a maximal inequality for empirical sums.
This result extends the concentration inequality by Adamczak \cite{adamczak2008tail}
to the maximum over an increasing sequence of index sets.
Furthermore,
while the original theorem is stated for empirical sums of the form $\sum f(X_i)$,
we reformulate it here using general random vectors $X_{i,s}$,
both to align with the framework of Bousquet's inequality and to facilitate its use in \cref{lemma:lil.upper.z4.ei}.

\begin{proposition}
\label{prop:adamczak.ineq}
Let $\calT$ be a finite or countable index set,
and let $X_1 = (X_{1, s})_{s\in \calT}, \dots, X_n = (X_{n, s})_{s\in \calT}$ be independent random vectors.
Let $\{I_j\}_{1 \leq j \leq J}$ be a sequence of finite index sets,
satisfying $I_1 \subset I_2 \subset \dots \subset I_J = \{1, \dots, n\}$.
Define $Z_j \coloneqq \sup_{s \in \calT} \sum_{i \in I_j} X_{i,s}$ for each $1 \leq j \leq J$,
and $\sigma^2 \coloneqq \sup_{s \in \calT} \sum_{i=1}^n \expect[none]{X_{i, s}^2}$.
Assume that $\expect[none]{X_{i, s}} = 0 $ and $\orlicznorm{\sup_{s\in \calT} \abs{X_{i, s}}} < \infty$ for all $i=1, \dots, n$.
Then,
for all $\eta \in (0, 1)$ and $\delta > 0$,
there exists a constant $C = C(\eta, \delta)$ depending only on $\eta$ and $\delta$,
such that for all $t \geq 0$,
\begin{align}
  \probab[Big]{\max_{1 \leq j \leq J} Z_j \geq (1 + \eta)\expect[none]{Z_J} + t}
  \leq \exp\ab\Big(- \frac{t^2}{2(1 + \delta)\sigma^2})
  + 3 \exp\ab\bigg(- \frac{t}{C(\eta, \delta) \norm[big]{\max\limits_{1 \leq i \leq n} \sup\limits_{s \in \calT} \abs{X_{i, s}}}_{\psi_1}}).
\end{align}
\end{proposition}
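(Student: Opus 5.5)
The statement to prove is \cref{prop:adamczak.ineq}, a maximal version of Adamczak's tail inequality for the supremum of sums of independent, centered, unbounded random vectors. The plan is to reduce to Adamczak's original bound (Theorem 4 in \cite{adamczak2008tail}) applied at the final index $J$, using a truncation-plus-martingale argument to handle the maximum over $j$.

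\textbf{Step 1: truncation.} Fix a level
\begin{align}
  \rho \coloneqq 8 \, \E\Bigl[\max_{i} \sup_{s} \abs{X_{i,s}}\Bigr],
\end{align}
which is bounded by a constant times $\norm{\max_i \sup_s \abs{X_{i,s}}}_{\psi_1}$. Split each coordinate as $X_{i,s} = Y_{i,s} + W_{i,s}$, where
\begin{align}
  Y_{i,s} &\coloneqq X_{i,s}\indicator{\sup_{s'}\abs{X_{i,s'}}\le\rho} - \E\bigl[X_{i,s}\indicator{\sup_{s'}\abs{X_{i,s'}}\le\rho}\bigr],\\
  W_{i,s} &\coloneqq X_{i,s} - Y_{i,s}.
\end{align}
Then
\begin{align}
  \max_j Z_j \le \max_j \sup_s \sum_{i\in I_j} Y_{i,s} + \sum_{i=1}^n \sup_s \abs{W_{i,s}}.
\end{align}
The last term is controlled exactly as in Adamczak's proof, via the Hoffmann--Jørgensen inequality for $\psi_1$ norms. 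This gives a tail $2\exp(-t/(C\norm{\max_i\sup_s\abs{X_{i,s}}}_{\psi_1}))$ once $t$ exceeds a small multiple of $\E Z_J$; that multiple is absorbed by $\eta$.

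\textbf{Step 2: the bounded part.} The key observation is that $V_j \coloneqq \sup_s \sum_{i\in I_j} Y_{i,s}$ is a submartingale in $j$ with respect to the filtration $\sigma(X_i : i\in I_j)$. This holds because the $Y_{i,s}$ are centered and independent, and the same mechanism was used in \cref{lemma:lil.upper.z4}. Doob's maximal inequality for the nonnegative submartingale $e^{\lambda V_j}$ therefore gives, for $\lambda\ge0$,
\begin{align}
  \P\Bigl(\max_j V_j \ge x\Bigr) \le e^{-\lambda x}\,\E e^{\lambda V_J}.
\end{align}
This reduces the problem to the non-maximal case. Bounding $\E e^{\lambda V_J}$ is then Talagrand's/Bousquet's exponential moment bound for bounded summands (\cref{prop:bousquet.ineq}, or its non-identically distributed version from Klein--Rio), with $a = 2\rho$ and variance proxy $\sigma^2 + 4\rho\,\E V_J$. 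Optimizing $\lambda$ as in Adamczak's paper yields
\begin{align}
  \P\Bigl(\max_j V_j \ge (1+\eta)\E V_J + t\Bigr) \le \exp\bigl(-t^2/(2(1+\delta)\sigma^2)\bigr) + \exp\bigl(-t/(C\rho)\bigr).
\end{align}

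\textbf{Step 3: returning to $\E Z_J$.} We still need $\E V_J \le \E Z_J + \E\sum_i \sup_s\abs{W_{i,s}}$, and the second expectation must be at most a small multiple of $\E Z_J$ plus $C\norm{\max_i\sup_s\abs{X_{i,s}}}_{\psi_1}$. The remainder is absorbed by adjusting $t$ and the constant $C(\eta,\delta)$. This step is the main obstacle. The truncated variables lose centering, so $\E V_J$ must be compared with $\E Z_J$; Adamczak handles this with the Hoffmann--Jørgensen bound
\begin{align}
  \E\sum_i \sup_s \abs{W_{i,s}} \le C\norm{\max_i\sup_s\abs{X_{i,s}}}_{\psi_1}.
\end{align}
I will reuse that argument verbatim. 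The only new ingredient relative to his theorem is the Doob step, which is legitimate because all truncations are made with a single level $\rho$ independent of $j$. Collecting the three error tails gives the factor $3$ in front of the $\psi_1$ term.
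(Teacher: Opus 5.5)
Your proposal is correct and follows essentially the paper's route: Adamczak's truncation at the $j$-independent level $\rho$, Doob's maximal inequality applied to the exponential of the bounded-part submartingale $V_j$, and a Klein--Rio bound on the terminal exponential moment (you are right to use Klein--Rio rather than the i.i.d.\ Bousquet statement). The one difference is the unbounded part: the paper applies Doob a second time, while you dominate it uniformly in $j$ by $\sum_i \sup_s |W_{i,s}|$, which is a harmless simplification. Note also that the shift $\mathbb{E}\sum_i \sup_s |W_{i,s}| \lesssim \|\max_i \sup_s |X_{i,s}|\|_{\psi_1}$ is absorbed into the constant by restricting to $t$ above a multiple of $\|\max_i \sup_s |X_{i,s}|\|_{\psi_1}$, not by $\eta$.
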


\begin{proof}
Let $\{\calG_j\}_{1 \leq j \leq J}$ be the filtration generated by the random vectors,
defined by $\calG_j \coloneqq \sigma(X_i : i \in I_j)$ with $\calG_0$ being the trivial $\sigma$-algebra.
Since the random vectors $X_1, \dots, X_n$ are independent and centered,
by an argument similar to that in Step 1 of the proof of \cref{lemma:lil.upper.z4},
we can easily verify that $\{Z_j\}_{1 \leq j \leq J}$ forms a submartingale,
with respect to the filtration $\{\calG_j\}_{1 \leq j \leq J}$.

The remainder of the proof essentially follows the argument of Theorem 4 in \cite{adamczak2008tail}.
Following the original proof,
we decompose the process into a bounded part (corresponding to $f_1$ in the original proof)
and an unbounded part (corresponding to $f_2$) via truncation.
For the bounded part,
instead of applying Markov's inequality to the exponential moment at the terminal time $J$,
we apply Doob's maximal inequality to the positive submartingale formed by its exponential,
and then bound the resulting terminal expectation using Lemma 1 based on Klein--Rio's inequality.
For the unbounded part,
we again apply Doob's maximal inequality to its exponential submartingale,
and evaluate the tail probability through the $\psi_1$-norm of its supremum,
corresponding to equation (11) in \cite{adamczak2008tail}.
Combining these upper bounds with the appropriate choice of parameters,
we obtain the desired maximal inequality.
\end{proof}

To effectively evaluate the $\psi_1$-norm appearing in the tail bound,
we summarize three fundamental properties.

\begin{proposition}
\label{prop:orliczc.norm}
\textup{(i)}
Let $X$ be a real-valued random variable with $\expect[none]{X} = 0$.
Assume that there exist constants $c > 0$ and $a \in (0, 1/4]$,
such that for all $\lambda \in \reals$,
\begin{align}
  \expect[none]{e^{\lambda X}} \leq 1 + a (e^{c\abs{\lambda}} - c\abs{\lambda} - 1). \label{eq:orlicz.moment.ass}
\end{align}
Then,
$\orlicznorm{X} \leq c$.

\textup{(ii)}
Let $X$ be a real-valued random variable with $\expect{X} = 0$.
Assume that there exist constants $c > 0$ and $a > 0$,
such that for all $\lambda \in \reals$ with $\abs{\lambda} < 1/c$,
\begin{align}
  \expect[none]{e^{\lambda X}} \leq 1 + a \frac{(c\lambda)^2}{1 - c\abs{\lambda}}.
\end{align}
Then, $\orlicznorm{X} \leq 2c(\sqrt{2a} + 1)$.

\textup{(iii)}
Let $X_1, \dots, X_N$ be real-valued random variables.
Assume that there exists $A > 0$ such that $\orlicznorm{X_i} \leq A$ for all $i=1, \dots, N$.
Then,
\begin{align}
  \orlicznorm{\max_{1 \leq i \leq N} \abs{X_i}} \leq (1 + \log_2 N)A. \label{eq:orlicz.norm.max}
\end{align}
\end{proposition}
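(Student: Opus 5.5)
The three parts are elementary facts about the $\psi_1$-norm. The plan is to prove each directly from the definition \eqref{eq:orlicz.norm.def}: we must show $\expect[none]{e^{\abs{X}/\lambda}} - 1 \le 1$ for the claimed $\lambda$. Throughout we use $e^{\abs{x}} \le e^{x} + e^{-x}$, so that
\[
\expect[none]{e^{\abs{X}/\lambda}} \le \expect[none]{e^{X/\lambda}} + \expect[none]{e^{-X/\lambda}}.
\]
This reduces everything to the two-sided moment generating function bounds in the hypotheses.

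For (i), take $\lambda = c$ and use \eqref{eq:orlicz.moment.ass} at $\pm 1/c$. This gives
\[
\expect[none]{e^{\abs{X}/c}} \le 2 + 2a(e - 2).
\]
This is not $\le 2$, so the sum bound is too crude. Instead, use $e^{\abs{x}} - 1 \le (e^{x} - 1 - x) + (e^{-x} - 1 + x)$, valid since $e^{\abs{x}} \le e^{x} + e^{-x} - 1$ for all $x$ (it is equivalent to $e^{-\abs{x}} \ge 1 - 0$, trivially true). The linear terms cancel, and $\expect[none]{X} = 0$. We get
\[
\expect[none]{e^{\abs{X}/c}} - 1 \le 2a(e - 2) \le \tfrac{1}{2}(e - 2) < 1,
\]
using $a \le 1/4$. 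Hence $\orlicznorm{X} \le c$.

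For (ii), apply the same inequality at $\pm 1/\lambda$ with $\lambda = 2c(\sqrt{2a} + 1)$, so that $c/\lambda \le 1/2 < 1$. This yields
\[
\expect[none]{e^{\abs{X}/\lambda}} - 1 \le 2a \frac{(c/\lambda)^2}{1 - c/\lambda}.
\]
Writing $x = c/\lambda = 1/(2(\sqrt{2a} + 1))$, we have $1 - x \ge 1/2$ and $x^2 \le 1/(8a)$. The right side is therefore at most $2a \cdot 2 \cdot \frac{1}{8a} = \tfrac{1}{2} \le 1$, which gives the claim.

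For (iii), the standard route is a union bound on the exponential moment. By homogeneity we may set $A = 1$, so $\expect[none]{e^{\abs{X_i}}} \le 2$ for each $i$. Let $\lambda = 1 + \log_2 N$. By Jensen's inequality, since $x \mapsto x^{\lambda}$ is convex for $\lambda \ge 1$,
\[
\expect[none]{e^{\max_i \abs{X_i}/\lambda}} \le \Bigl(\expect[none]{e^{\max_i \abs{X_i}}}\Bigr)^{1/\lambda} \le \Bigl(\sum_{i=1}^{N} \expect[none]{e^{\abs{X_i}}}\Bigr)^{1/\lambda} \le (2N)^{1/\lambda}.
\]
Finally, $(2N)^{1/(1 + \log_2 N)} = 2^{(1 + \log_2 N)/(1 + \log_2 N)} = 2$, which is exactly the required bound.

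The main point of care is in (i): the naive estimate $e^{\abs{x}} \le e^{x} + e^{-x}$ loses an additive constant. One must use the centered version $e^{\abs{x}} - 1 \le (e^{x} - 1 - x) + (e^{-x} - 1 + x)$ together with $\expect[none]{X} = 0$, so that the linear terms in the hypothesis cancel and only the small second-order remainder survives.
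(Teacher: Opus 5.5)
Part (iii) is correct and is the same argument as in the paper. Parts (i) and (ii), however, rest on a false pointwise inequality. You assert $e^{|x|}-1\le (e^{x}-1-x)+(e^{-x}-1+x)$, i.e.\ $e^{|x|}\le e^{x}+e^{-x}-1$. This is equivalent to $e^{-|x|}\ge 1$, which fails for every $x\neq 0$.

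The intermediate bound you derive from it is itself false. Take $\mathbb{P}(X=\pm c)=a/2$ and $\mathbb{P}(X=0)=1-a$. Then $\mathbb{E}[e^{\lambda X}]=1+a(\cosh(c\lambda)-1)\le 1+a(e^{c|\lambda|}-c|\lambda|-1)$, so the hypothesis of (i) holds. Yet $\mathbb{E}[e^{|X|/c}]-1=a(e-1)>2a(e-2)$.

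The valid inequality is $e^{|x|}-|x|-1\le (e^{x}-x-1)+(e^{-x}+x-1)$; for $x\ge 0$ it reduces to $e^{-x}+x-1\ge 0$. The signed linear terms on the right do cancel in expectation since $\mathbb{E}X=0$, but the $|x|$ on the left does not. You are therefore left with an extra term $\mathbb{E}|X|/c$ in (i), and $\mathbb{E}|X|/K$ with $K=2c(\sqrt{2a}+1)$ in (ii), which your argument never controls.

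The missing idea, which is what the paper does, is to extract a second-moment bound from the hypothesis by comparing the $\lambda^{2}$ coefficients as $\lambda\to 0$. This gives $\mathbb{E}X^{2}\le ac^{2}$ in (i) and $\mathbb{E}X^{2}\le 2ac^{2}$ in (ii), hence $\mathbb{E}|X|\le c\sqrt{a}$ and $\mathbb{E}|X|\le c\sqrt{2a}$ respectively.

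With this, the bounds close:
\begin{itemize}
\item In (i) the total becomes $\sqrt{a}+2a(e-2)\le \tfrac12+\tfrac{e-2}{2}<1$.
\item In (ii) the extra term is at most $\sqrt{2a}/(2\sqrt{2a}+2)<\tfrac12$. Together with your correct bound $\tfrac12$ on the quadratic part, the total is still $<1$.
\end{itemize}
So your choices of scale survive, but the first-moment term must be added and bounded for the proof to be valid.
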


\begin{proof}
\textup{(i)}
By comparing the second-order terms in the Taylor expansions of both sides as $\lambda \to 0$,
we obtain $\expect[none]{X^2} \leq a c^2$
and $\expect[none]{\abs{X}} \leq \sqrt{\expect[none]{X^2}} \leq c\sqrt{a}$.
For any $x \in \reals$,
the elementary inequality $e^{\abs{x}} - \abs{x} - 1 \leq (e^x - x - 1) + (e^{-x} + x - 1)$ holds.
Substituting $x = X/c$,
taking the expectation,
and using the assumption $\expect[none]{X} = 0$,
we obtain
\begin{align}
  \expect[Big]{\exp\ab\Big(\frac{\abs{X}}{c})} - \frac{1}{c}\expect[none]{\abs{X}} - 1
  \leq \ab(\expect[Big]{\exp\ab\Big(\frac{X}{c})} - 1) + \ab(\expect[Big]{\exp\ab\Big(-\frac{X}{c})} - 1).
\end{align}
Applying \eqref{eq:orlicz.moment.ass} with $\lambda = 1/c$ and $\lambda = -1/c$,
we can bound each term on the right-hand side by $a(e^1 - 1 - 1) = a(e - 2)$.
Using $\expect[none]{\abs{X}} \leq c\sqrt{a}$,
we get
\begin{align}
  \expect[Big]{\exp\ab\Big(\frac{\abs{X}}{c})}
  \leq 1 + \frac{1}{c}\expect[none]{\abs{X}} + 2a(e - 2)
  \leq 1 + \sqrt{a} + 2a(e - 2).
\end{align}
Since $1 + \sqrt{a} + 2a(e - 2) \leq 1 + 1/2 + (e - 2)/2 < 2$ for $a \in (0, 1/4] $,
by the definition of the Orlicz norm,
we conclude $\orlicznorm{X} \leq c$.

\textup{(ii)}
By the same argument as in the proof of (i),
we obtain $\expect{X^2} \leq 2ac^2$
and $\expect{\abs{X}} \leq \sqrt{\expect{X^2}} \leq c\sqrt{2a}$.
Let $u \in (0, 1)$ and $K_u \coloneqq c/u > c$.
Using $e^{\abs{x}} - \abs{x} - 1 \leq (e^x - x - 1) + (e^{-x} + x - 1)$ with $x = X/K_u$,
we obtain
\begin{align}
  \expect[Big]{\exp\ab\Big(\frac{\abs{X}}{K_u})}
  &\leq 1 + \frac{1}{K_u}\expect{\abs{X}} + \ab(\expect[Big]{\exp\ab\Big(\frac{X}{K_u})} - 1) + \ab(\expect[Big]{\exp\ab\Big(-\frac{X}{K_u})} - 1) \nonumber \\
  &\leq 1 + u\sqrt{2a} + 2a \frac{u^2}{1 - u}.
\end{align}
Choosing $u =(2\sqrt{2a} + 2)^{-1} \in (0, 1)$,
we obtain by a direct calculation
\begin{align}
  u\sqrt{2a} + 2a \frac{u^2}{1 - u}
  = \frac{\sqrt{2a}}{2\sqrt{2a} + 2} + \frac{2a}{(2\sqrt{2a} + 2)(2\sqrt{2a} + 1)}
  = \frac{6a + \sqrt{2a}}{8a + 6\sqrt{2a} + 2} < 1,
\end{align}
which implies $\expect{\exp(\abs{X}/K_u)} \leq 2$.
Thus,
% by the definition of the Orlicz norm,
we conclude $\orlicznorm{X} \leq K_u = c/u = 2c(\sqrt{2a} + 1)$.

\textup{(iii)}
Let $Y \coloneqq \max_{1 \leq i \leq N} \abs{X_i}$.
By the assumption $\orlicznorm{X_i} \leq A$,
we have $\expect[none]{\exp(\abs{X_i}/A)} \leq 2$ for all $i = 1, \dots, N$.
We have the bound
\begin{align}
  \expect[Big]{\exp\ab\Big(\frac{Y}{A})}
  = \expect[Big]{\max_{1 \leq i \leq N} \exp\ab\Big(\frac{\abs{X_i}}{A})}
  \leq \expect[bigg]{\sum_{i=1}^N \exp\ab\Big(\frac{\abs{X_i}}{A})}
  \leq 2N,
\end{align}
and for any $k \geq 1$, applying Jensen's inequality,
we obtain
\begin{align}
  \expect[Big]{\exp\ab\Big(\frac{Y}{kA})}
  \leq \expect[Big]{\exp\ab\Big(\frac{Y}{A})}^{1/k}
  \leq (2N)^{1/k}.
\end{align}
By choosing $k = 1 + \log_2 N \geq 1$,
we obtain $\expect[none]{\exp(Y/(kA))} \leq (2N)^{1/k} = 2$,
which yields \eqref{eq:orlicz.norm.max}.
\end{proof}

Next, we use a chaining argument to bound the expected maximum of a collection of random variables. Unlike standard sub-gamma chaining bounds, our setting requires a scale parameter uniformly bounded independently of the distance, as in (A.4) below.

To state the corresponding chaining inequality used in our proof,
we first recall the packing number of a metric space $(\calT, d)$.
For any $\delta > 0$, a subset $\calT_\delta \subset \calT$ is called a \textit{$\delta$-net} if it has maximal cardinality among subsets satisfying $d(s, t) > \delta$ for all distinct elements $s, t \in \calT_\delta$.
Its cardinality, denoted by $N(\delta, \calT)$, is defined as the \textit{$\delta$-packing number} of $\calT$. 

The following proposition provides a modified version of the chaining inequality
tailored to random variables satisfying this bound on pairwise differences.

\begin{proposition}
\label{prop:chaining.modify}
Let $(\calT, d)$ be a finite metric space and let $(Z_t)_{t \in \calT}$ be a collection of random variables such that for some constants $v, c > 0$,
\begin{align}
  \log \expect[none]{e^{\lambda(Z_t - Z_{t'})}}
  \leq \frac{v d(t, t')^2 \lambda^2}{2(1 - c\lambda)} \label{eq:chaining.exp.bound}
\end{align}
for all $t, t' \in \calT$ and all $0 < \lambda < c^{-1}$.
Then, there exists a positive integer $J$ such that
for any $t_0 \in \calT$,
\begin{align}
  \expect[bigg]{\max_{t \in \calT} Z_t - Z_{t_0}}
  \leq 12 \sqrt{v}\int_{0}^{\delta/2}\sqrt{\log N(u, \calT)} \,du
  + 2c \sum_{j=1}^{J} \log N(2^{-j}\delta, \calT),
\end{align}
where $\delta \coloneqq  \max_{t \in \calT} d(t, t_0)$.
Furthermore, $J$ satisfies
\begin{align}
  J \leq \Roundup{\log_2 \ab\big(\delta {\delta'}^{-1})} \lor 1, \label{eq:chaining.j.bound}
\end{align}
where $\delta' \coloneqq \min\{d(t, t'): t, t' \in \calT, t\neq t'\}  $.
\end{proposition}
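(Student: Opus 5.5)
This is a dyadic generic-chaining argument adapted to sub-gamma increments. Throughout, the linear term is controlled by counting cardinalities rather than by the metric.

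\emph{Nets.} Set $\delta_j \coloneqq 2^{-j}\delta$ for $j \geq 0$. Take $\calT_0 \coloneqq \{t_0\}$, which is a $\delta_0$-net since every point lies within $\delta$ of $t_0$. For $j \geq 1$ let $\calT_j$ be a maximal $\delta_j$-separated subset, so $\card{\calT_j} \leq N(\delta_j, \calT)$. By maximality, every $t$ has a point $\pi_j(t) \in \calT_j$ with $d(t, \pi_j(t)) \leq \delta_j$.

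Let $J$ be the first index with $\delta_J < \delta'$, or $J = 1$ if $\delta < \delta'$ is impossible. At that level any two distinct points are more than $\delta_J$ apart, so $\calT_J = \calT$ and $\pi_J$ is the identity. This gives $J \leq \roundup{\log_2(\delta/\delta')} \lor 1$.

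\emph{Telescoping.} Write
\begin{align}
  Z_t - Z_{t_0} = \sum_{j=1}^{J} \bigl(Z_{\pi_j(t)} - Z_{\pi_{j-1}(t)}\bigr).
\end{align}
By the triangle inequality $d(\pi_j(t), \pi_{j-1}(t)) \leq \delta_j + \delta_{j-1} = 3\delta_j$. The number of distinct pairs at level $j$ is at most $\card{\calT_j}\card{\calT_{j-1}} \leq N(\delta_j,\calT)^2$, since $N$ is monotone in the scale. Hence
\begin{align}
  \expect{\max_t Z_t - Z_{t_0}} \leq \sum_{j=1}^{J} \expect{\max_{(a,b) \in P_j} (Z_a - Z_b)},
\end{align}
where $P_j$ is the set of level-$j$ pairs.

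\emph{Maximal inequality per level.} For finitely many variables $X_1,\dots,X_N$ with $\log \expect{e^{\lambda X_i}} \leq \sigma^2\lambda^2/(2(1-c\lambda))$, the standard argument gives
\begin{align}
  \expect{\max_i X_i} \leq \inf_{\lambda}\Bigl(\frac{\log N}{\lambda} + \frac{\sigma^2\lambda}{2(1-c\lambda)}\Bigr) = \sigma\sqrt{2\log N} + c\log N.
\end{align}
This is Corollary 2.6 in Boucheron--Lugosi--Massart, obtained from Jensen's inequality followed by optimization over $\lambda$.

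Apply it at level $j$ with $\sigma^2 = 9v\delta_j^2$ and $N = N(\delta_j)^2$. The level-$j$ contribution is at most
\begin{align}
  3\delta_j\sqrt{v}\,\sqrt{4\log N(\delta_j)} + 2c\log N(\delta_j) = 6\sqrt{v}\,\delta_j\sqrt{\log N(\delta_j)} + 2c\log N(\delta_j).
\end{align}
The crucial feature is that the coefficient $c$ is uniform: it does not shrink with $d(a,b)$. This is exactly the "scale parameter uniformly bounded" issue flagged before the proposition. It is why the linear term appears as the bare sum $2c\sum_{j\le J}\log N(\delta_j,\calT)$, and why $J$ must be finite and bounded via $\delta'$.

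\emph{Integral bound.} Since $\delta_j = 2(\delta_j - \delta_{j+1})$ and $N$ is nonincreasing in the scale,
\begin{align}
  \delta_j\sqrt{\log N(\delta_j)} \leq 2\int_{\delta_{j+1}}^{\delta_j}\sqrt{\log N(u)}\,du.
\end{align}
Summing over $j \geq 1$ gives
\begin{align}
  \sum_{j\geq 1} 6\sqrt{v}\,\delta_j\sqrt{\log N(\delta_j)} \leq 12\sqrt{v}\int_0^{\delta/2}\sqrt{\log N(u,\calT)}\,du,
\end{align}
which is the first term of the claimed bound.

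The only delicate point is the bookkeeping of constants. The factor 3 from the triangle inequality and the factor 2 from squaring the net size must combine to give exactly $12$ and $2c$. If they do not match, I would refine the argument by charging each pair to the finer net $\calT_j$ via $\pi_{j-1} = \pi_{j-1}\circ\pi_j$, using nested nets, so that the number of pairs is only $N(\delta_j)$.
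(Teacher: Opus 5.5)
Your argument is the paper's proof step for step---the same dyadic nets with $\calT_0=\{t_0\}$, the same telescoping, the $3\delta_j$ increment distance and the $N(\delta_j,\calT)^2$ pair count fed into Corollary~2.6 of Boucheron--Lugosi--Massart, and the same comparison $\delta_j\sqrt{\log N(\delta_j,\calT)}\le 2\int_{\delta_{j+1}}^{\delta_j}\sqrt{\log N(u,\calT)}\,du$---and your own computation already gives exactly $12\sqrt{v}$ and $2c$, so the nested-net fallback is not needed. One small slip, which the paper shares: nets require strict separation $d>\delta_j$, so a $\delta'$-net cannot contain both points of a pair at distance exactly $\delta'$, and the first index with $\delta_J<\delta'$ is therefore $J=\lfloor\log_2(\delta/\delta')\rfloor+1$, which exceeds $\lceil\log_2(\delta/\delta')\rceil\lor 1$ by one when $\delta/\delta'$ is a power of $2$ larger than $1$; the bound on $J$ should be stated accordingly, and this is harmless downstream since only $D_l=O(l)$ is used.
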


\begin{proof}
Our proof is a slight modification of the argument presented in \cite{blm2013concentrationineq}.
For each integer $j$, let $\delta_j \coloneqq 2^{-j} \delta$ and let $\calT_j$ be a $\delta_j$-net of $\calT$.
By the properties of a $\delta_j$-net, we can construct a projection $\map{\Pi_j}{\calT}{\calT_j}$ for each $j$ that satisfies
\begin{align}
  d(t, \Pi_j(t)) \leq \delta_j \quad \text{ for all } t \in \calT.
\end{align}
When $j$ satisfies $\delta_j \leq \delta'$, the projection reduces to the identity map, i.e., $\Pi_j(t)= t$ for all $t \in \calT$.
Thus, there exists a positive integer $J$ satisfying \eqref{eq:chaining.j.bound} such that the following equation holds for any $t \in \calT$:
\begin{align}
  Z_t = Z_{\Pi_0(t)} + \sum_{j=0}^{J-1} (Z_{\Pi_{j+1}(t)} -  Z_{\Pi_j(t)}).
\end{align}
Without loss of generality, we can choose $\calT_0 = \{t_0\}$ based on the definition of $\delta$, implying $\Pi_{0}(t) = t_0$. It follows from the linearity of expectation that
\begin{align}
  \expect[bigg]{\max_{t \in \calT} Z_t - Z_{t_0}}
  \leq \sum_{j=0}^{J-1} \expect[bigg]{\max_{t \in \calT} \ab\big(Z_{\Pi_{j+1}(t)} -  Z_{\Pi_j(t)})}.
\end{align}
Note that for each integer $j$, the number of possible pairs is bounded by
\begin{align}
  \card{\Bab[big]{(\Pi_j(t), \Pi_{j+1}(t)) : t \in \calT}}
  % \leq e^{2 \log N(\delta_{j+1}, \calT)}.
  \leq N(\delta_{j+1}, \calT)^2.
\end{align}
Furthermore, the triangle inequality yields
\begin{align}
  d(\Pi_j(t), \Pi_{j+1}(t)) \leq d(\Pi_j(t), t) + d(t, \Pi_{j+1}(t)) \leq \delta_j + \delta_{j+1} = 3\delta_{j+1}
\end{align}
for all $t \in \calT$, which implies
\begin{align}
  \log \expect[big]{\exp\ab(\lambda(Z_{\Pi_{j+1}(t)} -  Z_{\Pi_j(t)}))}
  \leq \frac{9v \delta_{j+1}^2 \lambda^2 }{2(1 - c\lambda)}.
\end{align}
Applying the maximal inequality from Corollary 2.6 of \cite{blm2013concentrationineq}, we obtain
\begin{align}
  \expect[bigg]{\max_{t \in \calT} \ab\big(Z_{\Pi_{j+1}(t)} -  Z_{\Pi_j(t)})}
  \leq 6 \delta_{j+1} \sqrt{v \log N(\delta_{j+1}, \calT)} + 2c \log N(\delta_{j+1}, \calT).
\end{align}
Finally, summing these bounds over $j$ yields
\begin{align}
  \expect[bigg]{\max_{t \in \calT} Z_t - Z_{t_0}} 
  &\leq \sum_{j=1}^{J} \ab( 6 \delta_{j}  \sqrt{v \log N(\delta_{j}, \calT)} + 2c\log N(\delta_{j}, \calT)) \\
  &\leq 12 \sqrt{v}\int_{0}^{\delta/2}\sqrt{\log N(u, \calT)} \,du
  + 2c \sum_{j=1}^{J} \log N(2^{-j}\delta, \calT),
\end{align}
where the last inequality follows from the fact that $\log N(u, \calT)$ is nonincreasing in $u$.
\end{proof}

\section{Proof of \cref{cor:mdp.morse.vector,cor:lil.morse.vector}}
\label{sec:morse.proof}

In this appendix, we prove \cref{cor:mdp.morse.vector,cor:lil.morse.vector}.

\begin{proof}
\cref{cor:mdp.morse.vector} follows from the same calculations
as those used in \cref{prop:ge.eta}, \cref{prop:mdp.eta} and \cref{prop:exp.eq.eta.xi}.
Note that the boundedness of $\expect[none]{\exp(\abs{W_{j, z}})}$ in \cref{prop:exp.eq.eta.xi} as $n \to \infty$
is guaranteed by (5.53) in \cite{ho2023ldp}.

We now proceed to prove \cref{cor:lil.morse.vector}.
Here, we only show the counterpart of \eqref{eq:lil.upper.z4.ei.tail.max},
which is given by
\begin{align}
  \lim_{n \to \infty} \distancevec(\mdpfactor^{-1}\centered{N_n}, \accptvec) = 0.
\end{align}
In particular, we focus solely on establishing the inequality,
which corresponds to \cref{lemma:lil.upper.z4.ei}.
That is, we aim to show that for all $s \in (1, 2^{2(3 - \alpha)/(\alpha - 1)})$,
there exists a constant $C > 0$, independent of $s$, $l$, and $i$,
such that for all sufficiently large $l$ and all $R > C (n_{\lilsubseq{s}{l}}^3 r_{\lilsubseq{s}{l}}^4)^{1/2}$,
\begin{align}
  \probab[Big]{\max_{\lilsubseq{s}{l} \leq n \leq \lilsubseq{s}{l+1}} Z_{n}^{(4)} > R}
  \leq C\exp\ab\bigg(- \frac{R^2}{C (1 - v_{\lilsubseq{s}{l+1}}/v_{\lilsubseq{s}{l}}) n_{\lilsubseq{s}{l}}^3 r_{\lilsubseq{s}{l}}^4 })
  + C\exp\ab\bigg(- \frac{R}{Cl}), \label{eq:morse.lil.upper.z4}
\end{align}

The variables involved in this expression are defined analogously to those in \cref{sec:lil.upper}:
\begin{gather}
  S_{z}^{(v)}(\calX) \coloneqq \sum_{\calZ \subset \calX \cap Q_{z}^{(2Lv_{\lilsubseq{s}{l}})}} c(\calZ, \calX \cap Q_{z}^{(3Lv_{\lilsubseq{s}{l}})}) h^{(i)}(v^{-1} \calZ), \\
  \Delta_{\infty,z}^{(v)} \coloneqq S_{z}^{(v)}(\pppentire) - S_{z}^{(v)}(\pppentire_{z}'),
  \quad
  Z_{n}^{(4)} \coloneqq \sum_{z \in I_{n}} \condexpect[Big]{\Delta_{\infty,z}^{(v_{n})} - \Delta_{\infty,z}^{(v_{\lilsubseq{s}{l}})}}{\calF_z}.
\end{gather}
where $\pppentire_z' $ is given by \eqref{eq:ppp.indep},
$h^{(i)} $ is given by \eqref{eq:h.morse.critical} for $i=1, \dots, m$,
and $c(\calY, \calX) $ is defined in \eqref{eq:indicator.morse}.
Without loss of generality, we may assume that $i=1 $.
Define $Y_{n, z} $ analogously to \eqref{eq:lil.upper.z4.y} by $ Y_{n, z} \coloneqq \condexpect[Big]{\Delta_{\infty,z}^{(v_{n})} - \Delta_{\infty,z}^{(v_{\lilsubseq{s}{l}})}}{\calF_z}$.

Then, for any $\lilsubseq{s}{l} \leq n_1 < n_2 \leq \lilsubseq{s}{l+1} $ and $\lambda \in \reals$,
we show that
\begin{align}
  \expect{\exp\ab\Big(\lambda (Y_{n_1, z} - Y_{n_2, z}))}
  \leq 1 + v_{\lilsubseq{s}{l}}^6 \ab\big(1 - v_{n_2}/v_{n_1})
   \sum_{p=2}^{\infty} (C \abs{\lambda})^p. \label{eq:morse.y.exp.moment}
\end{align} 
Following an argument similar to that in \cref{prop:exp.moment.common},
we obtain
\begin{align}
  \expect{\exp\ab\Big(\lambda (Y_{n_1, z} - Y_{n_2, z}))}
  \leq 1 + \sum_{p=2}^{\infty} \frac{(C \abs{\lambda})^p}{p!}
   \expect{\ab(S_{z}^{(v_{n_1})}(\pppentire) - S_{z}^{(v_{n_2})}(\pppentire) )^p}.
\end{align}
Let $\calZ_1, \dots, \calZ_M $ denote the subsets $\calZ \subset \Pi \cap Q_{z}^{(3Lv_{\lilsubseq{s}{l}})} $
which satisfy $\card{\calZ} = 3 $ and $c(\calZ, \pppentire \cap Q_{z}^{(3Lv_{\lilsubseq{s}{l}})}) = 1 $.
Using this notation, we obtain the bound
\begin{align}
  \expect{\ab(S_{z}^{(v_{n_2})}(\pppentire) - S_{z}^{(v_{n_1})}(\pppentire) )^p}
  &\leq \expect[bigg]{\ab\bigg(\sum_{j=1}^M \ab\big(h^{(1)}(v_{n_1}^{-1}\calZ_j) - h^{(1)}(v_{n_2}^{-1} \calZ_j)) )^p} \\
  &\leq \expect[bigg]{M^{p-1} \sum_{j=1}^M \ab\big(h^{(1)}(v_{n_1}^{-1}\calZ_j) - h^{(1)}(v_{n_2}^{-1} \calZ_j))^p}. \label{eq:morse.s.m}
\end{align}
Since $Z_1, \dots, Z_M $ constitute Voronoi points in the plane,
and the number of Voronoi points associated with a set of $N$ vertices in $\reals^2$ is at most $2N $
(see Lemma 1 in \cite{ls1980delaunay}, as well as p.4041 of \cite{ho2023ldp}),
it follows that $M \leq 2 \card{\pppentire \cap Q_{z}^{(3Lv_{\lilsubseq{s}{l}})}}$.
Therefore, applying the Mecke formula, we can bound \eqref{eq:morse.s.m} by
\begin{align}
  &\expect[bigg]{2^{p-1} \card{\pppentire \cap Q_{z}^{(3Lv_{\lilsubseq{s}{l}})}}^{p-1} \sum_{\calZ \subset \pppentire \cap Q_{z}^{(3Lv_{\lilsubseq{s}{l}})}} \ab\big(h^{(1)}(v_{n_1}^{-1}\calZ) - h^{(1)}(v_{n_2}^{-1} \calZ))^p} \\
  &\quad = 2^{p-1} \frac{1}{6} \int_{(Q_{z}^{(3Lv_{\lilsubseq{s}{l}})})^3}  \ab\big(h^{(1)}(v_{n_1}^{-1}\bfx) - h^{(1)}(v_{n_2}^{-1} \bfx))^p \expect[bigg]{\card{(\bfx \cup \pppentire) \cap Q_{z}^{(3Lv_{\lilsubseq{s}{l}})}}^{p-1}} \,d\bfx  \\
  &\quad \leq C^{p} (v_{\lilsubseq{s}{l}}^2)^3 \ab\big(1 - v_{n_2}/v_{n_1}) \expect[bigg]{\ab\Big(\card{\pppentire \cap Q_{z}^{(3Lv_{\lilsubseq{s}{l}})}} + 3)^{p}}.
\end{align}
Consequently,
\begin{align}
  \expect{\exp\ab\Big(\lambda (Y_{n_1, z} - Y_{n_2, z}))}
  &\leq 1 + v_{\lilsubseq{s}{l}}^6 \ab\big(1 - v_{n_2}/v_{n_1}) \sum_{p=2}^{\infty} \frac{(C \abs{\lambda})^p}{p!}
   \expect[bigg]{\ab\Big(\card{\pppentire \cap Q_{z}^{(3Lv_{\lilsubseq{s}{l}})}} + 3)^{p}}\\
  &\leq 1 + v_{\lilsubseq{s}{l}}^6 \ab\big(1 - v_{n_2}/v_{n_1}) \sum_{p=2}^{\infty} (C \abs{\lambda})^p
   \expect[bigg]{\exp\ab\Big(\card{\pppentire \cap Q_{z}^{(3Lv_{\lilsubseq{s}{l}})}} + 3)},
\end{align}
and with
\begin{align}
  \expect[bigg]{\exp\ab\Big(\card{\pppentire \cap Q_{z}^{(3Lv_{\lilsubseq{s}{l}})}} + 3)}
  = e^3 \exp\ab\Big((e-1)\lebmeas\ab\big(Q_{z}^{(3Lv_{\lilsubseq{s}{l}})}))
  = \lorder(1)
  \quad \text{as } l \to \infty,
\end{align}
we obtain \eqref{eq:morse.y.exp.moment}.

Thus, arguing as in the proof of \cref{lemma:lil.upper.z4.ei}
and applying \cref{prop:orliczc.norm} (ii) and \cref{prop:adamczak.ineq},
we obtain the desired inequality \eqref{eq:morse.lil.upper.z4}.
This completes the proof.
\end{proof}

\bibliographystyle{abbrv}
\bibliography{ref}

\end{document}